\newtheorem{thm}{Theorem}[section]
\newtheorem{defi}[thm]{Definition}
\newtheorem{alg}[thm]{Algorithm}
\newtheorem{cor}[thm]{Corollary}
\newtheorem{example}[thm]{Example}
\newtheorem{rem}[thm]{Remark}
\let\c@equation\c@thm
\numberwithin{equation}{section}
\let\SK@label\label\fi
 \let\your@thm\@thm
 \def\@thm#1#2#3{\gdef\currthmtype{#3}\your@thm{#1}{#2}{#3}}
 \def\mylabel#1{{\let\your@currentlabel\@currentlabel\def\@currentlabel
  {\currthmtype~\your@currentlabel}
 \SK@label{#1@}}\label{#1}}
\title{The algebraic Atiyah-Hirzebruch spectral sequence of real projective spectra}
\author{Guozhen Wang}
\address{Department of Mathematics, University of Copenhagen, Universitetsparken 5, 2100 Copenhagen, Denmark}
\email{guozhen@math.ku.dk}
\author{Zhouli Xu}
\address{Department of Mathematics, The University of Chicago, Chicago, IL 60637}
\email{xu@math.uchicago.edu}
\begin{document}

\maketitle

\begin{abstract}
In this note, we use Curtis's algorithm and the Lambda algebra to compute the algebraic Atiyah-Hirzebruch spectral sequence of the suspension spectrum of $\mathbb{R}P^\infty$ with the aid of a computer, which gives us its Adams $E_2$-page in the range of $t<72$. We also compute the transfer map on the Adams $E_2$-pages. These data are used in our computations of the stable homotopy groups of $\mathbb{R}P^\infty$ in \cite{WX1} and of the stable homotopy groups of spheres in \cite{WX2}.
\end{abstract}

This note gives computer-generated computations to be used in \cite{WX1} and \cite{WX2}. The data here are ``mindless" input to those papers, input that a computer can generate without human intervention. The papers \cite{WX1} and \cite{WX2} compute differentials, starting from the data presented here. We are minded to quote Frank Adams \cite[page 58-59]{Ada} from 1969:

\emph{`` $\cdots$ The history of the subject [algebraic topology] shows, in fact, that whenever a chance has arisen to show that a differential $d_r$ is non-zero, the experts have fallen on it with shouts of joy - `Here is an interesting phenomenon! Here is a chance to do some nice, clean research!' - and they have solved the problem in short order. On the other hand, the calculation of $Ext^{s,t}$ groups is necessary not only for this spectral sequence, but also for the study of cohomology operations of the $n$-th kind: each such group can be calculated by a large amount of tedious mechanical work: but the process finds few people willing to take it on. $\cdots$"}\\

\textbf{Warning}: this note contains data of interest only to experts.

\section{Notations}

We work at the prime $2$ in this paper. All cohomology groups are taken with coefficients $\mathbb{Z}/2$.

Let $\mathcal{A}$ be the Steenrod algebra.
For any $\mathcal{A}$-module $M$, we will abbreviate $Ext_{\mathcal{A}}(M,\mathbb{Z}/2)$ by $Ext(M)$.

Let $V$ be a vector space with $\{v_j\}$ an ordered basis. We say that an element $v=\sum a_iv_i$ has leading term $a_kv_k$ if $k$ is the largest number for which $a_k\neq0$.

For spectra, let $S^0$ be the sphere spectrum, and $P_1^\infty$ be the suspension spectrum of $\mathbb{R}P^\infty$. In general, we use $P_n^{n+k}$ to denote the suspension spectrum of $\mathbb{R}P^{n+k}/\mathbb{R}P^{n-1}$.

\section{The Curtis table}

We recall the notion of Curtis table in a general setting in this section.

Let $X_0\rightarrow X_1\rightarrow\dots$ be a complex of vector spaces (over $\mathbb{F}_2)$. For each $X_i$, let $\{x_{i,j}\}$ be an ordered basis.

\begin{defi} \label{cus}
A Curtis table for $X_*$ associated with the basis $\{x_{i,j}\}$ consists of a list $L_i$ for each $i$.

The items on the list $L_i$ are either an element $x_{i,j}$ for some $j$, or a tag of the form $x_{i,j}\leftarrow x_{i-1,k}$ for some $j,k$.

These lists satisfy the following:
\begin{enumerate}
\item Each element $x_{i,j}$ appears in these lists exactly once.\
\item For any $i,j$, an item of the form $x_{i,j}$ or a tag of the form $x_{i,j}\leftarrow x_{i-1,k}$  appears in the list $L_i$ if an only if there is a cycle in $X_i$ with leading term $x_{i,j}$.
\item If a tag of the form $x_{i,j}\leftarrow x_{i-1,k}$ appears in the list $L_i$, then there is an element in $X_{i-1}$ with leading term $x_{i-1,k}$ whose boundary has leading term $x_{i,j}$.
\end{enumerate}
\end{defi}

\begin{rem}
	By Theorem \ref{cut} and Corollary \ref{unq}, the Curtis table exists and is unique for a finite dimensional complex with ordered basis.
\end{rem}

The Curtis algorithm constructs a Curtis table from a basis, and can output the full cycle from the input of a leading term.

For example, the Curtis table in the usual sense is for the lambda algebra with the basis of admissible monomials in lexicographic order. In \cite{Tan} Tangora computed the Curtis table for the lambda algebra up to stem $51$.

Another example is the minimal resolution for the sphere spectrum. This case is indeed trivial in the sense that there are no tags in the Curtis table.

\section{The Curtis algorithm}

The Curtis algorithm produces a Curtis table from an ordered basis. It can be described as follows:
\begin{alg}(Curtis)
\begin{enumerate}
\item For each $i$, construct a list $L_i$ which contains every $x_{i,j}$ such that the items are ordered with $j$ ascending.
\item For $i=0,1,2,\dots$ do the following:
\begin{enumerate}
\item Construct a pointer $p$ with initial value pointing to the beginning of $L_i$.
\item If $p$ points to the end of $L_i$ (i.e. after the last element), stop and proceed to the next $i$. \label{st1}
\item If the item pointed by $p$ is tagged, move $p$ to the next item and go to Step \ref{st1}.
\item Construct a vector $c\in X_i$. Give $c$ the initial value of the item pointed by $p$.
\item Compute the boundary $b\in X_{i+1}$ of $c$. \label{st2}
\item If $b=0$, move $p$ to the next item and go to Step \ref{st1}.
\item Search the leading term $y$ of $b$ in $L_{i+1}$.
\item If $y$ is untagged, tag $y$ with the leading term of $c$. Remove the item pointed by $p$ and move $p$ to the next item. Go to Step \ref{st1}.
\item If $y$ is tagged by $z$, add $z$ to $c$. Go to Step \ref{st2}.
\end{enumerate}
\end{enumerate}
\end{alg}

\begin{example}
As an example, we compute the Curtis table for the lambda algebra for $t=3$. We start with
\begin{equation*}
\begin{split}
L_1 & = \{\lambda_2\} \\
L_2 & = \{\lambda_1\lambda_0\} \\
L_3 & = \{\lambda_0^3\}
\end{split}
\end{equation*}
We next compute the boundary of $\lambda_2$:
$$d(\lambda_2) = \lambda_1\lambda_0.$$
We therefore remove it from $L_1$ and tag $\lambda_1\lambda_0$ with $\lambda_2$. The output gives us the following:
\begin{equation*}
\begin{split}
L_1 & = \emptyset \\
L_2 & = \{\lambda_1\lambda_0\leftarrow\lambda_2\} \\
L_3 & = \{\lambda_0^3\}
\end{split}
\end{equation*}
\end{example}

\begin{thm}(Curtis) \label{cut}
The Curtis algorithm ends after finitely many steps when $X_*$ is finite dimensional. Moreover, let $Y_*$ be the graded vector space generated by those untagged items on the $L_i$'s. Denote by $C_*$ the subspace of cycles in $X_*$. There is an algorithm which constructs a map $Y_i\rightarrow C_i$ and a map $C_i\rightarrow Y_i$ which induce an isomorphism between $Y_*$ and the homology of $X_*$.
\end{thm}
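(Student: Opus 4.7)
The plan breaks into four pieces: termination, leading-term invariants, construction of the two maps, and a dimension count identifying $Y_i$ with $H_i$. For termination, the outer loop visits each basis element of each $L_i$ at most once, so what remains is to bound the inner reduction loop; for that the key invariant is that each firing of the ``absorb a tag'' branch cancels the current leading term of $b = d(c)$, and hence strictly decreases it in the finite-dimensional space $X_{i+1}$.

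Second, I would establish the basic leading-term invariant by induction on the processing order within $L_i$: throughout the processing of item $x_{i,j}$, the leading term of $c$ stays equal to $x_{i,j}$. Indeed, every absorbed tag $z$ was assigned as the ``leading term of $c$'' during the processing of some strictly earlier pointer item, and that earlier leading term inductively equals the pointer item, which is strictly less than $x_{i,j}$. Consequently each basis element falls into exactly one of three sets --- $U_i$ (untagged items remaining on $L_i$), $T_i$ (tagged items on $L_i$), and $R_i$ (items removed from $L_i$ that appear as the right-hand side of a tag on $L_{i+1}$) --- and the tagging relation gives a canonical bijection $R_i \leftrightarrow T_{i+1}$. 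Re-running the algorithm then produces, for each $u \in U_i$, a cycle $\tilde u \in X_i$ with leading term $u$, and for each $r \in R_i$, an element $\tilde r \in X_i$ with leading term $r$ whose boundary has leading term equal to the element of $T_{i+1}$ paired with $r$.

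The two maps are then defined: $Y_i \to C_i$ by $u \mapsto \tilde u$, and $C_i \to Y_i$ by iteratively subtracting $\tilde u$'s and the explicit boundaries $d(\tilde r)$'s to clear leading terms. The technical heart, which I expect to be the main obstacle, is the claim $B_i = \operatorname{span}\{d(\tilde r) : r \in R_{i-1}\}$: every boundary is a combination of these particular boundaries, and they have distinct leading terms in $T_i$. One inclusion is immediate; the reverse reduces to showing $d(x) \in \operatorname{span}\{d(\tilde r)\}$ for every basis element $x$ of $X_{i-1}$, by induction on the ordering. The interesting case is $x \in T_{i-1}$, where one applies $d^2 = 0$ to the element $\tilde{r'} \in X_{i-2}$ witnessing the tag on $x$ (so that $d(\tilde{r'}) = x + \text{smaller terms}$, and hence $d(x) = d(\text{smaller terms})$, which lies in the span by induction).

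Finally, a dimension count finishes the argument. From $\dim X_i = |U_i| + |T_i| + |R_i|$, $\dim B_{i+1} = |R_i|$ (the above claim applied at index $i+1$), and rank-nullity, one obtains $\dim C_i = |U_i| + |T_i|$, whence $\dim H_i = |U_i| = \dim Y_i$. The map $u \mapsto [\tilde u]$ is injective because any nonzero element of $\operatorname{span}\{\tilde u\}$ has leading term in $U_i$ while every element of $B_i$ has leading term in $T_i$, and $U_i \cap T_i = \emptyset$; matching dimensions then yields the isomorphism.
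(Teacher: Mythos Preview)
The paper does not actually prove this theorem; it simply refers the reader to Tangora \cite{Tan}. Your sketch supplies what the paper omits and follows the standard line: termination, the leading-term invariant for $c$, construction of $\tilde u$ and $\tilde r$, the identification $B_i = \operatorname{span}\{d(\tilde r) : r \in R_{i-1}\}$, and the dimension count. The latter parts are correct as you have them; in particular your inductive treatment of the case $x \in T_{i-1}$ via $d^2 = 0$ is exactly right, and the injectivity argument from disjointness of leading-term sets is clean.

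Your termination argument, however, has a gap as written. You assert that each absorption step cancels the current leading term of $b = d(c)$ and hence strictly decreases it. That would be true if ``add $z$ to $c$'' meant ``add the full reduced vector $\tilde z$'', since $d(\tilde z)$ has leading term exactly $y$. But the algorithm as stated in the paper adds only the \emph{basis element} $z$ (the stored tag), and $d(z)$ need not have $y$ as its leading term, nor even as a term at all. Concretely: take $X_0$ with ordered basis $a < b < c$ and $X_1$ with ordered basis $u < v < w < x$, with $d(a) = w+x$, $d(b) = u+x$, $d(c) = w$ (and $X_2 = 0$). Processing $a$ tags $x \leftarrow a$; processing $b$ then adds $a$ and tags $w \leftarrow b$. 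Now processing $c$: the leading term of $d(c)$ is $w$, tagged by $b$; adding the basis element $b$ gives $d(b+c) = u+w+x$, whose leading term is $x > w$. The leading term has gone \emph{up}. (The loop does still halt here, after next adding $a$; but your monotonicity argument does not establish this.) If you intend the algorithm to add $\tilde z$ rather than $z$ --- which is how many implementations work, and which makes your argument go through verbatim --- you should say so explicitly; otherwise termination requires a different potential.
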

\begin{proof}
See \cite{Tan}.
\end{proof}

\begin{cor} \label{unq}
The Curtis table is unique for a finite dimensional complex $X_*$ with ordered basis. In fact, it is specified in the following way:

Let $l(x)$ denote the leading term of $x$.

If there is a tag $a\leftarrow b$, then $a$ is the minimal element of the set $\{l(d(x))|l(x)=b\}$.

If an item $a$ is untagged, then it is the leading term of an element with lowest leading term in a homology class.
\end{cor}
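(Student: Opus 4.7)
The plan is to verify the two explicit descriptions in the statement directly from the defining conditions (1)--(3) of Definition \ref{cus}; uniqueness then follows at once because those descriptions refer only to the complex and its ordered basis, not to any choice of table. Write $\mathcal{C}_i$ and $\mathcal{B}_i$ for the spaces of cycles and boundaries in $X_i$, and let $S_i$ and $B_i$ denote the sets of leading terms of their nonzero elements.

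For the untagged items, the approach is a dimension count. Condition (2) identifies the items of $L_i$ (tagged or untagged) with $S_i$, so there are $\dim\mathcal{C}_i$ of them, while (1) together with (2) applied in degree $i-1$ forces the number of tag sources in $L_i$ to be $\dim X_{i-1}-\dim\mathcal{C}_{i-1}=\dim\mathcal{B}_i$. Condition (3) places every tag target inside $B_i$, and matching cardinalities then forces the tag targets to be exactly $B_i$. Hence the untagged items form $S_i\setminus B_i$. A short direct check---using that membership of a leading term in $B_i$ is precisely what allows one to further lower the leading term of a representative by subtracting a boundary---identifies $S_i\setminus B_i$ with the set of leading terms of minimum-leading-term representatives of nonzero homology classes, giving the statement about untagged items.

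For the tags, fix a source $b=x_{i-1,k}$ and let $u=d(x_{i-1,k})$. The elements $y\in X_{i-1}$ with $l(y)=b$ are exactly $y=x_{i-1,k}+v$ for $v\in V_{i-1,k-1}:=\mathrm{span}\{x_{i-1,k'}:k'<k\}$, so
\[
A_k:=\{l(d(y)):l(y)=b\}=\{l(u+w):w\in d(V_{i-1,k-1}),\ u+w\neq 0\}.
\]
A standard row-reduction argument shows that $a_k^{*}:=\min A_k$ is the unique new leading term appearing at stage $k$ of the filtration $d(V_{i-1,0})\subseteq d(V_{i-1,1})\subseteq\cdots$, and that as $k$ ranges over all tag sources, the $a_k^{*}$ exhaust $B_i$ without repetition.

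The remaining step is a matching argument that pins down the tag itself. The Curtis table gives a bijection $\sigma$ from the set of tag sources to $B_i$, and (3) forces $\sigma(k)$ to lie in the leading terms of $d(V_{i-1,k})$, i.e.\ in $\{a_{k'}^{*}:k'\text{ a tag source},\ k'\leq k\}$. Proceeding by induction on the position of $k$ in the increasing list of tag sources: the smallest source has only $a_{k_0}^{*}$ available, and at each later source the remaining admissible targets shrink by one since $\sigma$ is a bijection, so $\sigma(k)=a_k^{*}=\min A_k$. I expect this last matching induction to be the main obstacle, since it is where the ordering of the basis is really used to pin down the table; the preceding dimension count and the identification of $a_k^{*}$ with a new leading term in the filtration are essentially routine linear algebra.
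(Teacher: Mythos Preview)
Your argument is correct. The dimension count showing that the untagged items are precisely $S_i\setminus B_i$, and the filtration/matching argument forcing $\sigma(k)=a_k^{*}$, are both sound; the only point worth spelling out a touch more is that condition~(1) guarantees each $x_{i,j}$ occurs at most once as a tag target, which is what makes $\sigma$ a genuine bijection and lets the cardinality argument identify the tag targets with $B_i$.

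As for comparison with the paper: there is nothing to compare. The paper's entire proof is a citation to Tangora's memoir~\cite{Tan}, so you have supplied a self-contained argument where the paper defers to the literature. What you gain is independence from that reference and a clear picture of why only conditions (1)--(3) are needed; what Tangora's treatment presumably offers is the same linear-algebra content packaged together with the proof of Theorem~\ref{cut} and the construction of the maps $Y_i\to C_i$ and $C_i\to Y_i$, so that existence, uniqueness, and the homology identification come out of a single development of the algorithm. Your matching induction is not really an obstacle: once you know the $a_k^{*}$ are distinct and exhaust $B_i$, the pigeonhole step is forced, exactly as you wrote it.
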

\begin{proof}
See \cite{Tan}.
\end{proof}

\section{Curtis table and spectral sequences}

Now suppose $V$ is a filtered vector space with $\dots\subset F_iV\subset F_{i+1}V\dots\subset V$. We call an ordered set of basis $\{v_k\}$ compatible if for any $i$ there is a $k_i$ such that $F_iV$ is spanned by $\{v_k:k\leq k_i\}$.

Let $X_0\rightarrow X_1\rightarrow\dots$ be a complex of filtered vector spaces such that the differentials preserve the filtration. Then there is a spectral sequence converging to the homology of $X$ with the $E_1$-term $F_kX_i/F_{k-1}X_i$. Suppose we have compatible bases $\{x_{i,j}\}$ of $X_i$.

\begin{thm} \label{css}
The Curtis table of $X_*$ consists of the following:
\begin{enumerate}
\item The tags of the Curtis table for $(E_r,d_r)$ of the spectral sequence, for all $r\geq1$.
\item The untagged items from the $E_\infty$-term.
\end{enumerate}
Here we label the basis of $E_r$ as the following. In the $E_1$-page, we use the image of the $x_{i,j}$'s as the basis, and label them by the same name. Inductively, we use Theorem \ref{cut} to label a basis of $E_r$ by the untagged items in the Curtis table of $E_{r-1}$.
\end{thm}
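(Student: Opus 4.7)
The plan is to exploit Corollary \ref{unq}, which uniquely characterizes the Curtis table of $X_*$ by: a tag $a \leftarrow b$ appears iff $a = \min\{l(dx) : l(x) = b\}$, and $a$ is untagged iff it is the minimum leading term in its $X_*$-homology class. It therefore suffices to verify that the assembly of tags from the Curtis tables of all pages $(E_r, d_r)$, together with the untagged items of $E_\infty$, satisfies this same characterization back in $X_*$. The required partition is immediate, since each basis element $x_{i,j}$ either becomes tagged at a unique page $E_r$ or survives as untagged in $E_\infty$.

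The core step is to show: if $b$ is untagged through $E_1, \ldots, E_{r-1}$ and the tag $a \leftarrow b$ appears in the Curtis table of $E_r$, then $a = \min\{l(dx) : l(x) = b\}$ in $X_*$. Write $k$ for the filtration of $b$, so $a$ lies at filtration $k - r + 1$. To realize this minimum, start from any $\tilde{b} \in F_k X_i$ with $l(\tilde{b}) = b$ and $d\tilde{b} \in F_{k-r+1} X_{i+1}$, which exists because $\bar{b}$ survives to $E_r$. If $y := l(d\tilde{b})$ is not in the basis of $E_r$, so that $y$ is tagged in some earlier page $E_s$ by an element $z$, then $z$ lies at filtration at most $k - 1$, so replacing $\tilde{b}$ by $\tilde{b} + e_z$ (where $e_z$ is the element produced by the Curtis algorithm on $E_s$ with $l(e_z) = z$ and $l(de_z) = y$) preserves $l(\tilde{b}) = b$ and $d\tilde{b} \in F_{k-r+1}$, while strictly lowering $l(d\tilde{b})$. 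Iterating terminates with $y$ in the basis of $E_r$, which is forced to equal $a$ by tracking leading terms through the pages.

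For the reverse inequality, any $x$ with $l(x) = b$ has $\bar{x} = \bar{b}$ in $E_r$, so $dx \in F_{k-r}$ would force $d_r \bar{b} = 0$, contradicting its having leading term $\bar{a}$; hence $dx$ lies at filtration exactly $k - r + 1$. If one had $l(dx) < a$ at that filtration, the same leading-term reduction using tags from $E_1, \ldots, E_{r-1}$ would yield an element whose image in $E_r$ has leading term strictly below $\bar{a}$, again a contradiction. The untagged-in-$E_\infty$ case is dual: finite-dimensionality plus the same iterative modification produces a genuine $X_*$-cycle with leading term $a$, and any $w$ with $l(dw) = a$ would, by the same analysis, support a $d_r$ killing $\bar{a}$ in some $E_r$, so $a$ would be tagged in $E_r$, contradicting untaggedness in $E_\infty$.

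The main technical obstacle underlying these arguments is the leading-term tracking lemma: if $\alpha \in E_1$ has $E_1$-leading term $\beta$ which remains untagged through $E_{r-1}$, then the image of $\alpha$ in $E_r$ has leading term $\bar{\beta}$ in the basis of $E_r$. This holds because, at each passage from $E_s$ to $E_{s+1}$, any tagged basis element is equivalent modulo $\mathrm{im}\, d_s$ to a strictly smaller combination (this is exactly the content of the tag), so the largest untagged basis element appearing in any representative is preserved. This is the point at which compatibility of the basis with the filtration is essentially used.
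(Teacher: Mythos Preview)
Your argument is correct and is essentially a fleshed-out version of what the paper's one-sentence proof (``check the conditions of Definition~\ref{cus}'') is pointing at. The only real difference is organizational: the paper verifies Definition~\ref{cus} directly, whereas you verify the equivalent min-characterization from Corollary~\ref{unq}. Since Corollary~\ref{unq} shows the two descriptions pin down the same object, either route suffices; your choice simply makes the inductive mechanism (iteratively cancelling leading terms using tags from earlier pages) more visible.

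Two small points of phrasing are worth tightening. First, when you write ``any $x$ with $l(x)=b$ has $\bar x=\bar b$ in $E_r$,'' what you actually need and use is that the image of $x$ in $E_r$ has \emph{leading term} $\bar b$; this is exactly your tracking lemma, and it also requires knowing that $x$ defines an element of $E_r$, which in your contradiction hypothesis ($dx\in F_{k-r}$) is automatic. Second, your construction of the lifts $e_z$ from tags in earlier pages $E_s$ implicitly uses the theorem for $s<r$; this is fine, but it means the argument should be read as an induction on $r$. With those clarifications the proof goes through, and your tracking lemma is precisely the technical content the paper's ``follows directly from the definition of the spectral sequence'' is gesturing toward.
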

\begin{proof}
We check the conditions of Definition \ref{cus}. They follow directly from the definition of the spectral sequence, the conditions for the Curtis tables of the $E_r$'s, and Theorem \ref{cut}.
\end{proof}

Consequently, we can identify the Curtis table with the table for the differentials and permanent cycles of the spectral sequence. For example, in the lambda algebra, we have a filtration by the first number of an admissible sequence. The induced spectral sequence is the algebraic EHP sequence. So the usual Curtis table can be identified with the algebraic EHP sequence. See \cite{CGM} for more details.

In practice, the Curtis table for the $E_1$ terms is often known before hand. Then we could skip those part of the Curtis algorithm dealing with the tags coming from the $E_1$ term. And we often omit this part in the output of Curtis table.

\section{The algebraic Atiyah-Hirzebruch spectral sequence} \label{s2}

Let $X$ be a spectrum. There is a filtration on $H^*(X)$ by the degrees. For any $n$ there is a short exact sequence $0\rightarrow H^{\geq n+1}(X)\rightarrow H^{\geq n}(X)\rightarrow H^{n}(X)\rightarrow0$. This induces a long exact sequence
$$\dots\rightarrow Ext(\mathbb{Z}/2)\otimes H^n(X)\rightarrow Ext(H^{\geq n}(X))\rightarrow Ext(H^{\geq n+1}(X))\rightarrow \dots$$
Combining the long exact sequences for all $n$ we get the algebraic Atiyah-Hirzebruch spectral sequence $$\oplus_n Ext(\mathbb{Z}/2)\otimes H^n(X) \Rightarrow Ext(H^*(X))$$

There is another way to look at the algebraic Atiyah-Hirzebruch spectral sequence.

Let us fix a free resolution $\dots\rightarrow F_1\rightarrow F_0\rightarrow\mathbb{F}_2$ of $\mathbb{F}_2$ as $\mathcal{A}$-modules. For example, we can take $F_*$ to be the Koszul resolution, which gives the lambda algebra constructed in \cite{B6}. We can also take $F_*$ to the the minimal resolution.

Then for $X$ a finite CW spectrum, we can identify $R\mathcal{H}om_\mathcal{A}(H^*(X),\mathbb{Z}/2)$ with the complex $C^\ast(H^\ast(X)) = Hom_{\mathcal{A}}(H^*(X)\otimes_{\mathbb{F}_2}F_*,\mathbb{F}_2)$ where we take the diagonal action of the Steenrod algebra on $H^*(X)\otimes_{\mathbb{F}_2}F_*$ using the Cartan formula.

The cell filtration on $H_*(X)$ induces a filtration on $H^*(X)\otimes_{\mathbb{F}_2}F_*$, and we can identify the algebraic Atiyah-Hirzebruch spectral sequence with the spectral sequence generated by this filtration.
In fact, the map $H^*(X)\otimes_{\mathbb{F}_2}F_*\rightarrow H^*(X)$ preserves these filtrations and induces a quasi-isomorphism on each layer. So they define equivalent sequences in the derived category, hence generate the same spectral sequence.

\section{The Curtis algorithm in computing the algebraic Atiyah-Hirzebruch spectral sequence}

Let $X$ be a finite CW spectrum.

Let $r^*_{i,j}\in F_i$ be a set of $\mathcal{A}$-basis for the free $\mathcal{A}$-module $F_i$. Let $r_{i,j}\in Hom_\mathcal{A}(F_i,\mathbb{Z}/2)$ be the dual basis.

We choose an ordered $\mathbb{F}_2$-basis $e^*_k$ of $H^*(X)$ such that elements with lower degrees come first. Let $e_k\in H_*(X)$ be the dual basis.
Then the set $\{e^*_k\otimes r^*_{i,j}\}$ is a set of $\mathcal{A}$-basis for $H^*(X)\otimes_{\mathbb{F}_2}F_*$. Let $e_k\otimes r_{i,j}\in Hom_{\mathcal{A}}(H^*(X)\otimes_{\mathbb{F}_2}F_*,\mathbb{F}_2)$ be the dual basis with the lexicographic order.

The following is a corollary of Theorem \ref{css}.
\begin{thm} The Curtis table for $C^\ast(H^\ast(X)) = Hom_{\mathcal{A}}(H^*(X)\otimes_{\mathbb{F}_2}F_*,\mathbb{F}_2)$ satisfies
\begin{enumerate}
\item If there is a tag $a\leftarrow b$ in the Curtis table of $Hom_\mathcal{A}(F_i,\mathbb{Z}/2)$, there are tags of the form $e_k\otimes a\leftarrow e_k\otimes b$. \label{tag1}
\item The table of all tags which are not contained in Case \ref{tag1} is the same as the table for the algebraic  Atiyah-Hirzebruch differentials of $X$.
\item The items not contained in the previous cases are untagged items. They correspond to the permanent cycles in the algebraic Atiyah-Hirzebruch spectral sequence.
\end{enumerate}
\end{thm}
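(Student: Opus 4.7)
The plan is to identify this as a direct instance of Theorem \ref{css} applied to the complex $C^\ast(H^\ast(X))$ equipped with the cell filtration described at the end of Section \ref{s2}, with the ordered basis $\{e_k\otimes r_{i,j}\}$ in lexicographic order.

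First I would verify the compatibility hypothesis of Theorem \ref{css}. Since the $e_k^\ast$ are ordered so that lower-degree elements come first and the filtration on $H^\ast(X)$ is by degree, and since lexicographic order on $\{e_k\otimes r_{i,j}\}$ puts the $e_k$-index first, the sub-basis $\{e_k\otimes r_{i,j} : k\le k_0\}$ spans exactly $F_{k_0}C^\ast(H^\ast(X))$. Hence the ordered basis is compatible with the filtration, and Theorem \ref{css} applies.

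Next I would identify the $E_1$-page. Passing to the associated graded kills the cross terms produced by the Cartan formula acting diagonally on $H^\ast(X)\otimes F_\ast$, so the differential on the associated graded reduces to $1\otimes d_{F_\ast}$. Therefore the $E_1$-page splits as $\bigoplus_k e_k\otimes \mathrm{Hom}_{\mathcal A}(F_\ast,\mathbb F_2)$, and each summand is a labeled copy of the complex $\mathrm{Hom}_{\mathcal A}(F_\ast,\mathbb F_2)$ with basis $\{r_{i,j}\}$ in the induced order. By the uniqueness of the Curtis table (Corollary \ref{unq}), the Curtis table of this $E_1$-page consists precisely of the tags $e_k\otimes a\leftarrow e_k\otimes b$ for every tag $a\leftarrow b$ in the Curtis table of $\mathrm{Hom}_{\mathcal A}(F_\ast,\mathbb F_2)$, one copy for each $k$. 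This gives clause (\ref{tag1}).

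Finally, by the first clause of Theorem \ref{css}, the remaining tags in the Curtis table of $C^\ast(H^\ast(X))$ are exactly the tags of the Curtis tables of the $(E_r,d_r)$ for $r\geq 2$, which by the identification in Section \ref{s2} are the algebraic Atiyah–Hirzebruch differentials of $X$; this yields clause (2). The second clause of Theorem \ref{css} says the untagged items are the $E_\infty$-basis, i.e., the permanent cycles in the algebraic Atiyah–Hirzebruch spectral sequence, which is clause (3). The only delicate step is the $E_1$ identification in the previous paragraph, since one must check that the diagonal $\mathcal{A}$-action contributes only to higher filtration on the associated graded; everything else is bookkeeping within the framework already set up by Theorem \ref{css}.
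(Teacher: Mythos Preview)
Your proposal is correct and follows exactly the approach the paper takes: the paper simply states that the theorem is a corollary of Theorem~\ref{css}, and you have spelled out the verification that Theorem~\ref{css} applies (compatibility of the lexicographic basis with the cell filtration) and identified the $E_1$-page as a direct sum of copies of $\mathrm{Hom}_{\mathcal A}(F_\ast,\mathbb F_2)$, so that its Curtis table gives precisely the tags in clause~(\ref{tag1}). Your elaboration is more detailed than the paper's one-line justification, but the strategy is identical.
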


Consequently, we can read off the $E_2$-term of the Adams spectral sequence of any truncation of $X$.
\begin{thm}
Let $X_m^n$ be the truncation of $X$ which consists of all cells of $X$ in dimensions between (and including) $m$ and $n$. Therefore in the Curtis table of $X_m^n$, all the tags are those tags in the Curtis table of $X$ lying within the corresponding range. (Note there could be more untagged items, which are just those not appearing in any tags.)
\end{thm}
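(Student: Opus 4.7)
The plan is to realize $C^\ast(H^\ast(X_m^n))$ as an explicit subcomplex of $C^\ast(H^\ast(X))$ and then invoke the uniqueness of Curtis tables established in Corollary~\ref{unq}.

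First, I would identify $H^\ast(X_m^n)$ with the $\mathcal{A}$-module subquotient $H^{\geq m}(X)/H^{\geq n+1}(X)$; both pieces are $\mathcal{A}$-submodules since Steenrod operations never lower cohomological degree. Under the induced identification of Hom complexes, $C^\ast(H^\ast(X_m^n))$ has dual basis $\{e_k\otimes r_{i,j}:m\leq\dim e_k\leq n\}$ sitting inside the dual basis of $C^\ast(H^\ast(X))$ with the same lexicographic order, and its differential $d_{X_m^n}$ is obtained from $d_X$ by deleting those terms whose cohomology factor has degree below $m$.

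The central technical point, which I expect to be the main obstacle to state cleanly, is an asymmetry built into the ordering. Because $\dim e_k$ is the major coordinate of the lex order and the $\mathcal{A}$-coaction on $H_\ast(X)$ can only lower cell dimension, $d_X$ sends any chain supported in cells of dimension $<m$ to another such chain. Hence if $x\in C^\ast(H^\ast(X))$ has leading term in range and we split $x=x^{\mathrm{in}}+x^{\mathrm{out}}$ by cell dimension, then $d_X(x^{\mathrm{out}})$ stays below the range, and every in-range term of $d_X(x)$ strictly dominates every out-of-range term in the lex order. Consequently, whenever $l(d_X(x))\in[m,n]$ one has $l(d_X(x))=l(d_{X_m^n}(x^{\mathrm{in}}))$.

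With this in hand, Corollary~\ref{unq} lets me match tags directly. Given a tag $(a\leftarrow b)$ of $X$ with both $a,b$ in range, pick any $x$ realizing the minimum $a=\min\{l(d_X(x)):l(x)=b\}$; then $x^{\mathrm{in}}\in C^\ast(H^\ast(X_m^n))$ still satisfies $l(x^{\mathrm{in}})=b$ and $l(d_{X_m^n}(x^{\mathrm{in}}))=a$ by the asymmetry, and minimality transfers since any strictly smaller in-range candidate is automatically a candidate over all of $X$, contradicting the original minimum. The reverse inclusion is analogous. The untagged items of $X_m^n$ are then exactly the in-range basis vectors appearing on neither side of such a restricted tag, which accounts both for items that were already untagged in $X$ and for those items of $X$ whose tag partner falls outside $[m,n]$.
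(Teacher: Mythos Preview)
Your approach differs from the paper's, which simply invokes the identification of the Curtis table with the algebraic Atiyah--Hirzebruch spectral sequence (the previous theorem together with Theorem~\ref{css}) and observes that truncating the cell filtration truncates that spectral sequence. Your direct chain-level argument is a legitimate alternative, and the central asymmetry observation---that $d_X$ preserves the subspace of chains supported in cells of dimension below $m$, so an in-range leading term of $d_X(x)$ coincides with the leading term of $d_{X_m^n}(x^{\mathrm{in}})$---is correct and is exactly the mechanism that makes the statement true.

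There is, however, a real gap in how you use Corollary~\ref{unq}. That corollary only asserts that \emph{if} $a\leftarrow b$ is a tag \emph{then} $a=\min\{l(d(x)):l(x)=b\}$; it does not give the converse. So verifying the minimum condition for $X_m^n$ does not by itself produce a tag there: you must also check that $b$ is not on the list (equivalently, that no cycle in $C^\ast(H^\ast(X_m^n))$ has leading term $b$). This bites in both directions, and the ``reverse inclusion is analogous'' is where it is most visible. If in $X$ the element $b$ tags some $a'$ with cell dimension below $m$, then any $x$ realising $l(d_X(x))=a'$ has $d_X(x)$ supported entirely in cells below $m$, so $x^{\mathrm{in}}$ is a \emph{cycle} in $X_m^n$ with leading term $b$; hence $b$ is on the list in $X_m^n$ and cannot be a tagger there at all. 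Nothing analogous to your forward argument yields this. The cleanest fix is to drop the two inclusions and instead verify directly that the restricted table satisfies Definition~\ref{cus} for $C^\ast(H^\ast(X_m^n))$: condition~(3) is essentially your forward argument, condition~(2) amounts to the cycle observation just made (together with its easy converse), and uniqueness then finishes the proof.
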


\begin{proof}
This follows from the previous theorem because the Atiyah-Hirzebruch spectral sequence is truncated this way.
\end{proof}

We present two examples. The latter one is used in our computation in \cite{WX2} that the 2-primary $\pi_{61} = 0$. For notation, in the Lambda algebra, we will abbreviate an element $\lambda_{i_1}\dots\lambda_{i_n}$ by $i_1 \dots i_n$. In the Lambda complex of $P_1^\infty$, we will abbreviate an element $e_k\otimes\lambda_{i_1}\dots\lambda_{i_n}$ by $(k)i_1 \dots i_n$. The Curtis table is separated into lists labeled by $(t-s,t)$ on the top, in which those untagged items give a basis for $Ext^{s-1,t-1}(H^*(P_1^\infty))$.

\begin{example}
As a relatively easy example, we compute $Ext^{2,2+9}(H^*(P_2^8))$ using the Curtis table of $P_1^\infty$ in the Appendix.

There are only two boxes that are used in this computation: the ones labeled with $(9,3)$ and $(8,4)$. The box labeled with $(9,3)$ is the following:
\begin{equation*}
\begin{split}
 & (1)\;5\;3 \\
 & (3)\;3\;3 \\
 & (7)\;1\;1 \leftarrow (9)\;1
\end{split}
\end{equation*}

The spectrum $P_2^8$ only has cells in dimensions 2 through 8. We remove the item $(1)\;5\;3$, since it comes from the cell in dimension 1. We also remove the tag $(9)\;1$, since it comes from the cell in dimension 9. Therefore, the only items remaining in this box are $(3)\;3\;3$ and $(7)\;1\;1$.\\

The box labeled with $(8,4)$ is the following:
\begin{equation*}
\begin{split}
 & (1)\;5\;1\;1 \leftarrow (2)\;6\;1\\
 & (5)\;1\;1\;1 \leftarrow (6)\;2\;1
\end{split}
\end{equation*}

After removing the element $(1)\;5\;1\;1$, which comes from the cell in dimension 1, the element $(2)\;6\;1$ tags nothing. We move the element $(2)\;6\;1$ from the box labeled with $(8,4)$ to the one labeled with $(9,3)$. Therefore, we have the conclusion that the group $Ext^{2,2+9}(H^*(P_2^8))$ has dimension $3$, generated by
$$(3)\;3\;3, \ (7)\;1\;1, \text{~~and~~} (2)\;6\;1.$$

One can even recover the names of these generators in the algebraic Atiyah-Hirzebruch spectral sequence. See Notation 3.3 in \cite{WX2} for the notation. In $Ext(\mathbb{Z}/2)$, the elements $3\;3$, $1\;1$ and $6\;1$ all lie in the bidegrees which contain only one nontrivial element. Therefore, we can identify their Adams $E_2$-page names as $h_2^2$, $h_1^1$ and $h_0h_3$. This gives us the algebraic Atiyah-Hirzebruch $E_1$-page names of these generators:
$$h_2^2[3], \ h_1^1[7], \text{~~and~~} h_0h_3[2].$$
\end{example}

\begin{example}
We present the computation of the Adams $E_2$ page of $P_{16}^{22}$ in the 60 and 61 stem for $s\leq 7$, which is used in the proof of Lemma 8.2 in \cite{WX2}. The boxes that are used in this computation have the following labels:
$$(59, s) \text{~~for~~} s\leq 7, \text{~~and~~}(60, s'), (61, s') \text{~~for~~} s\leq 8.$$
The spectrum $P_{16}^{22}$ consists of cells in dimensions 16 through 22.\\

We start with the 60 stem.\\

We have $Ext^{1,1+60}(P_{16}^{22}) = Ext^{2,2+60}(P_{16}^{22}) = 0$, since the boxes labeled with $(60,2)$, $(59,3)$ and $(60,3)$, $(59,4)$ becomes empty.\\

We have $Ext^{3,3+60}(P_{16}^{22}) = \mathbb{Z}/2$, generated by $(19)\;11\;15\;15$ from the box labeled with $(59,5)$. The box labeled with $(60,4)$ becomes empty. Since $11\;15\;15 \in Ext^{3,3+41} = \mathbb{Z}/2$, generated by $c_2$, we identify $(19)\;11\;15\;15$ with its Atiyah-Hirzebruch name $c_2[19]$.\\

We have $Ext^{4,4+60}(P_{16}^{22}) = \mathbb{Z}/2 \oplus \mathbb{Z}/2$, generated by $(16)\;13\;13\;11\;7$ from the box labeled with $(59,6)$, and by $(20)\;19\;7\;7\;7$ from the box labeled with $(60,5)$. We find their Atiyah-Hirzebruch names $g_2[16]$ and $f_1[20]$.\\

We have $Ext^{5,5+60}(P_{16}^{22}) = \mathbb{Z}/2 \oplus \mathbb{Z}/2$, generated by $(16)\;11\;14\;5\;7\;7$ and $(21)\;7\;13\;5\;7\;7$ from the box labeled with $(59,7)$. The box labeled with $(60,6)$ becomes empty. We find their Atiyah-Hirzebruch names $h_0g_2[16]$ and $h_1e_1[21]$.\\

We have $Ext^{6,6+60}(P_{16}^{22}) = \mathbb{Z}/2 \oplus \mathbb{Z}/2 \oplus \mathbb{Z}/2$, generated by $(16)\;7\;14\;4\;5\;7\;7$ from the box labeled with $(59,8)$, and by $(20)\;5\;5\;9\;7\;7\;7$ and $(22)\;3\;5\;9\;7\;7\;7$ from the box labeled with $(60,7)$. We find their Atiyah-Hirzebruch names $h_0^2g_2[16]$, $h_0^2f_1[20]$ and $h_1x[22]$.\\

We have $Ext^{7,7+60}(P_{16}^{22}) = \mathbb{Z}/2$, generated by $(21)\;3\;5\;9\;3\;5\;7\;7$ from the box labeled with $(60,8)$. The box labeled with $(59,9)$ becomes empty. We find its Atiyah-Hirzebruch name $h_1y[21]$.\\

Similarly, one can compute the 61 stem. The computation is summarized in the following Table 1.

\pagebreak

\begin{table}[]
\caption{The Adams $E_2$ page of $P_{16}^{22}$ in the 60 and 61 stems for $s\leq 7$}
\centering
\begin{tabular}{ l l l }
$s\backslash t-s$ & 60 & 61 \\ [0.5ex] % inserts table
\hline % inserts single horizontal line
7 & $h_1y[21]$ & $h_0^2h_5d_0[16]$\\
  & & $h_1y[22]$ \\ \hline
6 & $h_0^2g_2[16]$ & $h_0h_5d_0[16]$\\
  & $h_0^2f_1[20]$ & $Ph_2h_5[19]$ \\
  & $h_1x[22]$ &  \\ \hline
5 & $h_0g_2[16]$ & $h_1g_2[16]$\\
  & $h_1e_1[21]$ & $h_5d_0[16]$\\
  & & $h_1f_1[20]$\\
  & & $h_1h_5c_0[21]$\\
  & & $h_3d_1[22]$\\ \hline
4 & $g_2[16]$ & $h_0h_4^3[16]$\\
  & $f_1[20]$ & $g_2[17]$\\
  & & $f_1[21]$\\
  & & $h_1^2h_3h_5[21]$\\
  & & $h_5c_0[22]$\\ \hline
3 & $c_2[19]$ & $h_4^3[16]$\\
  & & $h_1h_3h_5[22]$ \\
\end{tabular}
\label{X}
\end{table}
\end{example}

\section{The homomorphism induced by a map}

Let $f:X\rightarrow Y$ be a map which induces the zero map on homology. Let $Z$ be the cofiber of $f$. Then the homology of $Z$ can be identified with the direct sum of $H_*(X)$ and $H_*(Y)$ as a vector space. If $x_1,\dots,x_k$ is an ordered basis of $H_*(X)$ and $y_1,\dots,y_l$ is an ordered basis of $H_*(Y)$, then $y_1,\dots,y_l,x_1,\dots,x_k$ is an ordered basis of $H_*(Z)$ with certain degree shifts. Note we do not make elements with lower degree go first here. Instead elements $y_i$ always go before elements $x_j$ regardless of degree.

Note that in this case, there is a map of Adams spectral sequence of $X$ and $Y$ which raises the Adams filtration by one, and on the $E_2$ page it is the boundary homomorphism for the Ext group for the exact sequence $0\rightarrow H^{*+1}(X)\rightarrow H^*(Z)\rightarrow H^*(Y)\rightarrow 0$. We call this the map induced by $f$.

\begin{thm} \label{t71}
The Curtis table for $C^\ast(H^\ast(Z))= Hom_{\mathcal{A}}(H^*(Z)\otimes_{\mathbb{F}_2}F_*,\mathbb{F}_2)$ from Section 5 with this ordered basis satisfies
\begin{enumerate}
\item All of the tags in the Curtis table for $C^\ast(H^\ast(X))$ and for $C^\ast(H^\ast(Y))$ also appear in the Curtis table for $C^\ast(H^\ast(Z))$.
\item The remaining tags give the table for the homomorphism on the Adams $E_2$-page induced by $f$.
\item The untagged items give basis for the kernel and cokernel of the homomorphism induced by $f$.
\end{enumerate}
\end{thm}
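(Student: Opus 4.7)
The strategy is to apply Theorem \ref{css} to the two-step filtration on $C^\ast(H^\ast(Z))$ coming from the short exact sequence of $\mathcal{A}$-modules $0 \to H^{\ast+1}(X) \to H^\ast(Z) \to H^\ast(Y) \to 0$ dual to the cofiber sequence $X \to Y \to Z$. Concretely, I would set $F_0 C^\ast = Hom_{\mathcal{A}}(H^\ast(Y) \otimes_{\mathbb{F}_2} F_\ast, \mathbb{F}_2)$, embedded in $F_1 C^\ast = C^\ast(H^\ast(Z))$ via the quotient map $H^\ast(Z) \twoheadrightarrow H^\ast(Y)$, so that $F_1/F_0 \cong C^\ast(H^\ast(X))$ up to a degree shift. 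The rule that places every $y$-vector before every $x$-vector is precisely what makes the $\mathcal{A}$-basis $\{e_k \otimes r^\ast_{i,j}\}$ and its dual compatible with this filtration in the sense of Section 4: $F_0$ is exactly $\mathbb{F}_2$-spanned by the $e_k \otimes r_{i,j}$ with $e_k$ among the $y$'s, which form the initial portion of the lexicographic order.

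Next I would read off the pages of the resulting spectral sequence. Its $E_1$-term, in the paper's convention, is $F_0 \oplus F_1/F_0$, i.e.\ the direct sum of $C^\ast(H^\ast(Y))$ and $C^\ast(H^\ast(X))$ with the direct-sum differential; so its Curtis table is literally the disjoint union of the Curtis tables of the two summands. By Theorem \ref{css} those tags appear verbatim in the Curtis table of $C^\ast(H^\ast(Z))$, giving conclusion (1). Passing to homology yields $E_2 = Ext(H^\ast(Y)) \oplus Ext(H^\ast(X))$, and because the filtration has only two layers we have $d_r = 0$ for all $r \geq 3$.

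The crux of the argument is to identify the one remaining differential $d_2$ with the map induced by $f$ on Adams $E_2$-pages. This is the standard recipe for the connecting homomorphism of a short exact sequence of complexes: a cocycle representing a class in the $x$-summand $Ext(H^\ast(X))$ is lifted to $F_1 = C^\ast(H^\ast(Z))$, its differential lands automatically in $F_0 = C^\ast(H^\ast(Y))$, and its homology class there is by construction the image under the boundary map in the long exact sequence of Ext for the short exact sequence above. By the paragraph preceding the theorem, this boundary is exactly the map induced by $f$; hence by Theorem \ref{css} the further tags in the Curtis table of $C^\ast(H^\ast(Z))$ constitute the table for this homomorphism, giving (2).

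Finally, the untagged items are exactly those surviving to $E_\infty = E_3 = \ker d_2 \oplus \operatorname{coker} d_2$, which is conclusion (3). The main bookkeeping point I anticipate is verifying the compatibility of the dual basis ordering with the filtration in the first paragraph, i.e.\ confirming that the paper's ``$y$-before-$x$'' convention on $H_\ast(Z)$ translates into the correct filtration-compatible ordering of the $\mathcal{A}$-basis of $C^\ast(H^\ast(Z))$; once that is in place, the three conclusions read off mechanically from the $E_1$, $E_2$, and $E_\infty$ pages of the spectral sequence.
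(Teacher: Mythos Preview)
Your proposal is correct and follows exactly the same approach as the paper's own proof, which is the one-line remark ``This follows from Theorem~\ref{css} by using the filtration $Y\subset Z$, and identifying the $d_2$-differential with the attaching map $X\rightarrow Y$.'' You have simply unpacked that sentence: the two-step filtration $F_0 \subset F_1$ you describe is precisely the filtration $Y \subset Z$, the $E_1$ tags recover part~(1), the $d_2$ you identify with the connecting homomorphism is what the paper calls the attaching map, and the degeneration at $E_3$ yields parts~(2) and~(3).
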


\begin{proof}
This follows from Theorem \ref{css} by using the filtration $Y\subset Z$, and identifying the $d_2$-differential with the attaching map $X\rightarrow Y$.
\end{proof}

So we can use the Curtis algorithm to compute the homomorphism induced by a map.

\section{The algebraic Atiyah-Hirzebruch spectral sequence of the real projective spectra}

We use the Curtis algorithm to compute the algebraic Atiyah-Hirzebruch spectral sequence for the real projective spectra. We take the lambda algebra for the resolution of $\mathbb{Z}/2$ and use the usual Curtis table for the sphere spectrum as input. We have carried out the computation through stems with $t < 72$. As a usual convention to out put the Curtis table, we abbreviate the sequence $2\;4\;1\;1$ by $*$; when there are multiple $2$'s consecutively, we replace them by the same amount of dots.

Together with the algebraic Kahn-Priddy theorem \cite{Lin} and known information of $Ext(\mathbb{Z}/2)$, this gives the Adams $E_2$-page of $P_1^\infty$ up to $t-s \leq 61$.

We also compute the transfer map. Recall that the fiber of the transfer map has one more cell than $P_1^\infty$ in dimension $-1$, and all the $Sq^i$ acts nontrivially on the class in dimension $-1$. We will use Theorem \ref{t71} to identify the table for transfer with a portion of the Curtis table for this complex.

For notation, in the Lambda algebra, we will abbreviate an element $\lambda_{i_1}\dots\lambda_{i_n}$ by $i_1 \dots i_n$. We will abbreviate an element $e_k\otimes\lambda_{i_1}\dots\lambda_{i_n}$ by $(k)i_1 \dots i_n$ in the Lambda complex of $P_1^\infty$. The symbol $o$ means zero. The Curtis table is separated into lists labeled by $(t-s,t)$ on the top, in which the untagged items give a basis for $Ext^{s-1,t-1}(H^*(P_1^\infty))$.

The table for the transfer is the output of the algorithm: (We put the table for the transfer map first since it is shorter)

In this table we only list the nontrivial items. Others either map to something with the same name, or to the only choice comparable with the algebraic Kahn-Priddy theorem. For example, $(1)$ maps to $1$, i.e. the inclusion of the bottom cell maps to $\eta$. As another example, $(5)~3$ maps to $5~3$, which can be proved independently by the Massey product
$$\langle h_2, h_1, h_2\rangle = h_1h_3.$$

We do not include such items in the transfer table.

\begin{appendices}

\pagebreak

\begin{longtable}{|l|l|}
\caption{The table for the transfer map}\\
%\centering
	\hline
	(1) 7 & 5 3 \\ \hline
	(1) 5 3 & 3 3 3 \\ \hline
	(1) 6 2 3 3  & 2 4 3 3 3 \\ \hline
	(1) 15 & 13 3 \\ \hline
	(1) 13 3 & 11 3 3 \\ \hline
	(3) 15 & 11 7 \\ \hline
	(2) 13 3 & 10 5 3 \\ \hline
	(1) 11 3 3 & 9 3 3 3 \\ \hline
	(4) 6 5 3 & 5 7 3 3 \\ \hline
	(1) 8 3 3 3 & 4 5 3 3 3 \\ \hline
	(3) 8 3 3 3 & 4 7 3 3 3 \\ \hline
	(3) 11 7 & 7 7 7 \\ \hline
	(1) 6 6 5 3 & 3 5 7 3 3 \\ \hline
	(5) 11 3 3 & 3 5 7 7 \\ \hline
	(2) 7 7 7 & 4 5 7 7 \\ \hline
	(1) 6 2 3 4 4 1 1 1 & 2 4 1 1 2 4 3 3 3 \\ \hline
	(1) 4 5 7 7 & 2 3 5 7 7 \\ \hline
	(3) 13 1 2 4 1 1 1 & 4 2 2 4 5 3 3 3 \\ \hline
	(1) 8 1 1 2 4 3 3 3 & 2 2 2 2 4 5 3 3 3 \\ \hline
	(5) 13 1 2 4 1 1 1 & 6 2 2 4 5 3 3 3 \\ \hline
	(3) 8 1 1 2 4 3 3 3 & 4 2 2 2 4 5 3 3 3 \\ \hline
	(1) 6 2 2 4 5 3 3 3 & 2 2 2 2 3 5 7 3 3 \\ \hline
	(8) 3 5 7 7 & 12 9 3 3 3 \\ \hline
	(1) 15 15 & 13 11 7 \\ \hline
	(12) 5 7 7 & o \\ \hline
	(1) 6 2 3 4 4 1 1 2 4 1 1 1 & 2 4 1 1 2 4 1 1 2 4 3 3 3 \\ \hline
	(1) 31 & 29 3 \\ \hline
	(2) 12 9 3 3 3 & 9 3 6 6 5 3 \\ \hline
	(1) 5 6 2 4 5 3 3 3 & 2 2 2 3 3 6 6 5 3 \\ \hline
	(1) 29 3 & 27 3 3 \\ \hline
	(1) 13 5 7 7 & 11 3 5 7 7  \\ \hline
	(1) 9 3 6 6 5 3 & 5 5 3 6 6 5 3 \\ \hline
	(3) 12 4 5 3 3 3 & 5 5 3 6 6 5 3 \\ \hline
	(3) 31 & 27 7 \\ \hline
	(2) 29 3 & 26 5 3 \\ \hline
	(1) 27 3 3 & 25 3 3 3 \\ \hline
	(3) 9 3 5 7 7 & 5 7 3 5 7 7 \\ \hline
	(4) 12 9 3 3 3 & 5 7 3 5 7 7 \\ \hline
	(1) 8 1 1 2 4 1 1 2 4 3 3 3 & 2 2 2 2 2 2 2 2 4 5 3 3 3 \\ \hline
	(3) 13 5 7 7 & 5 9 7 7 7 \\ \hline
	(5) 12 4 5 3 3 3 & 6 5 2 3 5 7 7 \\ \hline
	(1) 5 6 2 3 5 7 3 3 & 2 4 3 3 3 6 6 5 3 \\ \hline
	(3) 8 1 1 2 4 1 1 2 4 3 3 3 & 4 2 2 2 2 2 2 2 4 5 3 3 3 \\ \hline
	(3) 27 7 & 23 7 7 \\ \hline
	(1) 5 9 3 5 7 7 & 3 5 7 3 5 7 7 \\ \hline
	(4) 5 5 3 6 6 5 3 & 4 7 3 3 6 6 5 3 \\ \hline
	(7) 31 & 23 15 \\ \hline
	(6) 29 3 & 22 13 3 \\ \hline
	(5) 27 3 3 & 21 11 3 3 \\ \hline
	(4) 25 3 3 3 & 20 9 3 3 3 \\ \hline
	(1) 14 4 5 7 7 & 3 5 9 7 7 7 \\ \hline
	(1) 23 15 & 21 11 7 \\ \hline
	(5) 27 7 & 21 11 7 \\ \hline
	(2) 23 7 7 & 20 5 7 7 \\ \hline
	(1) 17 7 7 7 & 7 13 5 7 7 \\ \hline
	(1) 8 12 9 3 3 3 & 3 5 9 3 5 7 7 \\ \hline
	(1) 21 11 7 & 19 7 7 7 \\ \hline
	(3) 23 7 7 & 19 7 7 7 \\ \hline
	(9) 13 11 7 & 11 15 7 7 \\ \hline
	(1) 20 5 7 7 & 18 3 5 7 7 \\ \hline
	(2) 17 7 7 7 & 7 14 5 7 7 \\ \hline
	(1) 7 13 5 7 7 & 5 5 9 7 7 7 \\ \hline
	(2) 20 9 3 3 3 & 17 3 6 6 5 3 \\ \hline
	(1) 11 15 7 7 & 9 11 7 7 7 \\ \hline
	(3) 17 7 7 7 & 9 11 7 7 7 \\ \hline
	(1) 17 3 6 6 5 3 & 11 12 4 5 3 3 3 \\ \hline
	(4) 20 9 3 3 3 & 12 12 9 3 3 3 \\ \hline
	(2) 17 3 6 6 5 3 & 10 9 3 6 6 5 3 \\ \hline
	(1) 11 12 4 5 3 3 3 & 9 5 5 3 6 6 5 3 \\ \hline
	(2) 5 6 5 2 3 5 7 7 & 4 5 5 5 3 6 6 5 3 \\ \hline
	(7) 23 15 & 15 15 15 \\ \hline
	(6) 21 11 7 & 14 13 11 7 \\ \hline
	(1) 13 13 11 7 & 9 15 7 7 7 \\ \hline
	(6) 20 5 7 7 & 13 13 5 7 7 +9 15 7 7 7 \\ \hline
	(7) 17 7 7 7 & 9 15 7 7 7 \\ \hline
	(5) 18 3 5 7 7 & 12 11 3 5 7 7 \\ \hline
	(3) 12 12 9 3 3 3 & 7 8 12 9 3 3 3 \\ \hline
	(1) 13 13 5 7 7 & 11 5 9 7 7 7 \\ \hline
	(8) 20 9 3 3 3 & 11 5 9 7 7 7 \\ \hline
	(2) 7 14 4 5 7 7 & 8 3 5 9 7 7 7 \\ \hline
	(6) 17 3 6 6 5 3 & 8 3 5 9 7 7 7 \\ \hline
	(3) 13 13 11 7 & 7 11 15 7 7 \\ \hline
	(2) 9 15 7 7 7 & 6 9 11 7 7 7 \\ \hline
	(1) 11 5 9 7 7 7 & 9 3 5 9 7 7 7 \\ \hline
	(1) 8 3 5 9 7 7 7 & 4 5 3 5 9 7 7 7 \\ \hline
	(2) 7 8 12 9 3 3 3 & 8 3 5 9 3 5 7 7 \\ \hline
	(11) 23 7 7 & 7 11 15 15 \\ \hline
	(8) 19 7 7 7 & 10 17 7 7 7 \\ \hline
	(3) 13 13 5 7 7 & 9 7 13 5 7 7 \\ \hline
	(2) 11 5 9 7 7 7 & 8 5 5 9 7 7 7 \\ \hline
	(4) 7 14 4 5 7 7 & 5 10 11 3 5 7 7 \\ \hline
	(2) 8 3 5 9 7 7 7 & 4 6 3 5 9 7 7 7 \\ \hline
	(1) 4 5 3 5 9 7 7 7 & 2 3 5 3 5 9 7 7 7 \\ \hline
	(1) 8 3 5 9 3 5 7 7 & 4 5 3 5 9 3 5 7 7 \\ \hline
	(4) 14 13 11 7 & 9 11 15 7 7 \\ \hline
	(1) 10 17 7 7 7 & 8 9 11 7 7 7 \\ \hline
	(5) 15 15 15 & 9 11 15 15  \\ \hline
	(6) 14 13 11 7 & 7 13 13 11 7 \\ \hline
	(3) 10 17 7 7 7 & 6 9 15 7 7 7 \\ \hline
	(2) 8 9 11 7 7 7 & 4 6 9 11 7 7 7 \\ \hline
	(4) 9 3 5 9 7 7 7 & 3 5 10 11 3 5 7 7 \\ \hline
	(1) 7 13 13 11 7 & 5 7 11 15 7 7 \\ \hline
	(3) 9 11 15 7 7 & 5 7 11 15 7 7 \\ \hline
	(3) 9 11 15 15 & 5 7 11 15 15 \\ \hline
	(1) 8 4 5 3 5 9 3 5 7 7 & 4 5 4 5 3 5 9 3 5 7 7 \\ \hline
	(3) 16 2 3 5 5 3 6 6 5 3 & 4 5 4 5 3 5 9 3 5 7 7 \\ \hline
	(1) 27 12 4 5 3 3 3 & 25 5 5 3 6 6 5 3 \\ \hline
	(20) 5 5 9 7 7 7 & o \\ \hline
	(3) 27 12 4 5 3 3 3 & 5 1 2 4 7 11 15 15 \\ \hline
	(2) 25 5 5 3 6 6 5 3 & 3 6 4 6 9 11 7 7 7 \\ \hline
	(1) 6 2 3 5 10 11 3 5 7 7 & 2 2 4 5 9 3 5 9 7 7 7 \\ \hline
	(4) 5 5 9 3 5 7 3 5 7 7 & o \\ \hline
	(7) 18 2 4 3 3 3 6 6 5 3 & 8 4 2 3 5 3 5 9 7 7 7 +2 2 4 5 9 3 5 9 7 7 7 \\ \hline
	(8) 5 7 11 15 15 & o \\ \hline
	(9) 4 7 11 15 15 & 28 11 3 5 7 7 \\ \hline
	(3) 28 12 9 3 3 3 & 23 8 12 9 3 3 3 \\ \hline
	(1) 6 2 4 7 11 15 15 & 2 4 3 4 7 11 15 15 \\ \hline
	\hline
\end{longtable}

The Curtis table for the Adams $E_2$-page of $P_1^\infty$ in the range of $t<72$ is the following:

$*=2\;4\;1\;1$

$.=2$

\addtolength{\textwidth}{2cm}
\addtolength{\hoffset}{-1cm}

\tcbset{bottom=0pt,top=0pt,left=0pt,right=0pt}
 \twocolumn

\begin{tcolorbox}[title={$(1,1)$}]
(1) 

\end{tcolorbox}
\begin{tcolorbox}[title={$(2,2)$}]
(1) 1 

\end{tcolorbox}
\begin{tcolorbox}[title={$(3,1)$}]
(3) 

\end{tcolorbox}
\begin{tcolorbox}[title={$(3,2)$}]
(2) 1 

\end{tcolorbox}
\begin{tcolorbox}[title={$(3,3)$}]
(1) 1 1 

\end{tcolorbox}
\begin{tcolorbox}[title={$(4,2)$}]
(1) 3 

(3) 1 $\leftarrow$ (5) 

\end{tcolorbox}
\begin{tcolorbox}[title={$(4,3)$}]
(1) 2 1 $\leftarrow$ (2) 3 

(2) 1 1 $\leftarrow$ (4) 1 

\end{tcolorbox}
\begin{tcolorbox}[title={$(4,4)$}]
(1) 1 1 1 $\leftarrow$ (2) 2 1 

\end{tcolorbox}
\begin{tcolorbox}[title={$(5,3)$}]
(3) 1 1 $\leftarrow$ (5) 1 

\end{tcolorbox}
\begin{tcolorbox}[title={$(5,4)$}]
(2) 1 1 1 $\leftarrow$ (4) 1 1 

\end{tcolorbox}
\begin{tcolorbox}[title={$(6,2)$}]
(3) 3 

\end{tcolorbox}
\begin{tcolorbox}[title={$(6,3)$}]
(3) 2 1 $\leftarrow$ (4) 3 

\end{tcolorbox}
\begin{tcolorbox}[title={$(6,4)$}]
(3) 1 1 1 $\leftarrow$ (4) 2 1 

\end{tcolorbox}
\begin{tcolorbox}[title={$(7,1)$}]
(7) 

\end{tcolorbox}
\begin{tcolorbox}[title={$(7,2)$}]
(6) 1 

\end{tcolorbox}
\begin{tcolorbox}[title={$(7,3)$}]
(1) 3 3 

(5) 1 1 

\end{tcolorbox}
\begin{tcolorbox}[title={$(7,4)$}]
(4) 1 1 1 

\end{tcolorbox}
\begin{tcolorbox}[title={$(8,2)$}]
(1) 7 

(5) 3 

(7) 1 $\leftarrow$ (9) 

\end{tcolorbox}
\begin{tcolorbox}[title={$(8,3)$}]
(1) 6 1 $\leftarrow$ (2) 7 

(2) 3 3 

(5) 2 1 $\leftarrow$ (6) 3 

(6) 1 1 $\leftarrow$ (8) 1 

\end{tcolorbox}
\begin{tcolorbox}[title={$(8,4)$}]
(1) 5 1 1 $\leftarrow$ (2) 6 1 

(5) 1 1 1 $\leftarrow$ (6) 2 1 

\end{tcolorbox}
\begin{tcolorbox}[title={$(8,5)$}]
(1) 4 1 1 1 $\leftarrow$ (2) 5 1 1 

\end{tcolorbox}
\begin{tcolorbox}[title={$(9,3)$}]
(1) 5 3 

(3) 3 3 

(7) 1 1 $\leftarrow$ (9) 1 

\end{tcolorbox}
\begin{tcolorbox}[title={$(9,4)$}]
(1) 2 3 3 

(6) 1 1 1 $\leftarrow$ (8) 1 1 

\end{tcolorbox}
\begin{tcolorbox}[title={$(9,5)$}]
(2) 4 1 1 1 

\end{tcolorbox}
\begin{tcolorbox}[title={$(10,2)$}]
(3) 7 

(7) 3 $\leftarrow$ (11) 

\end{tcolorbox}
\begin{tcolorbox}[title={$(10,3)$}]
(2) 5 3 

(3) 6 1 $\leftarrow$ (4) 7 

(4) 3 3 $\leftarrow$ (10) 1 

(7) 2 1 $\leftarrow$ (8) 3 

\end{tcolorbox}
\begin{tcolorbox}[title={$(10,4)$}]
(1) 3 3 3 

(2) 2 3 3 $\leftarrow$ (9) 1 1 

(3) 5 1 1 $\leftarrow$ (4) 6 1 

(7) 1 1 1 $\leftarrow$ (8) 2 1 

\end{tcolorbox}
\begin{tcolorbox}[title={$(10,5)$}]
(1) 1 2 3 3 $\leftarrow$ (8) 1 1 1 

(3) 4 1 1 1 $\leftarrow$ (4) 5 1 1 

\end{tcolorbox}
\begin{tcolorbox}[title={$(10,6)$}]
(1) * 1 

\end{tcolorbox}
\begin{tcolorbox}[title={$(11,3)$}]
(3) 5 3 $\leftarrow$ (5) 7 

(5) 3 3 $\leftarrow$ (9) 3 

\end{tcolorbox}
\begin{tcolorbox}[title={$(11,4)$}]
(2) 3 3 3 $\leftarrow$ (4) 5 3 

(3) 2 3 3 $\leftarrow$ (6) 3 3 

\end{tcolorbox}
\begin{tcolorbox}[title={$(11,5)$}]
(2) 1 2 3 3 $\leftarrow$ (4) 2 3 3 

(4) 4 1 1 1 

\end{tcolorbox}
\begin{tcolorbox}[title={$(11,6)$}]
(2) * 1 

\end{tcolorbox}
\begin{tcolorbox}[title={$(11,7)$}]
(1) 1 * 1 

\end{tcolorbox}
\begin{tcolorbox}[title={$(12,2)$}]
(11) 1 $\leftarrow$ (13) 

\end{tcolorbox}
\begin{tcolorbox}[title={$(12,3)$}]
(5) 6 1 $\leftarrow$ (6) 7 

(9) 2 1 $\leftarrow$ (10) 3 

(10) 1 1 $\leftarrow$ (12) 1 

\end{tcolorbox}
\begin{tcolorbox}[title={$(12,4)$}]
(3) 3 3 3 $\leftarrow$ (5) 5 3 

(5) 5 1 1 $\leftarrow$ (6) 6 1 

(9) 1 1 1 $\leftarrow$ (10) 2 1 

\end{tcolorbox}
\begin{tcolorbox}[title={$(12,5)$}]
(3) 1 2 3 3 $\leftarrow$ (5) 2 3 3 

(5) 4 1 1 1 $\leftarrow$ (6) 5 1 1 

\end{tcolorbox}
\begin{tcolorbox}[title={$(12,6)$}]
(1) 4 4 1 1 1 $\leftarrow$ (4) 1 2 3 3 

(3) * 1 $\leftarrow$ (6) 4 1 1 1 

\end{tcolorbox}
\begin{tcolorbox}[title={$(12,7)$}]
(1) 2 * 1 $\leftarrow$ (2) 4 4 1 1 1 

(2) 1 * 1 $\leftarrow$ (4) * 1 

\end{tcolorbox}
\begin{tcolorbox}[title={$(12,8)$}]
(1) 1 1 * 1 $\leftarrow$ (2) 2 * 1 

\end{tcolorbox}
\begin{tcolorbox}[title={$(13,3)$}]
(7) 3 3 $\leftarrow$ (11) 3 

(11) 1 1 $\leftarrow$ (13) 1 

\end{tcolorbox}
\begin{tcolorbox}[title={$(13,4)$}]
(4) 3 3 3 $\leftarrow$ (8) 3 3 

(10) 1 1 1 $\leftarrow$ (12) 1 1 

\end{tcolorbox}
\begin{tcolorbox}[title={$(13,7)$}]
(3) 1 * 1 $\leftarrow$ (5) * 1 

\end{tcolorbox}
\begin{tcolorbox}[title={$(13,8)$}]
(2) 1 1 * 1 $\leftarrow$ (4) 1 * 1 

\end{tcolorbox}
\begin{tcolorbox}[title={$(14,2)$}]
(7) 7 

\end{tcolorbox}
\begin{tcolorbox}[title={$(14,3)$}]
(6) 5 3 

(7) 6 1 $\leftarrow$ (8) 7 

(11) 2 1 $\leftarrow$ (12) 3 

\end{tcolorbox}
\begin{tcolorbox}[title={$(14,4)$}]
(5) 3 3 3 $\leftarrow$ (9) 3 3 

(6) 2 3 3 

(7) 5 1 1 $\leftarrow$ (8) 6 1 

(11) 1 1 1 $\leftarrow$ (12) 2 1 

\end{tcolorbox}
\begin{tcolorbox}[title={$(14,5)$}]
(5) 1 2 3 3 

(7) 4 1 1 1 $\leftarrow$ (8) 5 1 1 

\end{tcolorbox}
\begin{tcolorbox}[title={$(14,6)$}]
(3) 4 4 1 1 1 

\end{tcolorbox}
\begin{tcolorbox}[title={$(14,7)$}]
(3) 2 * 1 $\leftarrow$ (4) 4 4 1 1 1 

\end{tcolorbox}
\begin{tcolorbox}[title={$(14,8)$}]
(3) 1 1 * 1 $\leftarrow$ (4) 2 * 1 

\end{tcolorbox}
\begin{tcolorbox}[title={$(15,1)$}]
(15) 

\end{tcolorbox}
\begin{tcolorbox}[title={$(15,2)$}]
(14) 1 

\end{tcolorbox}
\begin{tcolorbox}[title={$(15,3)$}]
(1) 7 7 

(7) 5 3 $\leftarrow$ (9) 7 

(13) 1 1 

\end{tcolorbox}
\begin{tcolorbox}[title={$(15,4)$}]
(1) 6 5 3 $\leftarrow$ (2) 7 7 

(6) 3 3 3 $\leftarrow$ (8) 5 3 

(7) 2 3 3 $\leftarrow$ (10) 3 3 

(12) 1 1 1 

\end{tcolorbox}
\begin{tcolorbox}[title={$(15,5)$}]
(1) 6 2 3 3 

(6) 1 2 3 3 $\leftarrow$ (8) 2 3 3 

(8) 4 1 1 1 

\end{tcolorbox}
\begin{tcolorbox}[title={$(15,6)$}]
(1) 5 1 2 3 3 $\leftarrow$ (2) 6 2 3 3 

(6) * 1 

\end{tcolorbox}
\begin{tcolorbox}[title={$(15,7)$}]
(1) 3 4 4 1 1 1 $\leftarrow$ (2) 5 1 2 3 3 

(5) 1 * 1 

\end{tcolorbox}
\begin{tcolorbox}[title={$(15,8)$}]
(4) 1 1 * 1 

\end{tcolorbox}
\begin{tcolorbox}[title={$(16,2)$}]
(1) 15 

(13) 3 

(15) 1 $\leftarrow$ (17) 

\end{tcolorbox}
\begin{tcolorbox}[title={$(16,3)$}]
(1) 14 1 $\leftarrow$ (2) 15 

(9) 6 1 $\leftarrow$ (10) 7 

(13) 2 1 $\leftarrow$ (14) 3 

(14) 1 1 $\leftarrow$ (16) 1 

\end{tcolorbox}
\begin{tcolorbox}[title={$(16,4)$}]
(1) 13 1 1 $\leftarrow$ (2) 14 1 

(2) 6 5 3 

(7) 3 3 3 $\leftarrow$ (9) 5 3 

(9) 5 1 1 $\leftarrow$ (10) 6 1 

(13) 1 1 1 $\leftarrow$ (14) 2 1 

\end{tcolorbox}
\begin{tcolorbox}[title={$(16,5)$}]
(1) 12 1 1 1 $\leftarrow$ (2) 13 1 1 

(7) 1 2 3 3 $\leftarrow$ (9) 2 3 3 

(9) 4 1 1 1 $\leftarrow$ (10) 5 1 1 

\end{tcolorbox}
\begin{tcolorbox}[title={$(16,6)$}]
(1) 2 4 3 3 3 

(1) 8 4 1 1 1 $\leftarrow$ (2) 12 1 1 1 

(5) 4 4 1 1 1 $\leftarrow$ (8) 1 2 3 3 

(7) * 1 $\leftarrow$ (10) 4 1 1 1 

\end{tcolorbox}
\begin{tcolorbox}[title={$(16,7)$}]
(1) 6 * 1 $\leftarrow$ (2) 8 4 1 1 1 

(2) 3 4 4 1 1 1 

(5) 2 * 1 $\leftarrow$ (6) 4 4 1 1 1 

(6) 1 * 1 $\leftarrow$ (8) * 1 

\end{tcolorbox}
\begin{tcolorbox}[title={$(16,8)$}]
(1) 5 1 * 1 $\leftarrow$ (2) 6 * 1 

(5) 1 1 * 1 $\leftarrow$ (6) 2 * 1 

\end{tcolorbox}
\begin{tcolorbox}[title={$(16,9)$}]
(1) 4 1 1 * 1 $\leftarrow$ (2) 5 1 * 1 

\end{tcolorbox}
\begin{tcolorbox}[title={$(17,3)$}]
(1) 13 3 

(3) 7 7 

(11) 3 3 

(15) 1 1 $\leftarrow$ (17) 1 

\end{tcolorbox}
\begin{tcolorbox}[title={$(17,4)$}]
(3) 6 5 3 $\leftarrow$ (4) 7 7 

(8) 3 3 3 

(14) 1 1 1 $\leftarrow$ (16) 1 1 

\end{tcolorbox}
\begin{tcolorbox}[title={$(17,5)$}]
(3) 6 2 3 3 

\end{tcolorbox}
\begin{tcolorbox}[title={$(17,6)$}]
(2) 2 4 3 3 3 

(3) 5 1 2 3 3 $\leftarrow$ (4) 6 2 3 3 

\end{tcolorbox}
\begin{tcolorbox}[title={$(17,7)$}]
(1) 1 2 4 3 3 3 

(3) 3 4 4 1 1 1 $\leftarrow$ (4) 5 1 2 3 3 

(7) 1 * 1 $\leftarrow$ (9) * 1 

\end{tcolorbox}
\begin{tcolorbox}[title={$(17,8)$}]
(1) 2 3 4 4 1 1 1 

(6) 1 1 * 1 $\leftarrow$ (8) 1 * 1 

\end{tcolorbox}
\begin{tcolorbox}[title={$(17,9)$}]
(2) 4 1 1 * 1 

\end{tcolorbox}
\begin{tcolorbox}[title={$(18,2)$}]
(3) 15 

(11) 7 

(15) 3 $\leftarrow$ (19) 

\end{tcolorbox}
\begin{tcolorbox}[title={$(18,3)$}]
(2) 13 3 

(3) 14 1 $\leftarrow$ (4) 15 

(10) 5 3 

(11) 6 1 $\leftarrow$ (12) 7 

(12) 3 3 $\leftarrow$ (18) 1 

(15) 2 1 $\leftarrow$ (16) 3 

\end{tcolorbox}
\begin{tcolorbox}[title={$(18,4)$}]
(1) 11 3 3 

(3) 13 1 1 $\leftarrow$ (4) 14 1 

(4) 6 5 3 

(9) 3 3 3 

(10) 2 3 3 $\leftarrow$ (17) 1 1 

(11) 5 1 1 $\leftarrow$ (12) 6 1 

(15) 1 1 1 $\leftarrow$ (16) 2 1 

\end{tcolorbox}
\begin{tcolorbox}[title={$(18,5)$}]
(1) 8 3 3 3 

(3) 12 1 1 1 $\leftarrow$ (4) 13 1 1 

(9) 1 2 3 3 $\leftarrow$ (16) 1 1 1 

(11) 4 1 1 1 $\leftarrow$ (12) 5 1 1 

\end{tcolorbox}
\begin{tcolorbox}[title={$(18,6)$}]
(1) 3 6 2 3 3 $\leftarrow$ (2) 8 3 3 3 

(3) 2 4 3 3 3 $\leftarrow$ (5) 6 2 3 3 

(3) 8 4 1 1 1 $\leftarrow$ (4) 12 1 1 1 

(7) 4 4 1 1 1 $\leftarrow$ (12) 4 1 1 1 

\end{tcolorbox}

\begin{tcolorbox}[title={$(18,7)$}]
(1) ..4 3 3 3 $\leftarrow$ (2) 3 6 2 3 3 

(2) 1 2 4 3 3 3 $\leftarrow$ (4) 2 4 3 3 3 

(3) 6 * 1 $\leftarrow$ (4) 8 4 1 1 1 

(4) 3 4 4 1 1 1 $\leftarrow$ (10) * 1 

(7) 2 * 1 $\leftarrow$ (8) 4 4 1 1 1 

\end{tcolorbox}
\begin{tcolorbox}[title={$(18,8)$}]
(1) 1 1 2 4 3 3 3 $\leftarrow$ (2) ..4 3 3 3 

(2) 2 3 4 4 1 1 1 $\leftarrow$ (9) 1 * 1 

(3) 5 1 * 1 $\leftarrow$ (4) 6 * 1 

(7) 1 1 * 1 $\leftarrow$ (8) 2 * 1 

\end{tcolorbox}
\begin{tcolorbox}[title={$(18,9)$}]
(1) 1 2 3 4 4 1 1 1 $\leftarrow$ (8) 1 1 * 1 

(3) 4 1 1 * 1 $\leftarrow$ (4) 5 1 * 1 

\end{tcolorbox}
\begin{tcolorbox}[title={$(18,10)$}]
(1) * * 1 

\end{tcolorbox}

\begin{tcolorbox}[title={$(19,3)$}]
(1) 11 7 

(3) 13 3 $\leftarrow$ (5) 15 

(5) 7 7 

(11) 5 3 $\leftarrow$ (13) 7 

(13) 3 3 $\leftarrow$ (17) 3 

\end{tcolorbox}
\begin{tcolorbox}[title={$(19,4)$}]
(1) 10 5 3 $\leftarrow$ (2) 11 7 

(2) 11 3 3 $\leftarrow$ (4) 13 3 

(5) 6 5 3 $\leftarrow$ (6) 7 7 

(10) 3 3 3 $\leftarrow$ (12) 5 3 

(11) 2 3 3 $\leftarrow$ (14) 3 3 

\end{tcolorbox}
\begin{tcolorbox}[title={$(19,5)$}]
(1) 5 7 3 3 

(1) 9 3 3 3 $\leftarrow$ (2) 10 5 3 

(10) 1 2 3 3 $\leftarrow$ (12) 2 3 3 

\end{tcolorbox}
\begin{tcolorbox}[title={$(19,6)$}]
(1) 4 5 3 3 3 $\leftarrow$ (2) 5 7 3 3 

(5) 5 1 2 3 3 $\leftarrow$ (6) 6 2 3 3 

\end{tcolorbox}
\begin{tcolorbox}[title={$(19,7)$}]
(3) 1 2 4 3 3 3 $\leftarrow$ (5) 2 4 3 3 3 

(5) 3 4 4 1 1 1 $\leftarrow$ (6) 5 1 2 3 3 

\end{tcolorbox}

\begin{tcolorbox}[title={$(19,8)$}]
(2) 1 1 2 4 3 3 3 $\leftarrow$ (4) 1 2 4 3 3 3 

(3) 2 3 4 4 1 1 1 $\leftarrow$ (6) 3 4 4 1 1 1 

\end{tcolorbox}

\begin{tcolorbox}[title={$(19,9)$}]
(2) 1 2 3 4 4 1 1 1 $\leftarrow$ (4) 2 3 4 4 1 1 1 

(4) 4 1 1 * 1 

\end{tcolorbox}
\begin{tcolorbox}[title={$(19,10)$}]
(2) * * 1 

\end{tcolorbox}
\begin{tcolorbox}[title={$(19,11)$}]
(1) 1 * * 1 

\end{tcolorbox}

\small

\begin{tcolorbox}[title={$(20,2)$}]
(19) 1 $\leftarrow$ (21) 

\end{tcolorbox}
\begin{tcolorbox}[title={$(20,3)$}]
(5) 14 1 $\leftarrow$ (6) 15 

(13) 6 1 $\leftarrow$ (14) 7 

(17) 2 1 $\leftarrow$ (18) 3 

(18) 1 1 $\leftarrow$ (20) 1 

\end{tcolorbox}
\begin{tcolorbox}[title={$(20,4)$}]
(1) 5 7 7 

(3) 11 3 3 $\leftarrow$ (5) 13 3 

(5) 13 1 1 $\leftarrow$ (6) 14 1 

(6) 6 5 3 

(11) 3 3 3 $\leftarrow$ (13) 5 3 

(13) 5 1 1 $\leftarrow$ (14) 6 1 

(17) 1 1 1 $\leftarrow$ (18) 2 1 

\end{tcolorbox}
\begin{tcolorbox}[title={$(20,5)$}]
(2) 9 3 3 3 $\leftarrow$ (4) 11 3 3 

(3) 8 3 3 3 

(5) 12 1 1 1 $\leftarrow$ (6) 13 1 1 

(11) 1 2 3 3 $\leftarrow$ (13) 2 3 3 

(13) 4 1 1 1 $\leftarrow$ (14) 5 1 1 

\end{tcolorbox}
\begin{tcolorbox}[title={$(20,6)$}]
(2) 4 5 3 3 3 

(3) 3 6 2 3 3 $\leftarrow$ (4) 8 3 3 3 

(5) 8 4 1 1 1 $\leftarrow$ (6) 12 1 1 1 

(9) 4 4 1 1 1 $\leftarrow$ (12) 1 2 3 3 

(11) * 1 $\leftarrow$ (14) 4 1 1 1 

\end{tcolorbox}
\begin{tcolorbox}[title={$(20,7)$}]
(3) ..4 3 3 3 $\leftarrow$ (4) 3 6 2 3 3 

(5) 6 * 1 $\leftarrow$ (6) 8 4 1 1 1 

(9) 2 * 1 $\leftarrow$ (10) 4 4 1 1 1 

(10) 1 * 1 $\leftarrow$ (12) * 1 

\end{tcolorbox}
\begin{tcolorbox}[title={$(20,8)$}]
(3) 1 1 2 4 3 3 3 $\leftarrow$ (4) ..4 3 3 3 

(5) 5 1 * 1 $\leftarrow$ (6) 6 * 1 

(9) 1 1 * 1 $\leftarrow$ (10) 2 * 1 

\end{tcolorbox}
\begin{tcolorbox}[title={$(20,9)$}]
(3) 1 2 3 4 4 1 1 1 $\leftarrow$ (5) 2 3 4 4 1 1 1 

(5) 4 1 1 * 1 $\leftarrow$ (6) 5 1 * 1 

\end{tcolorbox}
\begin{tcolorbox}[title={$(20,10)$}]
(1) 4 4 1 1 * 1 $\leftarrow$ (4) 1 2 3 4 4 1 1 1 

(3) * * 1 $\leftarrow$ (6) 4 1 1 * 1 

\end{tcolorbox}
\begin{tcolorbox}[title={$(20,11)$}]
(1) 2 * * 1 $\leftarrow$ (2) 4 4 1 1 * 1 

(2) 1 * * 1 $\leftarrow$ (4) * * 1 

\end{tcolorbox}
\begin{tcolorbox}[title={$(20,12)$}]
(1) 1 1 * * 1 $\leftarrow$ (2) 2 * * 1 

\end{tcolorbox}
\begin{tcolorbox}[title={$(21,3)$}]
(3) 11 7 

(7) 7 7 

(15) 3 3 $\leftarrow$ (19) 3 

(19) 1 1 $\leftarrow$ (21) 1 

\end{tcolorbox}
\begin{tcolorbox}[title={$(21,4)$}]
(2) 5 7 7 

(3) 10 5 3 $\leftarrow$ (4) 11 7 

(7) 6 5 3 $\leftarrow$ (8) 7 7 

(12) 3 3 3 $\leftarrow$ (16) 3 3 

(18) 1 1 1 $\leftarrow$ (20) 1 1 

\end{tcolorbox}
\begin{tcolorbox}[title={$(21,5)$}]
(1) 6 6 5 3 

(3) 5 7 3 3 

(3) 9 3 3 3 $\leftarrow$ (4) 10 5 3 

(7) 6 2 3 3 $\leftarrow$ (14) 2 3 3 

\end{tcolorbox}
\begin{tcolorbox}[title={$(21,6)$}]
(1) 4 7 3 3 3 $\leftarrow$ (2) 6 6 5 3 

(3) 4 5 3 3 3 $\leftarrow$ (4) 5 7 3 3 

(6) 2 4 3 3 3 $\leftarrow$ (13) 1 2 3 3 

(7) 5 1 2 3 3 $\leftarrow$ (8) 6 2 3 3 

\end{tcolorbox}
\begin{tcolorbox}[title={$(21,7)$}]
(1) 2 4 5 3 3 3 $\leftarrow$ (2) 4 7 3 3 3 

(5) 1 2 4 3 3 3 $\leftarrow$ (11) 4 4 1 1 1 

(7) 3 4 4 1 1 1 $\leftarrow$ (8) 5 1 2 3 3 

(11) 1 * 1 $\leftarrow$ (13) * 1 

\end{tcolorbox}
\begin{tcolorbox}[title={$(21,8)$}]
(4) 1 1 2 4 3 3 3 $\leftarrow$ (8) 3 4 4 1 1 1 

(10) 1 1 * 1 $\leftarrow$ (12) 1 * 1 

\end{tcolorbox}
\begin{tcolorbox}[title={$(21,11)$}]
(3) 1 * * 1 $\leftarrow$ (5) * * 1 

\end{tcolorbox}
\begin{tcolorbox}[title={$(21,12)$}]
(2) 1 1 * * 1 $\leftarrow$ (4) 1 * * 1 

\end{tcolorbox}
\begin{tcolorbox}[title={$(22,2)$}]
(7) 15 

(15) 7 $\leftarrow$ (23) 

\end{tcolorbox}
\begin{tcolorbox}[title={$(22,3)$}]
(6) 13 3 

(7) 14 1 $\leftarrow$ (8) 15 

(14) 5 3 $\leftarrow$ (22) 1 

(15) 6 1 $\leftarrow$ (16) 7 

(19) 2 1 $\leftarrow$ (20) 3 

\end{tcolorbox}
\begin{tcolorbox}[title={$(22,4)$}]
(1) 7 7 7 

(3) 5 7 7 

(5) 11 3 3 

(7) 13 1 1 $\leftarrow$ (8) 14 1 

(8) 6 5 3 $\leftarrow$ (21) 1 1 

(13) 3 3 3 $\leftarrow$ (17) 3 3 

(15) 5 1 1 $\leftarrow$ (16) 6 1 

(19) 1 1 1 $\leftarrow$ (20) 2 1 

\end{tcolorbox}
\begin{tcolorbox}[title={$(22,5)$}]
(4) 9 3 3 3 

(5) 8 3 3 3 $\leftarrow$ (20) 1 1 1 

(7) 12 1 1 1 $\leftarrow$ (8) 13 1 1 

(15) 4 1 1 1 $\leftarrow$ (16) 5 1 1 

\end{tcolorbox}
\begin{tcolorbox}[title={$(22,6)$}]
(1) 3 5 7 3 3 

(4) 4 5 3 3 3 $\leftarrow$ (16) 4 1 1 1 

(5) 3 6 2 3 3 $\leftarrow$ (6) 8 3 3 3 

(7) 2 4 3 3 3 $\leftarrow$ (9) 6 2 3 3 

(7) 8 4 1 1 1 $\leftarrow$ (8) 12 1 1 1 

\end{tcolorbox}
\begin{tcolorbox}[title={$(22,7)$}]
(2) 2 4 5 3 3 3 $\leftarrow$ (14) * 1 

(5) ..4 3 3 3 $\leftarrow$ (6) 3 6 2 3 3 

(6) 1 2 4 3 3 3 $\leftarrow$ (8) 2 4 3 3 3 

(7) 6 * 1 $\leftarrow$ (8) 8 4 1 1 1 

(11) 2 * 1 $\leftarrow$ (12) 4 4 1 1 1 

\end{tcolorbox}
\begin{tcolorbox}[title={$(22,8)$}]
(5) 1 1 2 4 3 3 3 $\leftarrow$ (6) ..4 3 3 3 

(6) 2 3 4 4 1 1 1 

(7) 5 1 * 1 $\leftarrow$ (8) 6 * 1 

(11) 1 1 * 1 $\leftarrow$ (12) 2 * 1 

\end{tcolorbox}
\begin{tcolorbox}[title={$(22,9)$}]
(5) 1 2 3 4 4 1 1 1 

(7) 4 1 1 * 1 $\leftarrow$ (8) 5 1 * 1 

\end{tcolorbox}
\begin{tcolorbox}[title={$(22,10)$}]
(3) 4 4 1 1 * 1 

\end{tcolorbox}
\begin{tcolorbox}[title={$(22,11)$}]
(3) 2 * * 1 $\leftarrow$ (4) 4 4 1 1 * 1 

\end{tcolorbox}
\begin{tcolorbox}[title={$(22,12)$}]
(3) 1 1 * * 1 $\leftarrow$ (4) 2 * * 1 

\end{tcolorbox}
\begin{tcolorbox}[title={$(23,3)$}]
(5) 11 7 

(7) 13 3 $\leftarrow$ (9) 15 

(9) 7 7 $\leftarrow$ (21) 3 

(15) 5 3 $\leftarrow$ (17) 7 

\end{tcolorbox}
\begin{tcolorbox}[title={$(23,4)$}]
(2) 7 7 7 

(4) 5 7 7 

(5) 10 5 3 $\leftarrow$ (6) 11 7 

(6) 11 3 3 $\leftarrow$ (8) 13 3 

(9) 6 5 3 $\leftarrow$ (10) 7 7 

(14) 3 3 3 $\leftarrow$ (16) 5 3 

(15) 2 3 3 $\leftarrow$ (18) 3 3 

\end{tcolorbox}
\begin{tcolorbox}[title={$(23,5)$}]
(1) 3 5 7 7 

(3) 6 6 5 3 

(5) 5 7 3 3 $\leftarrow$ (10) 6 5 3 

(5) 9 3 3 3 $\leftarrow$ (6) 10 5 3 

(14) 1 2 3 3 $\leftarrow$ (16) 2 3 3 

\end{tcolorbox}
\begin{tcolorbox}[title={$(23,6)$}]
(2) 3 5 7 3 3 

(3) 4 7 3 3 3 $\leftarrow$ (4) 6 6 5 3 

(5) 4 5 3 3 3 $\leftarrow$ (6) 5 7 3 3 

(9) 5 1 2 3 3 $\leftarrow$ (10) 6 2 3 3 

\end{tcolorbox}
\begin{tcolorbox}[title={$(23,7)$}]
(3) 2 4 5 3 3 3 $\leftarrow$ (4) 4 7 3 3 3 

(7) 1 2 4 3 3 3 $\leftarrow$ (9) 2 4 3 3 3 

(9) 3 4 4 1 1 1 $\leftarrow$ (10) 5 1 2 3 3 

(13) 1 * 1 

\end{tcolorbox}
\begin{tcolorbox}[title={$(23,8)$}]
(6) 1 1 2 4 3 3 3 $\leftarrow$ (8) 1 2 4 3 3 3 

(7) 2 3 4 4 1 1 1 $\leftarrow$ (10) 3 4 4 1 1 1 

(12) 1 1 * 1 

\end{tcolorbox}
\begin{tcolorbox}[title={$(23,9)$}]
(1) 6 2 3 4 4 1 1 1 

(6) 1 2 3 4 4 1 1 1 $\leftarrow$ (8) 2 3 4 4 1 1 1 

(8) 4 1 1 * 1 

\end{tcolorbox}

\begin{tcolorbox}[title={$(23,10)$}]
(1) 5 1 2 3 4 4 1 1 1 $\leftarrow$ (2) 6 2 3 4 4 1 1 1 

(6) * * 1 

\end{tcolorbox}
\begin{tcolorbox}[title={$(23,11)$}]
(1) 3 4 4 1 1 * 1 $\leftarrow$ (2) 5 1 2 3 4 4 1 1 1 

(5) 1 * * 1 

\end{tcolorbox}
\begin{tcolorbox}[title={$(23,12)$}]
(4) 1 1 * * 1 

\end{tcolorbox}

\footnotesize

\begin{tcolorbox}[title={$(24,2)$}]
(23) 1 $\leftarrow$ (25) 

\end{tcolorbox}
\begin{tcolorbox}[title={$(24,3)$}]
(9) 14 1 $\leftarrow$ (10) 15 

(17) 6 1 $\leftarrow$ (18) 7 

(21) 2 1 $\leftarrow$ (22) 3 

(22) 1 1 $\leftarrow$ (24) 1 

\end{tcolorbox}
\begin{tcolorbox}[title={$(24,4)$}]
(3) 7 7 7 

(5) 5 7 7 $\leftarrow$ (19) 3 3 

(7) 11 3 3 $\leftarrow$ (9) 13 3 

(9) 13 1 1 $\leftarrow$ (10) 14 1 

(15) 3 3 3 $\leftarrow$ (17) 5 3 

(17) 5 1 1 $\leftarrow$ (18) 6 1 

(21) 1 1 1 $\leftarrow$ (22) 2 1 

\end{tcolorbox}
\begin{tcolorbox}[title={$(24,5)$}]
(1) 4 5 7 7 

(2) 3 5 7 7 

(6) 9 3 3 3 $\leftarrow$ (8) 11 3 3 

(7) 8 3 3 3 $\leftarrow$ (16) 3 3 3 

(9) 12 1 1 1 $\leftarrow$ (10) 13 1 1 

(15) 1 2 3 3 $\leftarrow$ (17) 2 3 3 

(17) 4 1 1 1 $\leftarrow$ (18) 5 1 1 

\end{tcolorbox}
\begin{tcolorbox}[title={$(24,6)$}]
(1) 3 6 6 5 3 

(3) 3 5 7 3 3 $\leftarrow$ (5) 6 6 5 3 

(6) 4 5 3 3 3 $\leftarrow$ (11) 6 2 3 3 

(7) 3 6 2 3 3 $\leftarrow$ (8) 8 3 3 3 

(9) 8 4 1 1 1 $\leftarrow$ (10) 12 1 1 1 

(13) 4 4 1 1 1 $\leftarrow$ (16) 1 2 3 3 

(15) * 1 $\leftarrow$ (18) 4 1 1 1 

\end{tcolorbox}
\begin{tcolorbox}[title={$(24,7)$}]
(1) 2 3 5 7 3 3 $\leftarrow$ (2) 3 6 6 5 3 

(4) 2 4 5 3 3 3 $\leftarrow$ (10) 2 4 3 3 3 

(7) ..4 3 3 3 $\leftarrow$ (8) 3 6 2 3 3 

(9) 6 * 1 $\leftarrow$ (10) 8 4 1 1 1 

(13) 2 * 1 $\leftarrow$ (14) 4 4 1 1 1 

(14) 1 * 1 $\leftarrow$ (16) * 1 

\end{tcolorbox}
\begin{tcolorbox}[title={$(24,8)$}]
(1) 13 1 * 1 $\leftarrow$ (9) 1 2 4 3 3 3 

(7) 1 1 2 4 3 3 3 $\leftarrow$ (8) ..4 3 3 3 

(9) 5 1 * 1 $\leftarrow$ (10) 6 * 1 

(13) 1 1 * 1 $\leftarrow$ (14) 2 * 1 

\end{tcolorbox}
\begin{tcolorbox}[title={$(24,9)$}]
(1) 12 1 1 * 1 $\leftarrow$ (2) 13 1 * 1 

(7) 1 2 3 4 4 1 1 1 $\leftarrow$ (9) 2 3 4 4 1 1 1 

(9) 4 1 1 * 1 $\leftarrow$ (10) 5 1 * 1 

\end{tcolorbox}
\begin{tcolorbox}[title={$(24,10)$}]
(1) * 2 4 3 3 3 

(1) 8 4 1 1 * 1 $\leftarrow$ (2) 12 1 1 * 1 

(5) 4 4 1 1 * 1 $\leftarrow$ (8) 1 2 3 4 4 1 1 1 

(7) * * 1 $\leftarrow$ (10) 4 1 1 * 1 

\end{tcolorbox}
\begin{tcolorbox}[title={$(24,11)$}]
(1) 6 * * 1 $\leftarrow$ (2) 8 4 1 1 * 1 

(2) 3 4 4 1 1 * 1 

(5) 2 * * 1 $\leftarrow$ (6) 4 4 1 1 * 1 

(6) 1 * * 1 $\leftarrow$ (8) * * 1 

\end{tcolorbox}
\begin{tcolorbox}[title={$(24,12)$}]
(1) 5 1 * * 1 $\leftarrow$ (2) 6 * * 1 

(5) 1 1 * * 1 $\leftarrow$ (6) 2 * * 1 

\end{tcolorbox}
\begin{tcolorbox}[title={$(24,13)$}]
(1) 4 1 1 * * 1 $\leftarrow$ (2) 5 1 * * 1 

\end{tcolorbox}
\begin{tcolorbox}[title={$(25,3)$}]
(7) 11 7 $\leftarrow$ (11) 15 

(11) 7 7 $\leftarrow$ (19) 7 

(23) 1 1 $\leftarrow$ (25) 1 

\end{tcolorbox}
\begin{tcolorbox}[title={$(25,4)$}]
(4) 7 7 7 $\leftarrow$ (10) 13 3 

(6) 5 7 7 $\leftarrow$ (18) 5 3 

(7) 10 5 3 $\leftarrow$ (8) 11 7 

(11) 6 5 3 $\leftarrow$ (12) 7 7 

(22) 1 1 1 $\leftarrow$ (24) 1 1 

\end{tcolorbox}
\begin{tcolorbox}[title={$(25,5)$}]
(2) 4 5 7 7 $\leftarrow$ (9) 11 3 3 

(3) 3 5 7 7 $\leftarrow$ (17) 3 3 3 

(7) 5 7 3 3 $\leftarrow$ (12) 6 5 3 

(7) 9 3 3 3 $\leftarrow$ (8) 10 5 3 

\end{tcolorbox}
\begin{tcolorbox}[title={$(25,6)$}]
(1) 2 3 5 7 7 $\leftarrow$ (8) 9 3 3 3 

(4) 3 5 7 3 3 $\leftarrow$ (9) 8 3 3 3 

(5) 4 7 3 3 3 $\leftarrow$ (6) 6 6 5 3 

(7) 4 5 3 3 3 $\leftarrow$ (8) 5 7 3 3 

(11) 5 1 2 3 3 $\leftarrow$ (12) 6 2 3 3 

\end{tcolorbox}
\begin{tcolorbox}[title={$(25,7)$}]
(2) 2 3 5 7 3 3 $\leftarrow$ (8) 4 5 3 3 3 

(5) 2 4 5 3 3 3 $\leftarrow$ (6) 4 7 3 3 3 

(11) 3 4 4 1 1 1 $\leftarrow$ (12) 5 1 2 3 3 

(15) 1 * 1 $\leftarrow$ (17) * 1 

\end{tcolorbox}
\begin{tcolorbox}[title={$(25,8)$}]
(8) 1 1 2 4 3 3 3 

(14) 1 1 * 1 $\leftarrow$ (16) 1 * 1 

\end{tcolorbox}
\begin{tcolorbox}[title={$(25,9)$}]
(3) 6 2 3 4 4 1 1 1 

\end{tcolorbox}
\begin{tcolorbox}[title={$(25,10)$}]
(2) * 2 4 3 3 3 

(3) 5 1 2 3 4 4 1 1 1 $\leftarrow$ (4) 6 2 3 4 4 1 1 1 

\end{tcolorbox}
\begin{tcolorbox}[title={$(25,11)$}]
(1) 1 * 2 4 3 3 3 

(3) 3 4 4 1 1 * 1 $\leftarrow$ (4) 5 1 2 3 4 4 1 1 1 

(7) 1 * * 1 $\leftarrow$ (9) * * 1 

\end{tcolorbox}
\begin{tcolorbox}[title={$(25,12)$}]
(1) 2 3 4 4 1 1 * 1 

(6) 1 1 * * 1 $\leftarrow$ (8) 1 * * 1 

\end{tcolorbox}
\begin{tcolorbox}[title={$(25,13)$}]
(2) 4 1 1 * * 1 

\end{tcolorbox}
\begin{tcolorbox}[title={$(26,2)$}]
(23) 3 $\leftarrow$ (27) 

\end{tcolorbox}
\begin{tcolorbox}[title={$(26,3)$}]
(11) 14 1 $\leftarrow$ (12) 15 

(19) 6 1 $\leftarrow$ (20) 7 

(20) 3 3 $\leftarrow$ (26) 1 

(23) 2 1 $\leftarrow$ (24) 3 

\end{tcolorbox}
\begin{tcolorbox}[title={$(26,4)$}]
(5) 7 7 7 $\leftarrow$ (9) 11 7 

(7) 5 7 7 $\leftarrow$ (13) 7 7 

(11) 13 1 1 $\leftarrow$ (12) 14 1 

(18) 2 3 3 $\leftarrow$ (25) 1 1 

(19) 5 1 1 $\leftarrow$ (20) 6 1 

(23) 1 1 1 $\leftarrow$ (24) 2 1 

\end{tcolorbox}
\begin{tcolorbox}[title={$(26,5)$}]
(3) 4 5 7 7 $\leftarrow$ (6) 7 7 7 

(4) 3 5 7 7 $\leftarrow$ (8) 5 7 7 

(11) 12 1 1 1 $\leftarrow$ (12) 13 1 1 

(17) 1 2 3 3 $\leftarrow$ (24) 1 1 1 

(19) 4 1 1 1 $\leftarrow$ (20) 5 1 1 

\end{tcolorbox}
\begin{tcolorbox}[title={$(26,6)$}]
(2) 2 3 5 7 7 $\leftarrow$ (4) 4 5 7 7 

(3) 3 6 6 5 3 

(5) 3 5 7 3 3 $\leftarrow$ (9) 5 7 3 3 

(9) 3 6 2 3 3 $\leftarrow$ (10) 8 3 3 3 

(11) 2 4 3 3 3 $\leftarrow$ (13) 6 2 3 3 

(11) 8 4 1 1 1 $\leftarrow$ (12) 12 1 1 1 

(15) 4 4 1 1 1 $\leftarrow$ (20) 4 1 1 1 

\end{tcolorbox}
\begin{tcolorbox}[title={$(26,7)$}]
(3) 2 3 5 7 3 3 $\leftarrow$ (4) 3 6 6 5 3 

(6) 2 4 5 3 3 3 

(9) ..4 3 3 3 $\leftarrow$ (10) 3 6 2 3 3 

(10) 1 2 4 3 3 3 $\leftarrow$ (12) 2 4 3 3 3 

(11) 6 * 1 $\leftarrow$ (12) 8 4 1 1 1 

(12) 3 4 4 1 1 1 $\leftarrow$ (18) * 1 

(15) 2 * 1 $\leftarrow$ (16) 4 4 1 1 1 

\end{tcolorbox}
\begin{tcolorbox}[title={$(26,8)$}]
(3) 13 1 * 1 

(9) 1 1 2 4 3 3 3 $\leftarrow$ (10) ..4 3 3 3 

(10) 2 3 4 4 1 1 1 $\leftarrow$ (17) 1 * 1 

(11) 5 1 * 1 $\leftarrow$ (12) 6 * 1 

(15) 1 1 * 1 $\leftarrow$ (16) 2 * 1 

\end{tcolorbox}
\begin{tcolorbox}[title={$(26,9)$}]
(1) 8 1 1 2 4 3 3 3 

(3) 12 1 1 * 1 $\leftarrow$ (4) 13 1 * 1 

(9) 1 2 3 4 4 1 1 1 $\leftarrow$ (16) 1 1 * 1 

(11) 4 1 1 * 1 $\leftarrow$ (12) 5 1 * 1 

\end{tcolorbox}
\begin{tcolorbox}[title={$(26,10)$}]
(1) 3 6 2 3 4 4 1 1 1 $\leftarrow$ (2) 8 1 1 2 4 3 3 3 

(3) * 2 4 3 3 3 $\leftarrow$ (5) 6 2 3 4 4 1 1 1 

(3) 8 4 1 1 * 1 $\leftarrow$ (4) 12 1 1 * 1 

(7) 4 4 1 1 * 1 $\leftarrow$ (12) 4 1 1 * 1 

\end{tcolorbox}
\begin{tcolorbox}[title={$(26,11)$}]
(1) 2 * 2 4 3 3 3 $\leftarrow$ (2) 3 6 2 3 4 4 1 1 1 

(2) 1 * 2 4 3 3 3 $\leftarrow$ (4) * 2 4 3 3 3 

(3) 6 * * 1 $\leftarrow$ (4) 8 4 1 1 * 1 

(4) 3 4 4 1 1 * 1 $\leftarrow$ (10) * * 1 

(7) 2 * * 1 $\leftarrow$ (8) 4 4 1 1 * 1 

\end{tcolorbox}
\begin{tcolorbox}[title={$(26,12)$}]
(1) 1 1 * 2 4 3 3 3 $\leftarrow$ (2) 2 * 2 4 3 3 3 

(2) 2 3 4 4 1 1 * 1 $\leftarrow$ (9) 1 * * 1 

(3) 5 1 * * 1 $\leftarrow$ (4) 6 * * 1 

(7) 1 1 * * 1 $\leftarrow$ (8) 2 * * 1 

\end{tcolorbox}
\begin{tcolorbox}[title={$(26,13)$}]
(1) 1 2 3 4 4 1 1 * 1 $\leftarrow$ (8) 1 1 * * 1 

(3) 4 1 1 * * 1 $\leftarrow$ (4) 5 1 * * 1 

\end{tcolorbox}
\begin{tcolorbox}[title={$(26,14)$}]
(1) * * * 1 

\end{tcolorbox}
\begin{tcolorbox}[title={$(27,3)$}]
(11) 13 3 $\leftarrow$ (13) 15 

(19) 5 3 $\leftarrow$ (21) 7 

(21) 3 3 $\leftarrow$ (25) 3 

\end{tcolorbox}
\begin{tcolorbox}[title={$(27,4)$}]
(9) 10 5 3 $\leftarrow$ (10) 11 7 

(10) 11 3 3 $\leftarrow$ (12) 13 3 

(13) 6 5 3 $\leftarrow$ (14) 7 7 

(18) 3 3 3 $\leftarrow$ (20) 5 3 

(19) 2 3 3 $\leftarrow$ (22) 3 3 

\end{tcolorbox}
\begin{tcolorbox}[title={$(27,5)$}]
(5) 3 5 7 7 $\leftarrow$ (9) 5 7 7 

(7) 6 6 5 3 $\leftarrow$ (14) 6 5 3 

(9) 9 3 3 3 $\leftarrow$ (10) 10 5 3 

(18) 1 2 3 3 $\leftarrow$ (20) 2 3 3 

\end{tcolorbox}
\begin{tcolorbox}[title={$(27,6)$}]
(3) 2 3 5 7 7 $\leftarrow$ (5) 4 5 7 7 

(6) 3 5 7 3 3 $\leftarrow$ (11) 8 3 3 3 

(7) 4 7 3 3 3 $\leftarrow$ (8) 6 6 5 3 

(9) 4 5 3 3 3 $\leftarrow$ (10) 5 7 3 3 

(13) 5 1 2 3 3 $\leftarrow$ (14) 6 2 3 3 

\end{tcolorbox}
\begin{tcolorbox}[title={$(27,7)$}]
(1) 3 3 6 6 5 3 $\leftarrow$ (4) 2 3 5 7 7 

(4) 2 3 5 7 3 3 $\leftarrow$ (10) 4 5 3 3 3 

(7) 2 4 5 3 3 3 $\leftarrow$ (8) 4 7 3 3 3 

(11) 1 2 4 3 3 3 $\leftarrow$ (13) 2 4 3 3 3 

(13) 3 4 4 1 1 1 $\leftarrow$ (14) 5 1 2 3 3 

\end{tcolorbox}
\begin{tcolorbox}[title={$(27,8)$}]
(1) 6 2 4 5 3 3 3 $\leftarrow$ (8) 2 4 5 3 3 3 

(10) 1 1 2 4 3 3 3 $\leftarrow$ (12) 1 2 4 3 3 3 

(11) 2 3 4 4 1 1 1 $\leftarrow$ (14) 3 4 4 1 1 1 

\end{tcolorbox}
\begin{tcolorbox}[title={$(27,9)$}]
(1) 4 ..4 5 3 3 3 $\leftarrow$ (2) 6 2 4 5 3 3 3 

(10) 1 2 3 4 4 1 1 1 $\leftarrow$ (12) 2 3 4 4 1 1 1 

\end{tcolorbox}
\begin{tcolorbox}[title={$(27,10)$}]
(1) ....4 5 3 3 3 $\leftarrow$ (2) 4 ..4 5 3 3 3 

(5) 5 1 2 3 4 4 1 1 1 $\leftarrow$ (6) 6 2 3 4 4 1 1 1 

\end{tcolorbox}
\begin{tcolorbox}[title={$(27,11)$}]
(3) 1 * 2 4 3 3 3 $\leftarrow$ (5) * 2 4 3 3 3 

(5) 3 4 4 1 1 * 1 $\leftarrow$ (6) 5 1 2 3 4 4 1 1 1 

\end{tcolorbox}
\begin{tcolorbox}[title={$(27,12)$}]
(2) 1 1 * 2 4 3 3 3 $\leftarrow$ (4) 1 * 2 4 3 3 3 

(3) 2 3 4 4 1 1 * 1 $\leftarrow$ (6) 3 4 4 1 1 * 1 

\end{tcolorbox}
\begin{tcolorbox}[title={$(27,13)$}]
(2) 1 2 3 4 4 1 1 * 1 $\leftarrow$ (4) 2 3 4 4 1 1 * 1 

(4) 4 1 1 * * 1 

\end{tcolorbox}
\begin{tcolorbox}[title={$(27,14)$}]
(2) * * * 1 

\end{tcolorbox}
\begin{tcolorbox}[title={$(27,15)$}]
(1) 1 * * * 1 

\end{tcolorbox}
\begin{tcolorbox}[title={$(28,2)$}]
(27) 1 $\leftarrow$ (29) 

\end{tcolorbox}
\begin{tcolorbox}[title={$(28,3)$}]
(13) 14 1 $\leftarrow$ (14) 15 

(21) 6 1 $\leftarrow$ (22) 7 

(25) 2 1 $\leftarrow$ (26) 3 

(26) 1 1 $\leftarrow$ (28) 1 

\end{tcolorbox}
\begin{tcolorbox}[title={$(28,4)$}]
(7) 7 7 7 $\leftarrow$ (11) 11 7 

(11) 11 3 3 $\leftarrow$ (13) 13 3 

(13) 13 1 1 $\leftarrow$ (14) 14 1 

(19) 3 3 3 $\leftarrow$ (21) 5 3 

(21) 5 1 1 $\leftarrow$ (22) 6 1 

(25) 1 1 1 $\leftarrow$ (26) 2 1 

\end{tcolorbox}
\begin{tcolorbox}[title={$(28,5)$}]
(6) 3 5 7 7 $\leftarrow$ (10) 5 7 7 

(10) 9 3 3 3 $\leftarrow$ (12) 11 3 3 

(13) 12 1 1 1 $\leftarrow$ (14) 13 1 1 

(19) 1 2 3 3 $\leftarrow$ (21) 2 3 3 

(21) 4 1 1 1 $\leftarrow$ (22) 5 1 1 

\end{tcolorbox}
\begin{tcolorbox}[title={$(28,6)$}]
(5) 3 6 6 5 3 $\leftarrow$ (11) 5 7 3 3 

(7) 3 5 7 3 3 $\leftarrow$ (9) 6 6 5 3 

(11) 3 6 2 3 3 $\leftarrow$ (12) 8 3 3 3 

(13) 8 4 1 1 1 $\leftarrow$ (14) 12 1 1 1 

(17) 4 4 1 1 1 $\leftarrow$ (20) 1 2 3 3 

(19) * 1 $\leftarrow$ (22) 4 1 1 1 

\end{tcolorbox}
\begin{tcolorbox}[title={$(28,7)$}]
(2) 3 3 6 6 5 3 $\leftarrow$ (8) 3 5 7 3 3 

(5) 2 3 5 7 3 3 $\leftarrow$ (6) 3 6 6 5 3 

(11) ..4 3 3 3 $\leftarrow$ (12) 3 6 2 3 3 

(13) 6 * 1 $\leftarrow$ (14) 8 4 1 1 1 

(17) 2 * 1 $\leftarrow$ (18) 4 4 1 1 1 

(18) 1 * 1 $\leftarrow$ (20) * 1 

\end{tcolorbox}
\begin{tcolorbox}[title={$(28,8)$}]
(5) 13 1 * 1 

(11) 1 1 2 4 3 3 3 $\leftarrow$ (12) ..4 3 3 3 

(13) 5 1 * 1 $\leftarrow$ (14) 6 * 1 

(17) 1 1 * 1 $\leftarrow$ (18) 2 * 1 

\end{tcolorbox}
\begin{tcolorbox}[title={$(28,9)$}]
(3) 8 1 1 2 4 3 3 3 

(5) 12 1 1 * 1 $\leftarrow$ (6) 13 1 * 1 

(11) 1 2 3 4 4 1 1 1 $\leftarrow$ (13) 2 3 4 4 1 1 1 

(13) 4 1 1 * 1 $\leftarrow$ (14) 5 1 * 1 

\end{tcolorbox}
\begin{tcolorbox}[title={$(28,10)$}]
(2) ....4 5 3 3 3 

(3) 3 6 2 3 4 4 1 1 1 $\leftarrow$ (4) 8 1 1 2 4 3 3 3 

(5) 8 4 1 1 * 1 $\leftarrow$ (6) 12 1 1 * 1 

(9) 4 4 1 1 * 1 $\leftarrow$ (12) 1 2 3 4 4 1 1 1 

(11) * * 1 $\leftarrow$ (14) 4 1 1 * 1 

\end{tcolorbox}
\begin{tcolorbox}[title={$(28,11)$}]
(3) 2 * 2 4 3 3 3 $\leftarrow$ (4) 3 6 2 3 4 4 1 1 1 

(5) 6 * * 1 $\leftarrow$ (6) 8 4 1 1 * 1 

(9) 2 * * 1 $\leftarrow$ (10) 4 4 1 1 * 1 

(10) 1 * * 1 $\leftarrow$ (12) * * 1 

\end{tcolorbox}
\begin{tcolorbox}[title={$(28,12)$}]
(3) 1 1 * 2 4 3 3 3 $\leftarrow$ (4) 2 * 2 4 3 3 3 

(5) 5 1 * * 1 $\leftarrow$ (6) 6 * * 1 

(9) 1 1 * * 1 $\leftarrow$ (10) 2 * * 1 

\end{tcolorbox}
\begin{tcolorbox}[title={$(28,13)$}]
(3) 1 2 3 4 4 1 1 * 1 $\leftarrow$ (5) 2 3 4 4 1 1 * 1 

(5) 4 1 1 * * 1 $\leftarrow$ (6) 5 1 * * 1 

\end{tcolorbox}
\begin{tcolorbox}[title={$(28,14)$}]
(1) 4 4 1 1 * * 1 $\leftarrow$ (4) 1 2 3 4 4 1 1 * 1 

(3) * * * 1 $\leftarrow$ (6) 4 1 1 * * 1 

\end{tcolorbox}
\begin{tcolorbox}[title={$(28,15)$}]
(1) 2 * * * 1 $\leftarrow$ (2) 4 4 1 1 * * 1 

(2) 1 * * * 1 $\leftarrow$ (4) * * * 1 

\end{tcolorbox}
\begin{tcolorbox}[title={$(28,16)$}]
(1) 1 1 * * * 1 $\leftarrow$ (2) 2 * * * 1 

\end{tcolorbox}
\begin{tcolorbox}[title={$(29,3)$}]
(15) 7 7 $\leftarrow$ (23) 7 

(23) 3 3 $\leftarrow$ (27) 3 

(27) 1 1 $\leftarrow$ (29) 1 

\end{tcolorbox}
\begin{tcolorbox}[title={$(29,4)$}]
(8) 7 7 7 $\leftarrow$ (22) 5 3 

(11) 10 5 3 $\leftarrow$ (12) 11 7 

(15) 6 5 3 $\leftarrow$ (16) 7 7 

(20) 3 3 3 $\leftarrow$ (24) 3 3 

(26) 1 1 1 $\leftarrow$ (28) 1 1 

\end{tcolorbox}
\begin{tcolorbox}[title={$(29,5)$}]
(6) 4 5 7 7 $\leftarrow$ (16) 6 5 3 

(7) 3 5 7 7 $\leftarrow$ (11) 5 7 7 

(11) 9 3 3 3 $\leftarrow$ (12) 10 5 3 

(15) 6 2 3 3 $\leftarrow$ (22) 2 3 3 

\end{tcolorbox}
\begin{tcolorbox}[title={$(29,6)$}]
(5) 2 3 5 7 7 $\leftarrow$ (13) 8 3 3 3 

(9) 4 7 3 3 3 $\leftarrow$ (10) 6 6 5 3 

(11) 4 5 3 3 3 $\leftarrow$ (12) 5 7 3 3 

(14) 2 4 3 3 3 $\leftarrow$ (21) 1 2 3 3 

(15) 5 1 2 3 3 $\leftarrow$ (16) 6 2 3 3 

\end{tcolorbox}
\begin{tcolorbox}[title={$(29,7)$}]
(3) 3 3 6 6 5 3 $\leftarrow$ (9) 3 5 7 3 3 

(6) 2 3 5 7 3 3 

(9) 2 4 5 3 3 3 $\leftarrow$ (10) 4 7 3 3 3 

(13) 1 2 4 3 3 3 $\leftarrow$ (19) 4 4 1 1 1 

(15) 3 4 4 1 1 1 $\leftarrow$ (16) 5 1 2 3 3 

(19) 1 * 1 $\leftarrow$ (21) * 1 

\end{tcolorbox}
\begin{tcolorbox}[title={$(29,8)$}]
(3) 6 2 4 5 3 3 3 

(12) 1 1 2 4 3 3 3 $\leftarrow$ (16) 3 4 4 1 1 1 

(18) 1 1 * 1 $\leftarrow$ (20) 1 * 1 

\end{tcolorbox}
\begin{tcolorbox}[title={$(29,9)$}]
(1) 6 ..4 5 3 3 3 

(3) 4 ..4 5 3 3 3 $\leftarrow$ (4) 6 2 4 5 3 3 3 

(7) 6 2 3 4 4 1 1 1 $\leftarrow$ (14) 2 3 4 4 1 1 1 

\end{tcolorbox}
\begin{tcolorbox}[title={$(29,10)$}]
(1) 4 ...4 5 3 3 3 $\leftarrow$ (2) 6 ..4 5 3 3 3 

(3) ....4 5 3 3 3 $\leftarrow$ (4) 4 ..4 5 3 3 3 

(6) * 2 4 3 3 3 $\leftarrow$ (13) 1 2 3 4 4 1 1 1 

(7) 5 1 2 3 4 4 1 1 1 $\leftarrow$ (8) 6 2 3 4 4 1 1 1 

\end{tcolorbox}
\begin{tcolorbox}[title={$(29,11)$}]
(1) .....4 5 3 3 3 $\leftarrow$ (2) 4 ...4 5 3 3 3 

(5) 1 * 2 4 3 3 3 $\leftarrow$ (11) 4 4 1 1 * 1 

(7) 3 4 4 1 1 * 1 $\leftarrow$ (8) 5 1 2 3 4 4 1 1 1 

(11) 1 * * 1 $\leftarrow$ (13) * * 1 

\end{tcolorbox}
\begin{tcolorbox}[title={$(29,12)$}]
(4) 1 1 * 2 4 3 3 3 $\leftarrow$ (8) 3 4 4 1 1 * 1 

(10) 1 1 * * 1 $\leftarrow$ (12) 1 * * 1 

\end{tcolorbox}
\begin{tcolorbox}[title={$(29,15)$}]
(3) 1 * * * 1 $\leftarrow$ (5) * * * 1 

\end{tcolorbox}
\begin{tcolorbox}[title={$(29,16)$}]
(2) 1 1 * * * 1 $\leftarrow$ (4) 1 * * * 1 

\end{tcolorbox}
\begin{tcolorbox}[title={$(30,2)$}]
(15) 15 

\end{tcolorbox}
\begin{tcolorbox}[title={$(30,3)$}]
(14) 13 3 

(15) 14 1 $\leftarrow$ (16) 15 

(23) 6 1 $\leftarrow$ (24) 7 

(27) 2 1 $\leftarrow$ (28) 3 

\end{tcolorbox}
\begin{tcolorbox}[title={$(30,4)$}]
(9) 7 7 7 $\leftarrow$ (17) 7 7 

(13) 11 3 3 

(15) 13 1 1 $\leftarrow$ (16) 14 1 

(21) 3 3 3 $\leftarrow$ (25) 3 3 

(23) 5 1 1 $\leftarrow$ (24) 6 1 

(27) 1 1 1 $\leftarrow$ (28) 2 1 

\end{tcolorbox}
\begin{tcolorbox}[title={$(30,5)$}]
(7) 4 5 7 7 $\leftarrow$ (10) 7 7 7 

(8) 3 5 7 7 

(12) 9 3 3 3 

(15) 12 1 1 1 $\leftarrow$ (16) 13 1 1 

(23) 4 1 1 1 $\leftarrow$ (24) 5 1 1 

\end{tcolorbox}
\begin{tcolorbox}[title={$(30,6)$}]
(6) 2 3 5 7 7 $\leftarrow$ (8) 4 5 7 7 

(7) 3 6 6 5 3 $\leftarrow$ (11) 6 6 5 3 

(12) 4 5 3 3 3 

(13) 3 6 2 3 3 $\leftarrow$ (14) 8 3 3 3 

(15) 2 4 3 3 3 $\leftarrow$ (17) 6 2 3 3 

(15) 8 4 1 1 1 $\leftarrow$ (16) 12 1 1 1 

\end{tcolorbox}
\begin{tcolorbox}[title={$(30,7)$}]
(4) 3 3 6 6 5 3 $\leftarrow$ (10) 3 5 7 3 3 

(7) 2 3 5 7 3 3 $\leftarrow$ (8) 3 6 6 5 3 

(10) 2 4 5 3 3 3 

(13) ..4 3 3 3 $\leftarrow$ (14) 3 6 2 3 3 

(14) 1 2 4 3 3 3 $\leftarrow$ (16) 2 4 3 3 3 

(15) 6 * 1 $\leftarrow$ (16) 8 4 1 1 1 

(19) 2 * 1 $\leftarrow$ (20) 4 4 1 1 1 

\end{tcolorbox}
\begin{tcolorbox}[title={$(30,8)$}]
(1) 6 2 3 5 7 3 3 $\leftarrow$ (8) 2 3 5 7 3 3 

(7) 13 1 * 1 

(13) 1 1 2 4 3 3 3 $\leftarrow$ (14) ..4 3 3 3 

(15) 5 1 * 1 $\leftarrow$ (16) 6 * 1 

(19) 1 1 * 1 $\leftarrow$ (20) 2 * 1 

\end{tcolorbox}
\begin{tcolorbox}[title={$(30,9)$}]
(1) 3 6 2 4 5 3 3 3 $\leftarrow$ (2) 6 2 3 5 7 3 3 

(5) 8 1 1 2 4 3 3 3 

(7) 12 1 1 * 1 $\leftarrow$ (8) 13 1 * 1 

(15) 4 1 1 * 1 $\leftarrow$ (16) 5 1 * 1 

\end{tcolorbox}
\begin{tcolorbox}[title={$(30,10)$}]
(1) ....3 5 7 3 3 $\leftarrow$ (2) 3 6 2 4 5 3 3 3 

(4) ....4 5 3 3 3 

(5) 3 6 2 3 4 4 1 1 1 $\leftarrow$ (6) 8 1 1 2 4 3 3 3 

(7) * 2 4 3 3 3 $\leftarrow$ (9) 6 2 3 4 4 1 1 1 

(7) 8 4 1 1 * 1 $\leftarrow$ (8) 12 1 1 * 1 

\end{tcolorbox}
\begin{tcolorbox}[title={$(30,11)$}]
(2) .....4 5 3 3 3 

(5) 2 * 2 4 3 3 3 $\leftarrow$ (6) 3 6 2 3 4 4 1 1 1 

(6) 1 * 2 4 3 3 3 $\leftarrow$ (8) * 2 4 3 3 3 

(7) 6 * * 1 $\leftarrow$ (8) 8 4 1 1 * 1 

(11) 2 * * 1 $\leftarrow$ (12) 4 4 1 1 * 1 

\end{tcolorbox}
\begin{tcolorbox}[title={$(30,12)$}]
(5) 1 1 * 2 4 3 3 3 $\leftarrow$ (6) 2 * 2 4 3 3 3 

(6) 2 3 4 4 1 1 * 1 

(7) 5 1 * * 1 $\leftarrow$ (8) 6 * * 1 

(11) 1 1 * * 1 $\leftarrow$ (12) 2 * * 1 

\end{tcolorbox}
\begin{tcolorbox}[title={$(30,13)$}]
(5) 1 2 3 4 4 1 1 * 1 

(7) 4 1 1 * * 1 $\leftarrow$ (8) 5 1 * * 1 

\end{tcolorbox}
\begin{tcolorbox}[title={$(30,14)$}]
(3) 4 4 1 1 * * 1 

\end{tcolorbox}
\begin{tcolorbox}[title={$(30,15)$}]
(3) 2 * * * 1 $\leftarrow$ (4) 4 4 1 1 * * 1 

\end{tcolorbox}
\begin{tcolorbox}[title={$(30,16)$}]
(3) 1 1 * * * 1 $\leftarrow$ (4) 2 * * * 1 

\end{tcolorbox}
\begin{tcolorbox}[title={$(31,1)$}]
(31) 

\end{tcolorbox}
\begin{tcolorbox}[title={$(31,2)$}]
(30) 1 

\end{tcolorbox}
\begin{tcolorbox}[title={$(31,3)$}]
(1) 15 15 

(13) 11 7 

(15) 13 3 $\leftarrow$ (17) 15 

(23) 5 3 $\leftarrow$ (25) 7 

(29) 1 1 

\end{tcolorbox}
\begin{tcolorbox}[title={$(31,4)$}]
(1) 14 13 3 $\leftarrow$ (2) 15 15 

(12) 5 7 7 

(13) 10 5 3 $\leftarrow$ (14) 11 7 

(14) 11 3 3 $\leftarrow$ (16) 13 3 

(17) 6 5 3 $\leftarrow$ (18) 7 7 

(22) 3 3 3 $\leftarrow$ (24) 5 3 

(23) 2 3 3 $\leftarrow$ (26) 3 3 

(28) 1 1 1 

\end{tcolorbox}
\begin{tcolorbox}[title={$(31,5)$}]
(1) 13 11 3 3 $\leftarrow$ (2) 14 13 3 

(9) 3 5 7 7 

(13) 5 7 3 3 $\leftarrow$ (18) 6 5 3 

(13) 9 3 3 3 $\leftarrow$ (14) 10 5 3 

(22) 1 2 3 3 $\leftarrow$ (24) 2 3 3 

(24) 4 1 1 1 

\end{tcolorbox}
\begin{tcolorbox}[title={$(31,6)$}]
(1) 12 9 3 3 3 $\leftarrow$ (2) 13 11 3 3 

(7) 2 3 5 7 7 $\leftarrow$ (9) 4 5 7 7 

(11) 4 7 3 3 3 $\leftarrow$ (12) 6 6 5 3 

(13) 4 5 3 3 3 $\leftarrow$ (14) 5 7 3 3 

(17) 5 1 2 3 3 $\leftarrow$ (18) 6 2 3 3 

(22) * 1 

\end{tcolorbox}
\begin{tcolorbox}[title={$(31,7)$}]
(1) 12 4 5 3 3 3 

(5) 3 3 6 6 5 3 $\leftarrow$ (8) 2 3 5 7 7 

(11) 2 4 5 3 3 3 $\leftarrow$ (12) 4 7 3 3 3 

(15) 1 2 4 3 3 3 $\leftarrow$ (17) 2 4 3 3 3 

(17) 3 4 4 1 1 1 $\leftarrow$ (18) 5 1 2 3 3 

(21) 1 * 1 

\end{tcolorbox}
\begin{tcolorbox}[title={$(31,8)$}]
(1) 10 2 4 5 3 3 3 $\leftarrow$ (2) 12 4 5 3 3 3 

(5) 6 2 4 5 3 3 3 

(14) 1 1 2 4 3 3 3 $\leftarrow$ (16) 1 2 4 3 3 3 

(15) 2 3 4 4 1 1 1 $\leftarrow$ (18) 3 4 4 1 1 1 

(20) 1 1 * 1 

\end{tcolorbox}
\begin{tcolorbox}[title={$(31,9)$}]
(1) 7 13 1 * 1 $\leftarrow$ (2) 10 2 4 5 3 3 3 

(3) 6 ..4 5 3 3 3 

(5) 4 ..4 5 3 3 3 $\leftarrow$ (6) 6 2 4 5 3 3 3 

(14) 1 2 3 4 4 1 1 1 $\leftarrow$ (16) 2 3 4 4 1 1 1 

(16) 4 1 1 * 1 

\end{tcolorbox}
\begin{tcolorbox}[title={$(31,10)$}]
(1) 5 8 1 1 2 4 3 3 3 $\leftarrow$ (2) 7 13 1 * 1 

(2) ....3 5 7 3 3 

(3) 4 ...4 5 3 3 3 $\leftarrow$ (4) 6 ..4 5 3 3 3 

(5) ....4 5 3 3 3 $\leftarrow$ (6) 4 ..4 5 3 3 3 

(9) 5 1 2 3 4 4 1 1 1 $\leftarrow$ (10) 6 2 3 4 4 1 1 1 

(14) * * 1 

\end{tcolorbox}
\begin{tcolorbox}[title={$(31,11)$}]
(1) 4 ....4 5 3 3 3 $\leftarrow$ (2) 5 8 1 1 2 4 3 3 3 

(3) .....4 5 3 3 3 $\leftarrow$ (4) 4 ...4 5 3 3 3 

(7) 1 * 2 4 3 3 3 $\leftarrow$ (9) * 2 4 3 3 3 

(9) 3 4 4 1 1 * 1 $\leftarrow$ (10) 5 1 2 3 4 4 1 1 1 

(13) 1 * * 1 

\end{tcolorbox}
\begin{tcolorbox}[title={$(31,12)$}]
(1) ......4 5 3 3 3 $\leftarrow$ (2) 4 ....4 5 3 3 3 

(6) 1 1 * 2 4 3 3 3 $\leftarrow$ (8) 1 * 2 4 3 3 3 

(7) 2 3 4 4 1 1 * 1 $\leftarrow$ (10) 3 4 4 1 1 * 1 

(12) 1 1 * * 1 

\end{tcolorbox}
\begin{tcolorbox}[title={$(31,13)$}]
(1) 6 2 3 4 4 1 1 * 1 

(6) 1 2 3 4 4 1 1 * 1 $\leftarrow$ (8) 2 3 4 4 1 1 * 1 

(8) 4 1 1 * * 1 

\end{tcolorbox}

\begin{tcolorbox}[title={$(31,14)$}]
(1) 5 1 2 3 4 4 1 1 * 1 $\leftarrow$ (2) 6 2 3 4 4 1 1 * 1 

(6) * * * 1 

\end{tcolorbox}
\begin{tcolorbox}[title={$(31,15)$}]
(1) 3 4 4 1 1 * * 1 $\leftarrow$ (2) 5 1 2 3 4 4 1 1 * 1 

(5) 1 * * * 1 

\end{tcolorbox}
\begin{tcolorbox}[title={$(31,16)$}]
(4) 1 1 * * * 1 

\end{tcolorbox}

\scriptsize

\begin{tcolorbox}[title={$(32,2)$}]
(1) 31 

(29) 3 

(31) 1 $\leftarrow$ (33) 

\end{tcolorbox}
\begin{tcolorbox}[title={$(32,3)$}]
(1) 30 1 $\leftarrow$ (2) 31 

(17) 14 1 $\leftarrow$ (18) 15 

(25) 6 1 $\leftarrow$ (26) 7 

(29) 2 1 $\leftarrow$ (30) 3 

(30) 1 1 $\leftarrow$ (32) 1 

\end{tcolorbox}
\begin{tcolorbox}[title={$(32,4)$}]
(1) 13 11 7 

(1) 29 1 1 $\leftarrow$ (2) 30 1 

(11) 7 7 7 $\leftarrow$ (19) 7 7 

(13) 5 7 7 

(15) 11 3 3 $\leftarrow$ (17) 13 3 

(17) 13 1 1 $\leftarrow$ (18) 14 1 

(23) 3 3 3 $\leftarrow$ (25) 5 3 

(25) 5 1 1 $\leftarrow$ (26) 6 1 

(29) 1 1 1 $\leftarrow$ (30) 2 1 

\end{tcolorbox}
\begin{tcolorbox}[title={$(32,5)$}]
(1) 28 1 1 1 $\leftarrow$ (2) 29 1 1 

(10) 3 5 7 7 

(14) 9 3 3 3 $\leftarrow$ (16) 11 3 3 

(15) 8 3 3 3 $\leftarrow$ (24) 3 3 3 

(17) 12 1 1 1 $\leftarrow$ (18) 13 1 1 

(23) 1 2 3 3 $\leftarrow$ (25) 2 3 3 

(25) 4 1 1 1 $\leftarrow$ (26) 5 1 1 

\end{tcolorbox}
\begin{tcolorbox}[title={$(32,6)$}]
(1) 9 3 5 7 7 

(1) 24 4 1 1 1 $\leftarrow$ (2) 28 1 1 1 

(2) 12 9 3 3 3 

(9) 3 6 6 5 3 

(11) 3 5 7 3 3 $\leftarrow$ (13) 6 6 5 3 

(14) 4 5 3 3 3 $\leftarrow$ (19) 6 2 3 3 

(15) 3 6 2 3 3 $\leftarrow$ (16) 8 3 3 3 

(17) 8 4 1 1 1 $\leftarrow$ (18) 12 1 1 1 

(21) 4 4 1 1 1 $\leftarrow$ (24) 1 2 3 3 

(23) * 1 $\leftarrow$ (26) 4 1 1 1 

\end{tcolorbox}
\begin{tcolorbox}[title={$(32,7)$}]
(1) 22 * 1 $\leftarrow$ (2) 24 4 1 1 1 

(6) 3 3 6 6 5 3 

(9) 2 3 5 7 3 3 $\leftarrow$ (10) 3 6 6 5 3 

(12) 2 4 5 3 3 3 $\leftarrow$ (18) 2 4 3 3 3 

(15) ..4 3 3 3 $\leftarrow$ (16) 3 6 2 3 3 

(17) 6 * 1 $\leftarrow$ (18) 8 4 1 1 1 

(21) 2 * 1 $\leftarrow$ (22) 4 4 1 1 1 

(22) 1 * 1 $\leftarrow$ (24) * 1 

\end{tcolorbox}
\begin{tcolorbox}[title={$(32,8)$}]
(1) 21 1 * 1 $\leftarrow$ (2) 22 * 1 

(3) 6 2 3 5 7 3 3 

(9) 13 1 * 1 $\leftarrow$ (17) 1 2 4 3 3 3 

(15) 1 1 2 4 3 3 3 $\leftarrow$ (16) ..4 3 3 3 

(17) 5 1 * 1 $\leftarrow$ (18) 6 * 1 

(21) 1 1 * 1 $\leftarrow$ (22) 2 * 1 

\end{tcolorbox}
\begin{tcolorbox}[title={$(32,9)$}]
(1) 5 6 2 4 5 3 3 3 

(1) 20 1 1 * 1 $\leftarrow$ (2) 21 1 * 1 

(3) 3 6 2 4 5 3 3 3 $\leftarrow$ (4) 6 2 3 5 7 3 3 

(7) 8 1 1 2 4 3 3 3 $\leftarrow$ (16) 1 1 2 4 3 3 3 

(9) 12 1 1 * 1 $\leftarrow$ (10) 13 1 * 1 

(15) 1 2 3 4 4 1 1 1 $\leftarrow$ (17) 2 3 4 4 1 1 1 

(17) 4 1 1 * 1 $\leftarrow$ (18) 5 1 * 1 

\end{tcolorbox}
\begin{tcolorbox}[title={$(32,10)$}]
(1) 3 6 ..4 5 3 3 3 $\leftarrow$ (2) 5 6 2 4 5 3 3 3 

(1) 16 4 1 1 * 1 $\leftarrow$ (2) 20 1 1 * 1 

(3) ....3 5 7 3 3 $\leftarrow$ (4) 3 6 2 4 5 3 3 3 

(6) ....4 5 3 3 3 $\leftarrow$ (11) 6 2 3 4 4 1 1 1 

(7) 3 6 2 3 4 4 1 1 1 $\leftarrow$ (8) 8 1 1 2 4 3 3 3 

(9) 8 4 1 1 * 1 $\leftarrow$ (10) 12 1 1 * 1 

(13) 4 4 1 1 * 1 $\leftarrow$ (16) 1 2 3 4 4 1 1 1 

(15) * * 1 $\leftarrow$ (18) 4 1 1 * 1 

\end{tcolorbox}
\begin{tcolorbox}[title={$(32,11)$}]
(1) .....3 5 7 3 3 $\leftarrow$ (2) 3 6 ..4 5 3 3 3 

(1) 14 * * 1 $\leftarrow$ (2) 16 4 1 1 * 1 

(4) .....4 5 3 3 3 $\leftarrow$ (10) * 2 4 3 3 3 

(7) 2 * 2 4 3 3 3 $\leftarrow$ (8) 3 6 2 3 4 4 1 1 1 

(9) 6 * * 1 $\leftarrow$ (10) 8 4 1 1 * 1 

(13) 2 * * 1 $\leftarrow$ (14) 4 4 1 1 * 1 

(14) 1 * * 1 $\leftarrow$ (16) * * 1 

\end{tcolorbox}
\begin{tcolorbox}[title={$(32,12)$}]
(1) 13 1 * * 1 $\leftarrow$ (2) 14 * * 1 

(2) ......4 5 3 3 3 $\leftarrow$ (9) 1 * 2 4 3 3 3 

(7) 1 1 * 2 4 3 3 3 $\leftarrow$ (8) 2 * 2 4 3 3 3 

(9) 5 1 * * 1 $\leftarrow$ (10) 6 * * 1 

(13) 1 1 * * 1 $\leftarrow$ (14) 2 * * 1 

\end{tcolorbox}
\begin{tcolorbox}[title={$(32,13)$}]
(1) 12 1 1 * * 1 $\leftarrow$ (2) 13 1 * * 1 

(7) 1 2 3 4 4 1 1 * 1 $\leftarrow$ (9) 2 3 4 4 1 1 * 1 

(9) 4 1 1 * * 1 $\leftarrow$ (10) 5 1 * * 1 

\end{tcolorbox}
\begin{tcolorbox}[title={$(32,14)$}]
(1) * * 2 4 3 3 3 

(1) 8 4 1 1 * * 1 $\leftarrow$ (2) 12 1 1 * * 1 

(5) 4 4 1 1 * * 1 $\leftarrow$ (8) 1 2 3 4 4 1 1 * 1 

(7) * * * 1 $\leftarrow$ (10) 4 1 1 * * 1 

\end{tcolorbox}
\begin{tcolorbox}[title={$(32,15)$}]
(1) 6 * * * 1 $\leftarrow$ (2) 8 4 1 1 * * 1 

(2) 3 4 4 1 1 * * 1 

(5) 2 * * * 1 $\leftarrow$ (6) 4 4 1 1 * * 1 

(6) 1 * * * 1 $\leftarrow$ (8) * * * 1 

\end{tcolorbox}
\begin{tcolorbox}[title={$(32,16)$}]
(1) 5 1 * * * 1 $\leftarrow$ (2) 6 * * * 1 

(5) 1 1 * * * 1 $\leftarrow$ (6) 2 * * * 1 

\end{tcolorbox}
\begin{tcolorbox}[title={$(32,17)$}]
(1) 4 1 1 * * * 1 $\leftarrow$ (2) 5 1 * * * 1 

\end{tcolorbox}
\begin{tcolorbox}[title={$(33,3)$}]
(1) 29 3 

(3) 15 15 

(15) 11 7 $\leftarrow$ (19) 15 

(27) 3 3 

(31) 1 1 $\leftarrow$ (33) 1 

\end{tcolorbox}
\begin{tcolorbox}[title={$(33,4)$}]
(2) 13 11 7 

(3) 14 13 3 $\leftarrow$ (4) 15 15 

(12) 7 7 7 $\leftarrow$ (18) 13 3 

(14) 5 7 7 

(15) 10 5 3 $\leftarrow$ (16) 11 7 

(19) 6 5 3 $\leftarrow$ (20) 7 7 

(30) 1 1 1 $\leftarrow$ (32) 1 1 

\end{tcolorbox}
\begin{tcolorbox}[title={$(33,5)$}]
(1) 13 5 7 7 

(3) 13 11 3 3 $\leftarrow$ (4) 14 13 3 

(10) 4 5 7 7 $\leftarrow$ (17) 11 3 3 

(11) 3 5 7 7 

(15) 5 7 3 3 $\leftarrow$ (20) 6 5 3 

(15) 9 3 3 3 $\leftarrow$ (16) 10 5 3 

\end{tcolorbox}
\begin{tcolorbox}[title={$(33,6)$}]
(2) 9 3 5 7 7 

(3) 12 9 3 3 3 $\leftarrow$ (4) 13 11 3 3 

(9) 2 3 5 7 7 $\leftarrow$ (16) 9 3 3 3 

(12) 3 5 7 3 3 $\leftarrow$ (17) 8 3 3 3 

(13) 4 7 3 3 3 $\leftarrow$ (14) 6 6 5 3 

(15) 4 5 3 3 3 $\leftarrow$ (16) 5 7 3 3 

(19) 5 1 2 3 3 $\leftarrow$ (20) 6 2 3 3 

\end{tcolorbox}
\begin{tcolorbox}[title={$(33,7)$}]
(1) 9 3 6 6 5 3 

(3) 12 4 5 3 3 3 

(7) 3 3 6 6 5 3 $\leftarrow$ (11) 3 6 6 5 3 

(10) 2 3 5 7 3 3 $\leftarrow$ (16) 4 5 3 3 3 

(13) 2 4 5 3 3 3 $\leftarrow$ (14) 4 7 3 3 3 

(19) 3 4 4 1 1 1 $\leftarrow$ (20) 5 1 2 3 3 

(23) 1 * 1 $\leftarrow$ (25) * 1 

\end{tcolorbox}
\begin{tcolorbox}[title={$(33,8)$}]
(1) 6 3 3 6 6 5 3 $\leftarrow$ (8) 3 3 6 6 5 3 

(3) 10 2 4 5 3 3 3 $\leftarrow$ (4) 12 4 5 3 3 3 

(7) 6 2 4 5 3 3 3 $\leftarrow$ (14) 2 4 5 3 3 3 

(22) 1 1 * 1 $\leftarrow$ (24) 1 * 1 

\end{tcolorbox}
\begin{tcolorbox}[title={$(33,9)$}]
(1) 3 6 2 3 5 7 3 3 $\leftarrow$ (2) 6 3 3 6 6 5 3 

(3) 7 13 1 * 1 $\leftarrow$ (4) 10 2 4 5 3 3 3 

(5) 6 ..4 5 3 3 3 $\leftarrow$ (11) 13 1 * 1 

(7) 4 ..4 5 3 3 3 $\leftarrow$ (8) 6 2 4 5 3 3 3 

\end{tcolorbox}
\begin{tcolorbox}[title={$(33,10)$}]
(1) ...3 3 6 6 5 3 $\leftarrow$ (2) 3 6 2 3 5 7 3 3 

(3) 5 8 1 1 2 4 3 3 3 $\leftarrow$ (4) 7 13 1 * 1 

(4) ....3 5 7 3 3 $\leftarrow$ (9) 8 1 1 2 4 3 3 3 

(5) 4 ...4 5 3 3 3 $\leftarrow$ (6) 6 ..4 5 3 3 3 

(7) ....4 5 3 3 3 $\leftarrow$ (8) 4 ..4 5 3 3 3 

(11) 5 1 2 3 4 4 1 1 1 $\leftarrow$ (12) 6 2 3 4 4 1 1 1 

\end{tcolorbox}
\begin{tcolorbox}[title={$(33,11)$}]
(2) .....3 5 7 3 3 $\leftarrow$ (8) ....4 5 3 3 3 

(3) 4 ....4 5 3 3 3 $\leftarrow$ (4) 5 8 1 1 2 4 3 3 3 

(5) .....4 5 3 3 3 $\leftarrow$ (6) 4 ...4 5 3 3 3 

(11) 3 4 4 1 1 * 1 $\leftarrow$ (12) 5 1 2 3 4 4 1 1 1 

(15) 1 * * 1 $\leftarrow$ (17) * * 1 

\end{tcolorbox}
\begin{tcolorbox}[title={$(33,12)$}]
(3) ......4 5 3 3 3 $\leftarrow$ (4) 4 ....4 5 3 3 3 

(8) 1 1 * 2 4 3 3 3 

(14) 1 1 * * 1 $\leftarrow$ (16) 1 * * 1 

\end{tcolorbox}
\begin{tcolorbox}[title={$(33,13)$}]
(3) 6 2 3 4 4 1 1 * 1 

\end{tcolorbox}
\begin{tcolorbox}[title={$(33,14)$}]
(2) * * 2 4 3 3 3 

(3) 5 1 2 3 4 4 1 1 * 1 $\leftarrow$ (4) 6 2 3 4 4 1 1 * 1 

\end{tcolorbox}
\begin{tcolorbox}[title={$(33,15)$}]
(1) 1 * * 2 4 3 3 3 

(3) 3 4 4 1 1 * * 1 $\leftarrow$ (4) 5 1 2 3 4 4 1 1 * 1 

(7) 1 * * * 1 $\leftarrow$ (9) * * * 1 

\end{tcolorbox}
\begin{tcolorbox}[title={$(33,16)$}]
(1) 2 3 4 4 1 1 * * 1 

(6) 1 1 * * * 1 $\leftarrow$ (8) 1 * * * 1 

\end{tcolorbox}
\begin{tcolorbox}[title={$(33,17)$}]
(2) 4 1 1 * * * 1 

\end{tcolorbox}
\begin{tcolorbox}[title={$(34,2)$}]
(3) 31 

(27) 7 

(31) 3 $\leftarrow$ (35) 

\end{tcolorbox}
\begin{tcolorbox}[title={$(34,3)$}]
(2) 29 3 

(3) 30 1 $\leftarrow$ (4) 31 

(19) 14 1 $\leftarrow$ (20) 15 

(26) 5 3 

(27) 6 1 $\leftarrow$ (28) 7 

(28) 3 3 $\leftarrow$ (34) 1 

(31) 2 1 $\leftarrow$ (32) 3 

\end{tcolorbox}
\begin{tcolorbox}[title={$(34,4)$}]
(1) 27 3 3 

(3) 13 11 7 $\leftarrow$ (5) 15 15 

(3) 29 1 1 $\leftarrow$ (4) 30 1 

(13) 7 7 7 $\leftarrow$ (17) 11 7 

(15) 5 7 7 $\leftarrow$ (21) 7 7 

(19) 13 1 1 $\leftarrow$ (20) 14 1 

(25) 3 3 3 

(26) 2 3 3 $\leftarrow$ (33) 1 1 

(27) 5 1 1 $\leftarrow$ (28) 6 1 

(31) 1 1 1 $\leftarrow$ (32) 2 1 

\end{tcolorbox}
\begin{tcolorbox}[title={$(34,5)$}]
(1) 14 5 7 7 $\leftarrow$ (4) 13 11 7 

(2) 13 5 7 7 

(3) 28 1 1 1 $\leftarrow$ (4) 29 1 1 

(11) 4 5 7 7 $\leftarrow$ (14) 7 7 7 

(12) 3 5 7 7 $\leftarrow$ (16) 5 7 7 

(19) 12 1 1 1 $\leftarrow$ (20) 13 1 1 

(25) 1 2 3 3 $\leftarrow$ (32) 1 1 1 

(27) 4 1 1 1 $\leftarrow$ (28) 5 1 1 

\end{tcolorbox}
\begin{tcolorbox}[title={$(34,6)$}]
(1) 11 3 5 7 7 $\leftarrow$ (2) 14 5 7 7 

(3) 9 3 5 7 7 

(3) 24 4 1 1 1 $\leftarrow$ (4) 28 1 1 1 

(4) 12 9 3 3 3 

(10) 2 3 5 7 7 $\leftarrow$ (12) 4 5 7 7 

(13) 3 5 7 3 3 $\leftarrow$ (17) 5 7 3 3 

(17) 3 6 2 3 3 $\leftarrow$ (18) 8 3 3 3 

(19) 2 4 3 3 3 $\leftarrow$ (21) 6 2 3 3 

(19) 8 4 1 1 1 $\leftarrow$ (20) 12 1 1 1 

(23) 4 4 1 1 1 $\leftarrow$ (28) 4 1 1 1 

\end{tcolorbox}
\begin{tcolorbox}[title={$(34,7)$}]
(2) 9 3 6 6 5 3 

(3) 22 * 1 $\leftarrow$ (4) 24 4 1 1 1 

(11) 2 3 5 7 3 3 $\leftarrow$ (12) 3 6 6 5 3 

(17) ..4 3 3 3 $\leftarrow$ (18) 3 6 2 3 3 

(18) 1 2 4 3 3 3 $\leftarrow$ (20) 2 4 3 3 3 

(19) 6 * 1 $\leftarrow$ (20) 8 4 1 1 1 

(20) 3 4 4 1 1 1 $\leftarrow$ (26) * 1 

(23) 2 * 1 $\leftarrow$ (24) 4 4 1 1 1 

\end{tcolorbox}
\begin{tcolorbox}[title={$(34,8)$}]
(1) 5 5 3 6 6 5 3 

(3) 21 1 * 1 $\leftarrow$ (4) 22 * 1 

(5) 6 2 3 5 7 3 3 

(17) 1 1 2 4 3 3 3 $\leftarrow$ (18) ..4 3 3 3 

(18) 2 3 4 4 1 1 1 $\leftarrow$ (25) 1 * 1 

(19) 5 1 * 1 $\leftarrow$ (20) 6 * 1 

(23) 1 1 * 1 $\leftarrow$ (24) 2 * 1 

\end{tcolorbox}
\begin{tcolorbox}[title={$(34,9)$}]
(3) 5 6 2 4 5 3 3 3 

(3) 20 1 1 * 1 $\leftarrow$ (4) 21 1 * 1 

(5) 3 6 2 4 5 3 3 3 $\leftarrow$ (6) 6 2 3 5 7 3 3 

(11) 12 1 1 * 1 $\leftarrow$ (12) 13 1 * 1 

(17) 1 2 3 4 4 1 1 1 $\leftarrow$ (24) 1 1 * 1 

(19) 4 1 1 * 1 $\leftarrow$ (20) 5 1 * 1 

\end{tcolorbox}
\begin{tcolorbox}[title={$(34,10)$}]
(2) ...3 3 6 6 5 3 

(3) 3 6 ..4 5 3 3 3 $\leftarrow$ (4) 5 6 2 4 5 3 3 3 

(3) 16 4 1 1 * 1 $\leftarrow$ (4) 20 1 1 * 1 

(5) ....3 5 7 3 3 $\leftarrow$ (6) 3 6 2 4 5 3 3 3 

(9) 3 6 2 3 4 4 1 1 1 $\leftarrow$ (10) 8 1 1 2 4 3 3 3 

(11) * 2 4 3 3 3 $\leftarrow$ (13) 6 2 3 4 4 1 1 1 

(11) 8 4 1 1 * 1 $\leftarrow$ (12) 12 1 1 * 1 

(15) 4 4 1 1 * 1 $\leftarrow$ (20) 4 1 1 * 1 

\end{tcolorbox}
\begin{tcolorbox}[title={$(34,11)$}]
(3) .....3 5 7 3 3 $\leftarrow$ (4) 3 6 ..4 5 3 3 3 

(3) 14 * * 1 $\leftarrow$ (4) 16 4 1 1 * 1 

(6) .....4 5 3 3 3 

(9) 2 * 2 4 3 3 3 $\leftarrow$ (10) 3 6 2 3 4 4 1 1 1 

(10) 1 * 2 4 3 3 3 $\leftarrow$ (12) * 2 4 3 3 3 

(11) 6 * * 1 $\leftarrow$ (12) 8 4 1 1 * 1 

(12) 3 4 4 1 1 * 1 $\leftarrow$ (18) * * 1 

(15) 2 * * 1 $\leftarrow$ (16) 4 4 1 1 * 1 

\end{tcolorbox}
\begin{tcolorbox}[title={$(34,12)$}]
(3) 13 1 * * 1 $\leftarrow$ (4) 14 * * 1 

(4) ......4 5 3 3 3 

(9) 1 1 * 2 4 3 3 3 $\leftarrow$ (10) 2 * 2 4 3 3 3 

(10) 2 3 4 4 1 1 * 1 $\leftarrow$ (17) 1 * * 1 

(11) 5 1 * * 1 $\leftarrow$ (12) 6 * * 1 

(15) 1 1 * * 1 $\leftarrow$ (16) 2 * * 1 

\end{tcolorbox}
\begin{tcolorbox}[title={$(34,13)$}]
(1) 8 1 1 * 2 4 3 3 3 

(3) 12 1 1 * * 1 $\leftarrow$ (4) 13 1 * * 1 

(9) 1 2 3 4 4 1 1 * 1 $\leftarrow$ (16) 1 1 * * 1 

(11) 4 1 1 * * 1 $\leftarrow$ (12) 5 1 * * 1 

\end{tcolorbox}
\begin{tcolorbox}[title={$(34,14)$}]
(1) 3 6 2 3 4 4 1 1 * 1 $\leftarrow$ (2) 8 1 1 * 2 4 3 3 3 

(3) * * 2 4 3 3 3 $\leftarrow$ (5) 6 2 3 4 4 1 1 * 1 

(3) 8 4 1 1 * * 1 $\leftarrow$ (4) 12 1 1 * * 1 

(7) 4 4 1 1 * * 1 $\leftarrow$ (12) 4 1 1 * * 1 

\end{tcolorbox}
\begin{tcolorbox}[title={$(34,15)$}]
(1) 2 * * 2 4 3 3 3 $\leftarrow$ (2) 3 6 2 3 4 4 1 1 * 1 

(2) 1 * * 2 4 3 3 3 $\leftarrow$ (4) * * 2 4 3 3 3 

(3) 6 * * * 1 $\leftarrow$ (4) 8 4 1 1 * * 1 

(4) 3 4 4 1 1 * * 1 $\leftarrow$ (10) * * * 1 

(7) 2 * * * 1 $\leftarrow$ (8) 4 4 1 1 * * 1 

\end{tcolorbox}
\begin{tcolorbox}[title={$(34,16)$}]
(1) 1 1 * * 2 4 3 3 3 $\leftarrow$ (2) 2 * * 2 4 3 3 3 

(2) 2 3 4 4 1 1 * * 1 $\leftarrow$ (9) 1 * * * 1 

(3) 5 1 * * * 1 $\leftarrow$ (4) 6 * * * 1 

(7) 1 1 * * * 1 $\leftarrow$ (8) 2 * * * 1 

\end{tcolorbox}
\begin{tcolorbox}[title={$(34,17)$}]
(1) 1 2 3 4 4 1 1 * * 1 $\leftarrow$ (8) 1 1 * * * 1 

(3) 4 1 1 * * * 1 $\leftarrow$ (4) 5 1 * * * 1 

\end{tcolorbox}
\begin{tcolorbox}[title={$(34,18)$}]
(1) * * * * 1 

\end{tcolorbox}
\begin{tcolorbox}[title={$(35,3)$}]
(1) 27 7 

(3) 29 3 $\leftarrow$ (5) 31 

(19) 13 3 $\leftarrow$ (21) 15 

(27) 5 3 $\leftarrow$ (29) 7 

(29) 3 3 $\leftarrow$ (33) 3 

\end{tcolorbox}
\begin{tcolorbox}[title={$(35,4)$}]
(1) 26 5 3 $\leftarrow$ (2) 27 7 

(2) 27 3 3 $\leftarrow$ (4) 29 3 

(5) 14 13 3 $\leftarrow$ (6) 15 15 

(17) 10 5 3 $\leftarrow$ (18) 11 7 

(18) 11 3 3 $\leftarrow$ (20) 13 3 

(21) 6 5 3 $\leftarrow$ (22) 7 7 

(26) 3 3 3 $\leftarrow$ (28) 5 3 

(27) 2 3 3 $\leftarrow$ (30) 3 3 

\end{tcolorbox}
\begin{tcolorbox}[title={$(35,5)$}]
(1) 25 3 3 3 $\leftarrow$ (2) 26 5 3 

(3) 13 5 7 7 

(5) 13 11 3 3 $\leftarrow$ (6) 14 13 3 

(13) 3 5 7 7 $\leftarrow$ (17) 5 7 7 

(15) 6 6 5 3 $\leftarrow$ (22) 6 5 3 

(17) 9 3 3 3 $\leftarrow$ (18) 10 5 3 

(26) 1 2 3 3 $\leftarrow$ (28) 2 3 3 

\end{tcolorbox}
\begin{tcolorbox}[title={$(35,6)$}]
(2) 11 3 5 7 7 $\leftarrow$ (4) 13 5 7 7 

(4) 9 3 5 7 7 

(5) 12 9 3 3 3 $\leftarrow$ (6) 13 11 3 3 

(11) 2 3 5 7 7 $\leftarrow$ (13) 4 5 7 7 

(14) 3 5 7 3 3 $\leftarrow$ (19) 8 3 3 3 

(15) 4 7 3 3 3 $\leftarrow$ (16) 6 6 5 3 

(17) 4 5 3 3 3 $\leftarrow$ (18) 5 7 3 3 

(21) 5 1 2 3 3 $\leftarrow$ (22) 6 2 3 3 

\end{tcolorbox}
\begin{tcolorbox}[title={$(35,7)$}]
(1) 5 7 3 5 7 7 

(3) 9 3 6 6 5 3 $\leftarrow$ (6) 12 9 3 3 3 

(5) 12 4 5 3 3 3 

(9) 3 3 6 6 5 3 $\leftarrow$ (12) 2 3 5 7 7 

(12) 2 3 5 7 3 3 $\leftarrow$ (18) 4 5 3 3 3 

(15) 2 4 5 3 3 3 $\leftarrow$ (16) 4 7 3 3 3 

(19) 1 2 4 3 3 3 $\leftarrow$ (21) 2 4 3 3 3 

(21) 3 4 4 1 1 1 $\leftarrow$ (22) 5 1 2 3 3 

\end{tcolorbox}
\begin{tcolorbox}[title={$(35,8)$}]
(2) 5 5 3 6 6 5 3 $\leftarrow$ (4) 9 3 6 6 5 3 

(3) 6 3 3 6 6 5 3 

(5) 10 2 4 5 3 3 3 $\leftarrow$ (6) 12 4 5 3 3 3 

(9) 6 2 4 5 3 3 3 $\leftarrow$ (16) 2 4 5 3 3 3 

(18) 1 1 2 4 3 3 3 $\leftarrow$ (20) 1 2 4 3 3 3 

(19) 2 3 4 4 1 1 1 $\leftarrow$ (22) 3 4 4 1 1 1 

\end{tcolorbox}
\begin{tcolorbox}[title={$(35,9)$}]
(1) 5 6 2 3 5 7 3 3 

(3) 3 6 2 3 5 7 3 3 $\leftarrow$ (4) 6 3 3 6 6 5 3 

(5) 7 13 1 * 1 $\leftarrow$ (6) 10 2 4 5 3 3 3 

(7) 6 ..4 5 3 3 3 $\leftarrow$ (13) 13 1 * 1 

(9) 4 ..4 5 3 3 3 $\leftarrow$ (10) 6 2 4 5 3 3 3 

(18) 1 2 3 4 4 1 1 1 $\leftarrow$ (20) 2 3 4 4 1 1 1 

\end{tcolorbox}
\begin{tcolorbox}[title={$(35,10)$}]
(1) 3 5 6 2 4 5 3 3 3 $\leftarrow$ (2) 5 6 2 3 5 7 3 3 

(3) ...3 3 6 6 5 3 $\leftarrow$ (4) 3 6 2 3 5 7 3 3 

(5) 5 8 1 1 2 4 3 3 3 $\leftarrow$ (6) 7 13 1 * 1 

(6) ....3 5 7 3 3 $\leftarrow$ (11) 8 1 1 2 4 3 3 3 

(7) 4 ...4 5 3 3 3 $\leftarrow$ (8) 6 ..4 5 3 3 3 

(9) ....4 5 3 3 3 $\leftarrow$ (10) 4 ..4 5 3 3 3 

(13) 5 1 2 3 4 4 1 1 1 $\leftarrow$ (14) 6 2 3 4 4 1 1 1 

\end{tcolorbox}
\begin{tcolorbox}[title={$(35,11)$}]
(1) ....3 3 6 6 5 3 $\leftarrow$ (2) 3 5 6 2 4 5 3 3 3 

(4) .....3 5 7 3 3 $\leftarrow$ (10) ....4 5 3 3 3 

(5) 4 ....4 5 3 3 3 $\leftarrow$ (6) 5 8 1 1 2 4 3 3 3 

(7) .....4 5 3 3 3 $\leftarrow$ (8) 4 ...4 5 3 3 3 

(11) 1 * 2 4 3 3 3 $\leftarrow$ (13) * 2 4 3 3 3 

(13) 3 4 4 1 1 * 1 $\leftarrow$ (14) 5 1 2 3 4 4 1 1 1 

\end{tcolorbox}
\begin{tcolorbox}[title={$(35,12)$}]
(1) 6 .....4 5 3 3 3 $\leftarrow$ (8) .....4 5 3 3 3 

(5) ......4 5 3 3 3 $\leftarrow$ (6) 4 ....4 5 3 3 3 

(10) 1 1 * 2 4 3 3 3 $\leftarrow$ (12) 1 * 2 4 3 3 3 

(11) 2 3 4 4 1 1 * 1 $\leftarrow$ (14) 3 4 4 1 1 * 1 

\end{tcolorbox}
\begin{tcolorbox}[title={$(35,13)$}]
(1) 4 ......4 5 3 3 3 $\leftarrow$ (2) 6 .....4 5 3 3 3 

(10) 1 2 3 4 4 1 1 * 1 $\leftarrow$ (12) 2 3 4 4 1 1 * 1 

\end{tcolorbox}
\begin{tcolorbox}[title={$(35,14)$}]
(1) ........4 5 3 3 3 $\leftarrow$ (2) 4 ......4 5 3 3 3 

(5) 5 1 2 3 4 4 1 1 * 1 $\leftarrow$ (6) 6 2 3 4 4 1 1 * 1 

\end{tcolorbox}
\begin{tcolorbox}[title={$(35,15)$}]
(3) 1 * * 2 4 3 3 3 $\leftarrow$ (5) * * 2 4 3 3 3 

(5) 3 4 4 1 1 * * 1 $\leftarrow$ (6) 5 1 2 3 4 4 1 1 * 1 

\end{tcolorbox}
\begin{tcolorbox}[title={$(35,16)$}]
(2) 1 1 * * 2 4 3 3 3 $\leftarrow$ (4) 1 * * 2 4 3 3 3 

(3) 2 3 4 4 1 1 * * 1 $\leftarrow$ (6) 3 4 4 1 1 * * 1 

\end{tcolorbox}
\begin{tcolorbox}[title={$(35,17)$}]
(2) 1 2 3 4 4 1 1 * * 1 $\leftarrow$ (4) 2 3 4 4 1 1 * * 1 

(4) 4 1 1 * * * 1 

\end{tcolorbox}
\begin{tcolorbox}[title={$(35,18)$}]
(2) * * * * 1 

\end{tcolorbox}
\begin{tcolorbox}[title={$(35,19)$}]
(1) 1 * * * * 1 

\end{tcolorbox}
\begin{tcolorbox}[title={$(36,2)$}]
(35) 1 $\leftarrow$ (37) 

\end{tcolorbox}
\begin{tcolorbox}[title={$(36,3)$}]
(5) 30 1 $\leftarrow$ (6) 31 

(21) 14 1 $\leftarrow$ (22) 15 

(29) 6 1 $\leftarrow$ (30) 7 

(33) 2 1 $\leftarrow$ (34) 3 

(34) 1 1 $\leftarrow$ (36) 1 

\end{tcolorbox}
\begin{tcolorbox}[title={$(36,4)$}]
(3) 27 3 3 $\leftarrow$ (5) 29 3 

(5) 13 11 7 

(5) 29 1 1 $\leftarrow$ (6) 30 1 

(15) 7 7 7 $\leftarrow$ (19) 11 7 

(19) 11 3 3 $\leftarrow$ (21) 13 3 

(21) 13 1 1 $\leftarrow$ (22) 14 1 

(27) 3 3 3 $\leftarrow$ (29) 5 3 

(29) 5 1 1 $\leftarrow$ (30) 6 1 

(33) 1 1 1 $\leftarrow$ (34) 2 1 

\end{tcolorbox}
\begin{tcolorbox}[title={$(36,5)$}]
(2) 25 3 3 3 $\leftarrow$ (4) 27 3 3 

(3) 14 5 7 7 $\leftarrow$ (16) 7 7 7 

(5) 28 1 1 1 $\leftarrow$ (6) 29 1 1 

(14) 3 5 7 7 $\leftarrow$ (18) 5 7 7 

(18) 9 3 3 3 $\leftarrow$ (20) 11 3 3 

(21) 12 1 1 1 $\leftarrow$ (22) 13 1 1 

(27) 1 2 3 3 $\leftarrow$ (29) 2 3 3 

(29) 4 1 1 1 $\leftarrow$ (30) 5 1 1 

\end{tcolorbox}
\begin{tcolorbox}[title={$(36,6)$}]
(1) 5 9 7 7 7 

(3) 11 3 5 7 7 $\leftarrow$ (4) 14 5 7 7 

(5) 9 3 5 7 7 

(5) 24 4 1 1 1 $\leftarrow$ (6) 28 1 1 1 

(13) 3 6 6 5 3 $\leftarrow$ (19) 5 7 3 3 

(15) 3 5 7 3 3 $\leftarrow$ (17) 6 6 5 3 

(19) 3 6 2 3 3 $\leftarrow$ (20) 8 3 3 3 

(21) 8 4 1 1 1 $\leftarrow$ (22) 12 1 1 1 

(25) 4 4 1 1 1 $\leftarrow$ (28) 1 2 3 3 

(27) * 1 $\leftarrow$ (30) 4 1 1 1 

\end{tcolorbox}
\begin{tcolorbox}[title={$(36,7)$}]
(2) 5 7 3 5 7 7 

(5) 22 * 1 $\leftarrow$ (6) 24 4 1 1 1 

(10) 3 3 6 6 5 3 $\leftarrow$ (16) 3 5 7 3 3 

(13) 2 3 5 7 3 3 $\leftarrow$ (14) 3 6 6 5 3 

(19) ..4 3 3 3 $\leftarrow$ (20) 3 6 2 3 3 

(21) 6 * 1 $\leftarrow$ (22) 8 4 1 1 1 

(25) 2 * 1 $\leftarrow$ (26) 4 4 1 1 1 

(26) 1 * 1 $\leftarrow$ (28) * 1 

\end{tcolorbox}
\begin{tcolorbox}[title={$(36,8)$}]
(1) 6 5 2 3 5 7 7 

(3) 5 5 3 6 6 5 3 $\leftarrow$ (5) 9 3 6 6 5 3 

(5) 21 1 * 1 $\leftarrow$ (6) 22 * 1 

(7) 6 2 3 5 7 3 3 $\leftarrow$ (14) 2 3 5 7 3 3 

(19) 1 1 2 4 3 3 3 $\leftarrow$ (20) ..4 3 3 3 

(21) 5 1 * 1 $\leftarrow$ (22) 6 * 1 

(25) 1 1 * 1 $\leftarrow$ (26) 2 * 1 

\end{tcolorbox}
\begin{tcolorbox}[title={$(36,9)$}]
(1) 3 6 3 3 6 6 5 3 $\leftarrow$ (2) 6 5 2 3 5 7 7 

(5) 5 6 2 4 5 3 3 3 $\leftarrow$ (11) 6 2 4 5 3 3 3 

(5) 20 1 1 * 1 $\leftarrow$ (6) 21 1 * 1 

(7) 3 6 2 4 5 3 3 3 $\leftarrow$ (8) 6 2 3 5 7 3 3 

(13) 12 1 1 * 1 $\leftarrow$ (14) 13 1 * 1 

(19) 1 2 3 4 4 1 1 1 $\leftarrow$ (21) 2 3 4 4 1 1 1 

(21) 4 1 1 * 1 $\leftarrow$ (22) 5 1 * 1 

\end{tcolorbox}
\begin{tcolorbox}[title={$(36,10)$}]
(1) 2 4 3 3 3 6 6 5 3 $\leftarrow$ (2) 3 6 3 3 6 6 5 3 

(4) ...3 3 6 6 5 3 $\leftarrow$ (9) 6 ..4 5 3 3 3 

(5) 3 6 ..4 5 3 3 3 $\leftarrow$ (6) 5 6 2 4 5 3 3 3 

(5) 16 4 1 1 * 1 $\leftarrow$ (6) 20 1 1 * 1 

(7) ....3 5 7 3 3 $\leftarrow$ (8) 3 6 2 4 5 3 3 3 

(11) 3 6 2 3 4 4 1 1 1 $\leftarrow$ (12) 8 1 1 2 4 3 3 3 

(13) 8 4 1 1 * 1 $\leftarrow$ (14) 12 1 1 * 1 

(17) 4 4 1 1 * 1 $\leftarrow$ (20) 1 2 3 4 4 1 1 1 

(19) * * 1 $\leftarrow$ (22) 4 1 1 * 1 

\end{tcolorbox}
\begin{tcolorbox}[title={$(36,11)$}]
(2) ....3 3 6 6 5 3 $\leftarrow$ (8) ....3 5 7 3 3 

(5) .....3 5 7 3 3 $\leftarrow$ (6) 3 6 ..4 5 3 3 3 

(5) 14 * * 1 $\leftarrow$ (6) 16 4 1 1 * 1 

(11) 2 * 2 4 3 3 3 $\leftarrow$ (12) 3 6 2 3 4 4 1 1 1 

(13) 6 * * 1 $\leftarrow$ (14) 8 4 1 1 * 1 

(17) 2 * * 1 $\leftarrow$ (18) 4 4 1 1 * 1 

(18) 1 * * 1 $\leftarrow$ (20) * * 1 

\end{tcolorbox}
\begin{tcolorbox}[title={$(36,12)$}]
(5) 13 1 * * 1 $\leftarrow$ (6) 14 * * 1 

(6) ......4 5 3 3 3 

(11) 1 1 * 2 4 3 3 3 $\leftarrow$ (12) 2 * 2 4 3 3 3 

(13) 5 1 * * 1 $\leftarrow$ (14) 6 * * 1 

(17) 1 1 * * 1 $\leftarrow$ (18) 2 * * 1 

\end{tcolorbox}
\begin{tcolorbox}[title={$(36,13)$}]
(3) 8 1 1 * 2 4 3 3 3 

(5) 12 1 1 * * 1 $\leftarrow$ (6) 13 1 * * 1 

(11) 1 2 3 4 4 1 1 * 1 $\leftarrow$ (13) 2 3 4 4 1 1 * 1 

(13) 4 1 1 * * 1 $\leftarrow$ (14) 5 1 * * 1 

\end{tcolorbox}
\begin{tcolorbox}[title={$(36,14)$}]
(2) ........4 5 3 3 3 

(3) 3 6 2 3 4 4 1 1 * 1 $\leftarrow$ (4) 8 1 1 * 2 4 3 3 3 

(5) 8 4 1 1 * * 1 $\leftarrow$ (6) 12 1 1 * * 1 

(9) 4 4 1 1 * * 1 $\leftarrow$ (12) 1 2 3 4 4 1 1 * 1 

(11) * * * 1 $\leftarrow$ (14) 4 1 1 * * 1 

\end{tcolorbox}
\begin{tcolorbox}[title={$(36,15)$}]
(3) 2 * * 2 4 3 3 3 $\leftarrow$ (4) 3 6 2 3 4 4 1 1 * 1 

(5) 6 * * * 1 $\leftarrow$ (6) 8 4 1 1 * * 1 

(9) 2 * * * 1 $\leftarrow$ (10) 4 4 1 1 * * 1 

(10) 1 * * * 1 $\leftarrow$ (12) * * * 1 

\end{tcolorbox}
\begin{tcolorbox}[title={$(36,16)$}]
(3) 1 1 * * 2 4 3 3 3 $\leftarrow$ (4) 2 * * 2 4 3 3 3 

(5) 5 1 * * * 1 $\leftarrow$ (6) 6 * * * 1 

(9) 1 1 * * * 1 $\leftarrow$ (10) 2 * * * 1 

\end{tcolorbox}
\begin{tcolorbox}[title={$(36,17)$}]
(3) 1 2 3 4 4 1 1 * * 1 $\leftarrow$ (5) 2 3 4 4 1 1 * * 1 

(5) 4 1 1 * * * 1 $\leftarrow$ (6) 5 1 * * * 1 

\end{tcolorbox}
\begin{tcolorbox}[title={$(36,18)$}]
(1) 4 4 1 1 * * * 1 $\leftarrow$ (4) 1 2 3 4 4 1 1 * * 1 

(3) * * * * 1 $\leftarrow$ (6) 4 1 1 * * * 1 

\end{tcolorbox}
\begin{tcolorbox}[title={$(36,19)$}]
(1) 2 * * * * 1 $\leftarrow$ (2) 4 4 1 1 * * * 1 

(2) 1 * * * * 1 $\leftarrow$ (4) * * * * 1 

\end{tcolorbox}
\begin{tcolorbox}[title={$(36,20)$}]
(1) 1 1 * * * * 1 $\leftarrow$ (2) 2 * * * * 1 

\end{tcolorbox}
\begin{tcolorbox}[title={$(37,3)$}]
(3) 27 7 

(7) 15 15 

(23) 7 7 

(31) 3 3 $\leftarrow$ (35) 3 

(35) 1 1 $\leftarrow$ (37) 1 

\end{tcolorbox}
\begin{tcolorbox}[title={$(37,4)$}]
(3) 26 5 3 $\leftarrow$ (4) 27 7 

(6) 13 11 7 

(7) 14 13 3 $\leftarrow$ (8) 15 15 

(19) 10 5 3 $\leftarrow$ (20) 11 7 

(23) 6 5 3 $\leftarrow$ (24) 7 7 

(28) 3 3 3 $\leftarrow$ (32) 3 3 

(34) 1 1 1 $\leftarrow$ (36) 1 1 

\end{tcolorbox}
\begin{tcolorbox}[title={$(37,5)$}]
(3) 25 3 3 3 $\leftarrow$ (4) 26 5 3 

(5) 13 5 7 7 

(7) 13 11 3 3 $\leftarrow$ (8) 14 13 3 

(14) 4 5 7 7 

(15) 3 5 7 7 $\leftarrow$ (19) 5 7 7 

(19) 9 3 3 3 $\leftarrow$ (20) 10 5 3 

(23) 6 2 3 3 $\leftarrow$ (30) 2 3 3 

\end{tcolorbox}
\begin{tcolorbox}[title={$(37,6)$}]
(2) 5 9 7 7 7 

(4) 11 3 5 7 7 

(6) 9 3 5 7 7 $\leftarrow$ (16) 3 5 7 7 

(7) 12 9 3 3 3 $\leftarrow$ (8) 13 11 3 3 

(13) 2 3 5 7 7 

(17) 4 7 3 3 3 $\leftarrow$ (18) 6 6 5 3 

(19) 4 5 3 3 3 $\leftarrow$ (20) 5 7 3 3 

(22) 2 4 3 3 3 $\leftarrow$ (29) 1 2 3 3 

(23) 5 1 2 3 3 $\leftarrow$ (24) 6 2 3 3 

\end{tcolorbox}
\begin{tcolorbox}[title={$(37,7)$}]
(1) 5 9 3 5 7 7 

(3) 5 7 3 5 7 7 

(7) 12 4 5 3 3 3 

(11) 3 3 6 6 5 3 $\leftarrow$ (17) 3 5 7 3 3 

(17) 2 4 5 3 3 3 $\leftarrow$ (18) 4 7 3 3 3 

(21) 1 2 4 3 3 3 $\leftarrow$ (27) 4 4 1 1 1 

(23) 3 4 4 1 1 1 $\leftarrow$ (24) 5 1 2 3 3 

(27) 1 * 1 $\leftarrow$ (29) * 1 

\end{tcolorbox}
\begin{tcolorbox}[title={$(37,8)$}]
(4) 5 5 3 6 6 5 3 

(5) 6 3 3 6 6 5 3 

(7) 10 2 4 5 3 3 3 $\leftarrow$ (8) 12 4 5 3 3 3 

(20) 1 1 2 4 3 3 3 $\leftarrow$ (24) 3 4 4 1 1 1 

(26) 1 1 * 1 $\leftarrow$ (28) 1 * 1 

\end{tcolorbox}
\begin{tcolorbox}[title={$(37,9)$}]
(3) 5 6 2 3 5 7 3 3 

(5) 3 6 2 3 5 7 3 3 $\leftarrow$ (6) 6 3 3 6 6 5 3 

(7) 7 13 1 * 1 $\leftarrow$ (8) 10 2 4 5 3 3 3 

(11) 4 ..4 5 3 3 3 $\leftarrow$ (12) 6 2 4 5 3 3 3 

(15) 6 2 3 4 4 1 1 1 $\leftarrow$ (22) 2 3 4 4 1 1 1 

\end{tcolorbox}
\begin{tcolorbox}[title={$(37,10)$}]
(2) 2 4 3 3 3 6 6 5 3 

(3) 3 5 6 2 4 5 3 3 3 $\leftarrow$ (4) 5 6 2 3 5 7 3 3 

(5) ...3 3 6 6 5 3 $\leftarrow$ (6) 3 6 2 3 5 7 3 3 

(7) 5 8 1 1 2 4 3 3 3 $\leftarrow$ (8) 7 13 1 * 1 

(9) 4 ...4 5 3 3 3 $\leftarrow$ (10) 6 ..4 5 3 3 3 

(11) ....4 5 3 3 3 $\leftarrow$ (12) 4 ..4 5 3 3 3 

(14) * 2 4 3 3 3 $\leftarrow$ (21) 1 2 3 4 4 1 1 1 

(15) 5 1 2 3 4 4 1 1 1 $\leftarrow$ (16) 6 2 3 4 4 1 1 1 

\end{tcolorbox}
\begin{tcolorbox}[title={$(37,11)$}]
(3) ....3 3 6 6 5 3 $\leftarrow$ (4) 3 5 6 2 4 5 3 3 3 

(6) .....3 5 7 3 3 

(7) 4 ....4 5 3 3 3 $\leftarrow$ (8) 5 8 1 1 2 4 3 3 3 

(9) .....4 5 3 3 3 $\leftarrow$ (10) 4 ...4 5 3 3 3 

(13) 1 * 2 4 3 3 3 $\leftarrow$ (19) 4 4 1 1 * 1 

(15) 3 4 4 1 1 * 1 $\leftarrow$ (16) 5 1 2 3 4 4 1 1 1 

(19) 1 * * 1 $\leftarrow$ (21) * * 1 

\end{tcolorbox}
\begin{tcolorbox}[title={$(37,12)$}]
(3) 6 .....4 5 3 3 3 

(7) ......4 5 3 3 3 $\leftarrow$ (8) 4 ....4 5 3 3 3 

(12) 1 1 * 2 4 3 3 3 $\leftarrow$ (16) 3 4 4 1 1 * 1 

(18) 1 1 * * 1 $\leftarrow$ (20) 1 * * 1 

\end{tcolorbox}
\begin{tcolorbox}[title={$(37,13)$}]
(1) 6 ......4 5 3 3 3 

(3) 4 ......4 5 3 3 3 $\leftarrow$ (4) 6 .....4 5 3 3 3 

(7) 6 2 3 4 4 1 1 * 1 $\leftarrow$ (14) 2 3 4 4 1 1 * 1 

\end{tcolorbox}
\begin{tcolorbox}[title={$(37,14)$}]
(1) 4 .......4 5 3 3 3 $\leftarrow$ (2) 6 ......4 5 3 3 3 

(3) ........4 5 3 3 3 $\leftarrow$ (4) 4 ......4 5 3 3 3 

(6) * * 2 4 3 3 3 $\leftarrow$ (13) 1 2 3 4 4 1 1 * 1 

(7) 5 1 2 3 4 4 1 1 * 1 $\leftarrow$ (8) 6 2 3 4 4 1 1 * 1 

\end{tcolorbox}
\begin{tcolorbox}[title={$(37,15)$}]
(1) .........4 5 3 3 3 $\leftarrow$ (2) 4 .......4 5 3 3 3 

(5) 1 * * 2 4 3 3 3 $\leftarrow$ (11) 4 4 1 1 * * 1 

(7) 3 4 4 1 1 * * 1 $\leftarrow$ (8) 5 1 2 3 4 4 1 1 * 1 

(11) 1 * * * 1 $\leftarrow$ (13) * * * 1 

\end{tcolorbox}
\begin{tcolorbox}[title={$(37,16)$}]
(4) 1 1 * * 2 4 3 3 3 $\leftarrow$ (8) 3 4 4 1 1 * * 1 

(10) 1 1 * * * 1 $\leftarrow$ (12) 1 * * * 1 

\end{tcolorbox}
\begin{tcolorbox}[title={$(37,19)$}]
(3) 1 * * * * 1 $\leftarrow$ (5) * * * * 1 

\end{tcolorbox}
\begin{tcolorbox}[title={$(37,20)$}]
(2) 1 1 * * * * 1 $\leftarrow$ (4) 1 * * * * 1 

\end{tcolorbox}
\begin{tcolorbox}[title={$(38,2)$}]
(7) 31 

(23) 15 

(31) 7 $\leftarrow$ (39) 

\end{tcolorbox}
\begin{tcolorbox}[title={$(38,3)$}]
(6) 29 3 

(7) 30 1 $\leftarrow$ (8) 31 

(22) 13 3 

(23) 14 1 $\leftarrow$ (24) 15 

(30) 5 3 $\leftarrow$ (38) 1 

(31) 6 1 $\leftarrow$ (32) 7 

(35) 2 1 $\leftarrow$ (36) 3 

\end{tcolorbox}
\begin{tcolorbox}[title={$(38,4)$}]
(1) 23 7 7 

(5) 27 3 3 

(7) 13 11 7 $\leftarrow$ (9) 15 15 

(7) 29 1 1 $\leftarrow$ (8) 30 1 

(17) 7 7 7 

(21) 11 3 3 

(23) 13 1 1 $\leftarrow$ (24) 14 1 

(24) 6 5 3 $\leftarrow$ (37) 1 1 

(29) 3 3 3 $\leftarrow$ (33) 3 3 

(31) 5 1 1 $\leftarrow$ (32) 6 1 

(35) 1 1 1 $\leftarrow$ (36) 2 1 

\end{tcolorbox}
\begin{tcolorbox}[title={$(38,5)$}]
(4) 25 3 3 3 

(5) 14 5 7 7 $\leftarrow$ (8) 13 11 7 

(6) 13 5 7 7 

(7) 28 1 1 1 $\leftarrow$ (8) 29 1 1 

(15) 4 5 7 7 $\leftarrow$ (18) 7 7 7 

(20) 9 3 3 3 

(21) 8 3 3 3 $\leftarrow$ (36) 1 1 1 

(23) 12 1 1 1 $\leftarrow$ (24) 13 1 1 

(31) 4 1 1 1 $\leftarrow$ (32) 5 1 1 

\end{tcolorbox}
\begin{tcolorbox}[title={$(38,6)$}]
(1) 14 4 5 7 7 

(3) 5 9 7 7 7 

(5) 11 3 5 7 7 $\leftarrow$ (6) 14 5 7 7 

(7) 9 3 5 7 7 $\leftarrow$ (17) 3 5 7 7 

(7) 24 4 1 1 1 $\leftarrow$ (8) 28 1 1 1 

(8) 12 9 3 3 3 

(14) 2 3 5 7 7 $\leftarrow$ (16) 4 5 7 7 

(15) 3 6 6 5 3 $\leftarrow$ (19) 6 6 5 3 

(20) 4 5 3 3 3 $\leftarrow$ (32) 4 1 1 1 

(21) 3 6 2 3 3 $\leftarrow$ (22) 8 3 3 3 

(23) 2 4 3 3 3 $\leftarrow$ (25) 6 2 3 3 

(23) 8 4 1 1 1 $\leftarrow$ (24) 12 1 1 1 

\end{tcolorbox}
\begin{tcolorbox}[title={$(38,7)$}]
(1) 13 2 3 5 7 7 $\leftarrow$ (2) 14 4 5 7 7 

(2) 5 9 3 5 7 7 

(4) 5 7 3 5 7 7 $\leftarrow$ (8) 9 3 5 7 7 

(6) 9 3 6 6 5 3 

(7) 22 * 1 $\leftarrow$ (8) 24 4 1 1 1 

(12) 3 3 6 6 5 3 $\leftarrow$ (18) 3 5 7 3 3 

(15) 2 3 5 7 3 3 $\leftarrow$ (16) 3 6 6 5 3 

(18) 2 4 5 3 3 3 $\leftarrow$ (30) * 1 

(21) ..4 3 3 3 $\leftarrow$ (22) 3 6 2 3 3 

(22) 1 2 4 3 3 3 $\leftarrow$ (24) 2 4 3 3 3 

(23) 6 * 1 $\leftarrow$ (24) 8 4 1 1 1 

(27) 2 * 1 $\leftarrow$ (28) 4 4 1 1 1 

\end{tcolorbox}
\begin{tcolorbox}[title={$(38,8)$}]
(1) 3 5 7 3 5 7 7 

(1) 7 12 4 5 3 3 3 $\leftarrow$ (2) 13 2 3 5 7 7 

(3) 6 5 2 3 5 7 7 

(5) 5 5 3 6 6 5 3 $\leftarrow$ (9) 12 4 5 3 3 3 

(7) 21 1 * 1 $\leftarrow$ (8) 22 * 1 

(9) 6 2 3 5 7 3 3 $\leftarrow$ (16) 2 3 5 7 3 3 

(15) 13 1 * 1 $\leftarrow$ (29) 1 * 1 

(21) 1 1 2 4 3 3 3 $\leftarrow$ (22) ..4 3 3 3 

(23) 5 1 * 1 $\leftarrow$ (24) 6 * 1 

(27) 1 1 * 1 $\leftarrow$ (28) 2 * 1 

\end{tcolorbox}
\begin{tcolorbox}[title={$(38,9)$}]
(1) 4 7 3 3 6 6 5 3 

(1) 5 6 3 3 6 6 5 3 $\leftarrow$ (2) 7 12 4 5 3 3 3 

(3) 3 6 3 3 6 6 5 3 $\leftarrow$ (4) 6 5 2 3 5 7 7 

(7) 5 6 2 4 5 3 3 3 $\leftarrow$ (13) 6 2 4 5 3 3 3 

(7) 20 1 1 * 1 $\leftarrow$ (8) 21 1 * 1 

(9) 3 6 2 4 5 3 3 3 $\leftarrow$ (10) 6 2 3 5 7 3 3 

(13) 8 1 1 2 4 3 3 3 $\leftarrow$ (28) 1 1 * 1 

(15) 12 1 1 * 1 $\leftarrow$ (16) 13 1 * 1 

(23) 4 1 1 * 1 $\leftarrow$ (24) 5 1 * 1 

\end{tcolorbox}
\begin{tcolorbox}[title={$(38,10)$}]
(1) 3 5 6 2 3 5 7 3 3 $\leftarrow$ (2) 5 6 3 3 6 6 5 3 

(3) 2 4 3 3 3 6 6 5 3 $\leftarrow$ (4) 3 6 3 3 6 6 5 3 

(6) ...3 3 6 6 5 3 $\leftarrow$ (11) 6 ..4 5 3 3 3 

(7) 3 6 ..4 5 3 3 3 $\leftarrow$ (8) 5 6 2 4 5 3 3 3 

(7) 16 4 1 1 * 1 $\leftarrow$ (8) 20 1 1 * 1 

(9) ....3 5 7 3 3 $\leftarrow$ (10) 3 6 2 4 5 3 3 3 

(12) ....4 5 3 3 3 $\leftarrow$ (24) 4 1 1 * 1 

(13) 3 6 2 3 4 4 1 1 1 $\leftarrow$ (14) 8 1 1 2 4 3 3 3 

(15) * 2 4 3 3 3 $\leftarrow$ (17) 6 2 3 4 4 1 1 1 

(15) 8 4 1 1 * 1 $\leftarrow$ (16) 12 1 1 * 1 

\end{tcolorbox}
\begin{tcolorbox}[title={$(38,11)$}]
(1) ..4 3 3 3 6 6 5 3 $\leftarrow$ (2) 3 5 6 2 3 5 7 3 3 

(4) ....3 3 6 6 5 3 $\leftarrow$ (10) ....3 5 7 3 3 

(7) .....3 5 7 3 3 $\leftarrow$ (8) 3 6 ..4 5 3 3 3 

(7) 14 * * 1 $\leftarrow$ (8) 16 4 1 1 * 1 

(10) .....4 5 3 3 3 $\leftarrow$ (22) * * 1 

(13) 2 * 2 4 3 3 3 $\leftarrow$ (14) 3 6 2 3 4 4 1 1 1 

(14) 1 * 2 4 3 3 3 $\leftarrow$ (16) * 2 4 3 3 3 

(15) 6 * * 1 $\leftarrow$ (16) 8 4 1 1 * 1 

(19) 2 * * 1 $\leftarrow$ (20) 4 4 1 1 * 1 

\end{tcolorbox}
\begin{tcolorbox}[title={$(38,12)$}]
(1) 6 .....3 5 7 3 3 $\leftarrow$ (8) .....3 5 7 3 3 

(7) 13 1 * * 1 $\leftarrow$ (8) 14 * * 1 

(8) ......4 5 3 3 3 $\leftarrow$ (21) 1 * * 1 

(13) 1 1 * 2 4 3 3 3 $\leftarrow$ (14) 2 * 2 4 3 3 3 

(15) 5 1 * * 1 $\leftarrow$ (16) 6 * * 1 

(19) 1 1 * * 1 $\leftarrow$ (20) 2 * * 1 

\end{tcolorbox}
\begin{tcolorbox}[title={$(38,13)$}]
(1) 3 6 .....4 5 3 3 3 $\leftarrow$ (2) 6 .....3 5 7 3 3 

(5) 8 1 1 * 2 4 3 3 3 $\leftarrow$ (20) 1 1 * * 1 

(7) 12 1 1 * * 1 $\leftarrow$ (8) 13 1 * * 1 

(15) 4 1 1 * * 1 $\leftarrow$ (16) 5 1 * * 1 

\end{tcolorbox}
\begin{tcolorbox}[title={$(38,14)$}]
(1) ........3 5 7 3 3 $\leftarrow$ (2) 3 6 .....4 5 3 3 3 

(4) ........4 5 3 3 3 $\leftarrow$ (16) 4 1 1 * * 1 

(5) 3 6 2 3 4 4 1 1 * 1 $\leftarrow$ (6) 8 1 1 * 2 4 3 3 3 

(7) * * 2 4 3 3 3 $\leftarrow$ (9) 6 2 3 4 4 1 1 * 1 

(7) 8 4 1 1 * * 1 $\leftarrow$ (8) 12 1 1 * * 1 

\end{tcolorbox}
\begin{tcolorbox}[title={$(38,15)$}]
(2) .........4 5 3 3 3 $\leftarrow$ (14) * * * 1 

(5) 2 * * 2 4 3 3 3 $\leftarrow$ (6) 3 6 2 3 4 4 1 1 * 1 

(6) 1 * * 2 4 3 3 3 $\leftarrow$ (8) * * 2 4 3 3 3 

(7) 6 * * * 1 $\leftarrow$ (8) 8 4 1 1 * * 1 

(11) 2 * * * 1 $\leftarrow$ (12) 4 4 1 1 * * 1 

\end{tcolorbox}
\begin{tcolorbox}[title={$(38,16)$}]
(5) 1 1 * * 2 4 3 3 3 $\leftarrow$ (6) 2 * * 2 4 3 3 3 

(6) 2 3 4 4 1 1 * * 1 

(7) 5 1 * * * 1 $\leftarrow$ (8) 6 * * * 1 

(11) 1 1 * * * 1 $\leftarrow$ (12) 2 * * * 1 

\end{tcolorbox}
\begin{tcolorbox}[title={$(38,17)$}]
(5) 1 2 3 4 4 1 1 * * 1 

(7) 4 1 1 * * * 1 $\leftarrow$ (8) 5 1 * * * 1 

\end{tcolorbox}
\begin{tcolorbox}[title={$(38,18)$}]
(3) 4 4 1 1 * * * 1 

\end{tcolorbox}
\begin{tcolorbox}[title={$(38,19)$}]
(3) 2 * * * * 1 $\leftarrow$ (4) 4 4 1 1 * * * 1 

\end{tcolorbox}
\begin{tcolorbox}[title={$(38,20)$}]
(3) 1 1 * * * * 1 $\leftarrow$ (4) 2 * * * * 1 

\end{tcolorbox}
\begin{tcolorbox}[title={$(39,3)$}]
(1) 23 15 

(5) 27 7 

(7) 29 3 $\leftarrow$ (9) 31 

(21) 11 7 

(23) 13 3 $\leftarrow$ (25) 15 

(25) 7 7 $\leftarrow$ (37) 3 

(31) 5 3 $\leftarrow$ (33) 7 

\end{tcolorbox}
\begin{tcolorbox}[title={$(39,4)$}]
(1) 22 13 3 $\leftarrow$ (2) 23 15 

(2) 23 7 7 

(5) 26 5 3 $\leftarrow$ (6) 27 7 

(6) 27 3 3 $\leftarrow$ (8) 29 3 

(9) 14 13 3 $\leftarrow$ (10) 15 15 

(20) 5 7 7 

(21) 10 5 3 $\leftarrow$ (22) 11 7 

(22) 11 3 3 $\leftarrow$ (24) 13 3 

(25) 6 5 3 $\leftarrow$ (26) 7 7 

(30) 3 3 3 $\leftarrow$ (32) 5 3 

(31) 2 3 3 $\leftarrow$ (34) 3 3 

\end{tcolorbox}
\begin{tcolorbox}[title={$(39,5)$}]
(1) 17 7 7 7 

(1) 21 11 3 3 $\leftarrow$ (2) 22 13 3 

(5) 25 3 3 3 $\leftarrow$ (6) 26 5 3 

(7) 13 5 7 7 

(9) 13 11 3 3 $\leftarrow$ (10) 14 13 3 

(21) 5 7 3 3 $\leftarrow$ (26) 6 5 3 

(21) 9 3 3 3 $\leftarrow$ (22) 10 5 3 

(30) 1 2 3 3 $\leftarrow$ (32) 2 3 3 

\end{tcolorbox}
\begin{tcolorbox}[title={$(39,6)$}]
(1) 20 9 3 3 3 $\leftarrow$ (2) 21 11 3 3 

(4) 5 9 7 7 7 

(6) 11 3 5 7 7 $\leftarrow$ (8) 13 5 7 7 

(9) 12 9 3 3 3 $\leftarrow$ (10) 13 11 3 3 

(15) 2 3 5 7 7 $\leftarrow$ (17) 4 5 7 7 

(19) 4 7 3 3 3 $\leftarrow$ (20) 6 6 5 3 

(21) 4 5 3 3 3 $\leftarrow$ (22) 5 7 3 3 

(25) 5 1 2 3 3 $\leftarrow$ (26) 6 2 3 3 

\end{tcolorbox}
\begin{tcolorbox}[title={$(39,7)$}]
(1) 3 5 9 7 7 7 

(1) 8 12 9 3 3 3 

(3) 5 9 3 5 7 7 

(5) 5 7 3 5 7 7 $\leftarrow$ (9) 9 3 5 7 7 

(7) 9 3 6 6 5 3 $\leftarrow$ (10) 12 9 3 3 3 

(13) 3 3 6 6 5 3 $\leftarrow$ (16) 2 3 5 7 7 

(19) 2 4 5 3 3 3 $\leftarrow$ (20) 4 7 3 3 3 

(23) 1 2 4 3 3 3 $\leftarrow$ (25) 2 4 3 3 3 

(25) 3 4 4 1 1 1 $\leftarrow$ (26) 5 1 2 3 3 

\end{tcolorbox}
\begin{tcolorbox}[title={$(39,8)$}]
(1) 6 9 3 6 6 5 3 $\leftarrow$ (2) 8 12 9 3 3 3 

(2) 3 5 7 3 5 7 7 $\leftarrow$ (4) 5 9 3 5 7 7 

(6) 5 5 3 6 6 5 3 $\leftarrow$ (8) 9 3 6 6 5 3 

(7) 6 3 3 6 6 5 3 $\leftarrow$ (14) 3 3 6 6 5 3 

(9) 10 2 4 5 3 3 3 $\leftarrow$ (10) 12 4 5 3 3 3 

(22) 1 1 2 4 3 3 3 $\leftarrow$ (24) 1 2 4 3 3 3 

(23) 2 3 4 4 1 1 1 $\leftarrow$ (26) 3 4 4 1 1 1 

\end{tcolorbox}
\begin{tcolorbox}[title={$(39,9)$}]
(1) 3 6 5 2 3 5 7 7 $\leftarrow$ (2) 6 9 3 6 6 5 3 

(2) 4 7 3 3 6 6 5 3 

(5) 5 6 2 3 5 7 3 3 $\leftarrow$ (11) 6 2 3 5 7 3 3 

(7) 3 6 2 3 5 7 3 3 $\leftarrow$ (8) 6 3 3 6 6 5 3 

(9) 7 13 1 * 1 $\leftarrow$ (10) 10 2 4 5 3 3 3 

(13) 4 ..4 5 3 3 3 $\leftarrow$ (14) 6 2 4 5 3 3 3 

(22) 1 2 3 4 4 1 1 1 $\leftarrow$ (24) 2 3 4 4 1 1 1 

\end{tcolorbox}
\begin{tcolorbox}[title={$(39,10)$}]
(1) 2 3 5 5 3 6 6 5 3 $\leftarrow$ (2) 3 6 5 2 3 5 7 7 

(4) 2 4 3 3 3 6 6 5 3 $\leftarrow$ (9) 5 6 2 4 5 3 3 3 

(5) 3 5 6 2 4 5 3 3 3 $\leftarrow$ (6) 5 6 2 3 5 7 3 3 

(7) ...3 3 6 6 5 3 $\leftarrow$ (8) 3 6 2 3 5 7 3 3 

(9) 5 8 1 1 2 4 3 3 3 $\leftarrow$ (10) 7 13 1 * 1 

(11) 4 ...4 5 3 3 3 $\leftarrow$ (12) 6 ..4 5 3 3 3 

(13) ....4 5 3 3 3 $\leftarrow$ (14) 4 ..4 5 3 3 3 

(17) 5 1 2 3 4 4 1 1 1 $\leftarrow$ (18) 6 2 3 4 4 1 1 1 

\end{tcolorbox}
\begin{tcolorbox}[title={$(39,11)$}]
(2) ..4 3 3 3 6 6 5 3 $\leftarrow$ (8) ...3 3 6 6 5 3 

(5) ....3 3 6 6 5 3 $\leftarrow$ (6) 3 5 6 2 4 5 3 3 3 

(9) 4 ....4 5 3 3 3 $\leftarrow$ (10) 5 8 1 1 2 4 3 3 3 

(11) .....4 5 3 3 3 $\leftarrow$ (12) 4 ...4 5 3 3 3 

(15) 1 * 2 4 3 3 3 $\leftarrow$ (17) * 2 4 3 3 3 

(17) 3 4 4 1 1 * 1 $\leftarrow$ (18) 5 1 2 3 4 4 1 1 1 

\end{tcolorbox}
\begin{tcolorbox}[title={$(39,12)$}]
(5) 6 .....4 5 3 3 3 

(9) ......4 5 3 3 3 $\leftarrow$ (10) 4 ....4 5 3 3 3 

(14) 1 1 * 2 4 3 3 3 $\leftarrow$ (16) 1 * 2 4 3 3 3 

(15) 2 3 4 4 1 1 * 1 $\leftarrow$ (18) 3 4 4 1 1 * 1 

\end{tcolorbox}
\begin{tcolorbox}[title={$(39,13)$}]
(3) 6 ......4 5 3 3 3 

(5) 4 ......4 5 3 3 3 $\leftarrow$ (6) 6 .....4 5 3 3 3 

(14) 1 2 3 4 4 1 1 * 1 $\leftarrow$ (16) 2 3 4 4 1 1 * 1 

\end{tcolorbox}
\begin{tcolorbox}[title={$(39,14)$}]
(2) ........3 5 7 3 3 

(3) 4 .......4 5 3 3 3 $\leftarrow$ (4) 6 ......4 5 3 3 3 

(5) ........4 5 3 3 3 $\leftarrow$ (6) 4 ......4 5 3 3 3 

(9) 5 1 2 3 4 4 1 1 * 1 $\leftarrow$ (10) 6 2 3 4 4 1 1 * 1 

\end{tcolorbox}
\begin{tcolorbox}[title={$(39,15)$}]
(3) .........4 5 3 3 3 $\leftarrow$ (4) 4 .......4 5 3 3 3 

(7) 1 * * 2 4 3 3 3 $\leftarrow$ (9) * * 2 4 3 3 3 

(9) 3 4 4 1 1 * * 1 $\leftarrow$ (10) 5 1 2 3 4 4 1 1 * 1 

(13) 1 * * * 1 

\end{tcolorbox}
\begin{tcolorbox}[title={$(39,16)$}]
(6) 1 1 * * 2 4 3 3 3 $\leftarrow$ (8) 1 * * 2 4 3 3 3 

(7) 2 3 4 4 1 1 * * 1 $\leftarrow$ (10) 3 4 4 1 1 * * 1 

(12) 1 1 * * * 1 

\end{tcolorbox}
\begin{tcolorbox}[title={$(39,17)$}]
(1) 6 2 3 4 4 1 1 * * 1 

(6) 1 2 3 4 4 1 1 * * 1 $\leftarrow$ (8) 2 3 4 4 1 1 * * 1 

(8) 4 1 1 * * * 1 

\end{tcolorbox}

\begin{tcolorbox}[title={$(39,18)$}]
(1) 5 1 2 3 4 4 1 1 * * 1 $\leftarrow$ (2) 6 2 3 4 4 1 1 * * 1 

(6) * * * * 1 

\end{tcolorbox}

\begin{tcolorbox}[title={$(39,19)$}]
(1) 3 4 4 1 1 * * * 1 $\leftarrow$ (2) 5 1 2 3 4 4 1 1 * * 1 

(5) 1 * * * * 1 

\end{tcolorbox}
\begin{tcolorbox}[title={$(39,20)$}]
(4) 1 1 * * * * 1 

\end{tcolorbox}

\tiny

\begin{tcolorbox}[title={$(40,2)$}]
(39) 1 $\leftarrow$ (41) 

\end{tcolorbox}
\begin{tcolorbox}[title={$(40,3)$}]
(9) 30 1 $\leftarrow$ (10) 31 

(25) 14 1 $\leftarrow$ (26) 15 

(33) 6 1 $\leftarrow$ (34) 7 

(37) 2 1 $\leftarrow$ (38) 3 

(38) 1 1 $\leftarrow$ (40) 1 

\end{tcolorbox}
\begin{tcolorbox}[title={$(40,4)$}]
(1) 21 11 7 

(3) 23 7 7 

(7) 27 3 3 $\leftarrow$ (9) 29 3 

(9) 13 11 7 

(9) 29 1 1 $\leftarrow$ (10) 30 1 

(19) 7 7 7 

(21) 5 7 7 $\leftarrow$ (35) 3 3 

(23) 11 3 3 $\leftarrow$ (25) 13 3 

(25) 13 1 1 $\leftarrow$ (26) 14 1 

(31) 3 3 3 $\leftarrow$ (33) 5 3 

(33) 5 1 1 $\leftarrow$ (34) 6 1 

(37) 1 1 1 $\leftarrow$ (38) 2 1 

\end{tcolorbox}
\begin{tcolorbox}[title={$(40,5)$}]
(1) 20 5 7 7 

(2) 17 7 7 7 

(6) 25 3 3 3 $\leftarrow$ (8) 27 3 3 

(7) 14 5 7 7 

(9) 28 1 1 1 $\leftarrow$ (10) 29 1 1 

(18) 3 5 7 7 

(22) 9 3 3 3 $\leftarrow$ (24) 11 3 3 

(23) 8 3 3 3 $\leftarrow$ (32) 3 3 3 

(25) 12 1 1 1 $\leftarrow$ (26) 13 1 1 

(31) 1 2 3 3 $\leftarrow$ (33) 2 3 3 

(33) 4 1 1 1 $\leftarrow$ (34) 5 1 1 

\end{tcolorbox}
\begin{tcolorbox}[title={$(40,6)$}]
(1) 7 13 5 7 7 

(2) 20 9 3 3 3 

(3) 14 4 5 7 7 

(5) 5 9 7 7 7 $\leftarrow$ (9) 13 5 7 7 

(7) 11 3 5 7 7 $\leftarrow$ (8) 14 5 7 7 

(9) 24 4 1 1 1 $\leftarrow$ (10) 28 1 1 1 

(17) 3 6 6 5 3 

(19) 3 5 7 3 3 $\leftarrow$ (21) 6 6 5 3 

(22) 4 5 3 3 3 $\leftarrow$ (27) 6 2 3 3 

(23) 3 6 2 3 3 $\leftarrow$ (24) 8 3 3 3 

(25) 8 4 1 1 1 $\leftarrow$ (26) 12 1 1 1 

(29) 4 4 1 1 1 $\leftarrow$ (32) 1 2 3 3 

(31) * 1 $\leftarrow$ (34) 4 1 1 1 

\end{tcolorbox}
\begin{tcolorbox}[title={$(40,7)$}]
(2) 3 5 9 7 7 7 $\leftarrow$ (8) 11 3 5 7 7 

(3) 13 2 3 5 7 7 $\leftarrow$ (4) 14 4 5 7 7 

(6) 5 7 3 5 7 7 $\leftarrow$ (10) 9 3 5 7 7 

(9) 22 * 1 $\leftarrow$ (10) 24 4 1 1 1 

(17) 2 3 5 7 3 3 $\leftarrow$ (18) 3 6 6 5 3 

(20) 2 4 5 3 3 3 $\leftarrow$ (26) 2 4 3 3 3 

(23) ..4 3 3 3 $\leftarrow$ (24) 3 6 2 3 3 

(25) 6 * 1 $\leftarrow$ (26) 8 4 1 1 1 

(29) 2 * 1 $\leftarrow$ (30) 4 4 1 1 1 

(30) 1 * 1 $\leftarrow$ (32) * 1 

\end{tcolorbox}
\begin{tcolorbox}[title={$(40,8)$}]
(1) 3 5 9 3 5 7 7 

(3) 3 5 7 3 5 7 7 $\leftarrow$ (5) 5 9 3 5 7 7 

(3) 7 12 4 5 3 3 3 $\leftarrow$ (4) 13 2 3 5 7 7 

(5) 6 5 2 3 5 7 7 

(7) 5 5 3 6 6 5 3 $\leftarrow$ (9) 9 3 6 6 5 3 

(9) 21 1 * 1 $\leftarrow$ (10) 22 * 1 

(17) 13 1 * 1 $\leftarrow$ (25) 1 2 4 3 3 3 

(23) 1 1 2 4 3 3 3 $\leftarrow$ (24) ..4 3 3 3 

(25) 5 1 * 1 $\leftarrow$ (26) 6 * 1 

(29) 1 1 * 1 $\leftarrow$ (30) 2 * 1 

\end{tcolorbox}
\begin{tcolorbox}[title={$(40,9)$}]
(3) 4 7 3 3 6 6 5 3 $\leftarrow$ (8) 5 5 3 6 6 5 3 

(3) 5 6 3 3 6 6 5 3 $\leftarrow$ (4) 7 12 4 5 3 3 3 

(5) 3 6 3 3 6 6 5 3 $\leftarrow$ (6) 6 5 2 3 5 7 7 

(9) 20 1 1 * 1 $\leftarrow$ (10) 21 1 * 1 

(11) 3 6 2 4 5 3 3 3 $\leftarrow$ (12) 6 2 3 5 7 3 3 

(15) 8 1 1 2 4 3 3 3 $\leftarrow$ (24) 1 1 2 4 3 3 3 

(17) 12 1 1 * 1 $\leftarrow$ (18) 13 1 * 1 

(23) 1 2 3 4 4 1 1 1 $\leftarrow$ (25) 2 3 4 4 1 1 1 

(25) 4 1 1 * 1 $\leftarrow$ (26) 5 1 * 1 

\end{tcolorbox}
\begin{tcolorbox}[title={$(40,10)$}]
(1) 2 4 7 3 3 6 6 5 3 

(2) 2 3 5 5 3 6 6 5 3 $\leftarrow$ (4) 4 7 3 3 6 6 5 3 

(3) 3 5 6 2 3 5 7 3 3 $\leftarrow$ (4) 5 6 3 3 6 6 5 3 

(5) 2 4 3 3 3 6 6 5 3 $\leftarrow$ (6) 3 6 3 3 6 6 5 3 

(9) 3 6 ..4 5 3 3 3 $\leftarrow$ (10) 5 6 2 4 5 3 3 3 

(9) 16 4 1 1 * 1 $\leftarrow$ (10) 20 1 1 * 1 

(11) ....3 5 7 3 3 $\leftarrow$ (12) 3 6 2 4 5 3 3 3 

(14) ....4 5 3 3 3 $\leftarrow$ (19) 6 2 3 4 4 1 1 1 

(15) 3 6 2 3 4 4 1 1 1 $\leftarrow$ (16) 8 1 1 2 4 3 3 3 

(17) 8 4 1 1 * 1 $\leftarrow$ (18) 12 1 1 * 1 

(21) 4 4 1 1 * 1 $\leftarrow$ (24) 1 2 3 4 4 1 1 1 

(23) * * 1 $\leftarrow$ (26) 4 1 1 * 1 

\end{tcolorbox}
\begin{tcolorbox}[title={$(40,11)$}]
(3) ..4 3 3 3 6 6 5 3 $\leftarrow$ (4) 3 5 6 2 3 5 7 3 3 

(6) ....3 3 6 6 5 3 

(9) .....3 5 7 3 3 $\leftarrow$ (10) 3 6 ..4 5 3 3 3 

(9) 14 * * 1 $\leftarrow$ (10) 16 4 1 1 * 1 

(12) .....4 5 3 3 3 $\leftarrow$ (18) * 2 4 3 3 3 

(15) 2 * 2 4 3 3 3 $\leftarrow$ (16) 3 6 2 3 4 4 1 1 1 

(17) 6 * * 1 $\leftarrow$ (18) 8 4 1 1 * 1 

(21) 2 * * 1 $\leftarrow$ (22) 4 4 1 1 * 1 

(22) 1 * * 1 $\leftarrow$ (24) * * 1 

\end{tcolorbox}
\begin{tcolorbox}[title={$(40,12)$}]
(3) 6 .....3 5 7 3 3 

(9) 13 1 * * 1 $\leftarrow$ (10) 14 * * 1 

(10) ......4 5 3 3 3 $\leftarrow$ (17) 1 * 2 4 3 3 3 

(15) 1 1 * 2 4 3 3 3 $\leftarrow$ (16) 2 * 2 4 3 3 3 

(17) 5 1 * * 1 $\leftarrow$ (18) 6 * * 1 

(21) 1 1 * * 1 $\leftarrow$ (22) 2 * * 1 

\end{tcolorbox}
\begin{tcolorbox}[title={$(40,13)$}]
(1) 5 6 .....4 5 3 3 3 

(3) 3 6 .....4 5 3 3 3 $\leftarrow$ (4) 6 .....3 5 7 3 3 

(7) 8 1 1 * 2 4 3 3 3 $\leftarrow$ (16) 1 1 * 2 4 3 3 3 

(9) 12 1 1 * * 1 $\leftarrow$ (10) 13 1 * * 1 

(15) 1 2 3 4 4 1 1 * 1 $\leftarrow$ (17) 2 3 4 4 1 1 * 1 

(17) 4 1 1 * * 1 $\leftarrow$ (18) 5 1 * * 1 

\end{tcolorbox}
\begin{tcolorbox}[title={$(40,14)$}]
(1) 3 6 ......4 5 3 3 3 $\leftarrow$ (2) 5 6 .....4 5 3 3 3 

(3) ........3 5 7 3 3 $\leftarrow$ (4) 3 6 .....4 5 3 3 3 

(6) ........4 5 3 3 3 $\leftarrow$ (11) 6 2 3 4 4 1 1 * 1 

(7) 3 6 2 3 4 4 1 1 * 1 $\leftarrow$ (8) 8 1 1 * 2 4 3 3 3 

(9) 8 4 1 1 * * 1 $\leftarrow$ (10) 12 1 1 * * 1 

(13) 4 4 1 1 * * 1 $\leftarrow$ (16) 1 2 3 4 4 1 1 * 1 

(15) * * * 1 $\leftarrow$ (18) 4 1 1 * * 1 

\end{tcolorbox}
\begin{tcolorbox}[title={$(40,15)$}]
(1) .........3 5 7 3 3 $\leftarrow$ (2) 3 6 ......4 5 3 3 3 

(4) .........4 5 3 3 3 $\leftarrow$ (10) * * 2 4 3 3 3 

(7) 2 * * 2 4 3 3 3 $\leftarrow$ (8) 3 6 2 3 4 4 1 1 * 1 

(9) 6 * * * 1 $\leftarrow$ (10) 8 4 1 1 * * 1 

(13) 2 * * * 1 $\leftarrow$ (14) 4 4 1 1 * * 1 

(14) 1 * * * 1 $\leftarrow$ (16) * * * 1 

\end{tcolorbox}
\begin{tcolorbox}[title={$(40,16)$}]
(1) 13 1 * * * 1 $\leftarrow$ (9) 1 * * 2 4 3 3 3 

(7) 1 1 * * 2 4 3 3 3 $\leftarrow$ (8) 2 * * 2 4 3 3 3 

(9) 5 1 * * * 1 $\leftarrow$ (10) 6 * * * 1 

(13) 1 1 * * * 1 $\leftarrow$ (14) 2 * * * 1 

\end{tcolorbox}
\begin{tcolorbox}[title={$(40,17)$}]
(1) 12 1 1 * * * 1 $\leftarrow$ (2) 13 1 * * * 1 

(7) 1 2 3 4 4 1 1 * * 1 $\leftarrow$ (9) 2 3 4 4 1 1 * * 1 

(9) 4 1 1 * * * 1 $\leftarrow$ (10) 5 1 * * * 1 

\end{tcolorbox}
\begin{tcolorbox}[title={$(40,18)$}]
(1) * * * 2 4 3 3 3 

(1) 8 4 1 1 * * * 1 $\leftarrow$ (2) 12 1 1 * * * 1 

(5) 4 4 1 1 * * * 1 $\leftarrow$ (8) 1 2 3 4 4 1 1 * * 1 

(7) * * * * 1 $\leftarrow$ (10) 4 1 1 * * * 1 

\end{tcolorbox}
\begin{tcolorbox}[title={$(40,19)$}]
(1) 6 * * * * 1 $\leftarrow$ (2) 8 4 1 1 * * * 1 

(2) 3 4 4 1 1 * * * 1 

(5) 2 * * * * 1 $\leftarrow$ (6) 4 4 1 1 * * * 1 

(6) 1 * * * * 1 $\leftarrow$ (8) * * * * 1 

\end{tcolorbox}
\begin{tcolorbox}[title={$(40,20)$}]
(1) 5 1 * * * * 1 $\leftarrow$ (2) 6 * * * * 1 

(5) 1 1 * * * * 1 $\leftarrow$ (6) 2 * * * * 1 

\end{tcolorbox}
\begin{tcolorbox}[title={$(40,21)$}]
(1) 4 1 1 * * * * 1 $\leftarrow$ (2) 5 1 * * * * 1 

\end{tcolorbox}
\begin{tcolorbox}[title={$(41,3)$}]
(3) 23 15 

(7) 27 7 $\leftarrow$ (11) 31 

(11) 15 15 

(23) 11 7 $\leftarrow$ (27) 15 

(27) 7 7 $\leftarrow$ (35) 7 

(39) 1 1 $\leftarrow$ (41) 1 

\end{tcolorbox}
\begin{tcolorbox}[title={$(41,4)$}]
(2) 21 11 7 

(3) 22 13 3 $\leftarrow$ (4) 23 15 

(4) 23 7 7 $\leftarrow$ (10) 29 3 

(7) 26 5 3 $\leftarrow$ (8) 27 7 

(10) 13 11 7 

(11) 14 13 3 $\leftarrow$ (12) 15 15 

(20) 7 7 7 $\leftarrow$ (26) 13 3 

(22) 5 7 7 $\leftarrow$ (34) 5 3 

(23) 10 5 3 $\leftarrow$ (24) 11 7 

(27) 6 5 3 $\leftarrow$ (28) 7 7 

(38) 1 1 1 $\leftarrow$ (40) 1 1 

\end{tcolorbox}
\begin{tcolorbox}[title={$(41,5)$}]
(1) 11 15 7 7 

(1) 19 7 7 7 

(2) 20 5 7 7 $\leftarrow$ (9) 27 3 3 

(3) 17 7 7 7 

(3) 21 11 3 3 $\leftarrow$ (4) 22 13 3 

(7) 25 3 3 3 $\leftarrow$ (8) 26 5 3 

(11) 13 11 3 3 $\leftarrow$ (12) 14 13 3 

(18) 4 5 7 7 $\leftarrow$ (25) 11 3 3 

(19) 3 5 7 7 $\leftarrow$ (33) 3 3 3 

(23) 5 7 3 3 $\leftarrow$ (28) 6 5 3 

(23) 9 3 3 3 $\leftarrow$ (24) 10 5 3 

\end{tcolorbox}
\begin{tcolorbox}[title={$(41,6)$}]
(1) 7 14 5 7 7 $\leftarrow$ (2) 11 15 7 7 

(1) 18 3 5 7 7 $\leftarrow$ (8) 25 3 3 3 

(2) 7 13 5 7 7 $\leftarrow$ (4) 17 7 7 7 

(3) 20 9 3 3 3 $\leftarrow$ (4) 21 11 3 3 

(6) 5 9 7 7 7 $\leftarrow$ (10) 13 5 7 7 

(11) 12 9 3 3 3 $\leftarrow$ (12) 13 11 3 3 

(17) 2 3 5 7 7 $\leftarrow$ (24) 9 3 3 3 

(20) 3 5 7 3 3 $\leftarrow$ (25) 8 3 3 3 

(21) 4 7 3 3 3 $\leftarrow$ (22) 6 6 5 3 

(23) 4 5 3 3 3 $\leftarrow$ (24) 5 7 3 3 

(27) 5 1 2 3 3 $\leftarrow$ (28) 6 2 3 3 

\end{tcolorbox}
\begin{tcolorbox}[title={$(41,7)$}]
(1) 5 5 9 7 7 7 $\leftarrow$ (2) 7 14 5 7 7 

(1) 17 3 6 6 5 3 

(3) 3 5 9 7 7 7 $\leftarrow$ (5) 14 4 5 7 7 

(3) 8 12 9 3 3 3 

(7) 5 7 3 5 7 7 $\leftarrow$ (11) 9 3 5 7 7 

(11) 12 4 5 3 3 3 

(15) 3 3 6 6 5 3 $\leftarrow$ (19) 3 6 6 5 3 

(18) 2 3 5 7 3 3 $\leftarrow$ (24) 4 5 3 3 3 

(21) 2 4 5 3 3 3 $\leftarrow$ (22) 4 7 3 3 3 

(27) 3 4 4 1 1 1 $\leftarrow$ (28) 5 1 2 3 3 

(31) 1 * 1 $\leftarrow$ (33) * 1 

\end{tcolorbox}
\begin{tcolorbox}[title={$(41,8)$}]
(2) 3 5 9 3 5 7 7 

(3) 6 9 3 6 6 5 3 $\leftarrow$ (4) 8 12 9 3 3 3 

(4) 3 5 7 3 5 7 7 $\leftarrow$ (8) 5 7 3 5 7 7 

(9) 6 3 3 6 6 5 3 $\leftarrow$ (16) 3 3 6 6 5 3 

(11) 10 2 4 5 3 3 3 $\leftarrow$ (12) 12 4 5 3 3 3 

(15) 6 2 4 5 3 3 3 $\leftarrow$ (22) 2 4 5 3 3 3 

(30) 1 1 * 1 $\leftarrow$ (32) 1 * 1 

\end{tcolorbox}
\begin{tcolorbox}[title={$(41,9)$}]
(1) 5 6 5 2 3 5 7 7 

(3) 3 6 5 2 3 5 7 7 $\leftarrow$ (4) 6 9 3 6 6 5 3 

(7) 5 6 2 3 5 7 3 3 $\leftarrow$ (13) 6 2 3 5 7 3 3 

(9) 3 6 2 3 5 7 3 3 $\leftarrow$ (10) 6 3 3 6 6 5 3 

(11) 7 13 1 * 1 $\leftarrow$ (12) 10 2 4 5 3 3 3 

(13) 6 ..4 5 3 3 3 $\leftarrow$ (19) 13 1 * 1 

(15) 4 ..4 5 3 3 3 $\leftarrow$ (16) 6 2 4 5 3 3 3 

\end{tcolorbox}
\begin{tcolorbox}[title={$(41,10)$}]
(2) 2 4 7 3 3 6 6 5 3 

(3) 2 3 5 5 3 6 6 5 3 $\leftarrow$ (4) 3 6 5 2 3 5 7 7 

(6) 2 4 3 3 3 6 6 5 3 $\leftarrow$ (11) 5 6 2 4 5 3 3 3 

(7) 3 5 6 2 4 5 3 3 3 $\leftarrow$ (8) 5 6 2 3 5 7 3 3 

(9) ...3 3 6 6 5 3 $\leftarrow$ (10) 3 6 2 3 5 7 3 3 

(11) 5 8 1 1 2 4 3 3 3 $\leftarrow$ (12) 7 13 1 * 1 

(12) ....3 5 7 3 3 $\leftarrow$ (17) 8 1 1 2 4 3 3 3 

(13) 4 ...4 5 3 3 3 $\leftarrow$ (14) 6 ..4 5 3 3 3 

(15) ....4 5 3 3 3 $\leftarrow$ (16) 4 ..4 5 3 3 3 

(19) 5 1 2 3 4 4 1 1 1 $\leftarrow$ (20) 6 2 3 4 4 1 1 1 

\end{tcolorbox}
\begin{tcolorbox}[title={$(41,11)$}]
(1) 1 2 4 7 3 3 6 6 5 3 

(4) ..4 3 3 3 6 6 5 3 $\leftarrow$ (10) ...3 3 6 6 5 3 

(7) ....3 3 6 6 5 3 $\leftarrow$ (8) 3 5 6 2 4 5 3 3 3 

(10) .....3 5 7 3 3 $\leftarrow$ (16) ....4 5 3 3 3 

(11) 4 ....4 5 3 3 3 $\leftarrow$ (12) 5 8 1 1 2 4 3 3 3 

(13) .....4 5 3 3 3 $\leftarrow$ (14) 4 ...4 5 3 3 3 

(19) 3 4 4 1 1 * 1 $\leftarrow$ (20) 5 1 2 3 4 4 1 1 1 

(23) 1 * * 1 $\leftarrow$ (25) * * 1 

\end{tcolorbox}
\begin{tcolorbox}[title={$(41,12)$}]
(1) 6 ....3 3 6 6 5 3 $\leftarrow$ (8) ....3 3 6 6 5 3 

(7) 6 .....4 5 3 3 3 $\leftarrow$ (14) .....4 5 3 3 3 

(11) ......4 5 3 3 3 $\leftarrow$ (12) 4 ....4 5 3 3 3 

(22) 1 1 * * 1 $\leftarrow$ (24) 1 * * 1 

\end{tcolorbox}
\begin{tcolorbox}[title={$(41,13)$}]
(1) 3 6 .....3 5 7 3 3 $\leftarrow$ (2) 6 ....3 3 6 6 5 3 

(5) 6 ......4 5 3 3 3 $\leftarrow$ (12) ......4 5 3 3 3 

(7) 4 ......4 5 3 3 3 $\leftarrow$ (8) 6 .....4 5 3 3 3 

\end{tcolorbox}
\begin{tcolorbox}[title={$(41,14)$}]
(1) .......3 3 6 6 5 3 $\leftarrow$ (2) 3 6 .....3 5 7 3 3 

(4) ........3 5 7 3 3 $\leftarrow$ (9) 8 1 1 * 2 4 3 3 3 

(5) 4 .......4 5 3 3 3 $\leftarrow$ (6) 6 ......4 5 3 3 3 

(7) ........4 5 3 3 3 $\leftarrow$ (8) 4 ......4 5 3 3 3 

(11) 5 1 2 3 4 4 1 1 * 1 $\leftarrow$ (12) 6 2 3 4 4 1 1 * 1 

\end{tcolorbox}
\begin{tcolorbox}[title={$(41,15)$}]
(2) .........3 5 7 3 3 $\leftarrow$ (8) ........4 5 3 3 3 

(5) .........4 5 3 3 3 $\leftarrow$ (6) 4 .......4 5 3 3 3 

(11) 3 4 4 1 1 * * 1 $\leftarrow$ (12) 5 1 2 3 4 4 1 1 * 1 

(15) 1 * * * 1 $\leftarrow$ (17) * * * 1 

\end{tcolorbox}
\begin{tcolorbox}[title={$(41,16)$}]
(8) 1 1 * * 2 4 3 3 3 

(14) 1 1 * * * 1 $\leftarrow$ (16) 1 * * * 1 

\end{tcolorbox}
\begin{tcolorbox}[title={$(41,17)$}]
(3) 6 2 3 4 4 1 1 * * 1 

\end{tcolorbox}
\begin{tcolorbox}[title={$(41,18)$}]
(2) * * * 2 4 3 3 3 

(3) 5 1 2 3 4 4 1 1 * * 1 $\leftarrow$ (4) 6 2 3 4 4 1 1 * * 1 

\end{tcolorbox}
\begin{tcolorbox}[title={$(41,19)$}]
(1) 1 * * * 2 4 3 3 3 

(3) 3 4 4 1 1 * * * 1 $\leftarrow$ (4) 5 1 2 3 4 4 1 1 * * 1 

(7) 1 * * * * 1 $\leftarrow$ (9) * * * * 1 

\end{tcolorbox}
\begin{tcolorbox}[title={$(41,20)$}]
(1) 2 3 4 4 1 1 * * * 1 

(6) 1 1 * * * * 1 $\leftarrow$ (8) 1 * * * * 1 

\end{tcolorbox}
\begin{tcolorbox}[title={$(41,21)$}]
(2) 4 1 1 * * * * 1 

\end{tcolorbox}
\begin{tcolorbox}[title={$(42,2)$}]
(39) 3 $\leftarrow$ (43) 

\end{tcolorbox}
\begin{tcolorbox}[title={$(42,3)$}]
(11) 30 1 $\leftarrow$ (12) 31 

(27) 14 1 $\leftarrow$ (28) 15 

(35) 6 1 $\leftarrow$ (36) 7 

(36) 3 3 $\leftarrow$ (42) 1 

(39) 2 1 $\leftarrow$ (40) 3 

\end{tcolorbox}
\begin{tcolorbox}[title={$(42,4)$}]
(1) 11 15 15 

(3) 21 11 7 $\leftarrow$ (5) 23 15 

(5) 23 7 7 $\leftarrow$ (9) 27 7 

(11) 13 11 7 $\leftarrow$ (13) 15 15 

(11) 29 1 1 $\leftarrow$ (12) 30 1 

(21) 7 7 7 $\leftarrow$ (25) 11 7 

(23) 5 7 7 $\leftarrow$ (29) 7 7 

(27) 13 1 1 $\leftarrow$ (28) 14 1 

(34) 2 3 3 $\leftarrow$ (41) 1 1 

(35) 5 1 1 $\leftarrow$ (36) 6 1 

(39) 1 1 1 $\leftarrow$ (40) 2 1 

\end{tcolorbox}
\begin{tcolorbox}[title={$(42,5)$}]
(1) 10 13 11 7 $\leftarrow$ (2) 11 15 15 

(2) 19 7 7 7 $\leftarrow$ (4) 21 11 7 

(3) 20 5 7 7 $\leftarrow$ (6) 23 7 7 

(9) 14 5 7 7 $\leftarrow$ (12) 13 11 7 

(11) 28 1 1 1 $\leftarrow$ (12) 29 1 1 

(19) 4 5 7 7 $\leftarrow$ (22) 7 7 7 

(20) 3 5 7 7 $\leftarrow$ (24) 5 7 7 

(27) 12 1 1 1 $\leftarrow$ (28) 13 1 1 

(33) 1 2 3 3 $\leftarrow$ (40) 1 1 1 

(35) 4 1 1 1 $\leftarrow$ (36) 5 1 1 

\end{tcolorbox}
\begin{tcolorbox}[title={$(42,6)$}]
(1) 9 11 7 7 7 $\leftarrow$ (2) 10 13 11 7 

(2) 18 3 5 7 7 $\leftarrow$ (4) 20 5 7 7 

(3) 7 13 5 7 7 $\leftarrow$ (5) 17 7 7 7 

(4) 20 9 3 3 3 

(7) 5 9 7 7 7 $\leftarrow$ (11) 13 5 7 7 

(9) 11 3 5 7 7 $\leftarrow$ (10) 14 5 7 7 

(11) 24 4 1 1 1 $\leftarrow$ (12) 28 1 1 1 

(12) 12 9 3 3 3 

(18) 2 3 5 7 7 $\leftarrow$ (20) 4 5 7 7 

(21) 3 5 7 3 3 $\leftarrow$ (25) 5 7 3 3 

(25) 3 6 2 3 3 $\leftarrow$ (26) 8 3 3 3 

(27) 2 4 3 3 3 $\leftarrow$ (29) 6 2 3 3 

(27) 8 4 1 1 1 $\leftarrow$ (28) 12 1 1 1 

(31) 4 4 1 1 1 $\leftarrow$ (36) 4 1 1 1 

\end{tcolorbox}
\begin{tcolorbox}[title={$(42,7)$}]
(2) 5 5 9 7 7 7 $\leftarrow$ (4) 7 13 5 7 7 

(2) 17 3 6 6 5 3 

(4) 3 5 9 7 7 7 $\leftarrow$ (8) 5 9 7 7 7 

(5) 13 2 3 5 7 7 $\leftarrow$ (6) 14 4 5 7 7 

(6) 5 9 3 5 7 7 $\leftarrow$ (12) 9 3 5 7 7 

(10) 9 3 6 6 5 3 

(11) 22 * 1 $\leftarrow$ (12) 24 4 1 1 1 

(19) 2 3 5 7 3 3 $\leftarrow$ (20) 3 6 6 5 3 

(25) ..4 3 3 3 $\leftarrow$ (26) 3 6 2 3 3 

(26) 1 2 4 3 3 3 $\leftarrow$ (28) 2 4 3 3 3 

(27) 6 * 1 $\leftarrow$ (28) 8 4 1 1 1 

(28) 3 4 4 1 1 1 $\leftarrow$ (34) * 1 

(31) 2 * 1 $\leftarrow$ (32) 4 4 1 1 1 

\end{tcolorbox}
\begin{tcolorbox}[title={$(42,8)$}]
(1) 11 12 4 5 3 3 3 

(3) 3 5 9 3 5 7 7 $\leftarrow$ (5) 8 12 9 3 3 3 

(5) 3 5 7 3 5 7 7 $\leftarrow$ (9) 5 7 3 5 7 7 

(5) 7 12 4 5 3 3 3 $\leftarrow$ (6) 13 2 3 5 7 7 

(7) 6 5 2 3 5 7 7 $\leftarrow$ (13) 12 4 5 3 3 3 

(9) 5 5 3 6 6 5 3 

(11) 21 1 * 1 $\leftarrow$ (12) 22 * 1 

(25) 1 1 2 4 3 3 3 $\leftarrow$ (26) ..4 3 3 3 

(26) 2 3 4 4 1 1 1 $\leftarrow$ (33) 1 * 1 

(27) 5 1 * 1 $\leftarrow$ (28) 6 * 1 

(31) 1 1 * 1 $\leftarrow$ (32) 2 * 1 

\end{tcolorbox}
\begin{tcolorbox}[title={$(42,9)$}]
(2) 5 6 5 2 3 5 7 7 

(5) 4 7 3 3 6 6 5 3 $\leftarrow$ (11) 6 3 3 6 6 5 3 

(5) 5 6 3 3 6 6 5 3 $\leftarrow$ (6) 7 12 4 5 3 3 3 

(7) 3 6 3 3 6 6 5 3 $\leftarrow$ (8) 6 5 2 3 5 7 7 

(11) 20 1 1 * 1 $\leftarrow$ (12) 21 1 * 1 

(13) 3 6 2 4 5 3 3 3 $\leftarrow$ (14) 6 2 3 5 7 3 3 

(19) 12 1 1 * 1 $\leftarrow$ (20) 13 1 * 1 

(25) 1 2 3 4 4 1 1 1 $\leftarrow$ (32) 1 1 * 1 

(27) 4 1 1 * 1 $\leftarrow$ (28) 5 1 * 1 

\end{tcolorbox}
\begin{tcolorbox}[title={$(42,10)$}]
(3) 2 4 7 3 3 6 6 5 3 $\leftarrow$ (6) 4 7 3 3 6 6 5 3 

(4) 2 3 5 5 3 6 6 5 3 $\leftarrow$ (9) 5 6 2 3 5 7 3 3 

(5) 3 5 6 2 3 5 7 3 3 $\leftarrow$ (6) 5 6 3 3 6 6 5 3 

(7) 2 4 3 3 3 6 6 5 3 $\leftarrow$ (8) 3 6 3 3 6 6 5 3 

(11) 3 6 ..4 5 3 3 3 $\leftarrow$ (12) 5 6 2 4 5 3 3 3 

(11) 16 4 1 1 * 1 $\leftarrow$ (12) 20 1 1 * 1 

(13) ....3 5 7 3 3 $\leftarrow$ (14) 3 6 2 4 5 3 3 3 

(17) 3 6 2 3 4 4 1 1 1 $\leftarrow$ (18) 8 1 1 2 4 3 3 3 

(19) * 2 4 3 3 3 $\leftarrow$ (21) 6 2 3 4 4 1 1 1 

(19) 8 4 1 1 * 1 $\leftarrow$ (20) 12 1 1 * 1 

(23) 4 4 1 1 * 1 $\leftarrow$ (28) 4 1 1 * 1 

\end{tcolorbox}
\begin{tcolorbox}[title={$(42,11)$}]
(1) ..4 7 3 3 6 6 5 3 $\leftarrow$ (8) 2 4 3 3 3 6 6 5 3 

(2) 1 2 4 7 3 3 6 6 5 3 $\leftarrow$ (4) 2 4 7 3 3 6 6 5 3 

(5) ..4 3 3 3 6 6 5 3 $\leftarrow$ (6) 3 5 6 2 3 5 7 3 3 

(11) .....3 5 7 3 3 $\leftarrow$ (12) 3 6 ..4 5 3 3 3 

(11) 14 * * 1 $\leftarrow$ (12) 16 4 1 1 * 1 

(17) 2 * 2 4 3 3 3 $\leftarrow$ (18) 3 6 2 3 4 4 1 1 1 

(18) 1 * 2 4 3 3 3 $\leftarrow$ (20) * 2 4 3 3 3 

(19) 6 * * 1 $\leftarrow$ (20) 8 4 1 1 * 1 

(20) 3 4 4 1 1 * 1 $\leftarrow$ (26) * * 1 

(23) 2 * * 1 $\leftarrow$ (24) 4 4 1 1 * 1 

\end{tcolorbox}
\begin{tcolorbox}[title={$(42,12)$}]
(1) 1 1 2 4 7 3 3 6 6 5 3 $\leftarrow$ (2) ..4 7 3 3 6 6 5 3 

(5) 6 .....3 5 7 3 3 

(11) 13 1 * * 1 $\leftarrow$ (12) 14 * * 1 

(17) 1 1 * 2 4 3 3 3 $\leftarrow$ (18) 2 * 2 4 3 3 3 

(18) 2 3 4 4 1 1 * 1 $\leftarrow$ (25) 1 * * 1 

(19) 5 1 * * 1 $\leftarrow$ (20) 6 * * 1 

(23) 1 1 * * 1 $\leftarrow$ (24) 2 * * 1 

\end{tcolorbox}
\begin{tcolorbox}[title={$(42,13)$}]
(3) 5 6 .....4 5 3 3 3 

(5) 3 6 .....4 5 3 3 3 $\leftarrow$ (6) 6 .....3 5 7 3 3 

(11) 12 1 1 * * 1 $\leftarrow$ (12) 13 1 * * 1 

(17) 1 2 3 4 4 1 1 * 1 $\leftarrow$ (24) 1 1 * * 1 

(19) 4 1 1 * * 1 $\leftarrow$ (20) 5 1 * * 1 

\end{tcolorbox}
\begin{tcolorbox}[title={$(42,14)$}]
(2) .......3 3 6 6 5 3 

(3) 3 6 ......4 5 3 3 3 $\leftarrow$ (4) 5 6 .....4 5 3 3 3 

(5) ........3 5 7 3 3 $\leftarrow$ (6) 3 6 .....4 5 3 3 3 

(9) 3 6 2 3 4 4 1 1 * 1 $\leftarrow$ (10) 8 1 1 * 2 4 3 3 3 

(11) * * 2 4 3 3 3 $\leftarrow$ (13) 6 2 3 4 4 1 1 * 1 

(11) 8 4 1 1 * * 1 $\leftarrow$ (12) 12 1 1 * * 1 

(15) 4 4 1 1 * * 1 $\leftarrow$ (20) 4 1 1 * * 1 

\end{tcolorbox}
\begin{tcolorbox}[title={$(42,15)$}]
(3) .........3 5 7 3 3 $\leftarrow$ (4) 3 6 ......4 5 3 3 3 

(6) .........4 5 3 3 3 

(9) 2 * * 2 4 3 3 3 $\leftarrow$ (10) 3 6 2 3 4 4 1 1 * 1 

(10) 1 * * 2 4 3 3 3 $\leftarrow$ (12) * * 2 4 3 3 3 

(11) 6 * * * 1 $\leftarrow$ (12) 8 4 1 1 * * 1 

(12) 3 4 4 1 1 * * 1 $\leftarrow$ (18) * * * 1 

(15) 2 * * * 1 $\leftarrow$ (16) 4 4 1 1 * * 1 

\end{tcolorbox}
\begin{tcolorbox}[title={$(42,16)$}]
(3) 13 1 * * * 1 

(9) 1 1 * * 2 4 3 3 3 $\leftarrow$ (10) 2 * * 2 4 3 3 3 

(10) 2 3 4 4 1 1 * * 1 $\leftarrow$ (17) 1 * * * 1 

(11) 5 1 * * * 1 $\leftarrow$ (12) 6 * * * 1 

(15) 1 1 * * * 1 $\leftarrow$ (16) 2 * * * 1 

\end{tcolorbox}
\begin{tcolorbox}[title={$(42,17)$}]
(1) 8 1 1 * * 2 4 3 3 3 

(3) 12 1 1 * * * 1 $\leftarrow$ (4) 13 1 * * * 1 

(9) 1 2 3 4 4 1 1 * * 1 $\leftarrow$ (16) 1 1 * * * 1 

(11) 4 1 1 * * * 1 $\leftarrow$ (12) 5 1 * * * 1 

\end{tcolorbox}
\begin{tcolorbox}[title={$(42,18)$}]
(1) 3 6 2 3 4 4 1 1 * * 1 $\leftarrow$ (2) 8 1 1 * * 2 4 3 3 3 

(3) * * * 2 4 3 3 3 $\leftarrow$ (5) 6 2 3 4 4 1 1 * * 1 

(3) 8 4 1 1 * * * 1 $\leftarrow$ (4) 12 1 1 * * * 1 

(7) 4 4 1 1 * * * 1 $\leftarrow$ (12) 4 1 1 * * * 1 

\end{tcolorbox}
\begin{tcolorbox}[title={$(42,19)$}]
(1) 2 * * * 2 4 3 3 3 $\leftarrow$ (2) 3 6 2 3 4 4 1 1 * * 1 

(2) 1 * * * 2 4 3 3 3 $\leftarrow$ (4) * * * 2 4 3 3 3 

(3) 6 * * * * 1 $\leftarrow$ (4) 8 4 1 1 * * * 1 

(4) 3 4 4 1 1 * * * 1 $\leftarrow$ (10) * * * * 1 

(7) 2 * * * * 1 $\leftarrow$ (8) 4 4 1 1 * * * 1 

\end{tcolorbox}
\begin{tcolorbox}[title={$(42,20)$}]
(1) 1 1 * * * 2 4 3 3 3 $\leftarrow$ (2) 2 * * * 2 4 3 3 3 

(2) 2 3 4 4 1 1 * * * 1 $\leftarrow$ (9) 1 * * * * 1 

(3) 5 1 * * * * 1 $\leftarrow$ (4) 6 * * * * 1 

(7) 1 1 * * * * 1 $\leftarrow$ (8) 2 * * * * 1 

\end{tcolorbox}
\begin{tcolorbox}[title={$(42,21)$}]
(1) 1 2 3 4 4 1 1 * * * 1 $\leftarrow$ (8) 1 1 * * * * 1 

(3) 4 1 1 * * * * 1 $\leftarrow$ (4) 5 1 * * * * 1 

\end{tcolorbox}
\begin{tcolorbox}[title={$(42,22)$}]
(1) * * * * * 1 

\end{tcolorbox}
\begin{tcolorbox}[title={$(43,3)$}]
(11) 29 3 $\leftarrow$ (13) 31 

(27) 13 3 $\leftarrow$ (29) 15 

(35) 5 3 $\leftarrow$ (37) 7 

(37) 3 3 $\leftarrow$ (41) 3 

\end{tcolorbox}
\begin{tcolorbox}[title={$(43,4)$}]
(5) 22 13 3 $\leftarrow$ (6) 23 15 

(9) 26 5 3 $\leftarrow$ (10) 27 7 

(10) 27 3 3 $\leftarrow$ (12) 29 3 

(13) 14 13 3 $\leftarrow$ (14) 15 15 

(25) 10 5 3 $\leftarrow$ (26) 11 7 

(26) 11 3 3 $\leftarrow$ (28) 13 3 

(29) 6 5 3 $\leftarrow$ (30) 7 7 

(34) 3 3 3 $\leftarrow$ (36) 5 3 

(35) 2 3 3 $\leftarrow$ (38) 3 3 

\end{tcolorbox}
\begin{tcolorbox}[title={$(43,5)$}]
(3) 11 15 7 7 

(3) 19 7 7 7 $\leftarrow$ (5) 21 11 7 

(5) 21 11 3 3 $\leftarrow$ (6) 22 13 3 

(9) 25 3 3 3 $\leftarrow$ (10) 26 5 3 

(13) 13 11 3 3 $\leftarrow$ (14) 14 13 3 

(21) 3 5 7 7 $\leftarrow$ (25) 5 7 7 

(23) 6 6 5 3 $\leftarrow$ (30) 6 5 3 

(25) 9 3 3 3 $\leftarrow$ (26) 10 5 3 

(34) 1 2 3 3 $\leftarrow$ (36) 2 3 3 

\end{tcolorbox}
\begin{tcolorbox}[title={$(43,6)$}]
(2) 9 11 7 7 7 $\leftarrow$ (6) 17 7 7 7 

(3) 7 14 5 7 7 $\leftarrow$ (4) 11 15 7 7 

(3) 18 3 5 7 7 $\leftarrow$ (5) 20 5 7 7 

(5) 20 9 3 3 3 $\leftarrow$ (6) 21 11 3 3 

(10) 11 3 5 7 7 $\leftarrow$ (12) 13 5 7 7 

(13) 12 9 3 3 3 $\leftarrow$ (14) 13 11 3 3 

(19) 2 3 5 7 7 $\leftarrow$ (21) 4 5 7 7 

(22) 3 5 7 3 3 $\leftarrow$ (27) 8 3 3 3 

(23) 4 7 3 3 3 $\leftarrow$ (24) 6 6 5 3 

(25) 4 5 3 3 3 $\leftarrow$ (26) 5 7 3 3 

(29) 5 1 2 3 3 $\leftarrow$ (30) 6 2 3 3 

\end{tcolorbox}
\begin{tcolorbox}[title={$(43,7)$}]
(1) 12 12 9 3 3 3 $\leftarrow$ (4) 18 3 5 7 7 

(3) 5 5 9 7 7 7 $\leftarrow$ (4) 7 14 5 7 7 

(3) 17 3 6 6 5 3 $\leftarrow$ (6) 20 9 3 3 3 

(5) 3 5 9 7 7 7 $\leftarrow$ (9) 5 9 7 7 7 

(7) 5 9 3 5 7 7 $\leftarrow$ (13) 9 3 5 7 7 

(11) 9 3 6 6 5 3 $\leftarrow$ (14) 12 9 3 3 3 

(17) 3 3 6 6 5 3 $\leftarrow$ (20) 2 3 5 7 7 

(20) 2 3 5 7 3 3 $\leftarrow$ (26) 4 5 3 3 3 

(23) 2 4 5 3 3 3 $\leftarrow$ (24) 4 7 3 3 3 

(27) 1 2 4 3 3 3 $\leftarrow$ (29) 2 4 3 3 3 

(29) 3 4 4 1 1 1 $\leftarrow$ (30) 5 1 2 3 3 

\end{tcolorbox}
\begin{tcolorbox}[title={$(43,8)$}]
(1) 10 9 3 6 6 5 3 $\leftarrow$ (2) 12 12 9 3 3 3 

(2) 11 12 4 5 3 3 3 $\leftarrow$ (4) 17 3 6 6 5 3 

(4) 3 5 9 3 5 7 7 $\leftarrow$ (10) 5 7 3 5 7 7 

(5) 6 9 3 6 6 5 3 $\leftarrow$ (6) 8 12 9 3 3 3 

(6) 3 5 7 3 5 7 7 $\leftarrow$ (8) 5 9 3 5 7 7 

(10) 5 5 3 6 6 5 3 $\leftarrow$ (12) 9 3 6 6 5 3 

(13) 10 2 4 5 3 3 3 $\leftarrow$ (14) 12 4 5 3 3 3 

(17) 6 2 4 5 3 3 3 $\leftarrow$ (24) 2 4 5 3 3 3 

(26) 1 1 2 4 3 3 3 $\leftarrow$ (28) 1 2 4 3 3 3 

(27) 2 3 4 4 1 1 1 $\leftarrow$ (30) 3 4 4 1 1 1 

\end{tcolorbox}
\begin{tcolorbox}[title={$(43,9)$}]
(1) 9 5 5 3 6 6 5 3 $\leftarrow$ (2) 10 9 3 6 6 5 3 

(3) 5 6 5 2 3 5 7 7 $\leftarrow$ (9) 6 5 2 3 5 7 7 

(5) 3 6 5 2 3 5 7 7 $\leftarrow$ (6) 6 9 3 6 6 5 3 

(11) 3 6 2 3 5 7 3 3 $\leftarrow$ (12) 6 3 3 6 6 5 3 

(13) 7 13 1 * 1 $\leftarrow$ (14) 10 2 4 5 3 3 3 

(15) 6 ..4 5 3 3 3 $\leftarrow$ (21) 13 1 * 1 

(17) 4 ..4 5 3 3 3 $\leftarrow$ (18) 6 2 4 5 3 3 3 

(26) 1 2 3 4 4 1 1 1 $\leftarrow$ (28) 2 3 4 4 1 1 1 

\end{tcolorbox}
\begin{tcolorbox}[title={$(43,10)$}]
(1) 4 5 5 5 3 6 6 5 3 $\leftarrow$ (4) 5 6 5 2 3 5 7 7 

(5) 2 3 5 5 3 6 6 5 3 $\leftarrow$ (6) 3 6 5 2 3 5 7 7 

(9) 3 5 6 2 4 5 3 3 3 $\leftarrow$ (10) 5 6 2 3 5 7 3 3 

(11) ...3 3 6 6 5 3 $\leftarrow$ (12) 3 6 2 3 5 7 3 3 

(13) 5 8 1 1 2 4 3 3 3 $\leftarrow$ (14) 7 13 1 * 1 

(14) ....3 5 7 3 3 $\leftarrow$ (19) 8 1 1 2 4 3 3 3 

(15) 4 ...4 5 3 3 3 $\leftarrow$ (16) 6 ..4 5 3 3 3 

(17) ....4 5 3 3 3 $\leftarrow$ (18) 4 ..4 5 3 3 3 

(21) 5 1 2 3 4 4 1 1 1 $\leftarrow$ (22) 6 2 3 4 4 1 1 1 

\end{tcolorbox}
\begin{tcolorbox}[title={$(43,11)$}]
(3) 1 2 4 7 3 3 6 6 5 3 $\leftarrow$ (5) 2 4 7 3 3 6 6 5 3 

(6) ..4 3 3 3 6 6 5 3 

(9) ....3 3 6 6 5 3 $\leftarrow$ (10) 3 5 6 2 4 5 3 3 3 

(12) .....3 5 7 3 3 $\leftarrow$ (18) ....4 5 3 3 3 

(13) 4 ....4 5 3 3 3 $\leftarrow$ (14) 5 8 1 1 2 4 3 3 3 

(15) .....4 5 3 3 3 $\leftarrow$ (16) 4 ...4 5 3 3 3 

(19) 1 * 2 4 3 3 3 $\leftarrow$ (21) * 2 4 3 3 3 

(21) 3 4 4 1 1 * 1 $\leftarrow$ (22) 5 1 2 3 4 4 1 1 1 

\end{tcolorbox}
\begin{tcolorbox}[title={$(43,12)$}]
(2) 1 1 2 4 7 3 3 6 6 5 3 $\leftarrow$ (4) 1 2 4 7 3 3 6 6 5 3 

(3) 6 ....3 3 6 6 5 3 

(9) 6 .....4 5 3 3 3 $\leftarrow$ (16) .....4 5 3 3 3 

(13) ......4 5 3 3 3 $\leftarrow$ (14) 4 ....4 5 3 3 3 

(18) 1 1 * 2 4 3 3 3 $\leftarrow$ (20) 1 * 2 4 3 3 3 

(19) 2 3 4 4 1 1 * 1 $\leftarrow$ (22) 3 4 4 1 1 * 1 

\end{tcolorbox}
\begin{tcolorbox}[title={$(43,13)$}]
(1) 5 6 .....3 5 7 3 3 

(3) 3 6 .....3 5 7 3 3 $\leftarrow$ (4) 6 ....3 3 6 6 5 3 

(7) 6 ......4 5 3 3 3 $\leftarrow$ (14) ......4 5 3 3 3 

(9) 4 ......4 5 3 3 3 $\leftarrow$ (10) 6 .....4 5 3 3 3 

(18) 1 2 3 4 4 1 1 * 1 $\leftarrow$ (20) 2 3 4 4 1 1 * 1 

\end{tcolorbox}
\begin{tcolorbox}[title={$(43,14)$}]
(1) 3 5 6 .....4 5 3 3 3 $\leftarrow$ (2) 5 6 .....3 5 7 3 3 

(3) .......3 3 6 6 5 3 $\leftarrow$ (4) 3 6 .....3 5 7 3 3 

(6) ........3 5 7 3 3 $\leftarrow$ (11) 8 1 1 * 2 4 3 3 3 

(7) 4 .......4 5 3 3 3 $\leftarrow$ (8) 6 ......4 5 3 3 3 

(9) ........4 5 3 3 3 $\leftarrow$ (10) 4 ......4 5 3 3 3 

(13) 5 1 2 3 4 4 1 1 * 1 $\leftarrow$ (14) 6 2 3 4 4 1 1 * 1 

\end{tcolorbox}
\begin{tcolorbox}[title={$(43,15)$}]
(1) ........3 3 6 6 5 3 $\leftarrow$ (2) 3 5 6 .....4 5 3 3 3 

(4) .........3 5 7 3 3 $\leftarrow$ (10) ........4 5 3 3 3 

(7) .........4 5 3 3 3 $\leftarrow$ (8) 4 .......4 5 3 3 3 

(11) 1 * * 2 4 3 3 3 $\leftarrow$ (13) * * 2 4 3 3 3 

(13) 3 4 4 1 1 * * 1 $\leftarrow$ (14) 5 1 2 3 4 4 1 1 * 1 

\end{tcolorbox}
\begin{tcolorbox}[title={$(43,16)$}]
(1) 6 .........4 5 3 3 3 $\leftarrow$ (8) .........4 5 3 3 3 

(10) 1 1 * * 2 4 3 3 3 $\leftarrow$ (12) 1 * * 2 4 3 3 3 

(11) 2 3 4 4 1 1 * * 1 $\leftarrow$ (14) 3 4 4 1 1 * * 1 

\end{tcolorbox}
\begin{tcolorbox}[title={$(43,17)$}]
(1) 4 ..........4 5 3 3 3 $\leftarrow$ (2) 6 .........4 5 3 3 3 

(10) 1 2 3 4 4 1 1 * * 1 $\leftarrow$ (12) 2 3 4 4 1 1 * * 1 

\end{tcolorbox}
\begin{tcolorbox}[title={$(43,18)$}]
(1) ............4 5 3 3 3 $\leftarrow$ (2) 4 ..........4 5 3 3 3 

(5) 5 1 2 3 4 4 1 1 * * 1 $\leftarrow$ (6) 6 2 3 4 4 1 1 * * 1 

\end{tcolorbox}
\begin{tcolorbox}[title={$(43,19)$}]
(3) 1 * * * 2 4 3 3 3 $\leftarrow$ (5) * * * 2 4 3 3 3 

(5) 3 4 4 1 1 * * * 1 $\leftarrow$ (6) 5 1 2 3 4 4 1 1 * * 1 

\end{tcolorbox}
\begin{tcolorbox}[title={$(43,20)$}]
(2) 1 1 * * * 2 4 3 3 3 $\leftarrow$ (4) 1 * * * 2 4 3 3 3 

(3) 2 3 4 4 1 1 * * * 1 $\leftarrow$ (6) 3 4 4 1 1 * * * 1 

\end{tcolorbox}
\begin{tcolorbox}[title={$(43,21)$}]
(2) 1 2 3 4 4 1 1 * * * 1 $\leftarrow$ (4) 2 3 4 4 1 1 * * * 1 

(4) 4 1 1 * * * * 1 

\end{tcolorbox}
\begin{tcolorbox}[title={$(43,22)$}]
(2) * * * * * 1 

\end{tcolorbox}
\begin{tcolorbox}[title={$(43,23)$}]
(1) 1 * * * * * 1 

\end{tcolorbox}
\begin{tcolorbox}[title={$(44,2)$}]
(43) 1 $\leftarrow$ (45) 

\end{tcolorbox}
\begin{tcolorbox}[title={$(44,3)$}]
(13) 30 1 $\leftarrow$ (14) 31 

(29) 14 1 $\leftarrow$ (30) 15 

(37) 6 1 $\leftarrow$ (38) 7 

(41) 2 1 $\leftarrow$ (42) 3 

(42) 1 1 $\leftarrow$ (44) 1 

\end{tcolorbox}
\begin{tcolorbox}[title={$(44,4)$}]
(3) 11 15 15 

(7) 23 7 7 $\leftarrow$ (11) 27 7 

(11) 27 3 3 $\leftarrow$ (13) 29 3 

(13) 13 11 7 

(13) 29 1 1 $\leftarrow$ (14) 30 1 

(23) 7 7 7 $\leftarrow$ (27) 11 7 

(27) 11 3 3 $\leftarrow$ (29) 13 3 

(29) 13 1 1 $\leftarrow$ (30) 14 1 

(35) 3 3 3 $\leftarrow$ (37) 5 3 

(37) 5 1 1 $\leftarrow$ (38) 6 1 

(41) 1 1 1 $\leftarrow$ (42) 2 1 

\end{tcolorbox}
\begin{tcolorbox}[title={$(44,5)$}]
(3) 10 13 11 7 $\leftarrow$ (4) 11 15 15 

(4) 19 7 7 7 $\leftarrow$ (8) 23 7 7 

(10) 25 3 3 3 $\leftarrow$ (12) 27 3 3 

(11) 14 5 7 7 

(13) 28 1 1 1 $\leftarrow$ (14) 29 1 1 

(22) 3 5 7 7 $\leftarrow$ (26) 5 7 7 

(26) 9 3 3 3 $\leftarrow$ (28) 11 3 3 

(29) 12 1 1 1 $\leftarrow$ (30) 13 1 1 

(35) 1 2 3 3 $\leftarrow$ (37) 2 3 3 

(37) 4 1 1 1 $\leftarrow$ (38) 5 1 1 

\end{tcolorbox}
\begin{tcolorbox}[title={$(44,6)$}]
(3) 9 11 7 7 7 $\leftarrow$ (4) 10 13 11 7 

(5) 7 13 5 7 7 

(7) 14 4 5 7 7 

(11) 11 3 5 7 7 $\leftarrow$ (12) 14 5 7 7 

(13) 24 4 1 1 1 $\leftarrow$ (14) 28 1 1 1 

(21) 3 6 6 5 3 $\leftarrow$ (27) 5 7 3 3 

(23) 3 5 7 3 3 $\leftarrow$ (25) 6 6 5 3 

(27) 3 6 2 3 3 $\leftarrow$ (28) 8 3 3 3 

(29) 8 4 1 1 1 $\leftarrow$ (30) 12 1 1 1 

(33) 4 4 1 1 1 $\leftarrow$ (36) 1 2 3 3 

(35) * 1 $\leftarrow$ (38) 4 1 1 1 

\end{tcolorbox}
\begin{tcolorbox}[title={$(44,7)$}]
(4) 5 5 9 7 7 7 

(6) 3 5 9 7 7 7 $\leftarrow$ (10) 5 9 7 7 7 

(7) 13 2 3 5 7 7 $\leftarrow$ (8) 14 4 5 7 7 

(13) 22 * 1 $\leftarrow$ (14) 24 4 1 1 1 

(18) 3 3 6 6 5 3 $\leftarrow$ (24) 3 5 7 3 3 

(21) 2 3 5 7 3 3 $\leftarrow$ (22) 3 6 6 5 3 

(27) ..4 3 3 3 $\leftarrow$ (28) 3 6 2 3 3 

(29) 6 * 1 $\leftarrow$ (30) 8 4 1 1 1 

(33) 2 * 1 $\leftarrow$ (34) 4 4 1 1 1 

(34) 1 * 1 $\leftarrow$ (36) * 1 

\end{tcolorbox}
\begin{tcolorbox}[title={$(44,8)$}]
(3) 11 12 4 5 3 3 3 $\leftarrow$ (5) 17 3 6 6 5 3 

(5) 3 5 9 3 5 7 7 $\leftarrow$ (11) 5 7 3 5 7 7 

(7) 3 5 7 3 5 7 7 $\leftarrow$ (9) 5 9 3 5 7 7 

(7) 7 12 4 5 3 3 3 $\leftarrow$ (8) 13 2 3 5 7 7 

(11) 5 5 3 6 6 5 3 $\leftarrow$ (13) 9 3 6 6 5 3 

(13) 21 1 * 1 $\leftarrow$ (14) 22 * 1 

(15) 6 2 3 5 7 3 3 $\leftarrow$ (22) 2 3 5 7 3 3 

(27) 1 1 2 4 3 3 3 $\leftarrow$ (28) ..4 3 3 3 

(29) 5 1 * 1 $\leftarrow$ (30) 6 * 1 

(33) 1 1 * 1 $\leftarrow$ (34) 2 * 1 

\end{tcolorbox}
\begin{tcolorbox}[title={$(44,9)$}]
(2) 9 5 5 3 6 6 5 3 $\leftarrow$ (4) 11 12 4 5 3 3 3 

(7) 4 7 3 3 6 6 5 3 $\leftarrow$ (12) 5 5 3 6 6 5 3 

(7) 5 6 3 3 6 6 5 3 $\leftarrow$ (8) 7 12 4 5 3 3 3 

(9) 3 6 3 3 6 6 5 3 $\leftarrow$ (10) 6 5 2 3 5 7 7 

(13) 5 6 2 4 5 3 3 3 $\leftarrow$ (19) 6 2 4 5 3 3 3 

(13) 20 1 1 * 1 $\leftarrow$ (14) 21 1 * 1 

(15) 3 6 2 4 5 3 3 3 $\leftarrow$ (16) 6 2 3 5 7 3 3 

(21) 12 1 1 * 1 $\leftarrow$ (22) 13 1 * 1 

(27) 1 2 3 4 4 1 1 1 $\leftarrow$ (29) 2 3 4 4 1 1 1 

(29) 4 1 1 * 1 $\leftarrow$ (30) 5 1 * 1 

\end{tcolorbox}
\begin{tcolorbox}[title={$(44,10)$}]
(2) 4 5 5 5 3 6 6 5 3 

(6) 2 3 5 5 3 6 6 5 3 $\leftarrow$ (8) 4 7 3 3 6 6 5 3 

(7) 3 5 6 2 3 5 7 3 3 $\leftarrow$ (8) 5 6 3 3 6 6 5 3 

(9) 2 4 3 3 3 6 6 5 3 $\leftarrow$ (10) 3 6 3 3 6 6 5 3 

(12) ...3 3 6 6 5 3 $\leftarrow$ (17) 6 ..4 5 3 3 3 

(13) 3 6 ..4 5 3 3 3 $\leftarrow$ (14) 5 6 2 4 5 3 3 3 

(13) 16 4 1 1 * 1 $\leftarrow$ (14) 20 1 1 * 1 

(15) ....3 5 7 3 3 $\leftarrow$ (16) 3 6 2 4 5 3 3 3 

(19) 3 6 2 3 4 4 1 1 1 $\leftarrow$ (20) 8 1 1 2 4 3 3 3 

(21) 8 4 1 1 * 1 $\leftarrow$ (22) 12 1 1 * 1 

(25) 4 4 1 1 * 1 $\leftarrow$ (28) 1 2 3 4 4 1 1 1 

(27) * * 1 $\leftarrow$ (30) 4 1 1 * 1 

\end{tcolorbox}
\begin{tcolorbox}[title={$(44,11)$}]
(3) ..4 7 3 3 6 6 5 3 $\leftarrow$ (6) 2 4 7 3 3 6 6 5 3 

(7) ..4 3 3 3 6 6 5 3 $\leftarrow$ (8) 3 5 6 2 3 5 7 3 3 

(10) ....3 3 6 6 5 3 $\leftarrow$ (16) ....3 5 7 3 3 

(13) .....3 5 7 3 3 $\leftarrow$ (14) 3 6 ..4 5 3 3 3 

(13) 14 * * 1 $\leftarrow$ (14) 16 4 1 1 * 1 

(19) 2 * 2 4 3 3 3 $\leftarrow$ (20) 3 6 2 3 4 4 1 1 1 

(21) 6 * * 1 $\leftarrow$ (22) 8 4 1 1 * 1 

(25) 2 * * 1 $\leftarrow$ (26) 4 4 1 1 * 1 

(26) 1 * * 1 $\leftarrow$ (28) * * 1 

\end{tcolorbox}
\begin{tcolorbox}[title={$(44,12)$}]
(1) 6 ..4 3 3 3 6 6 5 3 $\leftarrow$ (5) 1 2 4 7 3 3 6 6 5 3 

(3) 1 1 2 4 7 3 3 6 6 5 3 $\leftarrow$ (4) ..4 7 3 3 6 6 5 3 

(7) 6 .....3 5 7 3 3 $\leftarrow$ (14) .....3 5 7 3 3 

(13) 13 1 * * 1 $\leftarrow$ (14) 14 * * 1 

(19) 1 1 * 2 4 3 3 3 $\leftarrow$ (20) 2 * 2 4 3 3 3 

(21) 5 1 * * 1 $\leftarrow$ (22) 6 * * 1 

(25) 1 1 * * 1 $\leftarrow$ (26) 2 * * 1 

\end{tcolorbox}
\begin{tcolorbox}[title={$(44,13)$}]
(1) 3 6 ....3 3 6 6 5 3 $\leftarrow$ (2) 6 ..4 3 3 3 6 6 5 3 

(5) 5 6 .....4 5 3 3 3 $\leftarrow$ (11) 6 .....4 5 3 3 3 

(7) 3 6 .....4 5 3 3 3 $\leftarrow$ (8) 6 .....3 5 7 3 3 

(13) 12 1 1 * * 1 $\leftarrow$ (14) 13 1 * * 1 

(19) 1 2 3 4 4 1 1 * 1 $\leftarrow$ (21) 2 3 4 4 1 1 * 1 

(21) 4 1 1 * * 1 $\leftarrow$ (22) 5 1 * * 1 

\end{tcolorbox}
\begin{tcolorbox}[title={$(44,14)$}]
(1) .....4 3 3 3 6 6 5 3 $\leftarrow$ (2) 3 6 ....3 3 6 6 5 3 

(4) .......3 3 6 6 5 3 $\leftarrow$ (9) 6 ......4 5 3 3 3 

(5) 3 6 ......4 5 3 3 3 $\leftarrow$ (6) 5 6 .....4 5 3 3 3 

(7) ........3 5 7 3 3 $\leftarrow$ (8) 3 6 .....4 5 3 3 3 

(11) 3 6 2 3 4 4 1 1 * 1 $\leftarrow$ (12) 8 1 1 * 2 4 3 3 3 

(13) 8 4 1 1 * * 1 $\leftarrow$ (14) 12 1 1 * * 1 

(17) 4 4 1 1 * * 1 $\leftarrow$ (20) 1 2 3 4 4 1 1 * 1 

(19) * * * 1 $\leftarrow$ (22) 4 1 1 * * 1 

\end{tcolorbox}
\begin{tcolorbox}[title={$(44,15)$}]
(2) ........3 3 6 6 5 3 $\leftarrow$ (8) ........3 5 7 3 3 

(5) .........3 5 7 3 3 $\leftarrow$ (6) 3 6 ......4 5 3 3 3 

(11) 2 * * 2 4 3 3 3 $\leftarrow$ (12) 3 6 2 3 4 4 1 1 * 1 

(13) 6 * * * 1 $\leftarrow$ (14) 8 4 1 1 * * 1 

(17) 2 * * * 1 $\leftarrow$ (18) 4 4 1 1 * * 1 

(18) 1 * * * 1 $\leftarrow$ (20) * * * 1 

\end{tcolorbox}
\begin{tcolorbox}[title={$(44,16)$}]
(5) 13 1 * * * 1 

(11) 1 1 * * 2 4 3 3 3 $\leftarrow$ (12) 2 * * 2 4 3 3 3 

(13) 5 1 * * * 1 $\leftarrow$ (14) 6 * * * 1 

(17) 1 1 * * * 1 $\leftarrow$ (18) 2 * * * 1 

\end{tcolorbox}
\begin{tcolorbox}[title={$(44,17)$}]
(3) 8 1 1 * * 2 4 3 3 3 

(5) 12 1 1 * * * 1 $\leftarrow$ (6) 13 1 * * * 1 

(11) 1 2 3 4 4 1 1 * * 1 $\leftarrow$ (13) 2 3 4 4 1 1 * * 1 

(13) 4 1 1 * * * 1 $\leftarrow$ (14) 5 1 * * * 1 

\end{tcolorbox}
\begin{tcolorbox}[title={$(44,18)$}]
(2) ............4 5 3 3 3 

(3) 3 6 2 3 4 4 1 1 * * 1 $\leftarrow$ (4) 8 1 1 * * 2 4 3 3 3 

(5) 8 4 1 1 * * * 1 $\leftarrow$ (6) 12 1 1 * * * 1 

(9) 4 4 1 1 * * * 1 $\leftarrow$ (12) 1 2 3 4 4 1 1 * * 1 

(11) * * * * 1 $\leftarrow$ (14) 4 1 1 * * * 1 

\end{tcolorbox}
\begin{tcolorbox}[title={$(44,19)$}]
(3) 2 * * * 2 4 3 3 3 $\leftarrow$ (4) 3 6 2 3 4 4 1 1 * * 1 

(5) 6 * * * * 1 $\leftarrow$ (6) 8 4 1 1 * * * 1 

(9) 2 * * * * 1 $\leftarrow$ (10) 4 4 1 1 * * * 1 

(10) 1 * * * * 1 $\leftarrow$ (12) * * * * 1 

\end{tcolorbox}
\begin{tcolorbox}[title={$(44,20)$}]
(3) 1 1 * * * 2 4 3 3 3 $\leftarrow$ (4) 2 * * * 2 4 3 3 3 

(5) 5 1 * * * * 1 $\leftarrow$ (6) 6 * * * * 1 

(9) 1 1 * * * * 1 $\leftarrow$ (10) 2 * * * * 1 

\end{tcolorbox}
\begin{tcolorbox}[title={$(44,21)$}]
(3) 1 2 3 4 4 1 1 * * * 1 $\leftarrow$ (5) 2 3 4 4 1 1 * * * 1 

(5) 4 1 1 * * * * 1 $\leftarrow$ (6) 5 1 * * * * 1 

\end{tcolorbox}
\begin{tcolorbox}[title={$(44,22)$}]
(1) 4 4 1 1 * * * * 1 $\leftarrow$ (4) 1 2 3 4 4 1 1 * * * 1 

(3) * * * * * 1 $\leftarrow$ (6) 4 1 1 * * * * 1 

\end{tcolorbox}
\begin{tcolorbox}[title={$(44,23)$}]
(1) 2 * * * * * 1 $\leftarrow$ (2) 4 4 1 1 * * * * 1 

(2) 1 * * * * * 1 $\leftarrow$ (4) * * * * * 1 

\end{tcolorbox}
\begin{tcolorbox}[title={$(44,24)$}]
(1) 1 1 * * * * * 1 $\leftarrow$ (2) 2 * * * * * 1 

\end{tcolorbox}
\begin{tcolorbox}[title={$(45,3)$}]
(7) 23 15 

(15) 15 15 

(31) 7 7 $\leftarrow$ (39) 7 

(39) 3 3 $\leftarrow$ (43) 3 

(43) 1 1 $\leftarrow$ (45) 1 

\end{tcolorbox}
\begin{tcolorbox}[title={$(45,4)$}]
(6) 21 11 7 

(7) 22 13 3 $\leftarrow$ (8) 23 15 

(11) 26 5 3 $\leftarrow$ (12) 27 7 

(14) 13 11 7 

(15) 14 13 3 $\leftarrow$ (16) 15 15 

(24) 7 7 7 $\leftarrow$ (38) 5 3 

(27) 10 5 3 $\leftarrow$ (28) 11 7 

(31) 6 5 3 $\leftarrow$ (32) 7 7 

(36) 3 3 3 $\leftarrow$ (40) 3 3 

(42) 1 1 1 $\leftarrow$ (44) 1 1 

\end{tcolorbox}
\begin{tcolorbox}[title={$(45,5)$}]
(1) 13 13 11 7 

(5) 11 15 7 7 

(5) 19 7 7 7 $\leftarrow$ (9) 23 7 7 

(6) 20 5 7 7 

(7) 17 7 7 7 

(7) 21 11 3 3 $\leftarrow$ (8) 22 13 3 

(11) 25 3 3 3 $\leftarrow$ (12) 26 5 3 

(13) 13 5 7 7 

(15) 13 11 3 3 $\leftarrow$ (16) 14 13 3 

(22) 4 5 7 7 $\leftarrow$ (32) 6 5 3 

(23) 3 5 7 7 $\leftarrow$ (27) 5 7 7 

(27) 9 3 3 3 $\leftarrow$ (28) 10 5 3 

(31) 6 2 3 3 $\leftarrow$ (38) 2 3 3 

\end{tcolorbox}
\begin{tcolorbox}[title={$(45,6)$}]
(1) 11 14 5 7 7 $\leftarrow$ (2) 13 13 11 7 

(4) 9 11 7 7 7 

(5) 7 14 5 7 7 $\leftarrow$ (6) 11 15 7 7 

(5) 18 3 5 7 7 

(6) 7 13 5 7 7 $\leftarrow$ (8) 17 7 7 7 

(7) 20 9 3 3 3 $\leftarrow$ (8) 21 11 3 3 

(12) 11 3 5 7 7 

(14) 9 3 5 7 7 $\leftarrow$ (24) 3 5 7 7 

(15) 12 9 3 3 3 $\leftarrow$ (16) 13 11 3 3 

(21) 2 3 5 7 7 $\leftarrow$ (29) 8 3 3 3 

(25) 4 7 3 3 3 $\leftarrow$ (26) 6 6 5 3 

(27) 4 5 3 3 3 $\leftarrow$ (28) 5 7 3 3 

(30) 2 4 3 3 3 $\leftarrow$ (37) 1 2 3 3 

(31) 5 1 2 3 3 $\leftarrow$ (32) 6 2 3 3 

\end{tcolorbox}
\begin{tcolorbox}[title={$(45,7)$}]
(1) 7 14 4 5 7 7 $\leftarrow$ (2) 11 14 5 7 7 

(3) 12 12 9 3 3 3 

(5) 5 5 9 7 7 7 $\leftarrow$ (6) 7 14 5 7 7 

(7) 3 5 9 7 7 7 $\leftarrow$ (9) 14 4 5 7 7 

(7) 8 12 9 3 3 3 

(15) 12 4 5 3 3 3 $\leftarrow$ (28) 4 5 3 3 3 

(19) 3 3 6 6 5 3 $\leftarrow$ (25) 3 5 7 3 3 

(25) 2 4 5 3 3 3 $\leftarrow$ (26) 4 7 3 3 3 

(29) 1 2 4 3 3 3 $\leftarrow$ (35) 4 4 1 1 1 

(31) 3 4 4 1 1 1 $\leftarrow$ (32) 5 1 2 3 3 

(35) 1 * 1 $\leftarrow$ (37) * 1 

\end{tcolorbox}
\begin{tcolorbox}[title={$(45,8)$}]
(3) 10 9 3 6 6 5 3 $\leftarrow$ (4) 12 12 9 3 3 3 

(6) 3 5 9 3 5 7 7 $\leftarrow$ (10) 5 9 3 5 7 7 

(7) 6 9 3 6 6 5 3 $\leftarrow$ (8) 8 12 9 3 3 3 

(8) 3 5 7 3 5 7 7 

(13) 6 3 3 6 6 5 3 $\leftarrow$ (26) 2 4 5 3 3 3 

(15) 10 2 4 5 3 3 3 $\leftarrow$ (16) 12 4 5 3 3 3 

(28) 1 1 2 4 3 3 3 $\leftarrow$ (32) 3 4 4 1 1 1 

(34) 1 1 * 1 $\leftarrow$ (36) 1 * 1 

\end{tcolorbox}
\begin{tcolorbox}[title={$(45,9)$}]
(3) 9 5 5 3 6 6 5 3 $\leftarrow$ (4) 10 9 3 6 6 5 3 

(5) 5 6 5 2 3 5 7 7 

(7) 3 6 5 2 3 5 7 7 $\leftarrow$ (8) 6 9 3 6 6 5 3 

(11) 5 6 2 3 5 7 3 3 $\leftarrow$ (23) 13 1 * 1 

(13) 3 6 2 3 5 7 3 3 $\leftarrow$ (14) 6 3 3 6 6 5 3 

(15) 7 13 1 * 1 $\leftarrow$ (16) 10 2 4 5 3 3 3 

(19) 4 ..4 5 3 3 3 $\leftarrow$ (20) 6 2 4 5 3 3 3 

(23) 6 2 3 4 4 1 1 1 $\leftarrow$ (30) 2 3 4 4 1 1 1 

\end{tcolorbox}
\begin{tcolorbox}[title={$(45,10)$}]
(3) 4 5 5 5 3 6 6 5 3 $\leftarrow$ (6) 5 6 5 2 3 5 7 7 

(7) 2 3 5 5 3 6 6 5 3 $\leftarrow$ (8) 3 6 5 2 3 5 7 7 

(10) 2 4 3 3 3 6 6 5 3 $\leftarrow$ (21) 8 1 1 2 4 3 3 3 

(11) 3 5 6 2 4 5 3 3 3 $\leftarrow$ (12) 5 6 2 3 5 7 3 3 

(13) ...3 3 6 6 5 3 $\leftarrow$ (14) 3 6 2 3 5 7 3 3 

(15) 5 8 1 1 2 4 3 3 3 $\leftarrow$ (16) 7 13 1 * 1 

(17) 4 ...4 5 3 3 3 $\leftarrow$ (18) 6 ..4 5 3 3 3 

(19) ....4 5 3 3 3 $\leftarrow$ (20) 4 ..4 5 3 3 3 

(22) * 2 4 3 3 3 $\leftarrow$ (29) 1 2 3 4 4 1 1 1 

(23) 5 1 2 3 4 4 1 1 1 $\leftarrow$ (24) 6 2 3 4 4 1 1 1 

\end{tcolorbox}
\begin{tcolorbox}[title={$(45,11)$}]
(1) 2 4 5 5 5 3 6 6 5 3 $\leftarrow$ (4) 4 5 5 5 3 6 6 5 3 

(8) ..4 3 3 3 6 6 5 3 $\leftarrow$ (20) ....4 5 3 3 3 

(11) ....3 3 6 6 5 3 $\leftarrow$ (12) 3 5 6 2 4 5 3 3 3 

(15) 4 ....4 5 3 3 3 $\leftarrow$ (16) 5 8 1 1 2 4 3 3 3 

(17) .....4 5 3 3 3 $\leftarrow$ (18) 4 ...4 5 3 3 3 

(21) 1 * 2 4 3 3 3 $\leftarrow$ (27) 4 4 1 1 * 1 

(23) 3 4 4 1 1 * 1 $\leftarrow$ (24) 5 1 2 3 4 4 1 1 1 

(27) 1 * * 1 $\leftarrow$ (29) * * 1 

\end{tcolorbox}
\begin{tcolorbox}[title={$(45,12)$}]
(4) 1 1 2 4 7 3 3 6 6 5 3 

(5) 6 ....3 3 6 6 5 3 $\leftarrow$ (18) .....4 5 3 3 3 

(15) ......4 5 3 3 3 $\leftarrow$ (16) 4 ....4 5 3 3 3 

(20) 1 1 * 2 4 3 3 3 $\leftarrow$ (24) 3 4 4 1 1 * 1 

(26) 1 1 * * 1 $\leftarrow$ (28) 1 * * 1 

\end{tcolorbox}
\begin{tcolorbox}[title={$(45,13)$}]
(3) 5 6 .....3 5 7 3 3 $\leftarrow$ (16) ......4 5 3 3 3 

(5) 3 6 .....3 5 7 3 3 $\leftarrow$ (6) 6 ....3 3 6 6 5 3 

(11) 4 ......4 5 3 3 3 $\leftarrow$ (12) 6 .....4 5 3 3 3 

(15) 6 2 3 4 4 1 1 * 1 $\leftarrow$ (22) 2 3 4 4 1 1 * 1 

\end{tcolorbox}
\begin{tcolorbox}[title={$(45,14)$}]
(2) .....4 3 3 3 6 6 5 3 $\leftarrow$ (13) 8 1 1 * 2 4 3 3 3 

(3) 3 5 6 .....4 5 3 3 3 $\leftarrow$ (4) 5 6 .....3 5 7 3 3 

(5) .......3 3 6 6 5 3 $\leftarrow$ (6) 3 6 .....3 5 7 3 3 

(9) 4 .......4 5 3 3 3 $\leftarrow$ (10) 6 ......4 5 3 3 3 

(11) ........4 5 3 3 3 $\leftarrow$ (12) 4 ......4 5 3 3 3 

(14) * * 2 4 3 3 3 $\leftarrow$ (21) 1 2 3 4 4 1 1 * 1 

(15) 5 1 2 3 4 4 1 1 * 1 $\leftarrow$ (16) 6 2 3 4 4 1 1 * 1 

\end{tcolorbox}
\begin{tcolorbox}[title={$(45,15)$}]
(3) ........3 3 6 6 5 3 $\leftarrow$ (4) 3 5 6 .....4 5 3 3 3 

(6) .........3 5 7 3 3 

(9) .........4 5 3 3 3 $\leftarrow$ (10) 4 .......4 5 3 3 3 

(13) 1 * * 2 4 3 3 3 $\leftarrow$ (19) 4 4 1 1 * * 1 

(15) 3 4 4 1 1 * * 1 $\leftarrow$ (16) 5 1 2 3 4 4 1 1 * 1 

(19) 1 * * * 1 $\leftarrow$ (21) * * * 1 

\end{tcolorbox}
\begin{tcolorbox}[title={$(45,16)$}]
(3) 6 .........4 5 3 3 3 

(12) 1 1 * * 2 4 3 3 3 $\leftarrow$ (16) 3 4 4 1 1 * * 1 

(18) 1 1 * * * 1 $\leftarrow$ (20) 1 * * * 1 

\end{tcolorbox}
\begin{tcolorbox}[title={$(45,17)$}]
(1) 6 ..........4 5 3 3 3 

(3) 4 ..........4 5 3 3 3 $\leftarrow$ (4) 6 .........4 5 3 3 3 

(7) 6 2 3 4 4 1 1 * * 1 $\leftarrow$ (14) 2 3 4 4 1 1 * * 1 

\end{tcolorbox}
\begin{tcolorbox}[title={$(45,18)$}]
(1) 4 ...........4 5 3 3 3 $\leftarrow$ (2) 6 ..........4 5 3 3 3 

(3) ............4 5 3 3 3 $\leftarrow$ (4) 4 ..........4 5 3 3 3 

(6) * * * 2 4 3 3 3 $\leftarrow$ (13) 1 2 3 4 4 1 1 * * 1 

(7) 5 1 2 3 4 4 1 1 * * 1 $\leftarrow$ (8) 6 2 3 4 4 1 1 * * 1 

\end{tcolorbox}
\begin{tcolorbox}[title={$(45,19)$}]
(1) .............4 5 3 3 3 $\leftarrow$ (2) 4 ...........4 5 3 3 3 

(5) 1 * * * 2 4 3 3 3 $\leftarrow$ (11) 4 4 1 1 * * * 1 

(7) 3 4 4 1 1 * * * 1 $\leftarrow$ (8) 5 1 2 3 4 4 1 1 * * 1 

(11) 1 * * * * 1 $\leftarrow$ (13) * * * * 1 

\end{tcolorbox}
\begin{tcolorbox}[title={$(45,20)$}]
(4) 1 1 * * * 2 4 3 3 3 $\leftarrow$ (8) 3 4 4 1 1 * * * 1 

(10) 1 1 * * * * 1 $\leftarrow$ (12) 1 * * * * 1 

\end{tcolorbox}
\begin{tcolorbox}[title={$(45,23)$}]
(3) 1 * * * * * 1 $\leftarrow$ (5) * * * * * 1 

\end{tcolorbox}
\begin{tcolorbox}[title={$(45,24)$}]
(2) 1 1 * * * * * 1 $\leftarrow$ (4) 1 * * * * * 1 

\end{tcolorbox}
\begin{tcolorbox}[title={$(46,2)$}]
(15) 31 

(31) 15 $\leftarrow$ (47) 

\end{tcolorbox}
\begin{tcolorbox}[title={$(46,3)$}]
(14) 29 3 

(15) 30 1 $\leftarrow$ (16) 31 

(30) 13 3 $\leftarrow$ (46) 1 

(31) 14 1 $\leftarrow$ (32) 15 

(39) 6 1 $\leftarrow$ (40) 7 

(43) 2 1 $\leftarrow$ (44) 3 

\end{tcolorbox}
\begin{tcolorbox}[title={$(46,4)$}]
(1) 15 15 15 

(5) 11 15 15 

(7) 21 11 7 $\leftarrow$ (9) 23 15 

(13) 27 3 3 

(15) 13 11 7 $\leftarrow$ (17) 15 15 

(15) 29 1 1 $\leftarrow$ (16) 30 1 

(25) 7 7 7 $\leftarrow$ (33) 7 7 

(29) 11 3 3 $\leftarrow$ (45) 1 1 

(31) 13 1 1 $\leftarrow$ (32) 14 1 

(37) 3 3 3 $\leftarrow$ (41) 3 3 

(39) 5 1 1 $\leftarrow$ (40) 6 1 

(43) 1 1 1 $\leftarrow$ (44) 2 1 

\end{tcolorbox}
\begin{tcolorbox}[title={$(46,5)$}]
(1) 14 13 11 7 $\leftarrow$ (2) 15 15 15 

(5) 10 13 11 7 $\leftarrow$ (6) 11 15 15 

(6) 19 7 7 7 $\leftarrow$ (8) 21 11 7 

(7) 20 5 7 7 $\leftarrow$ (10) 23 7 7 

(12) 25 3 3 3 

(13) 14 5 7 7 $\leftarrow$ (16) 13 11 7 

(14) 13 5 7 7 $\leftarrow$ (28) 5 7 7 

(15) 28 1 1 1 $\leftarrow$ (16) 29 1 1 

(23) 4 5 7 7 $\leftarrow$ (26) 7 7 7 

(28) 9 3 3 3 $\leftarrow$ (44) 1 1 1 

(31) 12 1 1 1 $\leftarrow$ (32) 13 1 1 

(39) 4 1 1 1 $\leftarrow$ (40) 5 1 1 

\end{tcolorbox}
\begin{tcolorbox}[title={$(46,6)$}]
(1) 9 15 7 7 7 

(1) 13 13 5 7 7 

(5) 9 11 7 7 7 $\leftarrow$ (6) 10 13 11 7 

(6) 18 3 5 7 7 $\leftarrow$ (8) 20 5 7 7 

(7) 7 13 5 7 7 $\leftarrow$ (9) 17 7 7 7 

(8) 20 9 3 3 3 

(11) 5 9 7 7 7 

(13) 11 3 5 7 7 $\leftarrow$ (14) 14 5 7 7 

(15) 9 3 5 7 7 $\leftarrow$ (25) 3 5 7 7 

(15) 24 4 1 1 1 $\leftarrow$ (16) 28 1 1 1 

(16) 12 9 3 3 3 $\leftarrow$ (40) 4 1 1 1 

(22) 2 3 5 7 7 $\leftarrow$ (24) 4 5 7 7 

(23) 3 6 6 5 3 $\leftarrow$ (27) 6 6 5 3 

(29) 3 6 2 3 3 $\leftarrow$ (30) 8 3 3 3 

(31) 2 4 3 3 3 $\leftarrow$ (33) 6 2 3 3 

(31) 8 4 1 1 1 $\leftarrow$ (32) 12 1 1 1 

\end{tcolorbox}
\begin{tcolorbox}[title={$(46,7)$}]
(1) 12 11 3 5 7 7 $\leftarrow$ (2) 13 13 5 7 7 

(2) 7 14 4 5 7 7 

(6) 5 5 9 7 7 7 $\leftarrow$ (8) 7 13 5 7 7 

(6) 17 3 6 6 5 3 

(8) 3 5 9 7 7 7 

(9) 13 2 3 5 7 7 $\leftarrow$ (10) 14 4 5 7 7 

(12) 5 7 3 5 7 7 $\leftarrow$ (16) 9 3 5 7 7 

(14) 9 3 6 6 5 3 $\leftarrow$ (38) * 1 

(15) 22 * 1 $\leftarrow$ (16) 24 4 1 1 1 

(20) 3 3 6 6 5 3 $\leftarrow$ (26) 3 5 7 3 3 

(23) 2 3 5 7 3 3 $\leftarrow$ (24) 3 6 6 5 3 

(29) ..4 3 3 3 $\leftarrow$ (30) 3 6 2 3 3 

(30) 1 2 4 3 3 3 $\leftarrow$ (32) 2 4 3 3 3 

(31) 6 * 1 $\leftarrow$ (32) 8 4 1 1 1 

(35) 2 * 1 $\leftarrow$ (36) 4 4 1 1 1 

\end{tcolorbox}
\begin{tcolorbox}[title={$(46,8)$}]
(1) 7 8 12 9 3 3 3 $\leftarrow$ (2) 12 11 3 5 7 7 

(5) 11 12 4 5 3 3 3 

(7) 3 5 9 3 5 7 7 $\leftarrow$ (9) 8 12 9 3 3 3 

(9) 3 5 7 3 5 7 7 

(9) 7 12 4 5 3 3 3 $\leftarrow$ (10) 13 2 3 5 7 7 

(11) 6 5 2 3 5 7 7 $\leftarrow$ (37) 1 * 1 

(13) 5 5 3 6 6 5 3 $\leftarrow$ (17) 12 4 5 3 3 3 

(15) 21 1 * 1 $\leftarrow$ (16) 22 * 1 

(17) 6 2 3 5 7 3 3 $\leftarrow$ (24) 2 3 5 7 3 3 

(29) 1 1 2 4 3 3 3 $\leftarrow$ (30) ..4 3 3 3 

(31) 5 1 * 1 $\leftarrow$ (32) 6 * 1 

(35) 1 1 * 1 $\leftarrow$ (36) 2 * 1 

\end{tcolorbox}
\begin{tcolorbox}[title={$(46,9)$}]
(4) 9 5 5 3 6 6 5 3 

(9) 4 7 3 3 6 6 5 3 $\leftarrow$ (36) 1 1 * 1 

(9) 5 6 3 3 6 6 5 3 $\leftarrow$ (10) 7 12 4 5 3 3 3 

(11) 3 6 3 3 6 6 5 3 $\leftarrow$ (12) 6 5 2 3 5 7 7 

(15) 5 6 2 4 5 3 3 3 $\leftarrow$ (21) 6 2 4 5 3 3 3 

(15) 20 1 1 * 1 $\leftarrow$ (16) 21 1 * 1 

(17) 3 6 2 4 5 3 3 3 $\leftarrow$ (18) 6 2 3 5 7 3 3 

(23) 12 1 1 * 1 $\leftarrow$ (24) 13 1 * 1 

(31) 4 1 1 * 1 $\leftarrow$ (32) 5 1 * 1 

\end{tcolorbox}
\begin{tcolorbox}[title={$(46,10)$}]
(1) 5 5 6 5 2 3 5 7 7 

(7) 2 4 7 3 3 6 6 5 3 $\leftarrow$ (10) 4 7 3 3 6 6 5 3 

(8) 2 3 5 5 3 6 6 5 3 $\leftarrow$ (32) 4 1 1 * 1 

(9) 3 5 6 2 3 5 7 3 3 $\leftarrow$ (10) 5 6 3 3 6 6 5 3 

(11) 2 4 3 3 3 6 6 5 3 $\leftarrow$ (12) 3 6 3 3 6 6 5 3 

(14) ...3 3 6 6 5 3 $\leftarrow$ (19) 6 ..4 5 3 3 3 

(15) 3 6 ..4 5 3 3 3 $\leftarrow$ (16) 5 6 2 4 5 3 3 3 

(15) 16 4 1 1 * 1 $\leftarrow$ (16) 20 1 1 * 1 

(17) ....3 5 7 3 3 $\leftarrow$ (18) 3 6 2 4 5 3 3 3 

(21) 3 6 2 3 4 4 1 1 1 $\leftarrow$ (22) 8 1 1 2 4 3 3 3 

(23) * 2 4 3 3 3 $\leftarrow$ (25) 6 2 3 4 4 1 1 1 

(23) 8 4 1 1 * 1 $\leftarrow$ (24) 12 1 1 * 1 

\end{tcolorbox}
\begin{tcolorbox}[title={$(46,11)$}]
(2) 2 4 5 5 5 3 6 6 5 3 

(5) ..4 7 3 3 6 6 5 3 $\leftarrow$ (30) * * 1 

(6) 1 2 4 7 3 3 6 6 5 3 $\leftarrow$ (8) 2 4 7 3 3 6 6 5 3 

(9) ..4 3 3 3 6 6 5 3 $\leftarrow$ (10) 3 5 6 2 3 5 7 3 3 

(12) ....3 3 6 6 5 3 $\leftarrow$ (18) ....3 5 7 3 3 

(15) .....3 5 7 3 3 $\leftarrow$ (16) 3 6 ..4 5 3 3 3 

(15) 14 * * 1 $\leftarrow$ (16) 16 4 1 1 * 1 

(21) 2 * 2 4 3 3 3 $\leftarrow$ (22) 3 6 2 3 4 4 1 1 1 

(22) 1 * 2 4 3 3 3 $\leftarrow$ (24) * 2 4 3 3 3 

(23) 6 * * 1 $\leftarrow$ (24) 8 4 1 1 * 1 

(27) 2 * * 1 $\leftarrow$ (28) 4 4 1 1 * 1 

\end{tcolorbox}
\begin{tcolorbox}[title={$(46,12)$}]
(3) 6 ..4 3 3 3 6 6 5 3 $\leftarrow$ (29) 1 * * 1 

(5) 1 1 2 4 7 3 3 6 6 5 3 $\leftarrow$ (6) ..4 7 3 3 6 6 5 3 

(9) 6 .....3 5 7 3 3 $\leftarrow$ (16) .....3 5 7 3 3 

(15) 13 1 * * 1 $\leftarrow$ (16) 14 * * 1 

(21) 1 1 * 2 4 3 3 3 $\leftarrow$ (22) 2 * 2 4 3 3 3 

(23) 5 1 * * 1 $\leftarrow$ (24) 6 * * 1 

(27) 1 1 * * 1 $\leftarrow$ (28) 2 * * 1 

\end{tcolorbox}
\begin{tcolorbox}[title={$(46,13)$}]
(1) 4 1 1 2 4 7 3 3 6 6 5 3 $\leftarrow$ (28) 1 1 * * 1 

(3) 3 6 ....3 3 6 6 5 3 $\leftarrow$ (4) 6 ..4 3 3 3 6 6 5 3 

(7) 5 6 .....4 5 3 3 3 $\leftarrow$ (13) 6 .....4 5 3 3 3 

(9) 3 6 .....4 5 3 3 3 $\leftarrow$ (10) 6 .....3 5 7 3 3 

(15) 12 1 1 * * 1 $\leftarrow$ (16) 13 1 * * 1 

(23) 4 1 1 * * 1 $\leftarrow$ (24) 5 1 * * 1 

\end{tcolorbox}
\begin{tcolorbox}[title={$(46,14)$}]
(3) .....4 3 3 3 6 6 5 3 $\leftarrow$ (4) 3 6 ....3 3 6 6 5 3 

(6) .......3 3 6 6 5 3 $\leftarrow$ (11) 6 ......4 5 3 3 3 

(7) 3 6 ......4 5 3 3 3 $\leftarrow$ (8) 5 6 .....4 5 3 3 3 

(9) ........3 5 7 3 3 $\leftarrow$ (10) 3 6 .....4 5 3 3 3 

(12) ........4 5 3 3 3 

(13) 3 6 2 3 4 4 1 1 * 1 $\leftarrow$ (14) 8 1 1 * 2 4 3 3 3 

(15) * * 2 4 3 3 3 $\leftarrow$ (17) 6 2 3 4 4 1 1 * 1 

(15) 8 4 1 1 * * 1 $\leftarrow$ (16) 12 1 1 * * 1 

\end{tcolorbox}
\begin{tcolorbox}[title={$(46,15)$}]
(4) ........3 3 6 6 5 3 $\leftarrow$ (10) ........3 5 7 3 3 

(7) .........3 5 7 3 3 $\leftarrow$ (8) 3 6 ......4 5 3 3 3 

(10) .........4 5 3 3 3 

(13) 2 * * 2 4 3 3 3 $\leftarrow$ (14) 3 6 2 3 4 4 1 1 * 1 

(14) 1 * * 2 4 3 3 3 $\leftarrow$ (16) * * 2 4 3 3 3 

(15) 6 * * * 1 $\leftarrow$ (16) 8 4 1 1 * * 1 

(19) 2 * * * 1 $\leftarrow$ (20) 4 4 1 1 * * 1 

\end{tcolorbox}
\begin{tcolorbox}[title={$(46,16)$}]
(1) 6 .........3 5 7 3 3 $\leftarrow$ (8) .........3 5 7 3 3 

(7) 13 1 * * * 1 

(13) 1 1 * * 2 4 3 3 3 $\leftarrow$ (14) 2 * * 2 4 3 3 3 

(15) 5 1 * * * 1 $\leftarrow$ (16) 6 * * * 1 

(19) 1 1 * * * 1 $\leftarrow$ (20) 2 * * * 1 

\end{tcolorbox}
\begin{tcolorbox}[title={$(46,17)$}]
(1) 3 6 .........4 5 3 3 3 $\leftarrow$ (2) 6 .........3 5 7 3 3 

(5) 8 1 1 * * 2 4 3 3 3 

(7) 12 1 1 * * * 1 $\leftarrow$ (8) 13 1 * * * 1 

(15) 4 1 1 * * * 1 $\leftarrow$ (16) 5 1 * * * 1 

\end{tcolorbox}
\begin{tcolorbox}[title={$(46,18)$}]
(1) ............3 5 7 3 3 $\leftarrow$ (2) 3 6 .........4 5 3 3 3 

(4) ............4 5 3 3 3 

(5) 3 6 2 3 4 4 1 1 * * 1 $\leftarrow$ (6) 8 1 1 * * 2 4 3 3 3 

(7) * * * 2 4 3 3 3 $\leftarrow$ (9) 6 2 3 4 4 1 1 * * 1 

(7) 8 4 1 1 * * * 1 $\leftarrow$ (8) 12 1 1 * * * 1 

\end{tcolorbox}
\begin{tcolorbox}[title={$(46,19)$}]
(2) .............4 5 3 3 3 

(5) 2 * * * 2 4 3 3 3 $\leftarrow$ (6) 3 6 2 3 4 4 1 1 * * 1 

(6) 1 * * * 2 4 3 3 3 $\leftarrow$ (8) * * * 2 4 3 3 3 

(7) 6 * * * * 1 $\leftarrow$ (8) 8 4 1 1 * * * 1 

(11) 2 * * * * 1 $\leftarrow$ (12) 4 4 1 1 * * * 1 

\end{tcolorbox}
\begin{tcolorbox}[title={$(46,20)$}]
(5) 1 1 * * * 2 4 3 3 3 $\leftarrow$ (6) 2 * * * 2 4 3 3 3 

(6) 2 3 4 4 1 1 * * * 1 

(7) 5 1 * * * * 1 $\leftarrow$ (8) 6 * * * * 1 

(11) 1 1 * * * * 1 $\leftarrow$ (12) 2 * * * * 1 

\end{tcolorbox}
\begin{tcolorbox}[title={$(46,21)$}]
(5) 1 2 3 4 4 1 1 * * * 1 

(7) 4 1 1 * * * * 1 $\leftarrow$ (8) 5 1 * * * * 1 

\end{tcolorbox}
\begin{tcolorbox}[title={$(46,22)$}]
(3) 4 4 1 1 * * * * 1 

\end{tcolorbox}
\begin{tcolorbox}[title={$(46,23)$}]
(3) 2 * * * * * 1 $\leftarrow$ (4) 4 4 1 1 * * * * 1 

\end{tcolorbox}
\begin{tcolorbox}[title={$(46,24)$}]
(3) 1 1 * * * * * 1 $\leftarrow$ (4) 2 * * * * * 1 

\end{tcolorbox}
\begin{tcolorbox}[title={$(47,3)$}]
(13) 27 7 

(15) 29 3 $\leftarrow$ (17) 31 

(29) 11 7 $\leftarrow$ (45) 3 

(31) 13 3 $\leftarrow$ (33) 15 

(39) 5 3 $\leftarrow$ (41) 7 

\end{tcolorbox}
\begin{tcolorbox}[title={$(47,4)$}]
(9) 22 13 3 $\leftarrow$ (10) 23 15 

(13) 26 5 3 $\leftarrow$ (14) 27 7 

(14) 27 3 3 $\leftarrow$ (16) 29 3 

(17) 14 13 3 $\leftarrow$ (18) 15 15 

(29) 10 5 3 $\leftarrow$ (30) 11 7 

(30) 11 3 3 $\leftarrow$ (32) 13 3 

(33) 6 5 3 $\leftarrow$ (34) 7 7 

(38) 3 3 3 $\leftarrow$ (40) 5 3 

(39) 2 3 3 $\leftarrow$ (42) 3 3 

\end{tcolorbox}
\begin{tcolorbox}[title={$(47,5)$}]
(2) 14 13 11 7 

(3) 13 13 11 7 

(7) 11 15 7 7 

(7) 19 7 7 7 $\leftarrow$ (9) 21 11 7 

(9) 21 11 3 3 $\leftarrow$ (10) 22 13 3 

(13) 25 3 3 3 $\leftarrow$ (14) 26 5 3 

(15) 13 5 7 7 $\leftarrow$ (29) 5 7 7 

(17) 13 11 3 3 $\leftarrow$ (18) 14 13 3 

(29) 5 7 3 3 $\leftarrow$ (34) 6 5 3 

(29) 9 3 3 3 $\leftarrow$ (30) 10 5 3 

(38) 1 2 3 3 $\leftarrow$ (40) 2 3 3 

\end{tcolorbox}
\begin{tcolorbox}[title={$(47,6)$}]
(2) 9 15 7 7 7 

(3) 11 14 5 7 7 $\leftarrow$ (4) 13 13 11 7 

(6) 9 11 7 7 7 

(7) 7 14 5 7 7 $\leftarrow$ (8) 11 15 7 7 

(7) 18 3 5 7 7 $\leftarrow$ (9) 20 5 7 7 

(9) 20 9 3 3 3 $\leftarrow$ (10) 21 11 3 3 

(12) 5 9 7 7 7 $\leftarrow$ (26) 3 5 7 7 

(14) 11 3 5 7 7 $\leftarrow$ (16) 13 5 7 7 

(17) 12 9 3 3 3 $\leftarrow$ (18) 13 11 3 3 

(23) 2 3 5 7 7 $\leftarrow$ (25) 4 5 7 7 

(27) 4 7 3 3 3 $\leftarrow$ (28) 6 6 5 3 

(29) 4 5 3 3 3 $\leftarrow$ (30) 5 7 3 3 

(33) 5 1 2 3 3 $\leftarrow$ (34) 6 2 3 3 

\end{tcolorbox}
\begin{tcolorbox}[title={$(47,7)$}]
(1) 11 5 9 7 7 7 

(3) 7 14 4 5 7 7 $\leftarrow$ (4) 11 14 5 7 7 

(5) 12 12 9 3 3 3 $\leftarrow$ (8) 18 3 5 7 7 

(7) 5 5 9 7 7 7 $\leftarrow$ (8) 7 14 5 7 7 

(7) 17 3 6 6 5 3 $\leftarrow$ (10) 20 9 3 3 3 

(9) 3 5 9 7 7 7 

(11) 5 9 3 5 7 7 $\leftarrow$ (25) 3 6 6 5 3 

(13) 5 7 3 5 7 7 $\leftarrow$ (17) 9 3 5 7 7 

(15) 9 3 6 6 5 3 $\leftarrow$ (18) 12 9 3 3 3 

(21) 3 3 6 6 5 3 $\leftarrow$ (24) 2 3 5 7 7 

(27) 2 4 5 3 3 3 $\leftarrow$ (28) 4 7 3 3 3 

(31) 1 2 4 3 3 3 $\leftarrow$ (33) 2 4 3 3 3 

(33) 3 4 4 1 1 1 $\leftarrow$ (34) 5 1 2 3 3 

\end{tcolorbox}
\begin{tcolorbox}[title={$(47,8)$}]
(1) 8 3 5 9 7 7 7 

(2) 7 8 12 9 3 3 3 

(5) 10 9 3 6 6 5 3 $\leftarrow$ (6) 12 12 9 3 3 3 

(6) 11 12 4 5 3 3 3 $\leftarrow$ (8) 17 3 6 6 5 3 

(8) 3 5 9 3 5 7 7 

(9) 6 9 3 6 6 5 3 $\leftarrow$ (10) 8 12 9 3 3 3 

(10) 3 5 7 3 5 7 7 $\leftarrow$ (12) 5 9 3 5 7 7 

(14) 5 5 3 6 6 5 3 $\leftarrow$ (16) 9 3 6 6 5 3 

(15) 6 3 3 6 6 5 3 $\leftarrow$ (22) 3 3 6 6 5 3 

(17) 10 2 4 5 3 3 3 $\leftarrow$ (18) 12 4 5 3 3 3 

(30) 1 1 2 4 3 3 3 $\leftarrow$ (32) 1 2 4 3 3 3 

(31) 2 3 4 4 1 1 1 $\leftarrow$ (34) 3 4 4 1 1 1 

\end{tcolorbox}
\begin{tcolorbox}[title={$(47,9)$}]
(1) 9 3 5 7 3 5 7 7 

(5) 9 5 5 3 6 6 5 3 $\leftarrow$ (6) 10 9 3 6 6 5 3 

(7) 5 6 5 2 3 5 7 7 $\leftarrow$ (13) 6 5 2 3 5 7 7 

(9) 3 6 5 2 3 5 7 7 $\leftarrow$ (10) 6 9 3 6 6 5 3 

(13) 5 6 2 3 5 7 3 3 $\leftarrow$ (19) 6 2 3 5 7 3 3 

(15) 3 6 2 3 5 7 3 3 $\leftarrow$ (16) 6 3 3 6 6 5 3 

(17) 7 13 1 * 1 $\leftarrow$ (18) 10 2 4 5 3 3 3 

(21) 4 ..4 5 3 3 3 $\leftarrow$ (22) 6 2 4 5 3 3 3 

(30) 1 2 3 4 4 1 1 1 $\leftarrow$ (32) 2 3 4 4 1 1 1 

\end{tcolorbox}
\begin{tcolorbox}[title={$(47,10)$}]
(2) 5 5 6 5 2 3 5 7 7 

(5) 4 5 5 5 3 6 6 5 3 $\leftarrow$ (8) 5 6 5 2 3 5 7 7 

(9) 2 3 5 5 3 6 6 5 3 $\leftarrow$ (10) 3 6 5 2 3 5 7 7 

(12) 2 4 3 3 3 6 6 5 3 $\leftarrow$ (17) 5 6 2 4 5 3 3 3 

(13) 3 5 6 2 4 5 3 3 3 $\leftarrow$ (14) 5 6 2 3 5 7 3 3 

(15) ...3 3 6 6 5 3 $\leftarrow$ (16) 3 6 2 3 5 7 3 3 

(17) 5 8 1 1 2 4 3 3 3 $\leftarrow$ (18) 7 13 1 * 1 

(19) 4 ...4 5 3 3 3 $\leftarrow$ (20) 6 ..4 5 3 3 3 

(21) ....4 5 3 3 3 $\leftarrow$ (22) 4 ..4 5 3 3 3 

(25) 5 1 2 3 4 4 1 1 1 $\leftarrow$ (26) 6 2 3 4 4 1 1 1 

\end{tcolorbox}
\begin{tcolorbox}[title={$(47,11)$}]
(3) 2 4 5 5 5 3 6 6 5 3 $\leftarrow$ (6) 4 5 5 5 3 6 6 5 3 

(7) 1 2 4 7 3 3 6 6 5 3 $\leftarrow$ (9) 2 4 7 3 3 6 6 5 3 

(10) ..4 3 3 3 6 6 5 3 $\leftarrow$ (16) ...3 3 6 6 5 3 

(13) ....3 3 6 6 5 3 $\leftarrow$ (14) 3 5 6 2 4 5 3 3 3 

(17) 4 ....4 5 3 3 3 $\leftarrow$ (18) 5 8 1 1 2 4 3 3 3 

(19) .....4 5 3 3 3 $\leftarrow$ (20) 4 ...4 5 3 3 3 

(23) 1 * 2 4 3 3 3 $\leftarrow$ (25) * 2 4 3 3 3 

(25) 3 4 4 1 1 * 1 $\leftarrow$ (26) 5 1 2 3 4 4 1 1 1 

\end{tcolorbox}
\begin{tcolorbox}[title={$(47,12)$}]
(1) ..4 5 5 5 3 6 6 5 3 $\leftarrow$ (4) 2 4 5 5 5 3 6 6 5 3 

(6) 1 1 2 4 7 3 3 6 6 5 3 $\leftarrow$ (8) 1 2 4 7 3 3 6 6 5 3 

(7) 6 ....3 3 6 6 5 3 $\leftarrow$ (14) ....3 3 6 6 5 3 

(17) ......4 5 3 3 3 $\leftarrow$ (18) 4 ....4 5 3 3 3 

(22) 1 1 * 2 4 3 3 3 $\leftarrow$ (24) 1 * 2 4 3 3 3 

(23) 2 3 4 4 1 1 * 1 $\leftarrow$ (26) 3 4 4 1 1 * 1 

\end{tcolorbox}
\begin{tcolorbox}[title={$(47,13)$}]
(2) 4 1 1 2 4 7 3 3 6 6 5 3 

(5) 5 6 .....3 5 7 3 3 $\leftarrow$ (11) 6 .....3 5 7 3 3 

(7) 3 6 .....3 5 7 3 3 $\leftarrow$ (8) 6 ....3 3 6 6 5 3 

(13) 4 ......4 5 3 3 3 $\leftarrow$ (14) 6 .....4 5 3 3 3 

(22) 1 2 3 4 4 1 1 * 1 $\leftarrow$ (24) 2 3 4 4 1 1 * 1 

(24) 4 1 1 * * 1 

\end{tcolorbox}
\begin{tcolorbox}[title={$(47,14)$}]
(4) .....4 3 3 3 6 6 5 3 $\leftarrow$ (9) 5 6 .....4 5 3 3 3 

(5) 3 5 6 .....4 5 3 3 3 $\leftarrow$ (6) 5 6 .....3 5 7 3 3 

(7) .......3 3 6 6 5 3 $\leftarrow$ (8) 3 6 .....3 5 7 3 3 

(11) 4 .......4 5 3 3 3 $\leftarrow$ (12) 6 ......4 5 3 3 3 

(13) ........4 5 3 3 3 $\leftarrow$ (14) 4 ......4 5 3 3 3 

(17) 5 1 2 3 4 4 1 1 * 1 $\leftarrow$ (18) 6 2 3 4 4 1 1 * 1 

(22) * * * 1 

\end{tcolorbox}
\begin{tcolorbox}[title={$(47,15)$}]
(1) 1.........4 5 3 3 3 $\leftarrow$ (8) .......3 3 6 6 5 3 

(5) ........3 3 6 6 5 3 $\leftarrow$ (6) 3 5 6 .....4 5 3 3 3 

(11) .........4 5 3 3 3 $\leftarrow$ (12) 4 .......4 5 3 3 3 

(15) 1 * * 2 4 3 3 3 $\leftarrow$ (17) * * 2 4 3 3 3 

(17) 3 4 4 1 1 * * 1 $\leftarrow$ (18) 5 1 2 3 4 4 1 1 * 1 

(21) 1 * * * 1 

\end{tcolorbox}
\begin{tcolorbox}[title={$(47,16)$}]
(1) 10 .........4 5 3 3 3 $\leftarrow$ (2) 1.........4 5 3 3 3 

(5) 6 .........4 5 3 3 3 

(14) 1 1 * * 2 4 3 3 3 $\leftarrow$ (16) 1 * * 2 4 3 3 3 

(15) 2 3 4 4 1 1 * * 1 $\leftarrow$ (18) 3 4 4 1 1 * * 1 

(20) 1 1 * * * 1 

\end{tcolorbox}
\begin{tcolorbox}[title={$(47,17)$}]
(1) 7 13 1 * * * 1 $\leftarrow$ (2) 10 .........4 5 3 3 3 

(3) 6 ..........4 5 3 3 3 

(5) 4 ..........4 5 3 3 3 $\leftarrow$ (6) 6 .........4 5 3 3 3 

(14) 1 2 3 4 4 1 1 * * 1 $\leftarrow$ (16) 2 3 4 4 1 1 * * 1 

(16) 4 1 1 * * * 1 

\end{tcolorbox}
\begin{tcolorbox}[title={$(47,18)$}]
(1) 5 8 1 1 * * 2 4 3 3 3 $\leftarrow$ (2) 7 13 1 * * * 1 

(2) ............3 5 7 3 3 

(3) 4 ...........4 5 3 3 3 $\leftarrow$ (4) 6 ..........4 5 3 3 3 

(5) ............4 5 3 3 3 $\leftarrow$ (6) 4 ..........4 5 3 3 3 

(9) 5 1 2 3 4 4 1 1 * * 1 $\leftarrow$ (10) 6 2 3 4 4 1 1 * * 1 

(14) * * * * 1 

\end{tcolorbox}
\begin{tcolorbox}[title={$(47,19)$}]
(1) 4 ............4 5 3 3 3 $\leftarrow$ (2) 5 8 1 1 * * 2 4 3 3 3 

(3) .............4 5 3 3 3 $\leftarrow$ (4) 4 ...........4 5 3 3 3 

(7) 1 * * * 2 4 3 3 3 $\leftarrow$ (9) * * * 2 4 3 3 3 

(9) 3 4 4 1 1 * * * 1 $\leftarrow$ (10) 5 1 2 3 4 4 1 1 * * 1 

(13) 1 * * * * 1 

\end{tcolorbox}
\begin{tcolorbox}[title={$(47,20)$}]
(1) ..............4 5 3 3 3 $\leftarrow$ (2) 4 ............4 5 3 3 3 

(6) 1 1 * * * 2 4 3 3 3 $\leftarrow$ (8) 1 * * * 2 4 3 3 3 

(7) 2 3 4 4 1 1 * * * 1 $\leftarrow$ (10) 3 4 4 1 1 * * * 1 

(12) 1 1 * * * * 1 

\end{tcolorbox}
\begin{tcolorbox}[title={$(47,21)$}]
(1) 6 2 3 4 4 1 1 * * * 1 

(6) 1 2 3 4 4 1 1 * * * 1 $\leftarrow$ (8) 2 3 4 4 1 1 * * * 1 

(8) 4 1 1 * * * * 1 

\end{tcolorbox}
\begin{tcolorbox}[title={$(47,22)$}]
(1) 5 1 2 3 4 4 1 1 * * * 1 $\leftarrow$ (2) 6 2 3 4 4 1 1 * * * 1 

(6) * * * * * 1 

\end{tcolorbox}
\begin{tcolorbox}[title={$(47,23)$}]
(1) 3 4 4 1 1 * * * * 1 $\leftarrow$ (2) 5 1 2 3 4 4 1 1 * * * 1 

(5) 1 * * * * * 1 

\end{tcolorbox}
\begin{tcolorbox}[title={$(47,24)$}]
(4) 1 1 * * * * * 1 

\end{tcolorbox}
\begin{tcolorbox}[title={$(48,2)$}]
(47) 1 $\leftarrow$ (49) 

\end{tcolorbox}
\begin{tcolorbox}[title={$(48,3)$}]
(17) 30 1 $\leftarrow$ (18) 31 

(33) 14 1 $\leftarrow$ (34) 15 

(41) 6 1 $\leftarrow$ (42) 7 

(45) 2 1 $\leftarrow$ (46) 3 

(46) 1 1 $\leftarrow$ (48) 1 

\end{tcolorbox}
\begin{tcolorbox}[title={$(48,4)$}]
(3) 15 15 15 

(7) 11 15 15 

(11) 23 7 7 

(15) 27 3 3 $\leftarrow$ (17) 29 3 

(17) 13 11 7 $\leftarrow$ (43) 3 3 

(17) 29 1 1 $\leftarrow$ (18) 30 1 

(27) 7 7 7 $\leftarrow$ (35) 7 7 

(31) 11 3 3 $\leftarrow$ (33) 13 3 

(33) 13 1 1 $\leftarrow$ (34) 14 1 

(39) 3 3 3 $\leftarrow$ (41) 5 3 

(41) 5 1 1 $\leftarrow$ (42) 6 1 

(45) 1 1 1 $\leftarrow$ (46) 2 1 

\end{tcolorbox}
\begin{tcolorbox}[title={$(48,5)$}]
(3) 14 13 11 7 $\leftarrow$ (4) 15 15 15 

(7) 10 13 11 7 $\leftarrow$ (8) 11 15 15 

(8) 19 7 7 7 

(10) 17 7 7 7 

(14) 25 3 3 3 $\leftarrow$ (16) 27 3 3 

(15) 14 5 7 7 $\leftarrow$ (30) 5 7 7 

(17) 28 1 1 1 $\leftarrow$ (18) 29 1 1 

(30) 9 3 3 3 $\leftarrow$ (32) 11 3 3 

(31) 8 3 3 3 $\leftarrow$ (40) 3 3 3 

(33) 12 1 1 1 $\leftarrow$ (34) 13 1 1 

(39) 1 2 3 3 $\leftarrow$ (41) 2 3 3 

(41) 4 1 1 1 $\leftarrow$ (42) 5 1 1 

\end{tcolorbox}
\begin{tcolorbox}[title={$(48,6)$}]
(1) 7 11 15 7 7 

(3) 9 15 7 7 7 $\leftarrow$ (5) 13 13 11 7 

(3) 13 13 5 7 7 

(7) 9 11 7 7 7 $\leftarrow$ (8) 10 13 11 7 

(9) 7 13 5 7 7 

(11) 14 4 5 7 7 $\leftarrow$ (27) 3 5 7 7 

(13) 5 9 7 7 7 $\leftarrow$ (17) 13 5 7 7 

(15) 11 3 5 7 7 $\leftarrow$ (16) 14 5 7 7 

(17) 24 4 1 1 1 $\leftarrow$ (18) 28 1 1 1 

(27) 3 5 7 3 3 $\leftarrow$ (29) 6 6 5 3 

(30) 4 5 3 3 3 $\leftarrow$ (35) 6 2 3 3 

(31) 3 6 2 3 3 $\leftarrow$ (32) 8 3 3 3 

(33) 8 4 1 1 1 $\leftarrow$ (34) 12 1 1 1 

(37) 4 4 1 1 1 $\leftarrow$ (40) 1 2 3 3 

(39) * 1 $\leftarrow$ (42) 4 1 1 1 

\end{tcolorbox}
\begin{tcolorbox}[title={$(48,7)$}]
(1) 6 9 11 7 7 7 $\leftarrow$ (2) 7 11 15 7 7 

(2) 11 5 9 7 7 7 

(3) 12 11 3 5 7 7 $\leftarrow$ (4) 13 13 5 7 7 

(4) 7 14 4 5 7 7 

(8) 5 5 9 7 7 7 

(10) 3 5 9 7 7 7 $\leftarrow$ (16) 11 3 5 7 7 

(11) 13 2 3 5 7 7 $\leftarrow$ (12) 14 4 5 7 7 

(14) 5 7 3 5 7 7 $\leftarrow$ (18) 9 3 5 7 7 

(17) 22 * 1 $\leftarrow$ (18) 24 4 1 1 1 

(25) 2 3 5 7 3 3 $\leftarrow$ (26) 3 6 6 5 3 

(28) 2 4 5 3 3 3 $\leftarrow$ (34) 2 4 3 3 3 

(31) ..4 3 3 3 $\leftarrow$ (32) 3 6 2 3 3 

(33) 6 * 1 $\leftarrow$ (34) 8 4 1 1 1 

(37) 2 * 1 $\leftarrow$ (38) 4 4 1 1 1 

(38) 1 * 1 $\leftarrow$ (40) * 1 

\end{tcolorbox}
\begin{tcolorbox}[title={$(48,8)$}]
(1) 9 3 5 9 7 7 7 

(2) 8 3 5 9 7 7 7 

(3) 7 8 12 9 3 3 3 $\leftarrow$ (4) 12 11 3 5 7 7 

(7) 11 12 4 5 3 3 3 $\leftarrow$ (9) 17 3 6 6 5 3 

(9) 3 5 9 3 5 7 7 $\leftarrow$ (19) 12 4 5 3 3 3 

(11) 3 5 7 3 5 7 7 $\leftarrow$ (13) 5 9 3 5 7 7 

(11) 7 12 4 5 3 3 3 $\leftarrow$ (12) 13 2 3 5 7 7 

(15) 5 5 3 6 6 5 3 $\leftarrow$ (17) 9 3 6 6 5 3 

(17) 21 1 * 1 $\leftarrow$ (18) 22 * 1 

(25) 13 1 * 1 $\leftarrow$ (33) 1 2 4 3 3 3 

(31) 1 1 2 4 3 3 3 $\leftarrow$ (32) ..4 3 3 3 

(33) 5 1 * 1 $\leftarrow$ (34) 6 * 1 

(37) 1 1 * 1 $\leftarrow$ (38) 2 * 1 

\end{tcolorbox}
\begin{tcolorbox}[title={$(48,9)$}]
(1) 4 5 3 5 9 7 7 7 

(1) 8 3 5 9 3 5 7 7 

(2) 9 3 5 7 3 5 7 7 

(6) 9 5 5 3 6 6 5 3 $\leftarrow$ (8) 11 12 4 5 3 3 3 

(11) 4 7 3 3 6 6 5 3 $\leftarrow$ (16) 5 5 3 6 6 5 3 

(11) 5 6 3 3 6 6 5 3 $\leftarrow$ (12) 7 12 4 5 3 3 3 

(13) 3 6 3 3 6 6 5 3 $\leftarrow$ (14) 6 5 2 3 5 7 7 

(17) 20 1 1 * 1 $\leftarrow$ (18) 21 1 * 1 

(19) 3 6 2 4 5 3 3 3 $\leftarrow$ (20) 6 2 3 5 7 3 3 

(23) 8 1 1 2 4 3 3 3 $\leftarrow$ (32) 1 1 2 4 3 3 3 

(25) 12 1 1 * 1 $\leftarrow$ (26) 13 1 * 1 

(31) 1 2 3 4 4 1 1 1 $\leftarrow$ (33) 2 3 4 4 1 1 1 

(33) 4 1 1 * 1 $\leftarrow$ (34) 5 1 * 1 

\end{tcolorbox}
\begin{tcolorbox}[title={$(48,10)$}]
(3) 5 5 6 5 2 3 5 7 7 $\leftarrow$ (9) 5 6 5 2 3 5 7 7 

(10) 2 3 5 5 3 6 6 5 3 $\leftarrow$ (12) 4 7 3 3 6 6 5 3 

(11) 3 5 6 2 3 5 7 3 3 $\leftarrow$ (12) 5 6 3 3 6 6 5 3 

(13) 2 4 3 3 3 6 6 5 3 $\leftarrow$ (14) 3 6 3 3 6 6 5 3 

(17) 3 6 ..4 5 3 3 3 $\leftarrow$ (18) 5 6 2 4 5 3 3 3 

(17) 16 4 1 1 * 1 $\leftarrow$ (18) 20 1 1 * 1 

(19) ....3 5 7 3 3 $\leftarrow$ (20) 3 6 2 4 5 3 3 3 

(22) ....4 5 3 3 3 $\leftarrow$ (27) 6 2 3 4 4 1 1 1 

(23) 3 6 2 3 4 4 1 1 1 $\leftarrow$ (24) 8 1 1 2 4 3 3 3 

(25) 8 4 1 1 * 1 $\leftarrow$ (26) 12 1 1 * 1 

(29) 4 4 1 1 * 1 $\leftarrow$ (32) 1 2 3 4 4 1 1 1 

(31) * * 1 $\leftarrow$ (34) 4 1 1 * 1 

\end{tcolorbox}
\begin{tcolorbox}[title={$(48,11)$}]
(1) 4 3 5 6 5 2 3 5 7 7 $\leftarrow$ (4) 5 5 6 5 2 3 5 7 7 

(7) ..4 7 3 3 6 6 5 3 $\leftarrow$ (10) 2 4 7 3 3 6 6 5 3 

(11) ..4 3 3 3 6 6 5 3 $\leftarrow$ (12) 3 5 6 2 3 5 7 3 3 

(17) .....3 5 7 3 3 $\leftarrow$ (18) 3 6 ..4 5 3 3 3 

(17) 14 * * 1 $\leftarrow$ (18) 16 4 1 1 * 1 

(20) .....4 5 3 3 3 $\leftarrow$ (26) * 2 4 3 3 3 

(23) 2 * 2 4 3 3 3 $\leftarrow$ (24) 3 6 2 3 4 4 1 1 1 

(25) 6 * * 1 $\leftarrow$ (26) 8 4 1 1 * 1 

(29) 2 * * 1 $\leftarrow$ (30) 4 4 1 1 * 1 

(30) 1 * * 1 $\leftarrow$ (32) * * 1 

\end{tcolorbox}
\begin{tcolorbox}[title={$(48,12)$}]
(2) ..4 5 5 5 3 6 6 5 3 

(5) 6 ..4 3 3 3 6 6 5 3 $\leftarrow$ (9) 1 2 4 7 3 3 6 6 5 3 

(7) 1 1 2 4 7 3 3 6 6 5 3 $\leftarrow$ (8) ..4 7 3 3 6 6 5 3 

(17) 13 1 * * 1 $\leftarrow$ (18) 14 * * 1 

(18) ......4 5 3 3 3 $\leftarrow$ (25) 1 * 2 4 3 3 3 

(23) 1 1 * 2 4 3 3 3 $\leftarrow$ (24) 2 * 2 4 3 3 3 

(25) 5 1 * * 1 $\leftarrow$ (26) 6 * * 1 

(29) 1 1 * * 1 $\leftarrow$ (30) 2 * * 1 

\end{tcolorbox}
\begin{tcolorbox}[title={$(48,13)$}]
(3) 4 1 1 2 4 7 3 3 6 6 5 3 $\leftarrow$ (8) 1 1 2 4 7 3 3 6 6 5 3 

(5) 3 6 ....3 3 6 6 5 3 $\leftarrow$ (6) 6 ..4 3 3 3 6 6 5 3 

(11) 3 6 .....4 5 3 3 3 $\leftarrow$ (12) 6 .....3 5 7 3 3 

(15) 8 1 1 * 2 4 3 3 3 $\leftarrow$ (24) 1 1 * 2 4 3 3 3 

(17) 12 1 1 * * 1 $\leftarrow$ (18) 13 1 * * 1 

(23) 1 2 3 4 4 1 1 * 1 $\leftarrow$ (25) 2 3 4 4 1 1 * 1 

(25) 4 1 1 * * 1 $\leftarrow$ (26) 5 1 * * 1 

\end{tcolorbox}
\begin{tcolorbox}[title={$(48,14)$}]
(1) * 2 4 7 3 3 6 6 5 3 

(1) 24 4 1 1 * * 1 $\leftarrow$ (4) 4 1 1 2 4 7 3 3 6 6 5 3 

(5) .....4 3 3 3 6 6 5 3 $\leftarrow$ (6) 3 6 ....3 3 6 6 5 3 

(9) 3 6 ......4 5 3 3 3 $\leftarrow$ (10) 5 6 .....4 5 3 3 3 

(11) ........3 5 7 3 3 $\leftarrow$ (12) 3 6 .....4 5 3 3 3 

(14) ........4 5 3 3 3 $\leftarrow$ (19) 6 2 3 4 4 1 1 * 1 

(15) 3 6 2 3 4 4 1 1 * 1 $\leftarrow$ (16) 8 1 1 * 2 4 3 3 3 

(17) 8 4 1 1 * * 1 $\leftarrow$ (18) 12 1 1 * * 1 

(21) 4 4 1 1 * * 1 $\leftarrow$ (24) 1 2 3 4 4 1 1 * 1 

(23) * * * 1 $\leftarrow$ (26) 4 1 1 * * 1 

\end{tcolorbox}
\begin{tcolorbox}[title={$(48,15)$}]
(1) 22 * * * 1 $\leftarrow$ (2) 24 4 1 1 * * 1 

(6) ........3 3 6 6 5 3 

(9) .........3 5 7 3 3 $\leftarrow$ (10) 3 6 ......4 5 3 3 3 

(12) .........4 5 3 3 3 $\leftarrow$ (18) * * 2 4 3 3 3 

(15) 2 * * 2 4 3 3 3 $\leftarrow$ (16) 3 6 2 3 4 4 1 1 * 1 

(17) 6 * * * 1 $\leftarrow$ (18) 8 4 1 1 * * 1 

(21) 2 * * * 1 $\leftarrow$ (22) 4 4 1 1 * * 1 

(22) 1 * * * 1 $\leftarrow$ (24) * * * 1 

\end{tcolorbox}
\begin{tcolorbox}[title={$(48,16)$}]
(1) 21 1 * * * 1 $\leftarrow$ (2) 22 * * * 1 

(3) 6 .........3 5 7 3 3 

(9) 13 1 * * * 1 $\leftarrow$ (17) 1 * * 2 4 3 3 3 

(15) 1 1 * * 2 4 3 3 3 $\leftarrow$ (16) 2 * * 2 4 3 3 3 

(17) 5 1 * * * 1 $\leftarrow$ (18) 6 * * * 1 

(21) 1 1 * * * 1 $\leftarrow$ (22) 2 * * * 1 

\end{tcolorbox}
\begin{tcolorbox}[title={$(48,17)$}]
(1) 5 6 .........4 5 3 3 3 

(1) 20 1 1 * * * 1 $\leftarrow$ (2) 21 1 * * * 1 

(3) 3 6 .........4 5 3 3 3 $\leftarrow$ (4) 6 .........3 5 7 3 3 

(7) 8 1 1 * * 2 4 3 3 3 $\leftarrow$ (16) 1 1 * * 2 4 3 3 3 

(9) 12 1 1 * * * 1 $\leftarrow$ (10) 13 1 * * * 1 

(15) 1 2 3 4 4 1 1 * * 1 $\leftarrow$ (17) 2 3 4 4 1 1 * * 1 

(17) 4 1 1 * * * 1 $\leftarrow$ (18) 5 1 * * * 1 

\end{tcolorbox}
\begin{tcolorbox}[title={$(48,18)$}]
(1) 3 6 ..........4 5 3 3 3 $\leftarrow$ (2) 5 6 .........4 5 3 3 3 

(1) 16 4 1 1 * * * 1 $\leftarrow$ (2) 20 1 1 * * * 1 

(3) ............3 5 7 3 3 $\leftarrow$ (4) 3 6 .........4 5 3 3 3 

(6) ............4 5 3 3 3 $\leftarrow$ (11) 6 2 3 4 4 1 1 * * 1 

(7) 3 6 2 3 4 4 1 1 * * 1 $\leftarrow$ (8) 8 1 1 * * 2 4 3 3 3 

(9) 8 4 1 1 * * * 1 $\leftarrow$ (10) 12 1 1 * * * 1 

(13) 4 4 1 1 * * * 1 $\leftarrow$ (16) 1 2 3 4 4 1 1 * * 1 

(15) * * * * 1 $\leftarrow$ (18) 4 1 1 * * * 1 

\end{tcolorbox}
\begin{tcolorbox}[title={$(48,19)$}]
(1) .............3 5 7 3 3 $\leftarrow$ (2) 3 6 ..........4 5 3 3 3 

(1) 14 * * * * 1 $\leftarrow$ (2) 16 4 1 1 * * * 1 

(4) .............4 5 3 3 3 $\leftarrow$ (10) * * * 2 4 3 3 3 

(7) 2 * * * 2 4 3 3 3 $\leftarrow$ (8) 3 6 2 3 4 4 1 1 * * 1 

(9) 6 * * * * 1 $\leftarrow$ (10) 8 4 1 1 * * * 1 

(13) 2 * * * * 1 $\leftarrow$ (14) 4 4 1 1 * * * 1 

(14) 1 * * * * 1 $\leftarrow$ (16) * * * * 1 

\end{tcolorbox}
\begin{tcolorbox}[title={$(48,20)$}]
(1) 13 1 * * * * 1 $\leftarrow$ (2) 14 * * * * 1 

(2) ..............4 5 3 3 3 $\leftarrow$ (9) 1 * * * 2 4 3 3 3 

(7) 1 1 * * * 2 4 3 3 3 $\leftarrow$ (8) 2 * * * 2 4 3 3 3 

(9) 5 1 * * * * 1 $\leftarrow$ (10) 6 * * * * 1 

(13) 1 1 * * * * 1 $\leftarrow$ (14) 2 * * * * 1 

\end{tcolorbox}
\begin{tcolorbox}[title={$(48,21)$}]
(1) 12 1 1 * * * * 1 $\leftarrow$ (2) 13 1 * * * * 1 

(7) 1 2 3 4 4 1 1 * * * 1 $\leftarrow$ (9) 2 3 4 4 1 1 * * * 1 

(9) 4 1 1 * * * * 1 $\leftarrow$ (10) 5 1 * * * * 1 

\end{tcolorbox}
\begin{tcolorbox}[title={$(48,22)$}]
(1) * * * * 2 4 3 3 3 

(1) 8 4 1 1 * * * * 1 $\leftarrow$ (2) 12 1 1 * * * * 1 

(5) 4 4 1 1 * * * * 1 $\leftarrow$ (8) 1 2 3 4 4 1 1 * * * 1 

(7) * * * * * 1 $\leftarrow$ (10) 4 1 1 * * * * 1 

\end{tcolorbox}
\begin{tcolorbox}[title={$(48,23)$}]
(1) 6 * * * * * 1 $\leftarrow$ (2) 8 4 1 1 * * * * 1 

(2) 3 4 4 1 1 * * * * 1 

(5) 2 * * * * * 1 $\leftarrow$ (6) 4 4 1 1 * * * * 1 

(6) 1 * * * * * 1 $\leftarrow$ (8) * * * * * 1 

\end{tcolorbox}
\begin{tcolorbox}[title={$(48,24)$}]
(1) 5 1 * * * * * 1 $\leftarrow$ (2) 6 * * * * * 1 

(5) 1 1 * * * * * 1 $\leftarrow$ (6) 2 * * * * * 1 

\end{tcolorbox}
\begin{tcolorbox}[title={$(48,25)$}]
(1) 4 1 1 * * * * * 1 $\leftarrow$ (2) 5 1 * * * * * 1 

\end{tcolorbox}
\begin{tcolorbox}[title={$(49,3)$}]
(11) 23 15 

(15) 27 7 $\leftarrow$ (19) 31 

(19) 15 15 $\leftarrow$ (43) 7 

(31) 11 7 $\leftarrow$ (35) 15 

(47) 1 1 $\leftarrow$ (49) 1 

\end{tcolorbox}
\begin{tcolorbox}[title={$(49,4)$}]
(10) 21 11 7 

(11) 22 13 3 $\leftarrow$ (12) 23 15 

(12) 23 7 7 $\leftarrow$ (18) 29 3 

(15) 26 5 3 $\leftarrow$ (16) 27 7 

(18) 13 11 7 $\leftarrow$ (42) 5 3 

(19) 14 13 3 $\leftarrow$ (20) 15 15 

(28) 7 7 7 $\leftarrow$ (34) 13 3 

(31) 10 5 3 $\leftarrow$ (32) 11 7 

(35) 6 5 3 $\leftarrow$ (36) 7 7 

(46) 1 1 1 $\leftarrow$ (48) 1 1 

\end{tcolorbox}
\begin{tcolorbox}[title={$(49,5)$}]
(1) 7 11 15 15 

(4) 14 13 11 7 

(9) 11 15 7 7 

(9) 19 7 7 7 

(10) 20 5 7 7 $\leftarrow$ (17) 27 3 3 

(11) 17 7 7 7 $\leftarrow$ (41) 3 3 3 

(11) 21 11 3 3 $\leftarrow$ (12) 22 13 3 

(15) 25 3 3 3 $\leftarrow$ (16) 26 5 3 

(19) 13 11 3 3 $\leftarrow$ (20) 14 13 3 

(26) 4 5 7 7 $\leftarrow$ (33) 11 3 3 

(31) 5 7 3 3 $\leftarrow$ (36) 6 5 3 

(31) 9 3 3 3 $\leftarrow$ (32) 10 5 3 

\end{tcolorbox}
\begin{tcolorbox}[title={$(49,6)$}]
(1) 10 17 7 7 7 

(4) 9 15 7 7 7 

(5) 11 14 5 7 7 $\leftarrow$ (6) 13 13 11 7 

(8) 9 11 7 7 7 

(9) 7 14 5 7 7 $\leftarrow$ (10) 11 15 7 7 

(9) 18 3 5 7 7 $\leftarrow$ (16) 25 3 3 3 

(10) 7 13 5 7 7 $\leftarrow$ (12) 17 7 7 7 

(11) 20 9 3 3 3 $\leftarrow$ (12) 21 11 3 3 

(14) 5 9 7 7 7 $\leftarrow$ (18) 13 5 7 7 

(19) 12 9 3 3 3 $\leftarrow$ (20) 13 11 3 3 

(25) 2 3 5 7 7 $\leftarrow$ (32) 9 3 3 3 

(28) 3 5 7 3 3 $\leftarrow$ (33) 8 3 3 3 

(29) 4 7 3 3 3 $\leftarrow$ (30) 6 6 5 3 

(31) 4 5 3 3 3 $\leftarrow$ (32) 5 7 3 3 

(35) 5 1 2 3 3 $\leftarrow$ (36) 6 2 3 3 

\end{tcolorbox}
\begin{tcolorbox}[title={$(49,7)$}]
(1) 9 7 13 5 7 7 $\leftarrow$ (2) 10 17 7 7 7 

(2) 6 9 11 7 7 7 

(3) 11 5 9 7 7 7 $\leftarrow$ (5) 13 13 5 7 7 

(5) 7 14 4 5 7 7 $\leftarrow$ (6) 11 14 5 7 7 

(7) 12 12 9 3 3 3 $\leftarrow$ (12) 20 9 3 3 3 

(9) 5 5 9 7 7 7 $\leftarrow$ (10) 7 14 5 7 7 

(11) 3 5 9 7 7 7 $\leftarrow$ (13) 14 4 5 7 7 

(11) 8 12 9 3 3 3 $\leftarrow$ (20) 12 9 3 3 3 

(15) 5 7 3 5 7 7 $\leftarrow$ (19) 9 3 5 7 7 

(23) 3 3 6 6 5 3 $\leftarrow$ (27) 3 6 6 5 3 

(26) 2 3 5 7 3 3 $\leftarrow$ (32) 4 5 3 3 3 

(29) 2 4 5 3 3 3 $\leftarrow$ (30) 4 7 3 3 3 

(35) 3 4 4 1 1 1 $\leftarrow$ (36) 5 1 2 3 3 

(39) 1 * 1 $\leftarrow$ (41) * 1 

\end{tcolorbox}
\begin{tcolorbox}[title={$(49,8)$}]
(1) 5 10 11 3 5 7 7 

(1) 8 5 5 9 7 7 7 $\leftarrow$ (2) 9 7 13 5 7 7 

(2) 9 3 5 9 7 7 7 $\leftarrow$ (4) 11 5 9 7 7 7 

(3) 8 3 5 9 7 7 7 $\leftarrow$ (6) 7 14 4 5 7 7 

(4) 7 8 12 9 3 3 3 $\leftarrow$ (10) 17 3 6 6 5 3 

(7) 10 9 3 6 6 5 3 $\leftarrow$ (8) 12 12 9 3 3 3 

(10) 3 5 9 3 5 7 7 $\leftarrow$ (18) 9 3 6 6 5 3 

(11) 6 9 3 6 6 5 3 $\leftarrow$ (12) 8 12 9 3 3 3 

(12) 3 5 7 3 5 7 7 $\leftarrow$ (16) 5 7 3 5 7 7 

(17) 6 3 3 6 6 5 3 $\leftarrow$ (24) 3 3 6 6 5 3 

(19) 10 2 4 5 3 3 3 $\leftarrow$ (20) 12 4 5 3 3 3 

(23) 6 2 4 5 3 3 3 $\leftarrow$ (30) 2 4 5 3 3 3 

(38) 1 1 * 1 $\leftarrow$ (40) 1 * 1 

\end{tcolorbox}
\begin{tcolorbox}[title={$(49,9)$}]
(1) 4 6 3 5 9 7 7 7 $\leftarrow$ (2) 5 10 11 3 5 7 7 

(2) 4 5 3 5 9 7 7 7 $\leftarrow$ (4) 8 3 5 9 7 7 7 

(2) 8 3 5 9 3 5 7 7 $\leftarrow$ (9) 11 12 4 5 3 3 3 

(3) 9 3 5 7 3 5 7 7 $\leftarrow$ (17) 5 5 3 6 6 5 3 

(7) 9 5 5 3 6 6 5 3 $\leftarrow$ (8) 10 9 3 6 6 5 3 

(11) 3 6 5 2 3 5 7 7 $\leftarrow$ (12) 6 9 3 6 6 5 3 

(15) 5 6 2 3 5 7 3 3 $\leftarrow$ (21) 6 2 3 5 7 3 3 

(17) 3 6 2 3 5 7 3 3 $\leftarrow$ (18) 6 3 3 6 6 5 3 

(19) 7 13 1 * 1 $\leftarrow$ (20) 10 2 4 5 3 3 3 

(21) 6 ..4 5 3 3 3 $\leftarrow$ (27) 13 1 * 1 

(23) 4 ..4 5 3 3 3 $\leftarrow$ (24) 6 2 4 5 3 3 3 

\end{tcolorbox}
\begin{tcolorbox}[title={$(49,10)$}]
(1) 2 3 5 3 5 9 7 7 7 $\leftarrow$ (2) 4 6 3 5 9 7 7 7 

(1) 4 5 3 5 9 3 5 7 7 $\leftarrow$ (8) 9 5 5 3 6 6 5 3 

(7) 4 5 5 5 3 6 6 5 3 $\leftarrow$ (10) 5 6 5 2 3 5 7 7 

(11) 2 3 5 5 3 6 6 5 3 $\leftarrow$ (12) 3 6 5 2 3 5 7 7 

(14) 2 4 3 3 3 6 6 5 3 $\leftarrow$ (19) 5 6 2 4 5 3 3 3 

(15) 3 5 6 2 4 5 3 3 3 $\leftarrow$ (16) 5 6 2 3 5 7 3 3 

(17) ...3 3 6 6 5 3 $\leftarrow$ (18) 3 6 2 3 5 7 3 3 

(19) 5 8 1 1 2 4 3 3 3 $\leftarrow$ (20) 7 13 1 * 1 

(20) ....3 5 7 3 3 $\leftarrow$ (25) 8 1 1 2 4 3 3 3 

(21) 4 ...4 5 3 3 3 $\leftarrow$ (22) 6 ..4 5 3 3 3 

(23) ....4 5 3 3 3 $\leftarrow$ (24) 4 ..4 5 3 3 3 

(27) 5 1 2 3 4 4 1 1 1 $\leftarrow$ (28) 6 2 3 4 4 1 1 1 

\end{tcolorbox}
\begin{tcolorbox}[title={$(49,11)$}]
(2) 4 3 5 6 5 2 3 5 7 7 

(5) 2 4 5 5 5 3 6 6 5 3 $\leftarrow$ (8) 4 5 5 5 3 6 6 5 3 

(12) ..4 3 3 3 6 6 5 3 $\leftarrow$ (18) ...3 3 6 6 5 3 

(15) ....3 3 6 6 5 3 $\leftarrow$ (16) 3 5 6 2 4 5 3 3 3 

(18) .....3 5 7 3 3 $\leftarrow$ (24) ....4 5 3 3 3 

(19) 4 ....4 5 3 3 3 $\leftarrow$ (20) 5 8 1 1 2 4 3 3 3 

(21) .....4 5 3 3 3 $\leftarrow$ (22) 4 ...4 5 3 3 3 

(27) 3 4 4 1 1 * 1 $\leftarrow$ (28) 5 1 2 3 4 4 1 1 1 

(31) 1 * * 1 $\leftarrow$ (33) * * 1 

\end{tcolorbox}
\begin{tcolorbox}[title={$(49,12)$}]
(3) ..4 5 5 5 3 6 6 5 3 $\leftarrow$ (6) 2 4 5 5 5 3 6 6 5 3 

(9) 6 ....3 3 6 6 5 3 $\leftarrow$ (16) ....3 3 6 6 5 3 

(15) 6 .....4 5 3 3 3 $\leftarrow$ (22) .....4 5 3 3 3 

(19) ......4 5 3 3 3 $\leftarrow$ (20) 4 ....4 5 3 3 3 

(30) 1 1 * * 1 $\leftarrow$ (32) 1 * * 1 

\end{tcolorbox}
\begin{tcolorbox}[title={$(49,13)$}]
(1) ...4 5 5 5 3 6 6 5 3 $\leftarrow$ (4) ..4 5 5 5 3 6 6 5 3 

(7) 5 6 .....3 5 7 3 3 $\leftarrow$ (13) 6 .....3 5 7 3 3 

(9) 3 6 .....3 5 7 3 3 $\leftarrow$ (10) 6 ....3 3 6 6 5 3 

(13) 6 ......4 5 3 3 3 $\leftarrow$ (20) ......4 5 3 3 3 

(15) 4 ......4 5 3 3 3 $\leftarrow$ (16) 6 .....4 5 3 3 3 

\end{tcolorbox}
\begin{tcolorbox}[title={$(49,14)$}]
(2) * 2 4 7 3 3 6 6 5 3 

(6) .....4 3 3 3 6 6 5 3 $\leftarrow$ (11) 5 6 .....4 5 3 3 3 

(7) 3 5 6 .....4 5 3 3 3 $\leftarrow$ (8) 5 6 .....3 5 7 3 3 

(9) .......3 3 6 6 5 3 $\leftarrow$ (10) 3 6 .....3 5 7 3 3 

(12) ........3 5 7 3 3 $\leftarrow$ (17) 8 1 1 * 2 4 3 3 3 

(13) 4 .......4 5 3 3 3 $\leftarrow$ (14) 6 ......4 5 3 3 3 

(15) ........4 5 3 3 3 $\leftarrow$ (16) 4 ......4 5 3 3 3 

(19) 5 1 2 3 4 4 1 1 * 1 $\leftarrow$ (20) 6 2 3 4 4 1 1 * 1 

\end{tcolorbox}
\begin{tcolorbox}[title={$(49,15)$}]
(1) 1 * 2 4 7 3 3 6 6 5 3 

(3) 1.........4 5 3 3 3 $\leftarrow$ (10) .......3 3 6 6 5 3 

(7) ........3 3 6 6 5 3 $\leftarrow$ (8) 3 5 6 .....4 5 3 3 3 

(10) .........3 5 7 3 3 $\leftarrow$ (16) ........4 5 3 3 3 

(13) .........4 5 3 3 3 $\leftarrow$ (14) 4 .......4 5 3 3 3 

(19) 3 4 4 1 1 * * 1 $\leftarrow$ (20) 5 1 2 3 4 4 1 1 * 1 

(23) 1 * * * 1 $\leftarrow$ (25) * * * 1 

\end{tcolorbox}
\begin{tcolorbox}[title={$(49,16)$}]
(1) 6 ........3 3 6 6 5 3 $\leftarrow$ (8) ........3 3 6 6 5 3 

(3) 10 .........4 5 3 3 3 $\leftarrow$ (4) 1.........4 5 3 3 3 

(7) 6 .........4 5 3 3 3 $\leftarrow$ (14) .........4 5 3 3 3 

(22) 1 1 * * * 1 $\leftarrow$ (24) 1 * * * 1 

\end{tcolorbox}
\begin{tcolorbox}[title={$(49,17)$}]
(1) 3 6 .........3 5 7 3 3 $\leftarrow$ (2) 6 ........3 3 6 6 5 3 

(3) 7 13 1 * * * 1 $\leftarrow$ (4) 10 .........4 5 3 3 3 

(5) 6 ..........4 5 3 3 3 $\leftarrow$ (11) 13 1 * * * 1 

(7) 4 ..........4 5 3 3 3 $\leftarrow$ (8) 6 .........4 5 3 3 3 

\end{tcolorbox}
\begin{tcolorbox}[title={$(49,18)$}]
(1) ...........3 3 6 6 5 3 $\leftarrow$ (2) 3 6 .........3 5 7 3 3 

(3) 5 8 1 1 * * 2 4 3 3 3 $\leftarrow$ (4) 7 13 1 * * * 1 

(4) ............3 5 7 3 3 $\leftarrow$ (9) 8 1 1 * * 2 4 3 3 3 

(5) 4 ...........4 5 3 3 3 $\leftarrow$ (6) 6 ..........4 5 3 3 3 

(7) ............4 5 3 3 3 $\leftarrow$ (8) 4 ..........4 5 3 3 3 

(11) 5 1 2 3 4 4 1 1 * * 1 $\leftarrow$ (12) 6 2 3 4 4 1 1 * * 1 

\end{tcolorbox}
\begin{tcolorbox}[title={$(49,19)$}]
(2) .............3 5 7 3 3 $\leftarrow$ (8) ............4 5 3 3 3 

(3) 4 ............4 5 3 3 3 $\leftarrow$ (4) 5 8 1 1 * * 2 4 3 3 3 

(5) .............4 5 3 3 3 $\leftarrow$ (6) 4 ...........4 5 3 3 3 

(11) 3 4 4 1 1 * * * 1 $\leftarrow$ (12) 5 1 2 3 4 4 1 1 * * 1 

(15) 1 * * * * 1 $\leftarrow$ (17) * * * * 1 

\end{tcolorbox}
\begin{tcolorbox}[title={$(49,20)$}]
(3) ..............4 5 3 3 3 $\leftarrow$ (4) 4 ............4 5 3 3 3 

(8) 1 1 * * * 2 4 3 3 3 

(14) 1 1 * * * * 1 $\leftarrow$ (16) 1 * * * * 1 

\end{tcolorbox}
\begin{tcolorbox}[title={$(49,21)$}]
(3) 6 2 3 4 4 1 1 * * * 1 

\end{tcolorbox}
\begin{tcolorbox}[title={$(49,22)$}]
(2) * * * * 2 4 3 3 3 

(3) 5 1 2 3 4 4 1 1 * * * 1 $\leftarrow$ (4) 6 2 3 4 4 1 1 * * * 1 

\end{tcolorbox}
\begin{tcolorbox}[title={$(49,23)$}]
(1) 1 * * * * 2 4 3 3 3 

(3) 3 4 4 1 1 * * * * 1 $\leftarrow$ (4) 5 1 2 3 4 4 1 1 * * * 1 

(7) 1 * * * * * 1 $\leftarrow$ (9) * * * * * 1 

\end{tcolorbox}
\begin{tcolorbox}[title={$(49,24)$}]
(1) 2 3 4 4 1 1 * * * * 1 

(6) 1 1 * * * * * 1 $\leftarrow$ (8) 1 * * * * * 1 

\end{tcolorbox}
\begin{tcolorbox}[title={$(49,25)$}]
(2) 4 1 1 * * * * * 1 

\end{tcolorbox}
\begin{tcolorbox}[title={$(50,2)$}]
(47) 3 $\leftarrow$ (51) 

\end{tcolorbox}
\begin{tcolorbox}[title={$(50,3)$}]
(19) 30 1 $\leftarrow$ (20) 31 

(35) 14 1 $\leftarrow$ (36) 15 

(43) 6 1 $\leftarrow$ (44) 7 

(44) 3 3 $\leftarrow$ (50) 1 

(47) 2 1 $\leftarrow$ (48) 3 

\end{tcolorbox}
\begin{tcolorbox}[title={$(50,4)$}]
(5) 15 15 15 

(9) 11 15 15 

(11) 21 11 7 $\leftarrow$ (13) 23 15 

(13) 23 7 7 $\leftarrow$ (17) 27 7 

(19) 13 11 7 $\leftarrow$ (21) 15 15 

(19) 29 1 1 $\leftarrow$ (20) 30 1 

(29) 7 7 7 $\leftarrow$ (33) 11 7 

(31) 5 7 7 $\leftarrow$ (37) 7 7 

(35) 13 1 1 $\leftarrow$ (36) 14 1 

(42) 2 3 3 $\leftarrow$ (49) 1 1 

(43) 5 1 1 $\leftarrow$ (44) 6 1 

(47) 1 1 1 $\leftarrow$ (48) 2 1 

\end{tcolorbox}
\begin{tcolorbox}[title={$(50,5)$}]
(2) 7 11 15 15 

(5) 14 13 11 7 $\leftarrow$ (6) 15 15 15 

(9) 10 13 11 7 $\leftarrow$ (10) 11 15 15 

(10) 19 7 7 7 $\leftarrow$ (12) 21 11 7 

(11) 20 5 7 7 $\leftarrow$ (14) 23 7 7 

(17) 14 5 7 7 $\leftarrow$ (20) 13 11 7 

(19) 28 1 1 1 $\leftarrow$ (20) 29 1 1 

(27) 4 5 7 7 $\leftarrow$ (30) 7 7 7 

(28) 3 5 7 7 $\leftarrow$ (32) 5 7 7 

(35) 12 1 1 1 $\leftarrow$ (36) 13 1 1 

(41) 1 2 3 3 $\leftarrow$ (48) 1 1 1 

(43) 4 1 1 1 $\leftarrow$ (44) 5 1 1 

\end{tcolorbox}
\begin{tcolorbox}[title={$(50,6)$}]
(1) 9 11 15 7 7 

(3) 7 11 15 7 7 

(5) 9 15 7 7 7 

(9) 9 11 7 7 7 $\leftarrow$ (10) 10 13 11 7 

(10) 18 3 5 7 7 $\leftarrow$ (12) 20 5 7 7 

(11) 7 13 5 7 7 $\leftarrow$ (13) 17 7 7 7 

(15) 5 9 7 7 7 $\leftarrow$ (19) 13 5 7 7 

(17) 11 3 5 7 7 $\leftarrow$ (18) 14 5 7 7 

(19) 24 4 1 1 1 $\leftarrow$ (20) 28 1 1 1 

(26) 2 3 5 7 7 $\leftarrow$ (28) 4 5 7 7 

(29) 3 5 7 3 3 $\leftarrow$ (33) 5 7 3 3 

(33) 3 6 2 3 3 $\leftarrow$ (34) 8 3 3 3 

(35) 2 4 3 3 3 $\leftarrow$ (37) 6 2 3 3 

(35) 8 4 1 1 1 $\leftarrow$ (36) 12 1 1 1 

(39) 4 4 1 1 1 $\leftarrow$ (44) 4 1 1 1 

\end{tcolorbox}
\begin{tcolorbox}[title={$(50,7)$}]
(1) 8 9 11 7 7 7 $\leftarrow$ (2) 9 11 15 7 7 

(3) 6 9 11 7 7 7 $\leftarrow$ (4) 7 11 15 7 7 

(5) 12 11 3 5 7 7 $\leftarrow$ (6) 13 13 5 7 7 

(10) 5 5 9 7 7 7 $\leftarrow$ (12) 7 13 5 7 7 

(12) 3 5 9 7 7 7 $\leftarrow$ (16) 5 9 7 7 7 

(13) 13 2 3 5 7 7 $\leftarrow$ (14) 14 4 5 7 7 

(14) 5 9 3 5 7 7 $\leftarrow$ (20) 9 3 5 7 7 

(19) 22 * 1 $\leftarrow$ (20) 24 4 1 1 1 

(27) 2 3 5 7 3 3 $\leftarrow$ (28) 3 6 6 5 3 

(33) ..4 3 3 3 $\leftarrow$ (34) 3 6 2 3 3 

(34) 1 2 4 3 3 3 $\leftarrow$ (36) 2 4 3 3 3 

(35) 6 * 1 $\leftarrow$ (36) 8 4 1 1 1 

(36) 3 4 4 1 1 1 $\leftarrow$ (42) * 1 

(39) 2 * 1 $\leftarrow$ (40) 4 4 1 1 1 

\end{tcolorbox}
\begin{tcolorbox}[title={$(50,8)$}]
(2) 8 5 5 9 7 7 7 

(3) 9 3 5 9 7 7 7 $\leftarrow$ (5) 11 5 9 7 7 7 

(5) 7 8 12 9 3 3 3 $\leftarrow$ (6) 12 11 3 5 7 7 

(11) 3 5 9 3 5 7 7 $\leftarrow$ (13) 8 12 9 3 3 3 

(13) 3 5 7 3 5 7 7 $\leftarrow$ (17) 5 7 3 5 7 7 

(13) 7 12 4 5 3 3 3 $\leftarrow$ (14) 13 2 3 5 7 7 

(15) 6 5 2 3 5 7 7 $\leftarrow$ (21) 12 4 5 3 3 3 

(19) 21 1 * 1 $\leftarrow$ (20) 22 * 1 

(33) 1 1 2 4 3 3 3 $\leftarrow$ (34) ..4 3 3 3 

(34) 2 3 4 4 1 1 1 $\leftarrow$ (41) 1 * 1 

(35) 5 1 * 1 $\leftarrow$ (36) 6 * 1 

(39) 1 1 * 1 $\leftarrow$ (40) 2 * 1 

\end{tcolorbox}
\begin{tcolorbox}[title={$(50,9)$}]
(3) 4 5 3 5 9 7 7 7 $\leftarrow$ (5) 8 3 5 9 7 7 7 

(3) 8 3 5 9 3 5 7 7 $\leftarrow$ (6) 7 8 12 9 3 3 3 

(4) 9 3 5 7 3 5 7 7 

(13) 4 7 3 3 6 6 5 3 $\leftarrow$ (19) 6 3 3 6 6 5 3 

(13) 5 6 3 3 6 6 5 3 $\leftarrow$ (14) 7 12 4 5 3 3 3 

(15) 3 6 3 3 6 6 5 3 $\leftarrow$ (16) 6 5 2 3 5 7 7 

(19) 20 1 1 * 1 $\leftarrow$ (20) 21 1 * 1 

(21) 3 6 2 4 5 3 3 3 $\leftarrow$ (22) 6 2 3 5 7 3 3 

(27) 12 1 1 * 1 $\leftarrow$ (28) 13 1 * 1 

(33) 1 2 3 4 4 1 1 1 $\leftarrow$ (40) 1 1 * 1 

(35) 4 1 1 * 1 $\leftarrow$ (36) 5 1 * 1 

\end{tcolorbox}
\begin{tcolorbox}[title={$(50,10)$}]
(2) 2 3 5 3 5 9 7 7 7 $\leftarrow$ (4) 4 5 3 5 9 7 7 7 

(2) 4 5 3 5 9 3 5 7 7 $\leftarrow$ (4) 8 3 5 9 3 5 7 7 

(5) 5 5 6 5 2 3 5 7 7 

(11) 2 4 7 3 3 6 6 5 3 $\leftarrow$ (14) 4 7 3 3 6 6 5 3 

(12) 2 3 5 5 3 6 6 5 3 $\leftarrow$ (17) 5 6 2 3 5 7 3 3 

(13) 3 5 6 2 3 5 7 3 3 $\leftarrow$ (14) 5 6 3 3 6 6 5 3 

(15) 2 4 3 3 3 6 6 5 3 $\leftarrow$ (16) 3 6 3 3 6 6 5 3 

(19) 3 6 ..4 5 3 3 3 $\leftarrow$ (20) 5 6 2 4 5 3 3 3 

(19) 16 4 1 1 * 1 $\leftarrow$ (20) 20 1 1 * 1 

(21) ....3 5 7 3 3 $\leftarrow$ (22) 3 6 2 4 5 3 3 3 

(25) 3 6 2 3 4 4 1 1 1 $\leftarrow$ (26) 8 1 1 2 4 3 3 3 

(27) * 2 4 3 3 3 $\leftarrow$ (29) 6 2 3 4 4 1 1 1 

(27) 8 4 1 1 * 1 $\leftarrow$ (28) 12 1 1 * 1 

(31) 4 4 1 1 * 1 $\leftarrow$ (36) 4 1 1 * 1 

\end{tcolorbox}
\begin{tcolorbox}[title={$(50,11)$}]
(3) 4 3 5 6 5 2 3 5 7 7 $\leftarrow$ (6) 5 5 6 5 2 3 5 7 7 

(9) ..4 7 3 3 6 6 5 3 $\leftarrow$ (16) 2 4 3 3 3 6 6 5 3 

(10) 1 2 4 7 3 3 6 6 5 3 $\leftarrow$ (12) 2 4 7 3 3 6 6 5 3 

(13) ..4 3 3 3 6 6 5 3 $\leftarrow$ (14) 3 5 6 2 3 5 7 3 3 

(19) .....3 5 7 3 3 $\leftarrow$ (20) 3 6 ..4 5 3 3 3 

(19) 14 * * 1 $\leftarrow$ (20) 16 4 1 1 * 1 

(25) 2 * 2 4 3 3 3 $\leftarrow$ (26) 3 6 2 3 4 4 1 1 1 

(26) 1 * 2 4 3 3 3 $\leftarrow$ (28) * 2 4 3 3 3 

(27) 6 * * 1 $\leftarrow$ (28) 8 4 1 1 * 1 

(28) 3 4 4 1 1 * 1 $\leftarrow$ (34) * * 1 

(31) 2 * * 1 $\leftarrow$ (32) 4 4 1 1 * 1 

\end{tcolorbox}
\begin{tcolorbox}[title={$(50,12)$}]
(1) 2 4 3 5 6 5 2 3 5 7 7 $\leftarrow$ (4) 4 3 5 6 5 2 3 5 7 7 

(7) 6 ..4 3 3 3 6 6 5 3 $\leftarrow$ (14) ..4 3 3 3 6 6 5 3 

(9) 1 1 2 4 7 3 3 6 6 5 3 $\leftarrow$ (10) ..4 7 3 3 6 6 5 3 

(19) 13 1 * * 1 $\leftarrow$ (20) 14 * * 1 

(25) 1 1 * 2 4 3 3 3 $\leftarrow$ (26) 2 * 2 4 3 3 3 

(26) 2 3 4 4 1 1 * 1 $\leftarrow$ (33) 1 * * 1 

(27) 5 1 * * 1 $\leftarrow$ (28) 6 * * 1 

(31) 1 1 * * 1 $\leftarrow$ (32) 2 * * 1 

\end{tcolorbox}
\begin{tcolorbox}[title={$(50,13)$}]
(2) ...4 5 5 5 3 6 6 5 3 

(5) 4 1 1 2 4 7 3 3 6 6 5 3 $\leftarrow$ (11) 6 ....3 3 6 6 5 3 

(7) 3 6 ....3 3 6 6 5 3 $\leftarrow$ (8) 6 ..4 3 3 3 6 6 5 3 

(13) 3 6 .....4 5 3 3 3 $\leftarrow$ (14) 6 .....3 5 7 3 3 

(19) 12 1 1 * * 1 $\leftarrow$ (20) 13 1 * * 1 

(25) 1 2 3 4 4 1 1 * 1 $\leftarrow$ (32) 1 1 * * 1 

(27) 4 1 1 * * 1 $\leftarrow$ (28) 5 1 * * 1 

\end{tcolorbox}
\begin{tcolorbox}[title={$(50,14)$}]
(3) * 2 4 7 3 3 6 6 5 3 $\leftarrow$ (6) 4 1 1 2 4 7 3 3 6 6 5 3 

(3) 24 4 1 1 * * 1 $\leftarrow$ (9) 5 6 .....3 5 7 3 3 

(7) .....4 3 3 3 6 6 5 3 $\leftarrow$ (8) 3 6 ....3 3 6 6 5 3 

(11) 3 6 ......4 5 3 3 3 $\leftarrow$ (12) 5 6 .....4 5 3 3 3 

(13) ........3 5 7 3 3 $\leftarrow$ (14) 3 6 .....4 5 3 3 3 

(17) 3 6 2 3 4 4 1 1 * 1 $\leftarrow$ (18) 8 1 1 * 2 4 3 3 3 

(19) * * 2 4 3 3 3 $\leftarrow$ (21) 6 2 3 4 4 1 1 * 1 

(19) 8 4 1 1 * * 1 $\leftarrow$ (20) 12 1 1 * * 1 

(23) 4 4 1 1 * * 1 $\leftarrow$ (28) 4 1 1 * * 1 

\end{tcolorbox}
\begin{tcolorbox}[title={$(50,15)$}]
(1) 2 * 2 4 7 3 3 6 6 5 3 $\leftarrow$ (8) .....4 3 3 3 6 6 5 3 

(2) 1 * 2 4 7 3 3 6 6 5 3 $\leftarrow$ (4) * 2 4 7 3 3 6 6 5 3 

(3) 22 * * * 1 $\leftarrow$ (4) 24 4 1 1 * * 1 

(11) .........3 5 7 3 3 $\leftarrow$ (12) 3 6 ......4 5 3 3 3 

(17) 2 * * 2 4 3 3 3 $\leftarrow$ (18) 3 6 2 3 4 4 1 1 * 1 

(18) 1 * * 2 4 3 3 3 $\leftarrow$ (20) * * 2 4 3 3 3 

(19) 6 * * * 1 $\leftarrow$ (20) 8 4 1 1 * * 1 

(20) 3 4 4 1 1 * * 1 $\leftarrow$ (26) * * * 1 

(23) 2 * * * 1 $\leftarrow$ (24) 4 4 1 1 * * 1 

\end{tcolorbox}
\begin{tcolorbox}[title={$(50,16)$}]
(1) 1 1 * 2 4 7 3 3 6 6 5 3 $\leftarrow$ (2) 2 * 2 4 7 3 3 6 6 5 3 

(3) 21 1 * * * 1 $\leftarrow$ (4) 22 * * * 1 

(5) 6 .........3 5 7 3 3 

(17) 1 1 * * 2 4 3 3 3 $\leftarrow$ (18) 2 * * 2 4 3 3 3 

(18) 2 3 4 4 1 1 * * 1 $\leftarrow$ (25) 1 * * * 1 

(19) 5 1 * * * 1 $\leftarrow$ (20) 6 * * * 1 

(23) 1 1 * * * 1 $\leftarrow$ (24) 2 * * * 1 

\end{tcolorbox}
\begin{tcolorbox}[title={$(50,17)$}]
(3) 5 6 .........4 5 3 3 3 

(3) 20 1 1 * * * 1 $\leftarrow$ (4) 21 1 * * * 1 

(5) 3 6 .........4 5 3 3 3 $\leftarrow$ (6) 6 .........3 5 7 3 3 

(11) 12 1 1 * * * 1 $\leftarrow$ (12) 13 1 * * * 1 

(17) 1 2 3 4 4 1 1 * * 1 $\leftarrow$ (24) 1 1 * * * 1 

(19) 4 1 1 * * * 1 $\leftarrow$ (20) 5 1 * * * 1 

\end{tcolorbox}
\begin{tcolorbox}[title={$(50,18)$}]
(2) ...........3 3 6 6 5 3 

(3) 3 6 ..........4 5 3 3 3 $\leftarrow$ (4) 5 6 .........4 5 3 3 3 

(3) 16 4 1 1 * * * 1 $\leftarrow$ (4) 20 1 1 * * * 1 

(5) ............3 5 7 3 3 $\leftarrow$ (6) 3 6 .........4 5 3 3 3 

(9) 3 6 2 3 4 4 1 1 * * 1 $\leftarrow$ (10) 8 1 1 * * 2 4 3 3 3 

(11) * * * 2 4 3 3 3 $\leftarrow$ (13) 6 2 3 4 4 1 1 * * 1 

(11) 8 4 1 1 * * * 1 $\leftarrow$ (12) 12 1 1 * * * 1 

(15) 4 4 1 1 * * * 1 $\leftarrow$ (20) 4 1 1 * * * 1 

\end{tcolorbox}
\begin{tcolorbox}[title={$(50,19)$}]
(3) .............3 5 7 3 3 $\leftarrow$ (4) 3 6 ..........4 5 3 3 3 

(3) 14 * * * * 1 $\leftarrow$ (4) 16 4 1 1 * * * 1 

(6) .............4 5 3 3 3 

(9) 2 * * * 2 4 3 3 3 $\leftarrow$ (10) 3 6 2 3 4 4 1 1 * * 1 

(10) 1 * * * 2 4 3 3 3 $\leftarrow$ (12) * * * 2 4 3 3 3 

(11) 6 * * * * 1 $\leftarrow$ (12) 8 4 1 1 * * * 1 

(12) 3 4 4 1 1 * * * 1 $\leftarrow$ (18) * * * * 1 

(15) 2 * * * * 1 $\leftarrow$ (16) 4 4 1 1 * * * 1 

\end{tcolorbox}
\begin{tcolorbox}[title={$(50,20)$}]
(3) 13 1 * * * * 1 $\leftarrow$ (4) 14 * * * * 1 

(4) ..............4 5 3 3 3 

(9) 1 1 * * * 2 4 3 3 3 $\leftarrow$ (10) 2 * * * 2 4 3 3 3 

(10) 2 3 4 4 1 1 * * * 1 $\leftarrow$ (17) 1 * * * * 1 

(11) 5 1 * * * * 1 $\leftarrow$ (12) 6 * * * * 1 

(15) 1 1 * * * * 1 $\leftarrow$ (16) 2 * * * * 1 

\end{tcolorbox}
\begin{tcolorbox}[title={$(50,21)$}]
(1) 8 1 1 * * * 2 4 3 3 3 

(3) 12 1 1 * * * * 1 $\leftarrow$ (4) 13 1 * * * * 1 

(9) 1 2 3 4 4 1 1 * * * 1 $\leftarrow$ (16) 1 1 * * * * 1 

(11) 4 1 1 * * * * 1 $\leftarrow$ (12) 5 1 * * * * 1 

\end{tcolorbox}
\begin{tcolorbox}[title={$(50,22)$}]
(1) 3 6 2 3 4 4 1 1 * * * 1 $\leftarrow$ (2) 8 1 1 * * * 2 4 3 3 3 

(3) * * * * 2 4 3 3 3 $\leftarrow$ (5) 6 2 3 4 4 1 1 * * * 1 

(3) 8 4 1 1 * * * * 1 $\leftarrow$ (4) 12 1 1 * * * * 1 

(7) 4 4 1 1 * * * * 1 $\leftarrow$ (12) 4 1 1 * * * * 1 

\end{tcolorbox}
\begin{tcolorbox}[title={$(50,23)$}]
(1) 2 * * * * 2 4 3 3 3 $\leftarrow$ (2) 3 6 2 3 4 4 1 1 * * * 1 

(2) 1 * * * * 2 4 3 3 3 $\leftarrow$ (4) * * * * 2 4 3 3 3 

(3) 6 * * * * * 1 $\leftarrow$ (4) 8 4 1 1 * * * * 1 

(4) 3 4 4 1 1 * * * * 1 $\leftarrow$ (10) * * * * * 1 

(7) 2 * * * * * 1 $\leftarrow$ (8) 4 4 1 1 * * * * 1 

\end{tcolorbox}
\begin{tcolorbox}[title={$(50,24)$}]
(1) 1 1 * * * * 2 4 3 3 3 $\leftarrow$ (2) 2 * * * * 2 4 3 3 3 

(2) 2 3 4 4 1 1 * * * * 1 $\leftarrow$ (9) 1 * * * * * 1 

(3) 5 1 * * * * * 1 $\leftarrow$ (4) 6 * * * * * 1 

(7) 1 1 * * * * * 1 $\leftarrow$ (8) 2 * * * * * 1 

\end{tcolorbox}
\begin{tcolorbox}[title={$(51,3)$}]
(19) 29 3 $\leftarrow$ (21) 31 

(35) 13 3 $\leftarrow$ (37) 15 

(43) 5 3 $\leftarrow$ (45) 7 

(45) 3 3 $\leftarrow$ (49) 3 

\end{tcolorbox}
\begin{tcolorbox}[title={$(51,4)$}]
(13) 22 13 3 $\leftarrow$ (14) 23 15 

(17) 26 5 3 $\leftarrow$ (18) 27 7 

(18) 27 3 3 $\leftarrow$ (20) 29 3 

(21) 14 13 3 $\leftarrow$ (22) 15 15 

(33) 10 5 3 $\leftarrow$ (34) 11 7 

(34) 11 3 3 $\leftarrow$ (36) 13 3 

(37) 6 5 3 $\leftarrow$ (38) 7 7 

(42) 3 3 3 $\leftarrow$ (44) 5 3 

(43) 2 3 3 $\leftarrow$ (46) 3 3 

\end{tcolorbox}
\begin{tcolorbox}[title={$(51,5)$}]
(1) 9 11 15 15 

(3) 7 11 15 15 

(6) 14 13 11 7 

(7) 13 13 11 7 

(11) 11 15 7 7 $\leftarrow$ (21) 13 11 7 

(11) 19 7 7 7 $\leftarrow$ (13) 21 11 7 

(13) 21 11 3 3 $\leftarrow$ (14) 22 13 3 

(17) 25 3 3 3 $\leftarrow$ (18) 26 5 3 

(21) 13 11 3 3 $\leftarrow$ (22) 14 13 3 

(29) 3 5 7 7 $\leftarrow$ (33) 5 7 7 

(31) 6 6 5 3 $\leftarrow$ (38) 6 5 3 

(33) 9 3 3 3 $\leftarrow$ (34) 10 5 3 

(42) 1 2 3 3 $\leftarrow$ (44) 2 3 3 

\end{tcolorbox}
\begin{tcolorbox}[title={$(51,6)$}]
(3) 10 17 7 7 7 

(6) 9 15 7 7 7 

(7) 11 14 5 7 7 $\leftarrow$ (8) 13 13 11 7 

(10) 9 11 7 7 7 $\leftarrow$ (14) 17 7 7 7 

(11) 7 14 5 7 7 $\leftarrow$ (12) 11 15 7 7 

(11) 18 3 5 7 7 $\leftarrow$ (13) 20 5 7 7 

(13) 20 9 3 3 3 $\leftarrow$ (14) 21 11 3 3 

(18) 11 3 5 7 7 $\leftarrow$ (20) 13 5 7 7 

(21) 12 9 3 3 3 $\leftarrow$ (22) 13 11 3 3 

(27) 2 3 5 7 7 $\leftarrow$ (29) 4 5 7 7 

(30) 3 5 7 3 3 $\leftarrow$ (35) 8 3 3 3 

(31) 4 7 3 3 3 $\leftarrow$ (32) 6 6 5 3 

(33) 4 5 3 3 3 $\leftarrow$ (34) 5 7 3 3 

(37) 5 1 2 3 3 $\leftarrow$ (38) 6 2 3 3 

\end{tcolorbox}
\begin{tcolorbox}[title={$(51,7)$}]
(1) 5 9 15 7 7 7 

(2) 8 9 11 7 7 7 

(3) 9 7 13 5 7 7 $\leftarrow$ (4) 10 17 7 7 7 

(4) 6 9 11 7 7 7 

(7) 7 14 4 5 7 7 $\leftarrow$ (8) 11 14 5 7 7 

(9) 12 12 9 3 3 3 $\leftarrow$ (12) 18 3 5 7 7 

(11) 5 5 9 7 7 7 $\leftarrow$ (12) 7 14 5 7 7 

(11) 17 3 6 6 5 3 $\leftarrow$ (14) 20 9 3 3 3 

(13) 3 5 9 7 7 7 $\leftarrow$ (17) 5 9 7 7 7 

(15) 5 9 3 5 7 7 $\leftarrow$ (21) 9 3 5 7 7 

(19) 9 3 6 6 5 3 $\leftarrow$ (22) 12 9 3 3 3 

(25) 3 3 6 6 5 3 $\leftarrow$ (28) 2 3 5 7 7 

(28) 2 3 5 7 3 3 $\leftarrow$ (34) 4 5 3 3 3 

(31) 2 4 5 3 3 3 $\leftarrow$ (32) 4 7 3 3 3 

(35) 1 2 4 3 3 3 $\leftarrow$ (37) 2 4 3 3 3 

(37) 3 4 4 1 1 1 $\leftarrow$ (38) 5 1 2 3 3 

\end{tcolorbox}
\begin{tcolorbox}[title={$(51,8)$}]
(3) 5 10 11 3 5 7 7 

(3) 8 5 5 9 7 7 7 $\leftarrow$ (4) 9 7 13 5 7 7 

(4) 9 3 5 9 7 7 7 

(9) 10 9 3 6 6 5 3 $\leftarrow$ (10) 12 12 9 3 3 3 

(10) 11 12 4 5 3 3 3 $\leftarrow$ (12) 17 3 6 6 5 3 

(12) 3 5 9 3 5 7 7 $\leftarrow$ (18) 5 7 3 5 7 7 

(13) 6 9 3 6 6 5 3 $\leftarrow$ (14) 8 12 9 3 3 3 

(14) 3 5 7 3 5 7 7 $\leftarrow$ (16) 5 9 3 5 7 7 

(18) 5 5 3 6 6 5 3 $\leftarrow$ (20) 9 3 6 6 5 3 

(21) 10 2 4 5 3 3 3 $\leftarrow$ (22) 12 4 5 3 3 3 

(25) 6 2 4 5 3 3 3 $\leftarrow$ (32) 2 4 5 3 3 3 

(34) 1 1 2 4 3 3 3 $\leftarrow$ (36) 1 2 4 3 3 3 

(35) 2 3 4 4 1 1 1 $\leftarrow$ (38) 3 4 4 1 1 1 

\end{tcolorbox}
\begin{tcolorbox}[title={$(51,9)$}]
(3) 4 6 3 5 9 7 7 7 $\leftarrow$ (4) 5 10 11 3 5 7 7 

(5) 9 3 5 7 3 5 7 7 

(9) 9 5 5 3 6 6 5 3 $\leftarrow$ (10) 10 9 3 6 6 5 3 

(11) 5 6 5 2 3 5 7 7 $\leftarrow$ (17) 6 5 2 3 5 7 7 

(13) 3 6 5 2 3 5 7 7 $\leftarrow$ (14) 6 9 3 6 6 5 3 

(19) 3 6 2 3 5 7 3 3 $\leftarrow$ (20) 6 3 3 6 6 5 3 

(21) 7 13 1 * 1 $\leftarrow$ (22) 10 2 4 5 3 3 3 

(23) 6 ..4 5 3 3 3 $\leftarrow$ (29) 13 1 * 1 

(25) 4 ..4 5 3 3 3 $\leftarrow$ (26) 6 2 4 5 3 3 3 

(34) 1 2 3 4 4 1 1 1 $\leftarrow$ (36) 2 3 4 4 1 1 1 

\end{tcolorbox}
\begin{tcolorbox}[title={$(51,10)$}]
(3) 2 3 5 3 5 9 7 7 7 $\leftarrow$ (4) 4 6 3 5 9 7 7 7 

(3) 4 5 3 5 9 3 5 7 7 $\leftarrow$ (5) 8 3 5 9 3 5 7 7 

(9) 4 5 5 5 3 6 6 5 3 $\leftarrow$ (12) 5 6 5 2 3 5 7 7 

(13) 2 3 5 5 3 6 6 5 3 $\leftarrow$ (14) 3 6 5 2 3 5 7 7 

(17) 3 5 6 2 4 5 3 3 3 $\leftarrow$ (18) 5 6 2 3 5 7 3 3 

(19) ...3 3 6 6 5 3 $\leftarrow$ (20) 3 6 2 3 5 7 3 3 

(21) 5 8 1 1 2 4 3 3 3 $\leftarrow$ (22) 7 13 1 * 1 

(22) ....3 5 7 3 3 $\leftarrow$ (27) 8 1 1 2 4 3 3 3 

(23) 4 ...4 5 3 3 3 $\leftarrow$ (24) 6 ..4 5 3 3 3 

(25) ....4 5 3 3 3 $\leftarrow$ (26) 4 ..4 5 3 3 3 

(29) 5 1 2 3 4 4 1 1 1 $\leftarrow$ (30) 6 2 3 4 4 1 1 1 

\end{tcolorbox}
\begin{tcolorbox}[title={$(51,11)$}]
(1) 5 5 5 6 5 2 3 5 7 7 

(7) 2 4 5 5 5 3 6 6 5 3 $\leftarrow$ (10) 4 5 5 5 3 6 6 5 3 

(11) 1 2 4 7 3 3 6 6 5 3 $\leftarrow$ (13) 2 4 7 3 3 6 6 5 3 

(17) ....3 3 6 6 5 3 $\leftarrow$ (18) 3 5 6 2 4 5 3 3 3 

(20) .....3 5 7 3 3 $\leftarrow$ (26) ....4 5 3 3 3 

(21) 4 ....4 5 3 3 3 $\leftarrow$ (22) 5 8 1 1 2 4 3 3 3 

(23) .....4 5 3 3 3 $\leftarrow$ (24) 4 ...4 5 3 3 3 

(27) 1 * 2 4 3 3 3 $\leftarrow$ (29) * 2 4 3 3 3 

(29) 3 4 4 1 1 * 1 $\leftarrow$ (30) 5 1 2 3 4 4 1 1 1 

\end{tcolorbox}
\begin{tcolorbox}[title={$(51,12)$}]
(2) 2 4 3 5 6 5 2 3 5 7 7 

(5) ..4 5 5 5 3 6 6 5 3 $\leftarrow$ (8) 2 4 5 5 5 3 6 6 5 3 

(10) 1 1 2 4 7 3 3 6 6 5 3 $\leftarrow$ (12) 1 2 4 7 3 3 6 6 5 3 

(17) 6 .....4 5 3 3 3 $\leftarrow$ (24) .....4 5 3 3 3 

(21) ......4 5 3 3 3 $\leftarrow$ (22) 4 ....4 5 3 3 3 

(26) 1 1 * 2 4 3 3 3 $\leftarrow$ (28) 1 * 2 4 3 3 3 

(27) 2 3 4 4 1 1 * 1 $\leftarrow$ (30) 3 4 4 1 1 * 1 

\end{tcolorbox}
\begin{tcolorbox}[title={$(51,13)$}]
(3) ...4 5 5 5 3 6 6 5 3 $\leftarrow$ (6) ..4 5 5 5 3 6 6 5 3 

(11) 3 6 .....3 5 7 3 3 $\leftarrow$ (12) 6 ....3 3 6 6 5 3 

(15) 6 ......4 5 3 3 3 $\leftarrow$ (22) ......4 5 3 3 3 

(17) 4 ......4 5 3 3 3 $\leftarrow$ (18) 6 .....4 5 3 3 3 

(26) 1 2 3 4 4 1 1 * 1 $\leftarrow$ (28) 2 3 4 4 1 1 * 1 

\end{tcolorbox}
\begin{tcolorbox}[title={$(51,14)$}]
(1) ....4 5 5 5 3 6 6 5 3 $\leftarrow$ (4) ...4 5 5 5 3 6 6 5 3 

(9) 3 5 6 .....4 5 3 3 3 $\leftarrow$ (10) 5 6 .....3 5 7 3 3 

(11) .......3 3 6 6 5 3 $\leftarrow$ (12) 3 6 .....3 5 7 3 3 

(14) ........3 5 7 3 3 $\leftarrow$ (19) 8 1 1 * 2 4 3 3 3 

(15) 4 .......4 5 3 3 3 $\leftarrow$ (16) 6 ......4 5 3 3 3 

(17) ........4 5 3 3 3 $\leftarrow$ (18) 4 ......4 5 3 3 3 

(21) 5 1 2 3 4 4 1 1 * 1 $\leftarrow$ (22) 6 2 3 4 4 1 1 * 1 

\end{tcolorbox}
\begin{tcolorbox}[title={$(51,15)$}]
(3) 1 * 2 4 7 3 3 6 6 5 3 $\leftarrow$ (5) * 2 4 7 3 3 6 6 5 3 

(5) 1.........4 5 3 3 3 

(9) ........3 3 6 6 5 3 $\leftarrow$ (10) 3 5 6 .....4 5 3 3 3 

(12) .........3 5 7 3 3 $\leftarrow$ (18) ........4 5 3 3 3 

(15) .........4 5 3 3 3 $\leftarrow$ (16) 4 .......4 5 3 3 3 

(19) 1 * * 2 4 3 3 3 $\leftarrow$ (21) * * 2 4 3 3 3 

(21) 3 4 4 1 1 * * 1 $\leftarrow$ (22) 5 1 2 3 4 4 1 1 * 1 

\end{tcolorbox}
\begin{tcolorbox}[title={$(51,16)$}]
(2) 1 1 * 2 4 7 3 3 6 6 5 3 $\leftarrow$ (4) 1 * 2 4 7 3 3 6 6 5 3 

(3) 6 ........3 3 6 6 5 3 

(5) 10 .........4 5 3 3 3 $\leftarrow$ (6) 1.........4 5 3 3 3 

(9) 6 .........4 5 3 3 3 $\leftarrow$ (16) .........4 5 3 3 3 

(18) 1 1 * * 2 4 3 3 3 $\leftarrow$ (20) 1 * * 2 4 3 3 3 

(19) 2 3 4 4 1 1 * * 1 $\leftarrow$ (22) 3 4 4 1 1 * * 1 

\end{tcolorbox}
\begin{tcolorbox}[title={$(51,17)$}]
(1) 5 6 .........3 5 7 3 3 

(3) 3 6 .........3 5 7 3 3 $\leftarrow$ (4) 6 ........3 3 6 6 5 3 

(5) 7 13 1 * * * 1 $\leftarrow$ (6) 10 .........4 5 3 3 3 

(7) 6 ..........4 5 3 3 3 $\leftarrow$ (13) 13 1 * * * 1 

(9) 4 ..........4 5 3 3 3 $\leftarrow$ (10) 6 .........4 5 3 3 3 

(18) 1 2 3 4 4 1 1 * * 1 $\leftarrow$ (20) 2 3 4 4 1 1 * * 1 

\end{tcolorbox}
\begin{tcolorbox}[title={$(51,18)$}]
(1) 3 5 6 .........4 5 3 3 3 $\leftarrow$ (2) 5 6 .........3 5 7 3 3 

(3) ...........3 3 6 6 5 3 $\leftarrow$ (4) 3 6 .........3 5 7 3 3 

(5) 5 8 1 1 * * 2 4 3 3 3 $\leftarrow$ (6) 7 13 1 * * * 1 

(6) ............3 5 7 3 3 $\leftarrow$ (11) 8 1 1 * * 2 4 3 3 3 

(7) 4 ...........4 5 3 3 3 $\leftarrow$ (8) 6 ..........4 5 3 3 3 

(9) ............4 5 3 3 3 $\leftarrow$ (10) 4 ..........4 5 3 3 3 

(13) 5 1 2 3 4 4 1 1 * * 1 $\leftarrow$ (14) 6 2 3 4 4 1 1 * * 1 

\end{tcolorbox}
\begin{tcolorbox}[title={$(51,19)$}]
(1) ............3 3 6 6 5 3 $\leftarrow$ (2) 3 5 6 .........4 5 3 3 3 

(4) .............3 5 7 3 3 $\leftarrow$ (10) ............4 5 3 3 3 

(5) 4 ............4 5 3 3 3 $\leftarrow$ (6) 5 8 1 1 * * 2 4 3 3 3 

(7) .............4 5 3 3 3 $\leftarrow$ (8) 4 ...........4 5 3 3 3 

(11) 1 * * * 2 4 3 3 3 $\leftarrow$ (13) * * * 2 4 3 3 3 

(13) 3 4 4 1 1 * * * 1 $\leftarrow$ (14) 5 1 2 3 4 4 1 1 * * 1 

\end{tcolorbox}
\begin{tcolorbox}[title={$(51,20)$}]
(1) 6 .............4 5 3 3 3 $\leftarrow$ (8) .............4 5 3 3 3 

(5) ..............4 5 3 3 3 $\leftarrow$ (6) 4 ............4 5 3 3 3 

(10) 1 1 * * * 2 4 3 3 3 $\leftarrow$ (12) 1 * * * 2 4 3 3 3 

(11) 2 3 4 4 1 1 * * * 1 $\leftarrow$ (14) 3 4 4 1 1 * * * 1 

\end{tcolorbox}
\begin{tcolorbox}[title={$(51,21)$}]
(1) 4 ..............4 5 3 3 3 $\leftarrow$ (2) 6 .............4 5 3 3 3 

(10) 1 2 3 4 4 1 1 * * * 1 $\leftarrow$ (12) 2 3 4 4 1 1 * * * 1 

\end{tcolorbox}
\begin{tcolorbox}[title={$(51,22)$}]
(1) ................4 5 3 3 3 $\leftarrow$ (2) 4 ..............4 5 3 3 3 

(5) 5 1 2 3 4 4 1 1 * * * 1 $\leftarrow$ (6) 6 2 3 4 4 1 1 * * * 1 

\end{tcolorbox}
\begin{tcolorbox}[title={$(51,23)$}]
(3) 1 * * * * 2 4 3 3 3 $\leftarrow$ (5) * * * * 2 4 3 3 3 

(5) 3 4 4 1 1 * * * * 1 $\leftarrow$ (6) 5 1 2 3 4 4 1 1 * * * 1 

\end{tcolorbox}
\begin{tcolorbox}[title={$(52,2)$}]
(51) 1 $\leftarrow$ (53) 

\end{tcolorbox}
\begin{tcolorbox}[title={$(52,3)$}]
(21) 30 1 $\leftarrow$ (22) 31 

(37) 14 1 $\leftarrow$ (38) 15 

(45) 6 1 $\leftarrow$ (46) 7 

(49) 2 1 $\leftarrow$ (50) 3 

(50) 1 1 $\leftarrow$ (52) 1 

\end{tcolorbox}
\begin{tcolorbox}[title={$(52,4)$}]
(7) 15 15 15 

(11) 11 15 15 $\leftarrow$ (39) 7 7 

(15) 23 7 7 $\leftarrow$ (19) 27 7 

(19) 27 3 3 $\leftarrow$ (21) 29 3 

(21) 29 1 1 $\leftarrow$ (22) 30 1 

(31) 7 7 7 $\leftarrow$ (35) 11 7 

(35) 11 3 3 $\leftarrow$ (37) 13 3 

(37) 13 1 1 $\leftarrow$ (38) 14 1 

(43) 3 3 3 $\leftarrow$ (45) 5 3 

(45) 5 1 1 $\leftarrow$ (46) 6 1 

(49) 1 1 1 $\leftarrow$ (50) 2 1 

\end{tcolorbox}
\begin{tcolorbox}[title={$(52,5)$}]
(2) 9 11 15 15 

(4) 7 11 15 15 

(7) 14 13 11 7 $\leftarrow$ (8) 15 15 15 

(11) 10 13 11 7 $\leftarrow$ (12) 11 15 15 

(12) 19 7 7 7 $\leftarrow$ (16) 23 7 7 

(18) 25 3 3 3 $\leftarrow$ (20) 27 3 3 

(19) 14 5 7 7 $\leftarrow$ (32) 7 7 7 

(21) 28 1 1 1 $\leftarrow$ (22) 29 1 1 

(30) 3 5 7 7 $\leftarrow$ (34) 5 7 7 

(34) 9 3 3 3 $\leftarrow$ (36) 11 3 3 

(37) 12 1 1 1 $\leftarrow$ (38) 13 1 1 

(43) 1 2 3 3 $\leftarrow$ (45) 2 3 3 

(45) 4 1 1 1 $\leftarrow$ (46) 5 1 1 

\end{tcolorbox}
\begin{tcolorbox}[title={$(52,6)$}]
(1) 7 13 13 11 7 

(3) 9 11 15 7 7 

(5) 7 11 15 7 7 

(7) 9 15 7 7 7 $\leftarrow$ (9) 13 13 11 7 

(7) 13 13 5 7 7 $\leftarrow$ (14) 20 5 7 7 

(11) 9 11 7 7 7 $\leftarrow$ (12) 10 13 11 7 

(13) 7 13 5 7 7 $\leftarrow$ (21) 13 5 7 7 

(15) 14 4 5 7 7 $\leftarrow$ (30) 4 5 7 7 

(19) 11 3 5 7 7 $\leftarrow$ (20) 14 5 7 7 

(21) 24 4 1 1 1 $\leftarrow$ (22) 28 1 1 1 

(29) 3 6 6 5 3 $\leftarrow$ (35) 5 7 3 3 

(31) 3 5 7 3 3 $\leftarrow$ (33) 6 6 5 3 

(35) 3 6 2 3 3 $\leftarrow$ (36) 8 3 3 3 

(37) 8 4 1 1 1 $\leftarrow$ (38) 12 1 1 1 

(41) 4 4 1 1 1 $\leftarrow$ (44) 1 2 3 3 

(43) * 1 $\leftarrow$ (46) 4 1 1 1 

\end{tcolorbox}
\begin{tcolorbox}[title={$(52,7)$}]
(1) 6 9 15 7 7 7 $\leftarrow$ (2) 7 13 13 11 7 

(2) 5 9 15 7 7 7 

(3) 8 9 11 7 7 7 $\leftarrow$ (4) 9 11 15 7 7 

(5) 6 9 11 7 7 7 $\leftarrow$ (6) 7 11 15 7 7 

(6) 11 5 9 7 7 7 $\leftarrow$ (13) 18 3 5 7 7 

(7) 12 11 3 5 7 7 $\leftarrow$ (8) 13 13 5 7 7 

(8) 7 14 4 5 7 7 $\leftarrow$ (29) 2 3 5 7 7 

(12) 5 5 9 7 7 7 $\leftarrow$ (20) 11 3 5 7 7 

(14) 3 5 9 7 7 7 $\leftarrow$ (18) 5 9 7 7 7 

(15) 13 2 3 5 7 7 $\leftarrow$ (16) 14 4 5 7 7 

(21) 22 * 1 $\leftarrow$ (22) 24 4 1 1 1 

(26) 3 3 6 6 5 3 $\leftarrow$ (32) 3 5 7 3 3 

(29) 2 3 5 7 3 3 $\leftarrow$ (30) 3 6 6 5 3 

(35) ..4 3 3 3 $\leftarrow$ (36) 3 6 2 3 3 

(37) 6 * 1 $\leftarrow$ (38) 8 4 1 1 1 

(41) 2 * 1 $\leftarrow$ (42) 4 4 1 1 1 

(42) 1 * 1 $\leftarrow$ (44) * 1 

\end{tcolorbox}
\begin{tcolorbox}[title={$(52,8)$}]
(1) 4 6 9 11 7 7 7 $\leftarrow$ (2) 6 9 15 7 7 7 

(4) 8 5 5 9 7 7 7 

(5) 9 3 5 9 7 7 7 $\leftarrow$ (11) 12 12 9 3 3 3 

(6) 8 3 5 9 7 7 7 $\leftarrow$ (23) 12 4 5 3 3 3 

(7) 7 8 12 9 3 3 3 $\leftarrow$ (8) 12 11 3 5 7 7 

(11) 11 12 4 5 3 3 3 $\leftarrow$ (13) 17 3 6 6 5 3 

(13) 3 5 9 3 5 7 7 $\leftarrow$ (19) 5 7 3 5 7 7 

(15) 3 5 7 3 5 7 7 $\leftarrow$ (17) 5 9 3 5 7 7 

(15) 7 12 4 5 3 3 3 $\leftarrow$ (16) 13 2 3 5 7 7 

(19) 5 5 3 6 6 5 3 $\leftarrow$ (21) 9 3 6 6 5 3 

(21) 21 1 * 1 $\leftarrow$ (22) 22 * 1 

(23) 6 2 3 5 7 3 3 $\leftarrow$ (30) 2 3 5 7 3 3 

(35) 1 1 2 4 3 3 3 $\leftarrow$ (36) ..4 3 3 3 

(37) 5 1 * 1 $\leftarrow$ (38) 6 * 1 

(41) 1 1 * 1 $\leftarrow$ (42) 2 * 1 

\end{tcolorbox}
\begin{tcolorbox}[title={$(52,9)$}]
(1) 3 5 10 11 3 5 7 7 

(5) 4 5 3 5 9 7 7 7 $\leftarrow$ (21) 6 3 3 6 6 5 3 

(6) 9 3 5 7 3 5 7 7 $\leftarrow$ (16) 3 5 7 3 5 7 7 

(10) 9 5 5 3 6 6 5 3 $\leftarrow$ (12) 11 12 4 5 3 3 3 

(15) 4 7 3 3 6 6 5 3 $\leftarrow$ (20) 5 5 3 6 6 5 3 

(15) 5 6 3 3 6 6 5 3 $\leftarrow$ (16) 7 12 4 5 3 3 3 

(17) 3 6 3 3 6 6 5 3 $\leftarrow$ (18) 6 5 2 3 5 7 7 

(21) 5 6 2 4 5 3 3 3 $\leftarrow$ (27) 6 2 4 5 3 3 3 

(21) 20 1 1 * 1 $\leftarrow$ (22) 21 1 * 1 

(23) 3 6 2 4 5 3 3 3 $\leftarrow$ (24) 6 2 3 5 7 3 3 

(29) 12 1 1 * 1 $\leftarrow$ (30) 13 1 * 1 

(35) 1 2 3 4 4 1 1 1 $\leftarrow$ (37) 2 3 4 4 1 1 1 

(37) 4 1 1 * 1 $\leftarrow$ (38) 5 1 * 1 

\end{tcolorbox}
\begin{tcolorbox}[title={$(52,10)$}]
(1) 5 9 3 5 7 3 5 7 7 

(4) 2 3 5 3 5 9 7 7 7 $\leftarrow$ (19) 5 6 2 3 5 7 3 3 

(4) 4 5 3 5 9 3 5 7 7 

(7) 5 5 6 5 2 3 5 7 7 $\leftarrow$ (13) 5 6 5 2 3 5 7 7 

(14) 2 3 5 5 3 6 6 5 3 $\leftarrow$ (16) 4 7 3 3 6 6 5 3 

(15) 3 5 6 2 3 5 7 3 3 $\leftarrow$ (16) 5 6 3 3 6 6 5 3 

(17) 2 4 3 3 3 6 6 5 3 $\leftarrow$ (18) 3 6 3 3 6 6 5 3 

(20) ...3 3 6 6 5 3 $\leftarrow$ (25) 6 ..4 5 3 3 3 

(21) 3 6 ..4 5 3 3 3 $\leftarrow$ (22) 5 6 2 4 5 3 3 3 

(21) 16 4 1 1 * 1 $\leftarrow$ (22) 20 1 1 * 1 

(23) ....3 5 7 3 3 $\leftarrow$ (24) 3 6 2 4 5 3 3 3 

(27) 3 6 2 3 4 4 1 1 1 $\leftarrow$ (28) 8 1 1 2 4 3 3 3 

(29) 8 4 1 1 * 1 $\leftarrow$ (30) 12 1 1 * 1 

(33) 4 4 1 1 * 1 $\leftarrow$ (36) 1 2 3 4 4 1 1 1 

(35) * * 1 $\leftarrow$ (38) 4 1 1 * 1 

\end{tcolorbox}
\begin{tcolorbox}[title={$(52,11)$}]
(2) 5 5 5 6 5 2 3 5 7 7 

(5) 4 3 5 6 5 2 3 5 7 7 $\leftarrow$ (8) 5 5 6 5 2 3 5 7 7 

(11) ..4 7 3 3 6 6 5 3 $\leftarrow$ (14) 2 4 7 3 3 6 6 5 3 

(15) ..4 3 3 3 6 6 5 3 $\leftarrow$ (16) 3 5 6 2 3 5 7 3 3 

(18) ....3 3 6 6 5 3 $\leftarrow$ (24) ....3 5 7 3 3 

(21) .....3 5 7 3 3 $\leftarrow$ (22) 3 6 ..4 5 3 3 3 

(21) 14 * * 1 $\leftarrow$ (22) 16 4 1 1 * 1 

(27) 2 * 2 4 3 3 3 $\leftarrow$ (28) 3 6 2 3 4 4 1 1 1 

(29) 6 * * 1 $\leftarrow$ (30) 8 4 1 1 * 1 

(33) 2 * * 1 $\leftarrow$ (34) 4 4 1 1 * 1 

(34) 1 * * 1 $\leftarrow$ (36) * * 1 

\end{tcolorbox}
\begin{tcolorbox}[title={$(52,12)$}]
(3) 2 4 3 5 6 5 2 3 5 7 7 $\leftarrow$ (6) 4 3 5 6 5 2 3 5 7 7 

(9) 6 ..4 3 3 3 6 6 5 3 $\leftarrow$ (13) 1 2 4 7 3 3 6 6 5 3 

(11) 1 1 2 4 7 3 3 6 6 5 3 $\leftarrow$ (12) ..4 7 3 3 6 6 5 3 

(15) 6 .....3 5 7 3 3 $\leftarrow$ (22) .....3 5 7 3 3 

(21) 13 1 * * 1 $\leftarrow$ (22) 14 * * 1 

(27) 1 1 * 2 4 3 3 3 $\leftarrow$ (28) 2 * 2 4 3 3 3 

(29) 5 1 * * 1 $\leftarrow$ (30) 6 * * 1 

(33) 1 1 * * 1 $\leftarrow$ (34) 2 * * 1 

\end{tcolorbox}
\begin{tcolorbox}[title={$(52,13)$}]
(1) ..4 3 5 6 5 2 3 5 7 7 $\leftarrow$ (4) 2 4 3 5 6 5 2 3 5 7 7 

(7) 4 1 1 2 4 7 3 3 6 6 5 3 $\leftarrow$ (12) 1 1 2 4 7 3 3 6 6 5 3 

(9) 3 6 ....3 3 6 6 5 3 $\leftarrow$ (10) 6 ..4 3 3 3 6 6 5 3 

(13) 5 6 .....4 5 3 3 3 $\leftarrow$ (19) 6 .....4 5 3 3 3 

(15) 3 6 .....4 5 3 3 3 $\leftarrow$ (16) 6 .....3 5 7 3 3 

(21) 12 1 1 * * 1 $\leftarrow$ (22) 13 1 * * 1 

(27) 1 2 3 4 4 1 1 * 1 $\leftarrow$ (29) 2 3 4 4 1 1 * 1 

(29) 4 1 1 * * 1 $\leftarrow$ (30) 5 1 * * 1 

\end{tcolorbox}
\begin{tcolorbox}[title={$(52,14)$}]
(2) ....4 5 5 5 3 6 6 5 3 

(5) 24 4 1 1 * * 1 $\leftarrow$ (8) 4 1 1 2 4 7 3 3 6 6 5 3 

(9) .....4 3 3 3 6 6 5 3 $\leftarrow$ (10) 3 6 ....3 3 6 6 5 3 

(12) .......3 3 6 6 5 3 $\leftarrow$ (17) 6 ......4 5 3 3 3 

(13) 3 6 ......4 5 3 3 3 $\leftarrow$ (14) 5 6 .....4 5 3 3 3 

(15) ........3 5 7 3 3 $\leftarrow$ (16) 3 6 .....4 5 3 3 3 

(19) 3 6 2 3 4 4 1 1 * 1 $\leftarrow$ (20) 8 1 1 * 2 4 3 3 3 

(21) 8 4 1 1 * * 1 $\leftarrow$ (22) 12 1 1 * * 1 

(25) 4 4 1 1 * * 1 $\leftarrow$ (28) 1 2 3 4 4 1 1 * 1 

(27) * * * 1 $\leftarrow$ (30) 4 1 1 * * 1 

\end{tcolorbox}
\begin{tcolorbox}[title={$(52,15)$}]
(3) 2 * 2 4 7 3 3 6 6 5 3 $\leftarrow$ (6) * 2 4 7 3 3 6 6 5 3 

(5) 22 * * * 1 $\leftarrow$ (6) 24 4 1 1 * * 1 

(10) ........3 3 6 6 5 3 $\leftarrow$ (16) ........3 5 7 3 3 

(13) .........3 5 7 3 3 $\leftarrow$ (14) 3 6 ......4 5 3 3 3 

(19) 2 * * 2 4 3 3 3 $\leftarrow$ (20) 3 6 2 3 4 4 1 1 * 1 

(21) 6 * * * 1 $\leftarrow$ (22) 8 4 1 1 * * 1 

(25) 2 * * * 1 $\leftarrow$ (26) 4 4 1 1 * * 1 

(26) 1 * * * 1 $\leftarrow$ (28) * * * 1 

\end{tcolorbox}
\begin{tcolorbox}[title={$(52,16)$}]
(1) 6 ......4 3 3 3 6 6 5 3 $\leftarrow$ (5) 1 * 2 4 7 3 3 6 6 5 3 

(3) 1 1 * 2 4 7 3 3 6 6 5 3 $\leftarrow$ (4) 2 * 2 4 7 3 3 6 6 5 3 

(5) 21 1 * * * 1 $\leftarrow$ (6) 22 * * * 1 

(7) 6 .........3 5 7 3 3 $\leftarrow$ (14) .........3 5 7 3 3 

(19) 1 1 * * 2 4 3 3 3 $\leftarrow$ (20) 2 * * 2 4 3 3 3 

(21) 5 1 * * * 1 $\leftarrow$ (22) 6 * * * 1 

(25) 1 1 * * * 1 $\leftarrow$ (26) 2 * * * 1 

\end{tcolorbox}
\begin{tcolorbox}[title={$(52,17)$}]
(1) 3 6 ........3 3 6 6 5 3 $\leftarrow$ (2) 6 ......4 3 3 3 6 6 5 3 

(5) 5 6 .........4 5 3 3 3 $\leftarrow$ (11) 6 .........4 5 3 3 3 

(5) 20 1 1 * * * 1 $\leftarrow$ (6) 21 1 * * * 1 

(7) 3 6 .........4 5 3 3 3 $\leftarrow$ (8) 6 .........3 5 7 3 3 

(13) 12 1 1 * * * 1 $\leftarrow$ (14) 13 1 * * * 1 

(19) 1 2 3 4 4 1 1 * * 1 $\leftarrow$ (21) 2 3 4 4 1 1 * * 1 

(21) 4 1 1 * * * 1 $\leftarrow$ (22) 5 1 * * * 1 

\end{tcolorbox}
\begin{tcolorbox}[title={$(52,18)$}]
(1) .........4 3 3 3 6 6 5 3 $\leftarrow$ (2) 3 6 ........3 3 6 6 5 3 

(4) ...........3 3 6 6 5 3 $\leftarrow$ (9) 6 ..........4 5 3 3 3 

(5) 3 6 ..........4 5 3 3 3 $\leftarrow$ (6) 5 6 .........4 5 3 3 3 

(5) 16 4 1 1 * * * 1 $\leftarrow$ (6) 20 1 1 * * * 1 

(7) ............3 5 7 3 3 $\leftarrow$ (8) 3 6 .........4 5 3 3 3 

(11) 3 6 2 3 4 4 1 1 * * 1 $\leftarrow$ (12) 8 1 1 * * 2 4 3 3 3 

(13) 8 4 1 1 * * * 1 $\leftarrow$ (14) 12 1 1 * * * 1 

(17) 4 4 1 1 * * * 1 $\leftarrow$ (20) 1 2 3 4 4 1 1 * * 1 

(19) * * * * 1 $\leftarrow$ (22) 4 1 1 * * * 1 

\end{tcolorbox}
\begin{tcolorbox}[title={$(52,19)$}]
(2) ............3 3 6 6 5 3 $\leftarrow$ (8) ............3 5 7 3 3 

(5) .............3 5 7 3 3 $\leftarrow$ (6) 3 6 ..........4 5 3 3 3 

(5) 14 * * * * 1 $\leftarrow$ (6) 16 4 1 1 * * * 1 

(11) 2 * * * 2 4 3 3 3 $\leftarrow$ (12) 3 6 2 3 4 4 1 1 * * 1 

(13) 6 * * * * 1 $\leftarrow$ (14) 8 4 1 1 * * * 1 

(17) 2 * * * * 1 $\leftarrow$ (18) 4 4 1 1 * * * 1 

(18) 1 * * * * 1 $\leftarrow$ (20) * * * * 1 

\end{tcolorbox}
\begin{tcolorbox}[title={$(52,20)$}]
(5) 13 1 * * * * 1 $\leftarrow$ (6) 14 * * * * 1 

(6) ..............4 5 3 3 3 

(11) 1 1 * * * 2 4 3 3 3 $\leftarrow$ (12) 2 * * * 2 4 3 3 3 

(13) 5 1 * * * * 1 $\leftarrow$ (14) 6 * * * * 1 

(17) 1 1 * * * * 1 $\leftarrow$ (18) 2 * * * * 1 

\end{tcolorbox}
\begin{tcolorbox}[title={$(52,21)$}]
(3) 8 1 1 * * * 2 4 3 3 3 

(5) 12 1 1 * * * * 1 $\leftarrow$ (6) 13 1 * * * * 1 

(11) 1 2 3 4 4 1 1 * * * 1 $\leftarrow$ (13) 2 3 4 4 1 1 * * * 1 

(13) 4 1 1 * * * * 1 $\leftarrow$ (14) 5 1 * * * * 1 

\end{tcolorbox}
\begin{tcolorbox}[title={$(52,22)$}]
(2) ................4 5 3 3 3 

(3) 3 6 2 3 4 4 1 1 * * * 1 $\leftarrow$ (4) 8 1 1 * * * 2 4 3 3 3 

(5) 8 4 1 1 * * * * 1 $\leftarrow$ (6) 12 1 1 * * * * 1 

(9) 4 4 1 1 * * * * 1 $\leftarrow$ (12) 1 2 3 4 4 1 1 * * * 1 

(11) * * * * * 1 $\leftarrow$ (14) 4 1 1 * * * * 1 

\end{tcolorbox}
\begin{tcolorbox}[title={$(53,3)$}]
(15) 23 15 $\leftarrow$ (23) 31 

(23) 15 15 $\leftarrow$ (39) 15 

(47) 3 3 $\leftarrow$ (51) 3 

(51) 1 1 $\leftarrow$ (53) 1 

\end{tcolorbox}
\begin{tcolorbox}[title={$(53,4)$}]
(14) 21 11 7 $\leftarrow$ (22) 29 3 

(15) 22 13 3 $\leftarrow$ (16) 23 15 

(19) 26 5 3 $\leftarrow$ (20) 27 7 

(22) 13 11 7 $\leftarrow$ (38) 13 3 

(23) 14 13 3 $\leftarrow$ (24) 15 15 

(35) 10 5 3 $\leftarrow$ (36) 11 7 

(39) 6 5 3 $\leftarrow$ (40) 7 7 

(44) 3 3 3 $\leftarrow$ (48) 3 3 

(50) 1 1 1 $\leftarrow$ (52) 1 1 

\end{tcolorbox}
\begin{tcolorbox}[title={$(53,5)$}]
(3) 9 11 15 15 

(5) 7 11 15 15 

(8) 14 13 11 7 $\leftarrow$ (21) 27 3 3 

(13) 11 15 7 7 $\leftarrow$ (37) 11 3 3 

(13) 19 7 7 7 $\leftarrow$ (17) 23 7 7 

(15) 17 7 7 7 $\leftarrow$ (33) 7 7 7 

(15) 21 11 3 3 $\leftarrow$ (16) 22 13 3 

(19) 25 3 3 3 $\leftarrow$ (20) 26 5 3 

(23) 13 11 3 3 $\leftarrow$ (24) 14 13 3 

(31) 3 5 7 7 $\leftarrow$ (35) 5 7 7 

(35) 9 3 3 3 $\leftarrow$ (36) 10 5 3 

(39) 6 2 3 3 $\leftarrow$ (46) 2 3 3 

\end{tcolorbox}
\begin{tcolorbox}[title={$(53,6)$}]
(1) 4 7 11 15 15 

(5) 10 17 7 7 7 $\leftarrow$ (20) 25 3 3 3 

(8) 9 15 7 7 7 $\leftarrow$ (36) 9 3 3 3 

(9) 11 14 5 7 7 $\leftarrow$ (10) 13 13 11 7 

(12) 9 11 7 7 7 $\leftarrow$ (22) 13 5 7 7 

(13) 7 14 5 7 7 $\leftarrow$ (14) 11 15 7 7 

(14) 7 13 5 7 7 $\leftarrow$ (16) 17 7 7 7 

(15) 20 9 3 3 3 $\leftarrow$ (16) 21 11 3 3 

(22) 9 3 5 7 7 $\leftarrow$ (32) 3 5 7 7 

(23) 12 9 3 3 3 $\leftarrow$ (24) 13 11 3 3 

(33) 4 7 3 3 3 $\leftarrow$ (34) 6 6 5 3 

(35) 4 5 3 3 3 $\leftarrow$ (36) 5 7 3 3 

(38) 2 4 3 3 3 $\leftarrow$ (45) 1 2 3 3 

(39) 5 1 2 3 3 $\leftarrow$ (40) 6 2 3 3 

\end{tcolorbox}
\begin{tcolorbox}[title={$(53,7)$}]
(1) 5 7 11 15 7 7 

(3) 5 9 15 7 7 7 

(4) 8 9 11 7 7 7 $\leftarrow$ (16) 20 9 3 3 3 

(5) 9 7 13 5 7 7 $\leftarrow$ (6) 10 17 7 7 7 

(6) 6 9 11 7 7 7 $\leftarrow$ (19) 5 9 7 7 7 

(7) 11 5 9 7 7 7 $\leftarrow$ (9) 13 13 5 7 7 

(9) 7 14 4 5 7 7 $\leftarrow$ (10) 11 14 5 7 7 

(13) 5 5 9 7 7 7 $\leftarrow$ (14) 7 14 5 7 7 

(15) 3 5 9 7 7 7 $\leftarrow$ (17) 14 4 5 7 7 

(15) 8 12 9 3 3 3 $\leftarrow$ (24) 12 9 3 3 3 

(27) 3 3 6 6 5 3 $\leftarrow$ (33) 3 5 7 3 3 

(33) 2 4 5 3 3 3 $\leftarrow$ (34) 4 7 3 3 3 

(37) 1 2 4 3 3 3 $\leftarrow$ (43) 4 4 1 1 1 

(39) 3 4 4 1 1 1 $\leftarrow$ (40) 5 1 2 3 3 

(43) 1 * 1 $\leftarrow$ (45) * 1 

\end{tcolorbox}
\begin{tcolorbox}[title={$(53,8)$}]
(2) 4 6 9 11 7 7 7 $\leftarrow$ (14) 17 3 6 6 5 3 

(5) 5 10 11 3 5 7 7 $\leftarrow$ (16) 3 5 9 7 7 7 

(5) 8 5 5 9 7 7 7 $\leftarrow$ (6) 9 7 13 5 7 7 

(6) 9 3 5 9 7 7 7 $\leftarrow$ (8) 11 5 9 7 7 7 

(7) 8 3 5 9 7 7 7 $\leftarrow$ (10) 7 14 4 5 7 7 

(8) 7 8 12 9 3 3 3 $\leftarrow$ (22) 9 3 6 6 5 3 

(11) 10 9 3 6 6 5 3 $\leftarrow$ (12) 12 12 9 3 3 3 

(14) 3 5 9 3 5 7 7 $\leftarrow$ (18) 5 9 3 5 7 7 

(15) 6 9 3 6 6 5 3 $\leftarrow$ (16) 8 12 9 3 3 3 

(23) 10 2 4 5 3 3 3 $\leftarrow$ (24) 12 4 5 3 3 3 

(36) 1 1 2 4 3 3 3 $\leftarrow$ (40) 3 4 4 1 1 1 

(42) 1 1 * 1 $\leftarrow$ (44) 1 * 1 

\end{tcolorbox}
\begin{tcolorbox}[title={$(53,9)$}]
(1) 4 8 5 5 9 7 7 7 

(2) 3 5 10 11 3 5 7 7 

(5) 4 6 3 5 9 7 7 7 $\leftarrow$ (6) 5 10 11 3 5 7 7 

(6) 4 5 3 5 9 7 7 7 $\leftarrow$ (8) 8 3 5 9 7 7 7 

(6) 8 3 5 9 3 5 7 7 $\leftarrow$ (19) 6 5 2 3 5 7 7 

(7) 9 3 5 7 3 5 7 7 $\leftarrow$ (17) 3 5 7 3 5 7 7 

(11) 9 5 5 3 6 6 5 3 $\leftarrow$ (12) 10 9 3 6 6 5 3 

(15) 3 6 5 2 3 5 7 7 $\leftarrow$ (16) 6 9 3 6 6 5 3 

(21) 3 6 2 3 5 7 3 3 $\leftarrow$ (22) 6 3 3 6 6 5 3 

(23) 7 13 1 * 1 $\leftarrow$ (24) 10 2 4 5 3 3 3 

(27) 4 ..4 5 3 3 3 $\leftarrow$ (28) 6 2 4 5 3 3 3 

(31) 6 2 3 4 4 1 1 1 $\leftarrow$ (38) 2 3 4 4 1 1 1 

\end{tcolorbox}
\begin{tcolorbox}[title={$(53,10)$}]
(2) 5 9 3 5 7 3 5 7 7 $\leftarrow$ (8) 9 3 5 7 3 5 7 7 

(5) 2 3 5 3 5 9 7 7 7 $\leftarrow$ (6) 4 6 3 5 9 7 7 7 

(5) 4 5 3 5 9 3 5 7 7 $\leftarrow$ (17) 4 7 3 3 6 6 5 3 

(11) 4 5 5 5 3 6 6 5 3 $\leftarrow$ (14) 5 6 5 2 3 5 7 7 

(15) 2 3 5 5 3 6 6 5 3 $\leftarrow$ (16) 3 6 5 2 3 5 7 7 

(18) 2 4 3 3 3 6 6 5 3 

(19) 3 5 6 2 4 5 3 3 3 $\leftarrow$ (20) 5 6 2 3 5 7 3 3 

(21) ...3 3 6 6 5 3 $\leftarrow$ (22) 3 6 2 3 5 7 3 3 

(23) 5 8 1 1 2 4 3 3 3 $\leftarrow$ (24) 7 13 1 * 1 

(25) 4 ...4 5 3 3 3 $\leftarrow$ (26) 6 ..4 5 3 3 3 

(27) ....4 5 3 3 3 $\leftarrow$ (28) 4 ..4 5 3 3 3 

(30) * 2 4 3 3 3 $\leftarrow$ (37) 1 2 3 4 4 1 1 1 

(31) 5 1 2 3 4 4 1 1 1 $\leftarrow$ (32) 6 2 3 4 4 1 1 1 

\end{tcolorbox}
\begin{tcolorbox}[title={$(53,11)$}]
(3) 5 5 5 6 5 2 3 5 7 7 $\leftarrow$ (9) 5 5 6 5 2 3 5 7 7 

(9) 2 4 5 5 5 3 6 6 5 3 $\leftarrow$ (12) 4 5 5 5 3 6 6 5 3 

(16) ..4 3 3 3 6 6 5 3 

(19) ....3 3 6 6 5 3 $\leftarrow$ (20) 3 5 6 2 4 5 3 3 3 

(23) 4 ....4 5 3 3 3 $\leftarrow$ (24) 5 8 1 1 2 4 3 3 3 

(25) .....4 5 3 3 3 $\leftarrow$ (26) 4 ...4 5 3 3 3 

(29) 1 * 2 4 3 3 3 $\leftarrow$ (35) 4 4 1 1 * 1 

(31) 3 4 4 1 1 * 1 $\leftarrow$ (32) 5 1 2 3 4 4 1 1 1 

(35) 1 * * 1 $\leftarrow$ (37) * * 1 

\end{tcolorbox}
\begin{tcolorbox}[title={$(53,12)$}]
(1) 4 3 5 5 6 5 2 3 5 7 7 $\leftarrow$ (4) 5 5 5 6 5 2 3 5 7 7 

(7) ..4 5 5 5 3 6 6 5 3 $\leftarrow$ (10) 2 4 5 5 5 3 6 6 5 3 

(13) 6 ....3 3 6 6 5 3 

(23) ......4 5 3 3 3 $\leftarrow$ (24) 4 ....4 5 3 3 3 

(28) 1 1 * 2 4 3 3 3 $\leftarrow$ (32) 3 4 4 1 1 * 1 

(34) 1 1 * * 1 $\leftarrow$ (36) 1 * * 1 

\end{tcolorbox}
\begin{tcolorbox}[title={$(53,13)$}]
(2) ..4 3 5 6 5 2 3 5 7 7 

(5) ...4 5 5 5 3 6 6 5 3 $\leftarrow$ (8) ..4 5 5 5 3 6 6 5 3 

(11) 5 6 .....3 5 7 3 3 

(13) 3 6 .....3 5 7 3 3 $\leftarrow$ (14) 6 ....3 3 6 6 5 3 

(19) 4 ......4 5 3 3 3 $\leftarrow$ (20) 6 .....4 5 3 3 3 

(23) 6 2 3 4 4 1 1 * 1 $\leftarrow$ (30) 2 3 4 4 1 1 * 1 

\end{tcolorbox}
\begin{tcolorbox}[title={$(53,14)$}]
(3) ....4 5 5 5 3 6 6 5 3 $\leftarrow$ (6) ...4 5 5 5 3 6 6 5 3 

(10) .....4 3 3 3 6 6 5 3 

(11) 3 5 6 .....4 5 3 3 3 $\leftarrow$ (12) 5 6 .....3 5 7 3 3 

(13) .......3 3 6 6 5 3 $\leftarrow$ (14) 3 6 .....3 5 7 3 3 

(17) 4 .......4 5 3 3 3 $\leftarrow$ (18) 6 ......4 5 3 3 3 

(19) ........4 5 3 3 3 $\leftarrow$ (20) 4 ......4 5 3 3 3 

(22) * * 2 4 3 3 3 $\leftarrow$ (29) 1 2 3 4 4 1 1 * 1 

(23) 5 1 2 3 4 4 1 1 * 1 $\leftarrow$ (24) 6 2 3 4 4 1 1 * 1 

\end{tcolorbox}
\begin{tcolorbox}[title={$(53,15)$}]
(1) .....4 5 5 5 3 6 6 5 3 $\leftarrow$ (4) ....4 5 5 5 3 6 6 5 3 

(7) 1.........4 5 3 3 3 

(11) ........3 3 6 6 5 3 $\leftarrow$ (12) 3 5 6 .....4 5 3 3 3 

(17) .........4 5 3 3 3 $\leftarrow$ (18) 4 .......4 5 3 3 3 

(21) 1 * * 2 4 3 3 3 $\leftarrow$ (27) 4 4 1 1 * * 1 

(23) 3 4 4 1 1 * * 1 $\leftarrow$ (24) 5 1 2 3 4 4 1 1 * 1 

(27) 1 * * * 1 $\leftarrow$ (29) * * * 1 

\end{tcolorbox}
\begin{tcolorbox}[title={$(53,16)$}]
(4) 1 1 * 2 4 7 3 3 6 6 5 3 

(5) 6 ........3 3 6 6 5 3 

(7) 10 .........4 5 3 3 3 $\leftarrow$ (8) 1.........4 5 3 3 3 

(20) 1 1 * * 2 4 3 3 3 $\leftarrow$ (24) 3 4 4 1 1 * * 1 

(26) 1 1 * * * 1 $\leftarrow$ (28) 1 * * * 1 

\end{tcolorbox}
\begin{tcolorbox}[title={$(53,17)$}]
(3) 5 6 .........3 5 7 3 3 

(5) 3 6 .........3 5 7 3 3 $\leftarrow$ (6) 6 ........3 3 6 6 5 3 

(7) 7 13 1 * * * 1 $\leftarrow$ (8) 10 .........4 5 3 3 3 

(11) 4 ..........4 5 3 3 3 $\leftarrow$ (12) 6 .........4 5 3 3 3 

(15) 6 2 3 4 4 1 1 * * 1 $\leftarrow$ (22) 2 3 4 4 1 1 * * 1 

\end{tcolorbox}
\begin{tcolorbox}[title={$(53,18)$}]
(2) .........4 3 3 3 6 6 5 3 

(3) 3 5 6 .........4 5 3 3 3 $\leftarrow$ (4) 5 6 .........3 5 7 3 3 

(5) ...........3 3 6 6 5 3 $\leftarrow$ (6) 3 6 .........3 5 7 3 3 

(7) 5 8 1 1 * * 2 4 3 3 3 $\leftarrow$ (8) 7 13 1 * * * 1 

(9) 4 ...........4 5 3 3 3 $\leftarrow$ (10) 6 ..........4 5 3 3 3 

(11) ............4 5 3 3 3 $\leftarrow$ (12) 4 ..........4 5 3 3 3 

(14) * * * 2 4 3 3 3 $\leftarrow$ (21) 1 2 3 4 4 1 1 * * 1 

(15) 5 1 2 3 4 4 1 1 * * 1 $\leftarrow$ (16) 6 2 3 4 4 1 1 * * 1 

\end{tcolorbox}
\begin{tcolorbox}[title={$(53,19)$}]
(3) ............3 3 6 6 5 3 $\leftarrow$ (4) 3 5 6 .........4 5 3 3 3 

(6) .............3 5 7 3 3 

(7) 4 ............4 5 3 3 3 $\leftarrow$ (8) 5 8 1 1 * * 2 4 3 3 3 

(9) .............4 5 3 3 3 $\leftarrow$ (10) 4 ...........4 5 3 3 3 

(13) 1 * * * 2 4 3 3 3 $\leftarrow$ (19) 4 4 1 1 * * * 1 

(15) 3 4 4 1 1 * * * 1 $\leftarrow$ (16) 5 1 2 3 4 4 1 1 * * 1 

(19) 1 * * * * 1 $\leftarrow$ (21) * * * * 1 

\end{tcolorbox}
\begin{tcolorbox}[title={$(53,20)$}]
(3) 6 .............4 5 3 3 3 

(7) ..............4 5 3 3 3 $\leftarrow$ (8) 4 ............4 5 3 3 3 

(12) 1 1 * * * 2 4 3 3 3 $\leftarrow$ (16) 3 4 4 1 1 * * * 1 

(18) 1 1 * * * * 1 $\leftarrow$ (20) 1 * * * * 1 

\end{tcolorbox}
\begin{tcolorbox}[title={$(53,21)$}]
(1) 6 ..............4 5 3 3 3 

(3) 4 ..............4 5 3 3 3 $\leftarrow$ (4) 6 .............4 5 3 3 3 

(7) 6 2 3 4 4 1 1 * * * 1 $\leftarrow$ (14) 2 3 4 4 1 1 * * * 1 

\end{tcolorbox}
\begin{tcolorbox}[title={$(54,2)$}]
(47) 7 $\leftarrow$ (55) 

\end{tcolorbox}
\begin{tcolorbox}[title={$(54,3)$}]
(23) 30 1 $\leftarrow$ (24) 31 

(39) 14 1 $\leftarrow$ (40) 15 

(46) 5 3 $\leftarrow$ (54) 1 

(47) 6 1 $\leftarrow$ (48) 7 

(51) 2 1 $\leftarrow$ (52) 3 

\end{tcolorbox}
\begin{tcolorbox}[title={$(54,4)$}]
(9) 15 15 15 $\leftarrow$ (21) 27 7 

(13) 11 15 15 $\leftarrow$ (37) 11 7 

(15) 21 11 7 $\leftarrow$ (17) 23 15 

(23) 13 11 7 $\leftarrow$ (25) 15 15 

(23) 29 1 1 $\leftarrow$ (24) 30 1 

(39) 13 1 1 $\leftarrow$ (40) 14 1 

(40) 6 5 3 $\leftarrow$ (53) 1 1 

(45) 3 3 3 $\leftarrow$ (49) 3 3 

(47) 5 1 1 $\leftarrow$ (48) 6 1 

(51) 1 1 1 $\leftarrow$ (52) 2 1 

\end{tcolorbox}
\begin{tcolorbox}[title={$(54,5)$}]
(4) 9 11 15 15 

(6) 7 11 15 15 $\leftarrow$ (36) 5 7 7 

(9) 14 13 11 7 $\leftarrow$ (10) 15 15 15 

(13) 10 13 11 7 $\leftarrow$ (14) 11 15 15 

(14) 19 7 7 7 $\leftarrow$ (16) 21 11 7 

(15) 20 5 7 7 $\leftarrow$ (18) 23 7 7 

(21) 14 5 7 7 $\leftarrow$ (24) 13 11 7 

(23) 28 1 1 1 $\leftarrow$ (24) 29 1 1 

(31) 4 5 7 7 $\leftarrow$ (34) 7 7 7 

(37) 8 3 3 3 $\leftarrow$ (52) 1 1 1 

(39) 12 1 1 1 $\leftarrow$ (40) 13 1 1 

(47) 4 1 1 1 $\leftarrow$ (48) 5 1 1 

\end{tcolorbox}
\begin{tcolorbox}[title={$(54,6)$}]
(1) 5 7 11 15 15 

(2) 4 7 11 15 15 

(3) 7 13 13 11 7 

(5) 9 11 15 7 7 $\leftarrow$ (10) 14 13 11 7 

(7) 7 11 15 7 7 $\leftarrow$ (11) 13 13 11 7 

(9) 9 15 7 7 7 $\leftarrow$ (23) 13 5 7 7 

(13) 9 11 7 7 7 $\leftarrow$ (14) 10 13 11 7 

(14) 18 3 5 7 7 $\leftarrow$ (16) 20 5 7 7 

(15) 7 13 5 7 7 $\leftarrow$ (17) 17 7 7 7 

(21) 11 3 5 7 7 $\leftarrow$ (22) 14 5 7 7 

(23) 9 3 5 7 7 $\leftarrow$ (33) 3 5 7 7 

(23) 24 4 1 1 1 $\leftarrow$ (24) 28 1 1 1 

(30) 2 3 5 7 7 $\leftarrow$ (32) 4 5 7 7 

(31) 3 6 6 5 3 $\leftarrow$ (35) 6 6 5 3 

(36) 4 5 3 3 3 $\leftarrow$ (48) 4 1 1 1 

(37) 3 6 2 3 3 $\leftarrow$ (38) 8 3 3 3 

(39) 2 4 3 3 3 $\leftarrow$ (41) 6 2 3 3 

(39) 8 4 1 1 1 $\leftarrow$ (40) 12 1 1 1 

\end{tcolorbox}
\begin{tcolorbox}[title={$(54,7)$}]
(2) 5 7 11 15 7 7 

(3) 6 9 15 7 7 7 $\leftarrow$ (4) 7 13 13 11 7 

(4) 5 9 15 7 7 7 $\leftarrow$ (20) 5 9 7 7 7 

(5) 8 9 11 7 7 7 $\leftarrow$ (6) 9 11 15 7 7 

(7) 6 9 11 7 7 7 $\leftarrow$ (8) 7 11 15 7 7 

(9) 12 11 3 5 7 7 $\leftarrow$ (10) 13 13 5 7 7 

(14) 5 5 9 7 7 7 $\leftarrow$ (16) 7 13 5 7 7 

(17) 13 2 3 5 7 7 $\leftarrow$ (18) 14 4 5 7 7 

(20) 5 7 3 5 7 7 $\leftarrow$ (24) 9 3 5 7 7 

(23) 22 * 1 $\leftarrow$ (24) 24 4 1 1 1 

(28) 3 3 6 6 5 3 $\leftarrow$ (34) 3 5 7 3 3 

(31) 2 3 5 7 3 3 $\leftarrow$ (32) 3 6 6 5 3 

(34) 2 4 5 3 3 3 $\leftarrow$ (46) * 1 

(37) ..4 3 3 3 $\leftarrow$ (38) 3 6 2 3 3 

(38) 1 2 4 3 3 3 $\leftarrow$ (40) 2 4 3 3 3 

(39) 6 * 1 $\leftarrow$ (40) 8 4 1 1 1 

(43) 2 * 1 $\leftarrow$ (44) 4 4 1 1 1 

\end{tcolorbox}
\begin{tcolorbox}[title={$(54,8)$}]
(1) 3 5 9 15 7 7 7 

(3) 4 6 9 11 7 7 7 $\leftarrow$ (4) 6 9 15 7 7 7 

(6) 8 5 5 9 7 7 7 $\leftarrow$ (19) 5 9 3 5 7 7 

(7) 9 3 5 9 7 7 7 $\leftarrow$ (9) 11 5 9 7 7 7 

(9) 7 8 12 9 3 3 3 $\leftarrow$ (10) 12 11 3 5 7 7 

(13) 11 12 4 5 3 3 3 

(15) 3 5 9 3 5 7 7 $\leftarrow$ (17) 8 12 9 3 3 3 

(17) 7 12 4 5 3 3 3 $\leftarrow$ (18) 13 2 3 5 7 7 

(21) 5 5 3 6 6 5 3 $\leftarrow$ (25) 12 4 5 3 3 3 

(23) 21 1 * 1 $\leftarrow$ (24) 22 * 1 

(25) 6 2 3 5 7 3 3 $\leftarrow$ (32) 2 3 5 7 3 3 

(31) 13 1 * 1 $\leftarrow$ (45) 1 * 1 

(37) 1 1 2 4 3 3 3 $\leftarrow$ (38) ..4 3 3 3 

(39) 5 1 * 1 $\leftarrow$ (40) 6 * 1 

(43) 1 1 * 1 $\leftarrow$ (44) 2 * 1 

\end{tcolorbox}
\begin{tcolorbox}[title={$(54,9)$}]
(2) 4 8 5 5 9 7 7 7 $\leftarrow$ (16) 3 5 9 3 5 7 7 

(3) 3 5 10 11 3 5 7 7 $\leftarrow$ (8) 9 3 5 9 7 7 7 

(7) 4 5 3 5 9 7 7 7 $\leftarrow$ (9) 8 3 5 9 7 7 7 

(7) 8 3 5 9 3 5 7 7 $\leftarrow$ (10) 7 8 12 9 3 3 3 

(12) 9 5 5 3 6 6 5 3 

(17) 5 6 3 3 6 6 5 3 $\leftarrow$ (18) 7 12 4 5 3 3 3 

(19) 3 6 3 3 6 6 5 3 $\leftarrow$ (20) 6 5 2 3 5 7 7 

(23) 5 6 2 4 5 3 3 3 $\leftarrow$ (29) 6 2 4 5 3 3 3 

(23) 20 1 1 * 1 $\leftarrow$ (24) 21 1 * 1 

(25) 3 6 2 4 5 3 3 3 $\leftarrow$ (26) 6 2 3 5 7 3 3 

(29) 8 1 1 2 4 3 3 3 $\leftarrow$ (44) 1 1 * 1 

(31) 12 1 1 * 1 $\leftarrow$ (32) 13 1 * 1 

(39) 4 1 1 * 1 $\leftarrow$ (40) 5 1 * 1 

\end{tcolorbox}
\begin{tcolorbox}[title={$(54,10)$}]
(1) 2 3 5 10 11 3 5 7 7 

(3) 5 9 3 5 7 3 5 7 7 $\leftarrow$ (9) 9 3 5 7 3 5 7 7 

(6) 2 3 5 3 5 9 7 7 7 $\leftarrow$ (8) 4 5 3 5 9 7 7 7 

(6) 4 5 3 5 9 3 5 7 7 $\leftarrow$ (8) 8 3 5 9 3 5 7 7 

(15) 2 4 7 3 3 6 6 5 3 $\leftarrow$ (18) 4 7 3 3 6 6 5 3 

(16) 2 3 5 5 3 6 6 5 3 

(17) 3 5 6 2 3 5 7 3 3 $\leftarrow$ (18) 5 6 3 3 6 6 5 3 

(19) 2 4 3 3 3 6 6 5 3 $\leftarrow$ (20) 3 6 3 3 6 6 5 3 

(22) ...3 3 6 6 5 3 $\leftarrow$ (27) 6 ..4 5 3 3 3 

(23) 3 6 ..4 5 3 3 3 $\leftarrow$ (24) 5 6 2 4 5 3 3 3 

(23) 16 4 1 1 * 1 $\leftarrow$ (24) 20 1 1 * 1 

(25) ....3 5 7 3 3 $\leftarrow$ (26) 3 6 2 4 5 3 3 3 

(28) ....4 5 3 3 3 $\leftarrow$ (40) 4 1 1 * 1 

(29) 3 6 2 3 4 4 1 1 1 $\leftarrow$ (30) 8 1 1 2 4 3 3 3 

(31) * 2 4 3 3 3 $\leftarrow$ (33) 6 2 3 4 4 1 1 1 

(31) 8 4 1 1 * 1 $\leftarrow$ (32) 12 1 1 * 1 

\end{tcolorbox}
\begin{tcolorbox}[title={$(54,11)$}]
(1) 18 2 4 3 3 3 6 6 5 3 

(7) 4 3 5 6 5 2 3 5 7 7 $\leftarrow$ (10) 5 5 6 5 2 3 5 7 7 

(13) ..4 7 3 3 6 6 5 3 

(14) 1 2 4 7 3 3 6 6 5 3 $\leftarrow$ (16) 2 4 7 3 3 6 6 5 3 

(17) ..4 3 3 3 6 6 5 3 $\leftarrow$ (18) 3 5 6 2 3 5 7 3 3 

(20) ....3 3 6 6 5 3 $\leftarrow$ (26) ....3 5 7 3 3 

(23) .....3 5 7 3 3 $\leftarrow$ (24) 3 6 ..4 5 3 3 3 

(23) 14 * * 1 $\leftarrow$ (24) 16 4 1 1 * 1 

(26) .....4 5 3 3 3 $\leftarrow$ (38) * * 1 

(29) 2 * 2 4 3 3 3 $\leftarrow$ (30) 3 6 2 3 4 4 1 1 1 

(30) 1 * 2 4 3 3 3 $\leftarrow$ (32) * 2 4 3 3 3 

(31) 6 * * 1 $\leftarrow$ (32) 8 4 1 1 * 1 

(35) 2 * * 1 $\leftarrow$ (36) 4 4 1 1 * 1 

\end{tcolorbox}
\begin{tcolorbox}[title={$(54,12)$}]
(1) 16 ..4 3 3 3 6 6 5 3 $\leftarrow$ (2) 18 2 4 3 3 3 6 6 5 3 

(2) 4 3 5 5 6 5 2 3 5 7 7 

(5) 2 4 3 5 6 5 2 3 5 7 7 $\leftarrow$ (8) 4 3 5 6 5 2 3 5 7 7 

(11) 6 ..4 3 3 3 6 6 5 3 

(13) 1 1 2 4 7 3 3 6 6 5 3 $\leftarrow$ (14) ..4 7 3 3 6 6 5 3 

(17) 6 .....3 5 7 3 3 $\leftarrow$ (24) .....3 5 7 3 3 

(23) 13 1 * * 1 $\leftarrow$ (24) 14 * * 1 

(24) ......4 5 3 3 3 $\leftarrow$ (37) 1 * * 1 

(29) 1 1 * 2 4 3 3 3 $\leftarrow$ (30) 2 * 2 4 3 3 3 

(31) 5 1 * * 1 $\leftarrow$ (32) 6 * * 1 

(35) 1 1 * * 1 $\leftarrow$ (36) 2 * * 1 

\end{tcolorbox}
\begin{tcolorbox}[title={$(54,13)$}]
(1) 13 6 ....3 3 6 6 5 3 $\leftarrow$ (2) 16 ..4 3 3 3 6 6 5 3 

(3) ..4 3 5 6 5 2 3 5 7 7 $\leftarrow$ (6) 2 4 3 5 6 5 2 3 5 7 7 

(9) 4 1 1 2 4 7 3 3 6 6 5 3 

(11) 3 6 ....3 3 6 6 5 3 $\leftarrow$ (12) 6 ..4 3 3 3 6 6 5 3 

(15) 5 6 .....4 5 3 3 3 $\leftarrow$ (21) 6 .....4 5 3 3 3 

(17) 3 6 .....4 5 3 3 3 $\leftarrow$ (18) 6 .....3 5 7 3 3 

(21) 8 1 1 * 2 4 3 3 3 $\leftarrow$ (36) 1 1 * * 1 

(23) 12 1 1 * * 1 $\leftarrow$ (24) 13 1 * * 1 

(31) 4 1 1 * * 1 $\leftarrow$ (32) 5 1 * * 1 

\end{tcolorbox}
\begin{tcolorbox}[title={$(54,14)$}]
(1) ...4 3 5 6 5 2 3 5 7 7 $\leftarrow$ (4) ..4 3 5 6 5 2 3 5 7 7 

(1) 11 5 6 .....3 5 7 3 3 $\leftarrow$ (2) 13 6 ....3 3 6 6 5 3 

(7) * 2 4 7 3 3 6 6 5 3 $\leftarrow$ (10) 4 1 1 2 4 7 3 3 6 6 5 3 

(7) 24 4 1 1 * * 1 

(11) .....4 3 3 3 6 6 5 3 $\leftarrow$ (12) 3 6 ....3 3 6 6 5 3 

(14) .......3 3 6 6 5 3 $\leftarrow$ (19) 6 ......4 5 3 3 3 

(15) 3 6 ......4 5 3 3 3 $\leftarrow$ (16) 5 6 .....4 5 3 3 3 

(17) ........3 5 7 3 3 $\leftarrow$ (18) 3 6 .....4 5 3 3 3 

(20) ........4 5 3 3 3 $\leftarrow$ (32) 4 1 1 * * 1 

(21) 3 6 2 3 4 4 1 1 * 1 $\leftarrow$ (22) 8 1 1 * 2 4 3 3 3 

(23) * * 2 4 3 3 3 $\leftarrow$ (25) 6 2 3 4 4 1 1 * 1 

(23) 8 4 1 1 * * 1 $\leftarrow$ (24) 12 1 1 * * 1 

\end{tcolorbox}
\begin{tcolorbox}[title={$(54,15)$}]
(1) 10 .....4 3 3 3 6 6 5 3 $\leftarrow$ (2) 11 5 6 .....3 5 7 3 3 

(2) .....4 5 5 5 3 6 6 5 3 

(5) 2 * 2 4 7 3 3 6 6 5 3 

(6) 1 * 2 4 7 3 3 6 6 5 3 $\leftarrow$ (8) * 2 4 7 3 3 6 6 5 3 

(7) 22 * * * 1 $\leftarrow$ (8) 24 4 1 1 * * 1 

(12) ........3 3 6 6 5 3 $\leftarrow$ (18) ........3 5 7 3 3 

(15) .........3 5 7 3 3 $\leftarrow$ (16) 3 6 ......4 5 3 3 3 

(18) .........4 5 3 3 3 $\leftarrow$ (30) * * * 1 

(21) 2 * * 2 4 3 3 3 $\leftarrow$ (22) 3 6 2 3 4 4 1 1 * 1 

(22) 1 * * 2 4 3 3 3 $\leftarrow$ (24) * * 2 4 3 3 3 

(23) 6 * * * 1 $\leftarrow$ (24) 8 4 1 1 * * 1 

(27) 2 * * * 1 $\leftarrow$ (28) 4 4 1 1 * * 1 

\end{tcolorbox}
\begin{tcolorbox}[title={$(54,16)$}]
(1) 7 1.........4 5 3 3 3 $\leftarrow$ (2) 10 .....4 3 3 3 6 6 5 3 

(3) 6 ......4 3 3 3 6 6 5 3 

(5) 1 1 * 2 4 7 3 3 6 6 5 3 $\leftarrow$ (6) 2 * 2 4 7 3 3 6 6 5 3 

(7) 21 1 * * * 1 $\leftarrow$ (8) 22 * * * 1 

(9) 6 .........3 5 7 3 3 $\leftarrow$ (16) .........3 5 7 3 3 

(15) 13 1 * * * 1 $\leftarrow$ (29) 1 * * * 1 

(21) 1 1 * * 2 4 3 3 3 $\leftarrow$ (22) 2 * * 2 4 3 3 3 

(23) 5 1 * * * 1 $\leftarrow$ (24) 6 * * * 1 

(27) 1 1 * * * 1 $\leftarrow$ (28) 2 * * * 1 

\end{tcolorbox}
\begin{tcolorbox}[title={$(54,17)$}]
(1) 4 1 1 * 2 4 7 3 3 6 6 5 3 

(1) 5 6 ........3 3 6 6 5 3 $\leftarrow$ (2) 7 1.........4 5 3 3 3 

(3) 3 6 ........3 3 6 6 5 3 $\leftarrow$ (4) 6 ......4 3 3 3 6 6 5 3 

(7) 5 6 .........4 5 3 3 3 $\leftarrow$ (13) 6 .........4 5 3 3 3 

(7) 20 1 1 * * * 1 $\leftarrow$ (8) 21 1 * * * 1 

(9) 3 6 .........4 5 3 3 3 $\leftarrow$ (10) 6 .........3 5 7 3 3 

(13) 8 1 1 * * 2 4 3 3 3 $\leftarrow$ (28) 1 1 * * * 1 

(15) 12 1 1 * * * 1 $\leftarrow$ (16) 13 1 * * * 1 

(23) 4 1 1 * * * 1 $\leftarrow$ (24) 5 1 * * * 1 

\end{tcolorbox}
\begin{tcolorbox}[title={$(54,18)$}]
(1) 3 5 6 .........3 5 7 3 3 $\leftarrow$ (2) 5 6 ........3 3 6 6 5 3 

(3) .........4 3 3 3 6 6 5 3 $\leftarrow$ (4) 3 6 ........3 3 6 6 5 3 

(6) ...........3 3 6 6 5 3 $\leftarrow$ (11) 6 ..........4 5 3 3 3 

(7) 3 6 ..........4 5 3 3 3 $\leftarrow$ (8) 5 6 .........4 5 3 3 3 

(7) 16 4 1 1 * * * 1 $\leftarrow$ (8) 20 1 1 * * * 1 

(9) ............3 5 7 3 3 $\leftarrow$ (10) 3 6 .........4 5 3 3 3 

(12) ............4 5 3 3 3 $\leftarrow$ (24) 4 1 1 * * * 1 

(13) 3 6 2 3 4 4 1 1 * * 1 $\leftarrow$ (14) 8 1 1 * * 2 4 3 3 3 

(15) * * * 2 4 3 3 3 $\leftarrow$ (17) 6 2 3 4 4 1 1 * * 1 

(15) 8 4 1 1 * * * 1 $\leftarrow$ (16) 12 1 1 * * * 1 

\end{tcolorbox}
\begin{tcolorbox}[title={$(54,19)$}]
(1) ..........4 3 3 3 6 6 5 3 $\leftarrow$ (2) 3 5 6 .........3 5 7 3 3 

(4) ............3 3 6 6 5 3 $\leftarrow$ (10) ............3 5 7 3 3 

(7) .............3 5 7 3 3 $\leftarrow$ (8) 3 6 ..........4 5 3 3 3 

(7) 14 * * * * 1 $\leftarrow$ (8) 16 4 1 1 * * * 1 

(10) .............4 5 3 3 3 $\leftarrow$ (22) * * * * 1 

(13) 2 * * * 2 4 3 3 3 $\leftarrow$ (14) 3 6 2 3 4 4 1 1 * * 1 

(14) 1 * * * 2 4 3 3 3 $\leftarrow$ (16) * * * 2 4 3 3 3 

(15) 6 * * * * 1 $\leftarrow$ (16) 8 4 1 1 * * * 1 

(19) 2 * * * * 1 $\leftarrow$ (20) 4 4 1 1 * * * 1 

\end{tcolorbox}
\begin{tcolorbox}[title={$(54,20)$}]
(1) 6 .............3 5 7 3 3 $\leftarrow$ (8) .............3 5 7 3 3 

(7) 13 1 * * * * 1 $\leftarrow$ (8) 14 * * * * 1 

(8) ..............4 5 3 3 3 $\leftarrow$ (21) 1 * * * * 1 

(13) 1 1 * * * 2 4 3 3 3 $\leftarrow$ (14) 2 * * * 2 4 3 3 3 

(15) 5 1 * * * * 1 $\leftarrow$ (16) 6 * * * * 1 

(19) 1 1 * * * * 1 $\leftarrow$ (20) 2 * * * * 1 

\end{tcolorbox}
\begin{tcolorbox}[title={$(55,3)$}]
(23) 29 3 $\leftarrow$ (25) 31 

(39) 13 3 $\leftarrow$ (41) 15 

(41) 7 7 $\leftarrow$ (53) 3 

(47) 5 3 $\leftarrow$ (49) 7 

\end{tcolorbox}
\begin{tcolorbox}[title={$(55,4)$}]
(17) 22 13 3 $\leftarrow$ (18) 23 15 

(21) 26 5 3 $\leftarrow$ (22) 27 7 

(22) 27 3 3 $\leftarrow$ (24) 29 3 

(25) 14 13 3 $\leftarrow$ (26) 15 15 

(37) 10 5 3 $\leftarrow$ (38) 11 7 

(38) 11 3 3 $\leftarrow$ (40) 13 3 

(41) 6 5 3 $\leftarrow$ (42) 7 7 

(46) 3 3 3 $\leftarrow$ (48) 5 3 

(47) 2 3 3 $\leftarrow$ (50) 3 3 

\end{tcolorbox}
\begin{tcolorbox}[title={$(55,5)$}]
(5) 9 11 15 15 $\leftarrow$ (19) 23 7 7 

(7) 7 11 15 15 $\leftarrow$ (35) 7 7 7 

(15) 11 15 7 7 $\leftarrow$ (25) 13 11 7 

(15) 19 7 7 7 $\leftarrow$ (17) 21 11 7 

(17) 21 11 3 3 $\leftarrow$ (18) 22 13 3 

(21) 25 3 3 3 $\leftarrow$ (22) 26 5 3 

(25) 13 11 3 3 $\leftarrow$ (26) 14 13 3 

(37) 5 7 3 3 $\leftarrow$ (42) 6 5 3 

(37) 9 3 3 3 $\leftarrow$ (38) 10 5 3 

(46) 1 2 3 3 $\leftarrow$ (48) 2 3 3 

\end{tcolorbox}
\begin{tcolorbox}[title={$(55,6)$}]
(2) 5 7 11 15 15 

(3) 4 7 11 15 15 $\leftarrow$ (34) 3 5 7 7 

(7) 10 17 7 7 7 $\leftarrow$ (16) 19 7 7 7 

(10) 9 15 7 7 7 $\leftarrow$ (18) 17 7 7 7 

(11) 11 14 5 7 7 $\leftarrow$ (12) 13 13 11 7 

(14) 9 11 7 7 7 $\leftarrow$ (23) 14 5 7 7 

(15) 7 14 5 7 7 $\leftarrow$ (16) 11 15 7 7 

(15) 18 3 5 7 7 $\leftarrow$ (17) 20 5 7 7 

(17) 20 9 3 3 3 $\leftarrow$ (18) 21 11 3 3 

(22) 11 3 5 7 7 $\leftarrow$ (24) 13 5 7 7 

(25) 12 9 3 3 3 $\leftarrow$ (26) 13 11 3 3 

(31) 2 3 5 7 7 $\leftarrow$ (33) 4 5 7 7 

(35) 4 7 3 3 3 $\leftarrow$ (36) 6 6 5 3 

(37) 4 5 3 3 3 $\leftarrow$ (38) 5 7 3 3 

(41) 5 1 2 3 3 $\leftarrow$ (42) 6 2 3 3 

\end{tcolorbox}
\begin{tcolorbox}[title={$(55,7)$}]
(1) 2 4 7 11 15 15 

(3) 5 7 11 15 7 7 $\leftarrow$ (5) 7 13 13 11 7 

(5) 5 9 15 7 7 7 $\leftarrow$ (19) 14 4 5 7 7 

(6) 8 9 11 7 7 7 $\leftarrow$ (11) 13 13 5 7 7 

(7) 9 7 13 5 7 7 $\leftarrow$ (8) 10 17 7 7 7 

(8) 6 9 11 7 7 7 $\leftarrow$ (17) 7 13 5 7 7 

(11) 7 14 4 5 7 7 $\leftarrow$ (12) 11 14 5 7 7 

(13) 12 12 9 3 3 3 $\leftarrow$ (16) 18 3 5 7 7 

(15) 5 5 9 7 7 7 $\leftarrow$ (16) 7 14 5 7 7 

(15) 17 3 6 6 5 3 $\leftarrow$ (18) 20 9 3 3 3 

(17) 3 5 9 7 7 7 $\leftarrow$ (33) 3 6 6 5 3 

(21) 5 7 3 5 7 7 $\leftarrow$ (25) 9 3 5 7 7 

(23) 9 3 6 6 5 3 $\leftarrow$ (26) 12 9 3 3 3 

(29) 3 3 6 6 5 3 $\leftarrow$ (32) 2 3 5 7 7 

(35) 2 4 5 3 3 3 $\leftarrow$ (36) 4 7 3 3 3 

(39) 1 2 4 3 3 3 $\leftarrow$ (41) 2 4 3 3 3 

(41) 3 4 4 1 1 1 $\leftarrow$ (42) 5 1 2 3 3 

\end{tcolorbox}
\begin{tcolorbox}[title={$(55,8)$}]
(2) 3 5 9 15 7 7 7 $\leftarrow$ (16) 5 5 9 7 7 7 

(4) 4 6 9 11 7 7 7 $\leftarrow$ (10) 11 5 9 7 7 7 

(7) 5 10 11 3 5 7 7 $\leftarrow$ (12) 7 14 4 5 7 7 

(7) 8 5 5 9 7 7 7 $\leftarrow$ (8) 9 7 13 5 7 7 

(13) 10 9 3 6 6 5 3 $\leftarrow$ (14) 12 12 9 3 3 3 

(14) 11 12 4 5 3 3 3 $\leftarrow$ (16) 17 3 6 6 5 3 

(17) 6 9 3 6 6 5 3 $\leftarrow$ (18) 8 12 9 3 3 3 

(18) 3 5 7 3 5 7 7 $\leftarrow$ (20) 5 9 3 5 7 7 

(22) 5 5 3 6 6 5 3 $\leftarrow$ (24) 9 3 6 6 5 3 

(23) 6 3 3 6 6 5 3 $\leftarrow$ (30) 3 3 6 6 5 3 

(25) 10 2 4 5 3 3 3 $\leftarrow$ (26) 12 4 5 3 3 3 

(38) 1 1 2 4 3 3 3 $\leftarrow$ (40) 1 2 4 3 3 3 

(39) 2 3 4 4 1 1 1 $\leftarrow$ (42) 3 4 4 1 1 1 

\end{tcolorbox}
\begin{tcolorbox}[title={$(55,9)$}]
(1) 13 11 12 4 5 3 3 3 $\leftarrow$ (8) 8 5 5 9 7 7 7 

(3) 4 8 5 5 9 7 7 7 $\leftarrow$ (17) 3 5 9 3 5 7 7 

(4) 3 5 10 11 3 5 7 7 $\leftarrow$ (10) 8 3 5 9 7 7 7 

(7) 4 6 3 5 9 7 7 7 $\leftarrow$ (8) 5 10 11 3 5 7 7 

(13) 9 5 5 3 6 6 5 3 $\leftarrow$ (14) 10 9 3 6 6 5 3 

(15) 5 6 5 2 3 5 7 7 $\leftarrow$ (21) 6 5 2 3 5 7 7 

(17) 3 6 5 2 3 5 7 7 $\leftarrow$ (18) 6 9 3 6 6 5 3 

(21) 5 6 2 3 5 7 3 3 $\leftarrow$ (27) 6 2 3 5 7 3 3 

(23) 3 6 2 3 5 7 3 3 $\leftarrow$ (24) 6 3 3 6 6 5 3 

(25) 7 13 1 * 1 $\leftarrow$ (26) 10 2 4 5 3 3 3 

(29) 4 ..4 5 3 3 3 $\leftarrow$ (30) 6 2 4 5 3 3 3 

(38) 1 2 3 4 4 1 1 1 $\leftarrow$ (40) 2 3 4 4 1 1 1 

\end{tcolorbox}
\begin{tcolorbox}[title={$(55,10)$}]
(1) 12 9 5 5 3 6 6 5 3 $\leftarrow$ (2) 13 11 12 4 5 3 3 3 

(2) 2 3 5 10 11 3 5 7 7 $\leftarrow$ (9) 4 5 3 5 9 7 7 7 

(4) 5 9 3 5 7 3 5 7 7 

(7) 2 3 5 3 5 9 7 7 7 $\leftarrow$ (8) 4 6 3 5 9 7 7 7 

(7) 4 5 3 5 9 3 5 7 7 $\leftarrow$ (9) 8 3 5 9 3 5 7 7 

(13) 4 5 5 5 3 6 6 5 3 $\leftarrow$ (16) 5 6 5 2 3 5 7 7 

(17) 2 3 5 5 3 6 6 5 3 $\leftarrow$ (18) 3 6 5 2 3 5 7 7 

(20) 2 4 3 3 3 6 6 5 3 $\leftarrow$ (25) 5 6 2 4 5 3 3 3 

(21) 3 5 6 2 4 5 3 3 3 $\leftarrow$ (22) 5 6 2 3 5 7 3 3 

(23) ...3 3 6 6 5 3 $\leftarrow$ (24) 3 6 2 3 5 7 3 3 

(25) 5 8 1 1 2 4 3 3 3 $\leftarrow$ (26) 7 13 1 * 1 

(27) 4 ...4 5 3 3 3 $\leftarrow$ (28) 6 ..4 5 3 3 3 

(29) ....4 5 3 3 3 $\leftarrow$ (30) 4 ..4 5 3 3 3 

(33) 5 1 2 3 4 4 1 1 1 $\leftarrow$ (34) 6 2 3 4 4 1 1 1 

\end{tcolorbox}
\begin{tcolorbox}[title={$(55,11)$}]
(1) 1 2 3 5 10 11 3 5 7 7 $\leftarrow$ (2) 12 9 5 5 3 6 6 5 3 

(1) 16 2 3 5 5 3 6 6 5 3 

(5) 5 5 5 6 5 2 3 5 7 7 

(11) 2 4 5 5 5 3 6 6 5 3 $\leftarrow$ (14) 4 5 5 5 3 6 6 5 3 

(15) 1 2 4 7 3 3 6 6 5 3 $\leftarrow$ (17) 2 4 7 3 3 6 6 5 3 

(18) ..4 3 3 3 6 6 5 3 $\leftarrow$ (24) ...3 3 6 6 5 3 

(21) ....3 3 6 6 5 3 $\leftarrow$ (22) 3 5 6 2 4 5 3 3 3 

(25) 4 ....4 5 3 3 3 $\leftarrow$ (26) 5 8 1 1 2 4 3 3 3 

(27) .....4 5 3 3 3 $\leftarrow$ (28) 4 ...4 5 3 3 3 

(31) 1 * 2 4 3 3 3 $\leftarrow$ (33) * 2 4 3 3 3 

(33) 3 4 4 1 1 * 1 $\leftarrow$ (34) 5 1 2 3 4 4 1 1 1 

\end{tcolorbox}
\begin{tcolorbox}[title={$(55,12)$}]
(1) 2 4 2 3 5 3 5 9 7 7 7 

(1) 13 ..4 7 3 3 6 6 5 3 $\leftarrow$ (2) 16 2 3 5 5 3 6 6 5 3 

(3) 4 3 5 5 6 5 2 3 5 7 7 $\leftarrow$ (6) 5 5 5 6 5 2 3 5 7 7 

(9) ..4 5 5 5 3 6 6 5 3 $\leftarrow$ (12) 2 4 5 5 5 3 6 6 5 3 

(14) 1 1 2 4 7 3 3 6 6 5 3 $\leftarrow$ (16) 1 2 4 7 3 3 6 6 5 3 

(15) 6 ....3 3 6 6 5 3 $\leftarrow$ (22) ....3 3 6 6 5 3 

(25) ......4 5 3 3 3 $\leftarrow$ (26) 4 ....4 5 3 3 3 

(30) 1 1 * 2 4 3 3 3 $\leftarrow$ (32) 1 * 2 4 3 3 3 

(31) 2 3 4 4 1 1 * 1 $\leftarrow$ (34) 3 4 4 1 1 * 1 

\end{tcolorbox}
\begin{tcolorbox}[title={$(55,13)$}]
(1) 2 4 3 5 5 6 5 2 3 5 7 7 $\leftarrow$ (4) 4 3 5 5 6 5 2 3 5 7 7 

(1) 11 6 ..4 3 3 3 6 6 5 3 $\leftarrow$ (2) 13 ..4 7 3 3 6 6 5 3 

(7) ...4 5 5 5 3 6 6 5 3 $\leftarrow$ (10) ..4 5 5 5 3 6 6 5 3 

(13) 5 6 .....3 5 7 3 3 $\leftarrow$ (19) 6 .....3 5 7 3 3 

(15) 3 6 .....3 5 7 3 3 $\leftarrow$ (16) 6 ....3 3 6 6 5 3 

(21) 4 ......4 5 3 3 3 $\leftarrow$ (22) 6 .....4 5 3 3 3 

(30) 1 2 3 4 4 1 1 * 1 $\leftarrow$ (32) 2 3 4 4 1 1 * 1 

\end{tcolorbox}
\begin{tcolorbox}[title={$(55,14)$}]
(1) 9 4 1 1 2 4 7 3 3 6 6 5 3 $\leftarrow$ (2) 11 6 ..4 3 3 3 6 6 5 3 

(2) ...4 3 5 6 5 2 3 5 7 7 

(5) ....4 5 5 5 3 6 6 5 3 $\leftarrow$ (8) ...4 5 5 5 3 6 6 5 3 

(12) .....4 3 3 3 6 6 5 3 $\leftarrow$ (17) 5 6 .....4 5 3 3 3 

(13) 3 5 6 .....4 5 3 3 3 $\leftarrow$ (14) 5 6 .....3 5 7 3 3 

(15) .......3 3 6 6 5 3 $\leftarrow$ (16) 3 6 .....3 5 7 3 3 

(19) 4 .......4 5 3 3 3 $\leftarrow$ (20) 6 ......4 5 3 3 3 

(21) ........4 5 3 3 3 $\leftarrow$ (22) 4 ......4 5 3 3 3 

(25) 5 1 2 3 4 4 1 1 * 1 $\leftarrow$ (26) 6 2 3 4 4 1 1 * 1 

\end{tcolorbox}
\begin{tcolorbox}[title={$(55,15)$}]
(1) 8 ..1 ..4 7 3 3 6 6 5 3 $\leftarrow$ (2) 9 4 1 1 2 4 7 3 3 6 6 5 3 

(3) .....4 5 5 5 3 6 6 5 3 $\leftarrow$ (6) ....4 5 5 5 3 6 6 5 3 

(7) 1 * 2 4 7 3 3 6 6 5 3 $\leftarrow$ (9) * 2 4 7 3 3 6 6 5 3 

(9) 1.........4 5 3 3 3 $\leftarrow$ (16) .......3 3 6 6 5 3 

(13) ........3 3 6 6 5 3 $\leftarrow$ (14) 3 5 6 .....4 5 3 3 3 

(19) .........4 5 3 3 3 $\leftarrow$ (20) 4 .......4 5 3 3 3 

(23) 1 * * 2 4 3 3 3 $\leftarrow$ (25) * * 2 4 3 3 3 

(25) 3 4 4 1 1 * * 1 $\leftarrow$ (26) 5 1 2 3 4 4 1 1 * 1 

\end{tcolorbox}
\begin{tcolorbox}[title={$(55,16)$}]
(1) ......4 5 5 5 3 6 6 5 3 $\leftarrow$ (4) .....4 5 5 5 3 6 6 5 3 

(1) 5 2 * 2 4 7 3 3 6 6 5 3 $\leftarrow$ (2) 8 ..1 ..4 7 3 3 6 6 5 3 

(6) 1 1 * 2 4 7 3 3 6 6 5 3 $\leftarrow$ (8) 1 * 2 4 7 3 3 6 6 5 3 

(7) 6 ........3 3 6 6 5 3 $\leftarrow$ (14) ........3 3 6 6 5 3 

(9) 10 .........4 5 3 3 3 $\leftarrow$ (10) 1.........4 5 3 3 3 

(22) 1 1 * * 2 4 3 3 3 $\leftarrow$ (24) 1 * * 2 4 3 3 3 

(23) 2 3 4 4 1 1 * * 1 $\leftarrow$ (26) 3 4 4 1 1 * * 1 

\end{tcolorbox}
\begin{tcolorbox}[title={$(55,17)$}]
(1) 3 6 ......4 3 3 3 6 6 5 3 $\leftarrow$ (2) 5 2 * 2 4 7 3 3 6 6 5 3 

(2) 4 1 1 * 2 4 7 3 3 6 6 5 3 

(5) 5 6 .........3 5 7 3 3 $\leftarrow$ (11) 6 .........3 5 7 3 3 

(7) 3 6 .........3 5 7 3 3 $\leftarrow$ (8) 6 ........3 3 6 6 5 3 

(9) 7 13 1 * * * 1 $\leftarrow$ (10) 10 .........4 5 3 3 3 

(13) 4 ..........4 5 3 3 3 $\leftarrow$ (14) 6 .........4 5 3 3 3 

(22) 1 2 3 4 4 1 1 * * 1 $\leftarrow$ (24) 2 3 4 4 1 1 * * 1 

\end{tcolorbox}
\begin{tcolorbox}[title={$(55,18)$}]
(1) ..1 2 * 2 4 7 3 3 6 6 5 3 $\leftarrow$ (2) 3 6 ......4 3 3 3 6 6 5 3 

(4) .........4 3 3 3 6 6 5 3 $\leftarrow$ (9) 5 6 .........4 5 3 3 3 

(5) 3 5 6 .........4 5 3 3 3 $\leftarrow$ (6) 5 6 .........3 5 7 3 3 

(7) ...........3 3 6 6 5 3 $\leftarrow$ (8) 3 6 .........3 5 7 3 3 

(9) 5 8 1 1 * * 2 4 3 3 3 $\leftarrow$ (10) 7 13 1 * * * 1 

(11) 4 ...........4 5 3 3 3 $\leftarrow$ (12) 6 ..........4 5 3 3 3 

(13) ............4 5 3 3 3 $\leftarrow$ (14) 4 ..........4 5 3 3 3 

(17) 5 1 2 3 4 4 1 1 * * 1 $\leftarrow$ (18) 6 2 3 4 4 1 1 * * 1 

\end{tcolorbox}
\begin{tcolorbox}[title={$(55,19)$}]
(2) ..........4 3 3 3 6 6 5 3 $\leftarrow$ (8) ...........3 3 6 6 5 3 

(5) ............3 3 6 6 5 3 $\leftarrow$ (6) 3 5 6 .........4 5 3 3 3 

(9) 4 ............4 5 3 3 3 $\leftarrow$ (10) 5 8 1 1 * * 2 4 3 3 3 

(11) .............4 5 3 3 3 $\leftarrow$ (12) 4 ...........4 5 3 3 3 

(15) 1 * * * 2 4 3 3 3 $\leftarrow$ (17) * * * 2 4 3 3 3 

(17) 3 4 4 1 1 * * * 1 $\leftarrow$ (18) 5 1 2 3 4 4 1 1 * * 1 

\end{tcolorbox}
\begin{tcolorbox}[title={$(56,2)$}]
(55) 1 $\leftarrow$ (57) 

\end{tcolorbox}
\begin{tcolorbox}[title={$(56,3)$}]
(25) 30 1 $\leftarrow$ (26) 31 

(41) 14 1 $\leftarrow$ (42) 15 

(49) 6 1 $\leftarrow$ (50) 7 

(53) 2 1 $\leftarrow$ (54) 3 

(54) 1 1 $\leftarrow$ (56) 1 

\end{tcolorbox}
\begin{tcolorbox}[title={$(56,4)$}]
(11) 15 15 15 $\leftarrow$ (19) 23 15 

(15) 11 15 15 $\leftarrow$ (27) 15 15 

(23) 27 3 3 $\leftarrow$ (25) 29 3 

(25) 29 1 1 $\leftarrow$ (26) 30 1 

(37) 5 7 7 $\leftarrow$ (51) 3 3 

(39) 11 3 3 $\leftarrow$ (41) 13 3 

(41) 13 1 1 $\leftarrow$ (42) 14 1 

(47) 3 3 3 $\leftarrow$ (49) 5 3 

(49) 5 1 1 $\leftarrow$ (50) 6 1 

(53) 1 1 1 $\leftarrow$ (54) 2 1 

\end{tcolorbox}
\begin{tcolorbox}[title={$(56,5)$}]
(6) 9 11 15 15 $\leftarrow$ (18) 21 11 7 

(8) 7 11 15 15 $\leftarrow$ (26) 13 11 7 

(11) 14 13 11 7 $\leftarrow$ (12) 15 15 15 

(15) 10 13 11 7 $\leftarrow$ (16) 11 15 15 

(22) 25 3 3 3 $\leftarrow$ (24) 27 3 3 

(25) 28 1 1 1 $\leftarrow$ (26) 29 1 1 

(38) 9 3 3 3 $\leftarrow$ (40) 11 3 3 

(39) 8 3 3 3 $\leftarrow$ (48) 3 3 3 

(41) 12 1 1 1 $\leftarrow$ (42) 13 1 1 

(47) 1 2 3 3 $\leftarrow$ (49) 2 3 3 

(49) 4 1 1 1 $\leftarrow$ (50) 5 1 1 

\end{tcolorbox}
\begin{tcolorbox}[title={$(56,6)$}]
(3) 5 7 11 15 15 $\leftarrow$ (17) 19 7 7 7 

(4) 4 7 11 15 15 $\leftarrow$ (19) 17 7 7 7 

(7) 9 11 15 7 7 $\leftarrow$ (12) 14 13 11 7 

(9) 7 11 15 7 7 $\leftarrow$ (17) 11 15 7 7 

(11) 9 15 7 7 7 $\leftarrow$ (13) 13 13 11 7 

(15) 9 11 7 7 7 $\leftarrow$ (16) 10 13 11 7 

(21) 5 9 7 7 7 $\leftarrow$ (25) 13 5 7 7 

(23) 11 3 5 7 7 $\leftarrow$ (24) 14 5 7 7 

(25) 24 4 1 1 1 $\leftarrow$ (26) 28 1 1 1 

(35) 3 5 7 3 3 $\leftarrow$ (37) 6 6 5 3 

(38) 4 5 3 3 3 $\leftarrow$ (43) 6 2 3 3 

(39) 3 6 2 3 3 $\leftarrow$ (40) 8 3 3 3 

(41) 8 4 1 1 1 $\leftarrow$ (42) 12 1 1 1 

(45) 4 4 1 1 1 $\leftarrow$ (48) 1 2 3 3 

(47) * 1 $\leftarrow$ (50) 4 1 1 1 

\end{tcolorbox}
\begin{tcolorbox}[title={$(56,7)$}]
(2) 2 4 7 11 15 15 $\leftarrow$ (16) 9 11 7 7 7 

(4) 5 7 11 15 7 7 $\leftarrow$ (9) 10 17 7 7 7 

(5) 6 9 15 7 7 7 $\leftarrow$ (6) 7 13 13 11 7 

(6) 5 9 15 7 7 7 $\leftarrow$ (12) 9 15 7 7 7 

(7) 8 9 11 7 7 7 $\leftarrow$ (8) 9 11 15 7 7 

(9) 6 9 11 7 7 7 $\leftarrow$ (10) 7 11 15 7 7 

(11) 12 11 3 5 7 7 $\leftarrow$ (12) 13 13 5 7 7 

(18) 3 5 9 7 7 7 $\leftarrow$ (24) 11 3 5 7 7 

(19) 13 2 3 5 7 7 $\leftarrow$ (20) 14 4 5 7 7 

(22) 5 7 3 5 7 7 $\leftarrow$ (26) 9 3 5 7 7 

(25) 22 * 1 $\leftarrow$ (26) 24 4 1 1 1 

(33) 2 3 5 7 3 3 $\leftarrow$ (34) 3 6 6 5 3 

(36) 2 4 5 3 3 3 $\leftarrow$ (42) 2 4 3 3 3 

(39) ..4 3 3 3 $\leftarrow$ (40) 3 6 2 3 3 

(41) 6 * 1 $\leftarrow$ (42) 8 4 1 1 1 

(45) 2 * 1 $\leftarrow$ (46) 4 4 1 1 1 

(46) 1 * 1 $\leftarrow$ (48) * 1 

\end{tcolorbox}
\begin{tcolorbox}[title={$(56,8)$}]
(1) 1 2 4 7 11 15 15 $\leftarrow$ (8) 8 9 11 7 7 7 

(3) 3 5 9 15 7 7 7 $\leftarrow$ (10) 6 9 11 7 7 7 

(5) 4 6 9 11 7 7 7 $\leftarrow$ (6) 6 9 15 7 7 7 

(9) 9 3 5 9 7 7 7 $\leftarrow$ (19) 8 12 9 3 3 3 

(11) 7 8 12 9 3 3 3 $\leftarrow$ (12) 12 11 3 5 7 7 

(15) 11 12 4 5 3 3 3 $\leftarrow$ (17) 17 3 6 6 5 3 

(19) 3 5 7 3 5 7 7 $\leftarrow$ (21) 5 9 3 5 7 7 

(19) 7 12 4 5 3 3 3 $\leftarrow$ (20) 13 2 3 5 7 7 

(23) 5 5 3 6 6 5 3 $\leftarrow$ (25) 9 3 6 6 5 3 

(25) 21 1 * 1 $\leftarrow$ (26) 22 * 1 

(33) 13 1 * 1 $\leftarrow$ (41) 1 2 4 3 3 3 

(39) 1 1 2 4 3 3 3 $\leftarrow$ (40) ..4 3 3 3 

(41) 5 1 * 1 $\leftarrow$ (42) 6 * 1 

(45) 1 1 * 1 $\leftarrow$ (46) 2 * 1 

\end{tcolorbox}
\begin{tcolorbox}[title={$(56,9)$}]
(4) 4 8 5 5 9 7 7 7 

(5) 3 5 10 11 3 5 7 7 $\leftarrow$ (9) 5 10 11 3 5 7 7 

(10) 9 3 5 7 3 5 7 7 

(14) 9 5 5 3 6 6 5 3 $\leftarrow$ (16) 11 12 4 5 3 3 3 

(19) 4 7 3 3 6 6 5 3 $\leftarrow$ (24) 5 5 3 6 6 5 3 

(19) 5 6 3 3 6 6 5 3 $\leftarrow$ (20) 7 12 4 5 3 3 3 

(21) 3 6 3 3 6 6 5 3 $\leftarrow$ (22) 6 5 2 3 5 7 7 

(25) 20 1 1 * 1 $\leftarrow$ (26) 21 1 * 1 

(27) 3 6 2 4 5 3 3 3 $\leftarrow$ (28) 6 2 3 5 7 3 3 

(31) 8 1 1 2 4 3 3 3 $\leftarrow$ (40) 1 1 2 4 3 3 3 

(33) 12 1 1 * 1 $\leftarrow$ (34) 13 1 * 1 

(39) 1 2 3 4 4 1 1 1 $\leftarrow$ (41) 2 3 4 4 1 1 1 

(41) 4 1 1 * 1 $\leftarrow$ (42) 5 1 * 1 

\end{tcolorbox}
\begin{tcolorbox}[title={$(56,10)$}]
(3) 2 3 5 10 11 3 5 7 7 $\leftarrow$ (6) 3 5 10 11 3 5 7 7 

(5) 5 9 3 5 7 3 5 7 7 

(8) 2 3 5 3 5 9 7 7 7 

(8) 4 5 3 5 9 3 5 7 7 

(11) 5 5 6 5 2 3 5 7 7 $\leftarrow$ (17) 5 6 5 2 3 5 7 7 

(18) 2 3 5 5 3 6 6 5 3 $\leftarrow$ (20) 4 7 3 3 6 6 5 3 

(19) 3 5 6 2 3 5 7 3 3 $\leftarrow$ (20) 5 6 3 3 6 6 5 3 

(21) 2 4 3 3 3 6 6 5 3 $\leftarrow$ (22) 3 6 3 3 6 6 5 3 

(25) 3 6 ..4 5 3 3 3 $\leftarrow$ (26) 5 6 2 4 5 3 3 3 

(25) 16 4 1 1 * 1 $\leftarrow$ (26) 20 1 1 * 1 

(27) ....3 5 7 3 3 $\leftarrow$ (28) 3 6 2 4 5 3 3 3 

(30) ....4 5 3 3 3 $\leftarrow$ (35) 6 2 3 4 4 1 1 1 

(31) 3 6 2 3 4 4 1 1 1 $\leftarrow$ (32) 8 1 1 2 4 3 3 3 

(33) 8 4 1 1 * 1 $\leftarrow$ (34) 12 1 1 * 1 

(37) 4 4 1 1 * 1 $\leftarrow$ (40) 1 2 3 4 4 1 1 1 

(39) * * 1 $\leftarrow$ (42) 4 1 1 * 1 

\end{tcolorbox}
\begin{tcolorbox}[title={$(56,11)$}]
(2) 1 2 3 5 10 11 3 5 7 7 $\leftarrow$ (4) 2 3 5 10 11 3 5 7 7 

(3) 18 2 4 3 3 3 6 6 5 3 

(9) 4 3 5 6 5 2 3 5 7 7 $\leftarrow$ (12) 5 5 6 5 2 3 5 7 7 

(15) ..4 7 3 3 6 6 5 3 $\leftarrow$ (18) 2 4 7 3 3 6 6 5 3 

(19) ..4 3 3 3 6 6 5 3 $\leftarrow$ (20) 3 5 6 2 3 5 7 3 3 

(25) .....3 5 7 3 3 $\leftarrow$ (26) 3 6 ..4 5 3 3 3 

(25) 14 * * 1 $\leftarrow$ (26) 16 4 1 1 * 1 

(28) .....4 5 3 3 3 $\leftarrow$ (34) * 2 4 3 3 3 

(31) 2 * 2 4 3 3 3 $\leftarrow$ (32) 3 6 2 3 4 4 1 1 1 

(33) 6 * * 1 $\leftarrow$ (34) 8 4 1 1 * 1 

(37) 2 * * 1 $\leftarrow$ (38) 4 4 1 1 * 1 

(38) 1 * * 1 $\leftarrow$ (40) * * 1 

\end{tcolorbox}
\begin{tcolorbox}[title={$(56,12)$}]
(1) 5 5 5 5 6 5 2 3 5 7 7 

(2) 2 4 2 3 5 3 5 9 7 7 7 

(3) 16 ..4 3 3 3 6 6 5 3 $\leftarrow$ (4) 18 2 4 3 3 3 6 6 5 3 

(7) 2 4 3 5 6 5 2 3 5 7 7 $\leftarrow$ (10) 4 3 5 6 5 2 3 5 7 7 

(13) 6 ..4 3 3 3 6 6 5 3 $\leftarrow$ (17) 1 2 4 7 3 3 6 6 5 3 

(15) 1 1 2 4 7 3 3 6 6 5 3 $\leftarrow$ (16) ..4 7 3 3 6 6 5 3 

(25) 13 1 * * 1 $\leftarrow$ (26) 14 * * 1 

(26) ......4 5 3 3 3 $\leftarrow$ (33) 1 * 2 4 3 3 3 

(31) 1 1 * 2 4 3 3 3 $\leftarrow$ (32) 2 * 2 4 3 3 3 

(33) 5 1 * * 1 $\leftarrow$ (34) 6 * * 1 

(37) 1 1 * * 1 $\leftarrow$ (38) 2 * * 1 

\end{tcolorbox}
\begin{tcolorbox}[title={$(56,13)$}]
(1) 1 2 4 2 3 5 3 5 9 7 7 7 

(2) 2 4 3 5 5 6 5 2 3 5 7 7 

(3) 13 6 ....3 3 6 6 5 3 $\leftarrow$ (4) 16 ..4 3 3 3 6 6 5 3 

(5) ..4 3 5 6 5 2 3 5 7 7 $\leftarrow$ (8) 2 4 3 5 6 5 2 3 5 7 7 

(11) 4 1 1 2 4 7 3 3 6 6 5 3 $\leftarrow$ (16) 1 1 2 4 7 3 3 6 6 5 3 

(13) 3 6 ....3 3 6 6 5 3 $\leftarrow$ (14) 6 ..4 3 3 3 6 6 5 3 

(19) 3 6 .....4 5 3 3 3 $\leftarrow$ (20) 6 .....3 5 7 3 3 

(23) 8 1 1 * 2 4 3 3 3 $\leftarrow$ (32) 1 1 * 2 4 3 3 3 

(25) 12 1 1 * * 1 $\leftarrow$ (26) 13 1 * * 1 

(31) 1 2 3 4 4 1 1 * 1 $\leftarrow$ (33) 2 3 4 4 1 1 * 1 

(33) 4 1 1 * * 1 $\leftarrow$ (34) 5 1 * * 1 

\end{tcolorbox}
\begin{tcolorbox}[title={$(56,14)$}]
(3) ...4 3 5 6 5 2 3 5 7 7 $\leftarrow$ (6) ..4 3 5 6 5 2 3 5 7 7 

(3) 11 5 6 .....3 5 7 3 3 $\leftarrow$ (4) 13 6 ....3 3 6 6 5 3 

(9) 24 4 1 1 * * 1 $\leftarrow$ (12) 4 1 1 2 4 7 3 3 6 6 5 3 

(13) .....4 3 3 3 6 6 5 3 $\leftarrow$ (14) 3 6 ....3 3 6 6 5 3 

(17) 3 6 ......4 5 3 3 3 $\leftarrow$ (18) 5 6 .....4 5 3 3 3 

(19) ........3 5 7 3 3 $\leftarrow$ (20) 3 6 .....4 5 3 3 3 

(22) ........4 5 3 3 3 $\leftarrow$ (27) 6 2 3 4 4 1 1 * 1 

(23) 3 6 2 3 4 4 1 1 * 1 $\leftarrow$ (24) 8 1 1 * 2 4 3 3 3 

(25) 8 4 1 1 * * 1 $\leftarrow$ (26) 12 1 1 * * 1 

(29) 4 4 1 1 * * 1 $\leftarrow$ (32) 1 2 3 4 4 1 1 * 1 

(31) * * * 1 $\leftarrow$ (34) 4 1 1 * * 1 

\end{tcolorbox}
\begin{tcolorbox}[title={$(56,15)$}]
(1) ....4 3 5 6 5 2 3 5 7 7 $\leftarrow$ (4) ...4 3 5 6 5 2 3 5 7 7 

(3) 10 .....4 3 3 3 6 6 5 3 $\leftarrow$ (4) 11 5 6 .....3 5 7 3 3 

(7) 2 * 2 4 7 3 3 6 6 5 3 $\leftarrow$ (10) * 2 4 7 3 3 6 6 5 3 

(9) 22 * * * 1 $\leftarrow$ (10) 24 4 1 1 * * 1 

(17) .........3 5 7 3 3 $\leftarrow$ (18) 3 6 ......4 5 3 3 3 

(20) .........4 5 3 3 3 $\leftarrow$ (26) * * 2 4 3 3 3 

(23) 2 * * 2 4 3 3 3 $\leftarrow$ (24) 3 6 2 3 4 4 1 1 * 1 

(25) 6 * * * 1 $\leftarrow$ (26) 8 4 1 1 * * 1 

(29) 2 * * * 1 $\leftarrow$ (30) 4 4 1 1 * * 1 

(30) 1 * * * 1 $\leftarrow$ (32) * * * 1 

\end{tcolorbox}
\begin{tcolorbox}[title={$(56,16)$}]
(2) ......4 5 5 5 3 6 6 5 3 

(3) 7 1.........4 5 3 3 3 $\leftarrow$ (4) 10 .....4 3 3 3 6 6 5 3 

(5) 6 ......4 3 3 3 6 6 5 3 $\leftarrow$ (9) 1 * 2 4 7 3 3 6 6 5 3 

(7) 1 1 * 2 4 7 3 3 6 6 5 3 $\leftarrow$ (8) 2 * 2 4 7 3 3 6 6 5 3 

(9) 21 1 * * * 1 $\leftarrow$ (10) 22 * * * 1 

(17) 13 1 * * * 1 $\leftarrow$ (25) 1 * * 2 4 3 3 3 

(23) 1 1 * * 2 4 3 3 3 $\leftarrow$ (24) 2 * * 2 4 3 3 3 

(25) 5 1 * * * 1 $\leftarrow$ (26) 6 * * * 1 

(29) 1 1 * * * 1 $\leftarrow$ (30) 2 * * * 1 

\end{tcolorbox}
\begin{tcolorbox}[title={$(56,17)$}]
(3) 4 1 1 * 2 4 7 3 3 6 6 5 3 $\leftarrow$ (8) 1 1 * 2 4 7 3 3 6 6 5 3 

(3) 5 6 ........3 3 6 6 5 3 $\leftarrow$ (4) 7 1.........4 5 3 3 3 

(5) 3 6 ........3 3 6 6 5 3 $\leftarrow$ (6) 6 ......4 3 3 3 6 6 5 3 

(9) 20 1 1 * * * 1 $\leftarrow$ (10) 21 1 * * * 1 

(11) 3 6 .........4 5 3 3 3 $\leftarrow$ (12) 6 .........3 5 7 3 3 

(15) 8 1 1 * * 2 4 3 3 3 $\leftarrow$ (24) 1 1 * * 2 4 3 3 3 

(17) 12 1 1 * * * 1 $\leftarrow$ (18) 13 1 * * * 1 

(23) 1 2 3 4 4 1 1 * * 1 $\leftarrow$ (25) 2 3 4 4 1 1 * * 1 

(25) 4 1 1 * * * 1 $\leftarrow$ (26) 5 1 * * * 1 

\end{tcolorbox}
\begin{tcolorbox}[title={$(56,18)$}]
(1) * * 2 4 7 3 3 6 6 5 3 

(2) ..1 2 * 2 4 7 3 3 6 6 5 3 $\leftarrow$ (4) 4 1 1 * 2 4 7 3 3 6 6 5 3 

(3) 3 5 6 .........3 5 7 3 3 $\leftarrow$ (4) 5 6 ........3 3 6 6 5 3 

(5) .........4 3 3 3 6 6 5 3 $\leftarrow$ (6) 3 6 ........3 3 6 6 5 3 

(9) 3 6 ..........4 5 3 3 3 $\leftarrow$ (10) 5 6 .........4 5 3 3 3 

(9) 16 4 1 1 * * * 1 $\leftarrow$ (10) 20 1 1 * * * 1 

(11) ............3 5 7 3 3 $\leftarrow$ (12) 3 6 .........4 5 3 3 3 

(14) ............4 5 3 3 3 $\leftarrow$ (19) 6 2 3 4 4 1 1 * * 1 

(15) 3 6 2 3 4 4 1 1 * * 1 $\leftarrow$ (16) 8 1 1 * * 2 4 3 3 3 

(17) 8 4 1 1 * * * 1 $\leftarrow$ (18) 12 1 1 * * * 1 

(21) 4 4 1 1 * * * 1 $\leftarrow$ (24) 1 2 3 4 4 1 1 * * 1 

(23) * * * * 1 $\leftarrow$ (26) 4 1 1 * * * 1 

\end{tcolorbox}
\begin{tcolorbox}[title={$(57,3)$}]
(23) 27 7 $\leftarrow$ (27) 31 

(39) 11 7 $\leftarrow$ (43) 15 

(43) 7 7 $\leftarrow$ (51) 7 

(55) 1 1 $\leftarrow$ (57) 1 

\end{tcolorbox}
\begin{tcolorbox}[title={$(57,4)$}]
(19) 22 13 3 $\leftarrow$ (20) 23 15 

(20) 23 7 7 $\leftarrow$ (26) 29 3 

(23) 26 5 3 $\leftarrow$ (24) 27 7 

(27) 14 13 3 $\leftarrow$ (28) 15 15 

(36) 7 7 7 $\leftarrow$ (42) 13 3 

(38) 5 7 7 $\leftarrow$ (50) 5 3 

(39) 10 5 3 $\leftarrow$ (40) 11 7 

(43) 6 5 3 $\leftarrow$ (44) 7 7 

(54) 1 1 1 $\leftarrow$ (56) 1 1 

\end{tcolorbox}
\begin{tcolorbox}[title={$(57,5)$}]
(7) 9 11 15 15 $\leftarrow$ (13) 15 15 15 

(9) 7 11 15 15 $\leftarrow$ (17) 11 15 15 

(18) 20 5 7 7 $\leftarrow$ (25) 27 3 3 

(19) 21 11 3 3 $\leftarrow$ (20) 22 13 3 

(23) 25 3 3 3 $\leftarrow$ (24) 26 5 3 

(27) 13 11 3 3 $\leftarrow$ (28) 14 13 3 

(34) 4 5 7 7 $\leftarrow$ (41) 11 3 3 

(35) 3 5 7 7 $\leftarrow$ (49) 3 3 3 

(39) 5 7 3 3 $\leftarrow$ (44) 6 5 3 

(39) 9 3 3 3 $\leftarrow$ (40) 10 5 3 

\end{tcolorbox}
\begin{tcolorbox}[title={$(57,6)$}]
(4) 5 7 11 15 15 $\leftarrow$ (8) 9 11 15 15 

(5) 4 7 11 15 15 $\leftarrow$ (10) 7 11 15 15 

(13) 11 14 5 7 7 $\leftarrow$ (14) 13 13 11 7 

(17) 7 14 5 7 7 $\leftarrow$ (18) 11 15 7 7 

(17) 18 3 5 7 7 $\leftarrow$ (24) 25 3 3 3 

(18) 7 13 5 7 7 $\leftarrow$ (20) 17 7 7 7 

(19) 20 9 3 3 3 $\leftarrow$ (20) 21 11 3 3 

(22) 5 9 7 7 7 $\leftarrow$ (26) 13 5 7 7 

(27) 12 9 3 3 3 $\leftarrow$ (28) 13 11 3 3 

(33) 2 3 5 7 7 $\leftarrow$ (40) 9 3 3 3 

(36) 3 5 7 3 3 $\leftarrow$ (41) 8 3 3 3 

(37) 4 7 3 3 3 $\leftarrow$ (38) 6 6 5 3 

(39) 4 5 3 3 3 $\leftarrow$ (40) 5 7 3 3 

(43) 5 1 2 3 3 $\leftarrow$ (44) 6 2 3 3 

\end{tcolorbox}
\begin{tcolorbox}[title={$(57,7)$}]
(3) 2 4 7 11 15 15 $\leftarrow$ (6) 4 7 11 15 15 

(5) 5 7 11 15 7 7 $\leftarrow$ (9) 9 11 15 7 7 

(7) 5 9 15 7 7 7 $\leftarrow$ (13) 9 15 7 7 7 

(9) 9 7 13 5 7 7 $\leftarrow$ (10) 10 17 7 7 7 

(11) 11 5 9 7 7 7 $\leftarrow$ (13) 13 13 5 7 7 

(13) 7 14 4 5 7 7 $\leftarrow$ (14) 11 14 5 7 7 

(15) 12 12 9 3 3 3 $\leftarrow$ (20) 20 9 3 3 3 

(17) 5 5 9 7 7 7 $\leftarrow$ (18) 7 14 5 7 7 

(19) 3 5 9 7 7 7 $\leftarrow$ (21) 14 4 5 7 7 

(23) 5 7 3 5 7 7 $\leftarrow$ (27) 9 3 5 7 7 

(27) 12 4 5 3 3 3 

(31) 3 3 6 6 5 3 $\leftarrow$ (35) 3 6 6 5 3 

(34) 2 3 5 7 3 3 $\leftarrow$ (40) 4 5 3 3 3 

(37) 2 4 5 3 3 3 $\leftarrow$ (38) 4 7 3 3 3 

(43) 3 4 4 1 1 1 $\leftarrow$ (44) 5 1 2 3 3 

(47) 1 * 1 $\leftarrow$ (49) * 1 

\end{tcolorbox}
\begin{tcolorbox}[title={$(57,8)$}]
(2) 1 2 4 7 11 15 15 $\leftarrow$ (4) 2 4 7 11 15 15 

(4) 3 5 9 15 7 7 7 $\leftarrow$ (8) 5 9 15 7 7 7 

(6) 4 6 9 11 7 7 7 

(9) 8 5 5 9 7 7 7 $\leftarrow$ (10) 9 7 13 5 7 7 

(10) 9 3 5 9 7 7 7 $\leftarrow$ (12) 11 5 9 7 7 7 

(11) 8 3 5 9 7 7 7 $\leftarrow$ (14) 7 14 4 5 7 7 

(12) 7 8 12 9 3 3 3 $\leftarrow$ (18) 17 3 6 6 5 3 

(15) 10 9 3 6 6 5 3 $\leftarrow$ (16) 12 12 9 3 3 3 

(18) 3 5 9 3 5 7 7 

(19) 6 9 3 6 6 5 3 $\leftarrow$ (20) 8 12 9 3 3 3 

(20) 3 5 7 3 5 7 7 $\leftarrow$ (24) 5 7 3 5 7 7 

(25) 6 3 3 6 6 5 3 $\leftarrow$ (32) 3 3 6 6 5 3 

(27) 10 2 4 5 3 3 3 $\leftarrow$ (28) 12 4 5 3 3 3 

(31) 6 2 4 5 3 3 3 $\leftarrow$ (38) 2 4 5 3 3 3 

(46) 1 1 * 1 $\leftarrow$ (48) 1 * 1 

\end{tcolorbox}
\begin{tcolorbox}[title={$(57,9)$}]
(3) 13 11 12 4 5 3 3 3 

(5) 4 8 5 5 9 7 7 7 $\leftarrow$ (10) 8 5 5 9 7 7 7 

(9) 4 6 3 5 9 7 7 7 $\leftarrow$ (10) 5 10 11 3 5 7 7 

(10) 4 5 3 5 9 7 7 7 $\leftarrow$ (12) 8 3 5 9 7 7 7 

(10) 8 3 5 9 3 5 7 7 $\leftarrow$ (17) 11 12 4 5 3 3 3 

(11) 9 3 5 7 3 5 7 7 

(15) 9 5 5 3 6 6 5 3 $\leftarrow$ (16) 10 9 3 6 6 5 3 

(19) 3 6 5 2 3 5 7 7 $\leftarrow$ (20) 6 9 3 6 6 5 3 

(23) 5 6 2 3 5 7 3 3 $\leftarrow$ (29) 6 2 3 5 7 3 3 

(25) 3 6 2 3 5 7 3 3 $\leftarrow$ (26) 6 3 3 6 6 5 3 

(27) 7 13 1 * 1 $\leftarrow$ (28) 10 2 4 5 3 3 3 

(29) 6 ..4 5 3 3 3 $\leftarrow$ (35) 13 1 * 1 

(31) 4 ..4 5 3 3 3 $\leftarrow$ (32) 6 2 4 5 3 3 3 

\end{tcolorbox}
\begin{tcolorbox}[title={$(57,10)$}]
(1) 4 4 8 5 5 9 7 7 7 

(3) 12 9 5 5 3 6 6 5 3 $\leftarrow$ (4) 13 11 12 4 5 3 3 3 

(6) 5 9 3 5 7 3 5 7 7 $\leftarrow$ (12) 9 3 5 7 3 5 7 7 

(9) 2 3 5 3 5 9 7 7 7 $\leftarrow$ (10) 4 6 3 5 9 7 7 7 

(9) 4 5 3 5 9 3 5 7 7 $\leftarrow$ (16) 9 5 5 3 6 6 5 3 

(15) 4 5 5 5 3 6 6 5 3 $\leftarrow$ (18) 5 6 5 2 3 5 7 7 

(19) 2 3 5 5 3 6 6 5 3 $\leftarrow$ (20) 3 6 5 2 3 5 7 7 

(22) 2 4 3 3 3 6 6 5 3 $\leftarrow$ (27) 5 6 2 4 5 3 3 3 

(23) 3 5 6 2 4 5 3 3 3 $\leftarrow$ (24) 5 6 2 3 5 7 3 3 

(25) ...3 3 6 6 5 3 $\leftarrow$ (26) 3 6 2 3 5 7 3 3 

(27) 5 8 1 1 2 4 3 3 3 $\leftarrow$ (28) 7 13 1 * 1 

(28) ....3 5 7 3 3 $\leftarrow$ (33) 8 1 1 2 4 3 3 3 

(29) 4 ...4 5 3 3 3 $\leftarrow$ (30) 6 ..4 5 3 3 3 

(31) ....4 5 3 3 3 $\leftarrow$ (32) 4 ..4 5 3 3 3 

(35) 5 1 2 3 4 4 1 1 1 $\leftarrow$ (36) 6 2 3 4 4 1 1 1 

\end{tcolorbox}
\begin{tcolorbox}[title={$(57,11)$}]
(1) 5 5 9 3 5 7 3 5 7 7 

(1) 8 4 5 3 5 9 3 5 7 7 

(3) 1 2 3 5 10 11 3 5 7 7 $\leftarrow$ (4) 12 9 5 5 3 6 6 5 3 

(3) 16 2 3 5 5 3 6 6 5 3 

(7) 5 5 5 6 5 2 3 5 7 7 $\leftarrow$ (13) 5 5 6 5 2 3 5 7 7 

(13) 2 4 5 5 5 3 6 6 5 3 $\leftarrow$ (16) 4 5 5 5 3 6 6 5 3 

(20) ..4 3 3 3 6 6 5 3 $\leftarrow$ (26) ...3 3 6 6 5 3 

(23) ....3 3 6 6 5 3 $\leftarrow$ (24) 3 5 6 2 4 5 3 3 3 

(26) .....3 5 7 3 3 $\leftarrow$ (32) ....4 5 3 3 3 

(27) 4 ....4 5 3 3 3 $\leftarrow$ (28) 5 8 1 1 2 4 3 3 3 

(29) .....4 5 3 3 3 $\leftarrow$ (30) 4 ...4 5 3 3 3 

(35) 3 4 4 1 1 * 1 $\leftarrow$ (36) 5 1 2 3 4 4 1 1 1 

(39) 1 * * 1 $\leftarrow$ (41) * * 1 

\end{tcolorbox}
\begin{tcolorbox}[title={$(57,12)$}]
(1) 4 4 2 3 5 3 5 9 7 7 7 $\leftarrow$ (2) 8 4 5 3 5 9 3 5 7 7 

(2) 5 5 5 5 6 5 2 3 5 7 7 

(3) 2 4 2 3 5 3 5 9 7 7 7 $\leftarrow$ (5) 18 2 4 3 3 3 6 6 5 3 

(3) 13 ..4 7 3 3 6 6 5 3 $\leftarrow$ (4) 16 2 3 5 5 3 6 6 5 3 

(5) 4 3 5 5 6 5 2 3 5 7 7 $\leftarrow$ (8) 5 5 5 6 5 2 3 5 7 7 

(11) ..4 5 5 5 3 6 6 5 3 $\leftarrow$ (14) 2 4 5 5 5 3 6 6 5 3 

(17) 6 ....3 3 6 6 5 3 $\leftarrow$ (24) ....3 3 6 6 5 3 

(23) 6 .....4 5 3 3 3 $\leftarrow$ (30) .....4 5 3 3 3 

(27) ......4 5 3 3 3 $\leftarrow$ (28) 4 ....4 5 3 3 3 

(38) 1 1 * * 1 $\leftarrow$ (40) 1 * * 1 

\end{tcolorbox}
\begin{tcolorbox}[title={$(57,13)$}]
(1) ..4 2 3 5 3 5 9 7 7 7 $\leftarrow$ (2) 4 4 2 3 5 3 5 9 7 7 7 

(2) 1 2 4 2 3 5 3 5 9 7 7 7 $\leftarrow$ (4) 2 4 2 3 5 3 5 9 7 7 7 

(3) 2 4 3 5 5 6 5 2 3 5 7 7 $\leftarrow$ (6) 4 3 5 5 6 5 2 3 5 7 7 

(3) 11 6 ..4 3 3 3 6 6 5 3 $\leftarrow$ (4) 13 ..4 7 3 3 6 6 5 3 

(9) ...4 5 5 5 3 6 6 5 3 $\leftarrow$ (12) ..4 5 5 5 3 6 6 5 3 

(15) 5 6 .....3 5 7 3 3 $\leftarrow$ (21) 6 .....3 5 7 3 3 

(17) 3 6 .....3 5 7 3 3 $\leftarrow$ (18) 6 ....3 3 6 6 5 3 

(21) 6 ......4 5 3 3 3 $\leftarrow$ (28) ......4 5 3 3 3 

(23) 4 ......4 5 3 3 3 $\leftarrow$ (24) 6 .....4 5 3 3 3 

\end{tcolorbox}
\begin{tcolorbox}[title={$(57,14)$}]
(1) 1 1 2 4 2 3 5 3 5 9 7 7 7 $\leftarrow$ (2) ..4 2 3 5 3 5 9 7 7 7 

(1) ..4 3 5 5 6 5 2 3 5 7 7 $\leftarrow$ (4) 2 4 3 5 5 6 5 2 3 5 7 7 

(3) 9 4 1 1 2 4 7 3 3 6 6 5 3 $\leftarrow$ (4) 11 6 ..4 3 3 3 6 6 5 3 

(7) ....4 5 5 5 3 6 6 5 3 $\leftarrow$ (10) ...4 5 5 5 3 6 6 5 3 

(14) .....4 3 3 3 6 6 5 3 $\leftarrow$ (19) 5 6 .....4 5 3 3 3 

(15) 3 5 6 .....4 5 3 3 3 $\leftarrow$ (16) 5 6 .....3 5 7 3 3 

(17) .......3 3 6 6 5 3 $\leftarrow$ (18) 3 6 .....3 5 7 3 3 

(20) ........3 5 7 3 3 $\leftarrow$ (25) 8 1 1 * 2 4 3 3 3 

(21) 4 .......4 5 3 3 3 $\leftarrow$ (22) 6 ......4 5 3 3 3 

(23) ........4 5 3 3 3 $\leftarrow$ (24) 4 ......4 5 3 3 3 

(27) 5 1 2 3 4 4 1 1 * 1 $\leftarrow$ (28) 6 2 3 4 4 1 1 * 1 

\end{tcolorbox}
\begin{tcolorbox}[title={$(57,15)$}]
(2) ....4 3 5 6 5 2 3 5 7 7 

(3) 8 ..1 ..4 7 3 3 6 6 5 3 $\leftarrow$ (4) 9 4 1 1 2 4 7 3 3 6 6 5 3 

(5) .....4 5 5 5 3 6 6 5 3 $\leftarrow$ (8) ....4 5 5 5 3 6 6 5 3 

(11) 1.........4 5 3 3 3 $\leftarrow$ (18) .......3 3 6 6 5 3 

(15) ........3 3 6 6 5 3 $\leftarrow$ (16) 3 5 6 .....4 5 3 3 3 

(18) .........3 5 7 3 3 $\leftarrow$ (24) ........4 5 3 3 3 

(21) .........4 5 3 3 3 $\leftarrow$ (22) 4 .......4 5 3 3 3 

(27) 3 4 4 1 1 * * 1 $\leftarrow$ (28) 5 1 2 3 4 4 1 1 * 1 

(31) 1 * * * 1 $\leftarrow$ (33) * * * 1 

\end{tcolorbox}
\begin{tcolorbox}[title={$(57,16)$}]
(3) ......4 5 5 5 3 6 6 5 3 $\leftarrow$ (6) .....4 5 5 5 3 6 6 5 3 

(3) 5 2 * 2 4 7 3 3 6 6 5 3 $\leftarrow$ (4) 8 ..1 ..4 7 3 3 6 6 5 3 

(9) 6 ........3 3 6 6 5 3 $\leftarrow$ (16) ........3 3 6 6 5 3 

(11) 10 .........4 5 3 3 3 $\leftarrow$ (12) 1.........4 5 3 3 3 

(15) 6 .........4 5 3 3 3 $\leftarrow$ (22) .........4 5 3 3 3 

(30) 1 1 * * * 1 $\leftarrow$ (32) 1 * * * 1 

\end{tcolorbox}
\begin{tcolorbox}[title={$(57,17)$}]
(1) .......4 5 5 5 3 6 6 5 3 $\leftarrow$ (4) ......4 5 5 5 3 6 6 5 3 

(3) 3 6 ......4 3 3 3 6 6 5 3 $\leftarrow$ (4) 5 2 * 2 4 7 3 3 6 6 5 3 

(7) 5 6 .........3 5 7 3 3 $\leftarrow$ (13) 6 .........3 5 7 3 3 

(9) 3 6 .........3 5 7 3 3 $\leftarrow$ (10) 6 ........3 3 6 6 5 3 

(11) 7 13 1 * * * 1 $\leftarrow$ (12) 10 .........4 5 3 3 3 

(13) 6 ..........4 5 3 3 3 $\leftarrow$ (19) 13 1 * * * 1 

(15) 4 ..........4 5 3 3 3 $\leftarrow$ (16) 6 .........4 5 3 3 3 

\end{tcolorbox}
\begin{tcolorbox}[title={$(58,2)$}]
(55) 3 $\leftarrow$ (59) 

\end{tcolorbox}
\begin{tcolorbox}[title={$(58,3)$}]
(27) 30 1 $\leftarrow$ (28) 31 

(43) 14 1 $\leftarrow$ (44) 15 

(51) 6 1 $\leftarrow$ (52) 7 

(52) 3 3 $\leftarrow$ (58) 1 

(55) 2 1 $\leftarrow$ (56) 3 

\end{tcolorbox}
\begin{tcolorbox}[title={$(58,4)$}]
(19) 21 11 7 $\leftarrow$ (21) 23 15 

(21) 23 7 7 $\leftarrow$ (25) 27 7 

(27) 13 11 7 $\leftarrow$ (29) 15 15 

(27) 29 1 1 $\leftarrow$ (28) 30 1 

(37) 7 7 7 $\leftarrow$ (41) 11 7 

(39) 5 7 7 $\leftarrow$ (45) 7 7 

(43) 13 1 1 $\leftarrow$ (44) 14 1 

(50) 2 3 3 $\leftarrow$ (57) 1 1 

(51) 5 1 1 $\leftarrow$ (52) 6 1 

(55) 1 1 1 $\leftarrow$ (56) 2 1 

\end{tcolorbox}
\begin{tcolorbox}[title={$(58,5)$}]
(13) 14 13 11 7 $\leftarrow$ (14) 15 15 15 

(17) 10 13 11 7 $\leftarrow$ (18) 11 15 15 

(18) 19 7 7 7 $\leftarrow$ (20) 21 11 7 

(19) 20 5 7 7 $\leftarrow$ (22) 23 7 7 

(25) 14 5 7 7 $\leftarrow$ (28) 13 11 7 

(27) 28 1 1 1 $\leftarrow$ (28) 29 1 1 

(35) 4 5 7 7 $\leftarrow$ (38) 7 7 7 

(36) 3 5 7 7 $\leftarrow$ (40) 5 7 7 

(43) 12 1 1 1 $\leftarrow$ (44) 13 1 1 

(49) 1 2 3 3 $\leftarrow$ (56) 1 1 1 

(51) 4 1 1 1 $\leftarrow$ (52) 5 1 1 

\end{tcolorbox}
\begin{tcolorbox}[title={$(58,6)$}]
(5) 5 7 11 15 15 $\leftarrow$ (9) 9 11 15 15 

(7) 7 13 13 11 7 $\leftarrow$ (14) 14 13 11 7 

(11) 7 11 15 7 7 $\leftarrow$ (19) 11 15 7 7 

(17) 9 11 7 7 7 $\leftarrow$ (18) 10 13 11 7 

(18) 18 3 5 7 7 $\leftarrow$ (20) 20 5 7 7 

(19) 7 13 5 7 7 $\leftarrow$ (21) 17 7 7 7 

(23) 5 9 7 7 7 $\leftarrow$ (27) 13 5 7 7 

(25) 11 3 5 7 7 $\leftarrow$ (26) 14 5 7 7 

(27) 24 4 1 1 1 $\leftarrow$ (28) 28 1 1 1 

(28) 12 9 3 3 3 

(34) 2 3 5 7 7 $\leftarrow$ (36) 4 5 7 7 

(37) 3 5 7 3 3 $\leftarrow$ (41) 5 7 3 3 

(41) 3 6 2 3 3 $\leftarrow$ (42) 8 3 3 3 

(43) 2 4 3 3 3 $\leftarrow$ (45) 6 2 3 3 

(43) 8 4 1 1 1 $\leftarrow$ (44) 12 1 1 1 

(47) 4 4 1 1 1 $\leftarrow$ (52) 4 1 1 1 

\end{tcolorbox}
\begin{tcolorbox}[title={$(58,7)$}]
(6) 5 7 11 15 7 7 $\leftarrow$ (11) 10 17 7 7 7 

(7) 6 9 15 7 7 7 $\leftarrow$ (8) 7 13 13 11 7 

(9) 8 9 11 7 7 7 $\leftarrow$ (10) 9 11 15 7 7 

(11) 6 9 11 7 7 7 $\leftarrow$ (12) 7 11 15 7 7 

(13) 12 11 3 5 7 7 $\leftarrow$ (14) 13 13 5 7 7 

(18) 5 5 9 7 7 7 $\leftarrow$ (20) 7 13 5 7 7 

(20) 3 5 9 7 7 7 $\leftarrow$ (24) 5 9 7 7 7 

(21) 13 2 3 5 7 7 $\leftarrow$ (22) 14 4 5 7 7 

(22) 5 9 3 5 7 7 $\leftarrow$ (28) 9 3 5 7 7 

(26) 9 3 6 6 5 3 

(27) 22 * 1 $\leftarrow$ (28) 24 4 1 1 1 

(35) 2 3 5 7 3 3 $\leftarrow$ (36) 3 6 6 5 3 

(41) ..4 3 3 3 $\leftarrow$ (42) 3 6 2 3 3 

(42) 1 2 4 3 3 3 $\leftarrow$ (44) 2 4 3 3 3 

(43) 6 * 1 $\leftarrow$ (44) 8 4 1 1 1 

(44) 3 4 4 1 1 1 $\leftarrow$ (50) * 1 

(47) 2 * 1 $\leftarrow$ (48) 4 4 1 1 1 

\end{tcolorbox}
\begin{tcolorbox}[title={$(58,8)$}]
(1) 27 12 4 5 3 3 3 

(3) 1 2 4 7 11 15 15 $\leftarrow$ (5) 2 4 7 11 15 15 

(5) 3 5 9 15 7 7 7 $\leftarrow$ (9) 5 9 15 7 7 7 

(7) 4 6 9 11 7 7 7 $\leftarrow$ (8) 6 9 15 7 7 7 

(11) 9 3 5 9 7 7 7 $\leftarrow$ (13) 11 5 9 7 7 7 

(13) 7 8 12 9 3 3 3 $\leftarrow$ (14) 12 11 3 5 7 7 

(19) 3 5 9 3 5 7 7 $\leftarrow$ (21) 8 12 9 3 3 3 

(21) 3 5 7 3 5 7 7 $\leftarrow$ (25) 5 7 3 5 7 7 

(21) 7 12 4 5 3 3 3 $\leftarrow$ (22) 13 2 3 5 7 7 

(23) 6 5 2 3 5 7 7 $\leftarrow$ (29) 12 4 5 3 3 3 

(25) 5 5 3 6 6 5 3 

(27) 21 1 * 1 $\leftarrow$ (28) 22 * 1 

(41) 1 1 2 4 3 3 3 $\leftarrow$ (42) ..4 3 3 3 

(42) 2 3 4 4 1 1 1 $\leftarrow$ (49) 1 * 1 

(43) 5 1 * 1 $\leftarrow$ (44) 6 * 1 

(47) 1 1 * 1 $\leftarrow$ (48) 2 * 1 

\end{tcolorbox}
\begin{tcolorbox}[title={$(58,9)$}]
(1) 6 4 6 9 11 7 7 7 $\leftarrow$ (4) 1 2 4 7 11 15 15 

(1) 18 3 5 9 3 5 7 7 $\leftarrow$ (2) 27 12 4 5 3 3 3 

(6) 4 8 5 5 9 7 7 7 $\leftarrow$ (12) 9 3 5 9 7 7 7 

(7) 3 5 10 11 3 5 7 7 $\leftarrow$ (11) 5 10 11 3 5 7 7 

(11) 4 5 3 5 9 7 7 7 $\leftarrow$ (13) 8 3 5 9 7 7 7 

(11) 8 3 5 9 3 5 7 7 $\leftarrow$ (14) 7 8 12 9 3 3 3 

(21) 4 7 3 3 6 6 5 3 $\leftarrow$ (27) 6 3 3 6 6 5 3 

(21) 5 6 3 3 6 6 5 3 $\leftarrow$ (22) 7 12 4 5 3 3 3 

(23) 3 6 3 3 6 6 5 3 $\leftarrow$ (24) 6 5 2 3 5 7 7 

(27) 20 1 1 * 1 $\leftarrow$ (28) 21 1 * 1 

(29) 3 6 2 4 5 3 3 3 $\leftarrow$ (30) 6 2 3 5 7 3 3 

(35) 12 1 1 * 1 $\leftarrow$ (36) 13 1 * 1 

(41) 1 2 3 4 4 1 1 1 $\leftarrow$ (48) 1 1 * 1 

(43) 4 1 1 * 1 $\leftarrow$ (44) 5 1 * 1 

\end{tcolorbox}
\begin{tcolorbox}[title={$(58,10)$}]
(1) 4 2 4 6 9 11 7 7 7 $\leftarrow$ (2) 6 4 6 9 11 7 7 7 

(1) 11 9 3 5 7 3 5 7 7 $\leftarrow$ (2) 18 3 5 9 3 5 7 7 

(2) 4 4 8 5 5 9 7 7 7 $\leftarrow$ (8) 3 5 10 11 3 5 7 7 

(5) 2 3 5 10 11 3 5 7 7 

(7) 5 9 3 5 7 3 5 7 7 $\leftarrow$ (13) 9 3 5 7 3 5 7 7 

(10) 2 3 5 3 5 9 7 7 7 $\leftarrow$ (12) 4 5 3 5 9 7 7 7 

(10) 4 5 3 5 9 3 5 7 7 $\leftarrow$ (12) 8 3 5 9 3 5 7 7 

(19) 2 4 7 3 3 6 6 5 3 $\leftarrow$ (22) 4 7 3 3 6 6 5 3 

(20) 2 3 5 5 3 6 6 5 3 $\leftarrow$ (25) 5 6 2 3 5 7 3 3 

(21) 3 5 6 2 3 5 7 3 3 $\leftarrow$ (22) 5 6 3 3 6 6 5 3 

(23) 2 4 3 3 3 6 6 5 3 $\leftarrow$ (24) 3 6 3 3 6 6 5 3 

(27) 3 6 ..4 5 3 3 3 $\leftarrow$ (28) 5 6 2 4 5 3 3 3 

(27) 16 4 1 1 * 1 $\leftarrow$ (28) 20 1 1 * 1 

(29) ....3 5 7 3 3 $\leftarrow$ (30) 3 6 2 4 5 3 3 3 

(33) 3 6 2 3 4 4 1 1 1 $\leftarrow$ (34) 8 1 1 2 4 3 3 3 

(35) * 2 4 3 3 3 $\leftarrow$ (37) 6 2 3 4 4 1 1 1 

(35) 8 4 1 1 * 1 $\leftarrow$ (36) 12 1 1 * 1 

(39) 4 4 1 1 * 1 $\leftarrow$ (44) 4 1 1 * 1 

\end{tcolorbox}
\begin{tcolorbox}[title={$(58,11)$}]
(1) ...4 6 9 11 7 7 7 $\leftarrow$ (2) 4 2 4 6 9 11 7 7 7 

(2) 5 5 9 3 5 7 3 5 7 7 $\leftarrow$ (8) 5 9 3 5 7 3 5 7 7 

(4) 1 2 3 5 10 11 3 5 7 7 

(11) 4 3 5 6 5 2 3 5 7 7 $\leftarrow$ (14) 5 5 6 5 2 3 5 7 7 

(17) ..4 7 3 3 6 6 5 3 $\leftarrow$ (24) 2 4 3 3 3 6 6 5 3 

(18) 1 2 4 7 3 3 6 6 5 3 $\leftarrow$ (20) 2 4 7 3 3 6 6 5 3 

(21) ..4 3 3 3 6 6 5 3 $\leftarrow$ (22) 3 5 6 2 3 5 7 3 3 

(27) .....3 5 7 3 3 $\leftarrow$ (28) 3 6 ..4 5 3 3 3 

(27) 14 * * 1 $\leftarrow$ (28) 16 4 1 1 * 1 

(33) 2 * 2 4 3 3 3 $\leftarrow$ (34) 3 6 2 3 4 4 1 1 1 

(34) 1 * 2 4 3 3 3 $\leftarrow$ (36) * 2 4 3 3 3 

(35) 6 * * 1 $\leftarrow$ (36) 8 4 1 1 * 1 

(36) 3 4 4 1 1 * 1 $\leftarrow$ (42) * * 1 

(39) 2 * * 1 $\leftarrow$ (40) 4 4 1 1 * 1 

\end{tcolorbox}
\begin{tcolorbox}[title={$(58,12)$}]
(1) 4 5 4 5 3 5 9 3 5 7 7 

(3) 5 5 5 5 6 5 2 3 5 7 7 $\leftarrow$ (9) 5 5 5 6 5 2 3 5 7 7 

(5) 16 ..4 3 3 3 6 6 5 3 $\leftarrow$ (6) 18 2 4 3 3 3 6 6 5 3 

(9) 2 4 3 5 6 5 2 3 5 7 7 $\leftarrow$ (12) 4 3 5 6 5 2 3 5 7 7 

(15) 6 ..4 3 3 3 6 6 5 3 $\leftarrow$ (22) ..4 3 3 3 6 6 5 3 

(17) 1 1 2 4 7 3 3 6 6 5 3 $\leftarrow$ (18) ..4 7 3 3 6 6 5 3 

(27) 13 1 * * 1 $\leftarrow$ (28) 14 * * 1 

(33) 1 1 * 2 4 3 3 3 $\leftarrow$ (34) 2 * 2 4 3 3 3 

(34) 2 3 4 4 1 1 * 1 $\leftarrow$ (41) 1 * * 1 

(35) 5 1 * * 1 $\leftarrow$ (36) 6 * * 1 

(39) 1 1 * * 1 $\leftarrow$ (40) 2 * * 1 

\end{tcolorbox}
\begin{tcolorbox}[title={$(58,13)$}]
(1) 4 3 5 5 5 6 5 2 3 5 7 7 $\leftarrow$ (4) 5 5 5 5 6 5 2 3 5 7 7 

(3) 1 2 4 2 3 5 3 5 9 7 7 7 $\leftarrow$ (5) 2 4 2 3 5 3 5 9 7 7 7 

(5) 13 6 ....3 3 6 6 5 3 $\leftarrow$ (6) 16 ..4 3 3 3 6 6 5 3 

(7) ..4 3 5 6 5 2 3 5 7 7 $\leftarrow$ (10) 2 4 3 5 6 5 2 3 5 7 7 

(13) 4 1 1 2 4 7 3 3 6 6 5 3 $\leftarrow$ (19) 6 ....3 3 6 6 5 3 

(15) 3 6 ....3 3 6 6 5 3 $\leftarrow$ (16) 6 ..4 3 3 3 6 6 5 3 

(21) 3 6 .....4 5 3 3 3 $\leftarrow$ (22) 6 .....3 5 7 3 3 

(27) 12 1 1 * * 1 $\leftarrow$ (28) 13 1 * * 1 

(33) 1 2 3 4 4 1 1 * 1 $\leftarrow$ (40) 1 1 * * 1 

(35) 4 1 1 * * 1 $\leftarrow$ (36) 5 1 * * 1 

\end{tcolorbox}
\begin{tcolorbox}[title={$(58,14)$}]
(2) 1 1 2 4 2 3 5 3 5 9 7 7 7 $\leftarrow$ (4) 1 2 4 2 3 5 3 5 9 7 7 7 

(2) ..4 3 5 5 6 5 2 3 5 7 7 

(5) ...4 3 5 6 5 2 3 5 7 7 $\leftarrow$ (8) ..4 3 5 6 5 2 3 5 7 7 

(5) 11 5 6 .....3 5 7 3 3 $\leftarrow$ (6) 13 6 ....3 3 6 6 5 3 

(11) * 2 4 7 3 3 6 6 5 3 $\leftarrow$ (14) 4 1 1 2 4 7 3 3 6 6 5 3 

(11) 24 4 1 1 * * 1 $\leftarrow$ (17) 5 6 .....3 5 7 3 3 

(15) .....4 3 3 3 6 6 5 3 $\leftarrow$ (16) 3 6 ....3 3 6 6 5 3 

(19) 3 6 ......4 5 3 3 3 $\leftarrow$ (20) 5 6 .....4 5 3 3 3 

(21) ........3 5 7 3 3 $\leftarrow$ (22) 3 6 .....4 5 3 3 3 

(25) 3 6 2 3 4 4 1 1 * 1 $\leftarrow$ (26) 8 1 1 * 2 4 3 3 3 

(27) * * 2 4 3 3 3 $\leftarrow$ (29) 6 2 3 4 4 1 1 * 1 

(27) 8 4 1 1 * * 1 $\leftarrow$ (28) 12 1 1 * * 1 

(31) 4 4 1 1 * * 1 $\leftarrow$ (36) 4 1 1 * * 1 

\end{tcolorbox}
\begin{tcolorbox}[title={$(58,15)$}]
(3) ....4 3 5 6 5 2 3 5 7 7 $\leftarrow$ (6) ...4 3 5 6 5 2 3 5 7 7 

(5) 10 .....4 3 3 3 6 6 5 3 $\leftarrow$ (6) 11 5 6 .....3 5 7 3 3 

(9) 2 * 2 4 7 3 3 6 6 5 3 $\leftarrow$ (16) .....4 3 3 3 6 6 5 3 

(10) 1 * 2 4 7 3 3 6 6 5 3 $\leftarrow$ (12) * 2 4 7 3 3 6 6 5 3 

(11) 22 * * * 1 $\leftarrow$ (12) 24 4 1 1 * * 1 

(19) .........3 5 7 3 3 $\leftarrow$ (20) 3 6 ......4 5 3 3 3 

(25) 2 * * 2 4 3 3 3 $\leftarrow$ (26) 3 6 2 3 4 4 1 1 * 1 

(26) 1 * * 2 4 3 3 3 $\leftarrow$ (28) * * 2 4 3 3 3 

(27) 6 * * * 1 $\leftarrow$ (28) 8 4 1 1 * * 1 

(28) 3 4 4 1 1 * * 1 $\leftarrow$ (34) * * * 1 

(31) 2 * * * 1 $\leftarrow$ (32) 4 4 1 1 * * 1 

\end{tcolorbox}
\begin{tcolorbox}[title={$(58,16)$}]
(1) .....4 3 5 6 5 2 3 5 7 7 $\leftarrow$ (4) ....4 3 5 6 5 2 3 5 7 7 

(5) 7 1.........4 5 3 3 3 $\leftarrow$ (6) 10 .....4 3 3 3 6 6 5 3 

(7) 6 ......4 3 3 3 6 6 5 3 $\leftarrow$ (13) 1.........4 5 3 3 3 

(9) 1 1 * 2 4 7 3 3 6 6 5 3 $\leftarrow$ (10) 2 * 2 4 7 3 3 6 6 5 3 

(11) 21 1 * * * 1 $\leftarrow$ (12) 22 * * * 1 

(25) 1 1 * * 2 4 3 3 3 $\leftarrow$ (26) 2 * * 2 4 3 3 3 

(26) 2 3 4 4 1 1 * * 1 $\leftarrow$ (33) 1 * * * 1 

(27) 5 1 * * * 1 $\leftarrow$ (28) 6 * * * 1 

(31) 1 1 * * * 1 $\leftarrow$ (32) 2 * * * 1 

\end{tcolorbox}
\begin{tcolorbox}[title={$(59,3)$}]
(27) 29 3 $\leftarrow$ (29) 31 

(43) 13 3 $\leftarrow$ (45) 15 

(51) 5 3 $\leftarrow$ (53) 7 

(53) 3 3 $\leftarrow$ (57) 3 

\end{tcolorbox}
\begin{tcolorbox}[title={$(59,4)$}]
(21) 22 13 3 $\leftarrow$ (22) 23 15 

(25) 26 5 3 $\leftarrow$ (26) 27 7 

(26) 27 3 3 $\leftarrow$ (28) 29 3 

(29) 14 13 3 $\leftarrow$ (30) 15 15 

(41) 10 5 3 $\leftarrow$ (42) 11 7 

(42) 11 3 3 $\leftarrow$ (44) 13 3 

(45) 6 5 3 $\leftarrow$ (46) 7 7 

(50) 3 3 3 $\leftarrow$ (52) 5 3 

(51) 2 3 3 $\leftarrow$ (54) 3 3 

\end{tcolorbox}
\begin{tcolorbox}[title={$(59,5)$}]
(11) 7 11 15 15 $\leftarrow$ (19) 11 15 15 

(15) 13 13 11 7 $\leftarrow$ (29) 13 11 7 

(19) 19 7 7 7 $\leftarrow$ (21) 21 11 7 

(21) 21 11 3 3 $\leftarrow$ (22) 22 13 3 

(25) 25 3 3 3 $\leftarrow$ (26) 26 5 3 

(29) 13 11 3 3 $\leftarrow$ (30) 14 13 3 

(37) 3 5 7 7 $\leftarrow$ (41) 5 7 7 

(39) 6 6 5 3 $\leftarrow$ (46) 6 5 3 

(41) 9 3 3 3 $\leftarrow$ (42) 10 5 3 

(50) 1 2 3 3 $\leftarrow$ (52) 2 3 3 

\end{tcolorbox}
\begin{tcolorbox}[title={$(59,6)$}]
(6) 5 7 11 15 15 $\leftarrow$ (10) 9 11 15 15 

(7) 4 7 11 15 15 $\leftarrow$ (12) 7 11 15 15 

(14) 9 15 7 7 7 $\leftarrow$ (27) 14 5 7 7 

(15) 11 14 5 7 7 $\leftarrow$ (16) 13 13 11 7 

(18) 9 11 7 7 7 $\leftarrow$ (22) 17 7 7 7 

(19) 7 14 5 7 7 $\leftarrow$ (20) 11 15 7 7 

(19) 18 3 5 7 7 $\leftarrow$ (21) 20 5 7 7 

(21) 20 9 3 3 3 $\leftarrow$ (22) 21 11 3 3 

(26) 11 3 5 7 7 $\leftarrow$ (28) 13 5 7 7 

(29) 12 9 3 3 3 $\leftarrow$ (30) 13 11 3 3 

(35) 2 3 5 7 7 $\leftarrow$ (37) 4 5 7 7 

(38) 3 5 7 3 3 $\leftarrow$ (43) 8 3 3 3 

(39) 4 7 3 3 3 $\leftarrow$ (40) 6 6 5 3 

(41) 4 5 3 3 3 $\leftarrow$ (42) 5 7 3 3 

(45) 5 1 2 3 3 $\leftarrow$ (46) 6 2 3 3 

\end{tcolorbox}
\begin{tcolorbox}[title={$(59,7)$}]
(1) 28 12 9 3 3 3 $\leftarrow$ (8) 4 7 11 15 15 

(7) 5 7 11 15 7 7 $\leftarrow$ (9) 7 13 13 11 7 

(10) 8 9 11 7 7 7 $\leftarrow$ (23) 14 4 5 7 7 

(11) 9 7 13 5 7 7 $\leftarrow$ (12) 10 17 7 7 7 

(12) 6 9 11 7 7 7 $\leftarrow$ (21) 7 13 5 7 7 

(15) 7 14 4 5 7 7 $\leftarrow$ (16) 11 14 5 7 7 

(17) 12 12 9 3 3 3 $\leftarrow$ (20) 18 3 5 7 7 

(19) 5 5 9 7 7 7 $\leftarrow$ (20) 7 14 5 7 7 

(19) 17 3 6 6 5 3 $\leftarrow$ (22) 20 9 3 3 3 

(21) 3 5 9 7 7 7 $\leftarrow$ (25) 5 9 7 7 7 

(23) 5 9 3 5 7 7 $\leftarrow$ (29) 9 3 5 7 7 

(27) 9 3 6 6 5 3 $\leftarrow$ (30) 12 9 3 3 3 

(33) 3 3 6 6 5 3 $\leftarrow$ (36) 2 3 5 7 7 

(36) 2 3 5 7 3 3 $\leftarrow$ (42) 4 5 3 3 3 

(39) 2 4 5 3 3 3 $\leftarrow$ (40) 4 7 3 3 3 

(43) 1 2 4 3 3 3 $\leftarrow$ (45) 2 4 3 3 3 

(45) 3 4 4 1 1 1 $\leftarrow$ (46) 5 1 2 3 3 

\end{tcolorbox}
\begin{tcolorbox}[title={$(59,8)$}]
(1) 26 9 3 6 6 5 3 $\leftarrow$ (2) 28 12 9 3 3 3 

(6) 3 5 9 15 7 7 7 $\leftarrow$ (10) 5 9 15 7 7 7 

(8) 4 6 9 11 7 7 7 $\leftarrow$ (16) 7 14 4 5 7 7 

(11) 8 5 5 9 7 7 7 $\leftarrow$ (12) 9 7 13 5 7 7 

(17) 10 9 3 6 6 5 3 $\leftarrow$ (18) 12 12 9 3 3 3 

(18) 11 12 4 5 3 3 3 $\leftarrow$ (20) 17 3 6 6 5 3 

(20) 3 5 9 3 5 7 7 $\leftarrow$ (26) 5 7 3 5 7 7 

(21) 6 9 3 6 6 5 3 $\leftarrow$ (22) 8 12 9 3 3 3 

(22) 3 5 7 3 5 7 7 $\leftarrow$ (24) 5 9 3 5 7 7 

(26) 5 5 3 6 6 5 3 $\leftarrow$ (28) 9 3 6 6 5 3 

(29) 10 2 4 5 3 3 3 $\leftarrow$ (30) 12 4 5 3 3 3 

(33) 6 2 4 5 3 3 3 $\leftarrow$ (40) 2 4 5 3 3 3 

(42) 1 1 2 4 3 3 3 $\leftarrow$ (44) 1 2 4 3 3 3 

(43) 2 3 4 4 1 1 1 $\leftarrow$ (46) 3 4 4 1 1 1 

\end{tcolorbox}
\begin{tcolorbox}[title={$(59,9)$}]
(1) 25 5 5 3 6 6 5 3 $\leftarrow$ (2) 26 9 3 6 6 5 3 

(5) 13 11 12 4 5 3 3 3 $\leftarrow$ (14) 8 3 5 9 7 7 7 

(7) 4 8 5 5 9 7 7 7 $\leftarrow$ (12) 8 5 5 9 7 7 7 

(11) 4 6 3 5 9 7 7 7 $\leftarrow$ (12) 5 10 11 3 5 7 7 

(17) 9 5 5 3 6 6 5 3 $\leftarrow$ (18) 10 9 3 6 6 5 3 

(19) 5 6 5 2 3 5 7 7 $\leftarrow$ (25) 6 5 2 3 5 7 7 

(21) 3 6 5 2 3 5 7 7 $\leftarrow$ (22) 6 9 3 6 6 5 3 

(27) 3 6 2 3 5 7 3 3 $\leftarrow$ (28) 6 3 3 6 6 5 3 

(29) 7 13 1 * 1 $\leftarrow$ (30) 10 2 4 5 3 3 3 

(31) 6 ..4 5 3 3 3 $\leftarrow$ (37) 13 1 * 1 

(33) 4 ..4 5 3 3 3 $\leftarrow$ (34) 6 2 4 5 3 3 3 

(42) 1 2 3 4 4 1 1 1 $\leftarrow$ (44) 2 3 4 4 1 1 1 

\end{tcolorbox}
\begin{tcolorbox}[title={$(59,10)$}]
(2) 11 9 3 5 7 3 5 7 7 $\leftarrow$ (9) 3 5 10 11 3 5 7 7 

(3) 4 4 8 5 5 9 7 7 7 $\leftarrow$ (8) 4 8 5 5 9 7 7 7 

(5) 12 9 5 5 3 6 6 5 3 $\leftarrow$ (6) 13 11 12 4 5 3 3 3 

(6) 2 3 5 10 11 3 5 7 7 

(11) 2 3 5 3 5 9 7 7 7 $\leftarrow$ (12) 4 6 3 5 9 7 7 7 

(11) 4 5 3 5 9 3 5 7 7 $\leftarrow$ (13) 8 3 5 9 3 5 7 7 

(17) 4 5 5 5 3 6 6 5 3 $\leftarrow$ (20) 5 6 5 2 3 5 7 7 

(21) 2 3 5 5 3 6 6 5 3 $\leftarrow$ (22) 3 6 5 2 3 5 7 7 

(25) 3 5 6 2 4 5 3 3 3 $\leftarrow$ (26) 5 6 2 3 5 7 3 3 

(27) ...3 3 6 6 5 3 $\leftarrow$ (28) 3 6 2 3 5 7 3 3 

(29) 5 8 1 1 2 4 3 3 3 $\leftarrow$ (30) 7 13 1 * 1 

(30) ....3 5 7 3 3 $\leftarrow$ (35) 8 1 1 2 4 3 3 3 

(31) 4 ...4 5 3 3 3 $\leftarrow$ (32) 6 ..4 5 3 3 3 

(33) ....4 5 3 3 3 $\leftarrow$ (34) 4 ..4 5 3 3 3 

(37) 5 1 2 3 4 4 1 1 1 $\leftarrow$ (38) 6 2 3 4 4 1 1 1 

\end{tcolorbox}
\begin{tcolorbox}[title={$(59,11)$}]
(2) ...4 6 9 11 7 7 7 $\leftarrow$ (4) 4 4 8 5 5 9 7 7 7 

(3) 5 5 9 3 5 7 3 5 7 7 $\leftarrow$ (9) 5 9 3 5 7 3 5 7 7 

(3) 8 4 5 3 5 9 3 5 7 7 

(5) 1 2 3 5 10 11 3 5 7 7 $\leftarrow$ (6) 12 9 5 5 3 6 6 5 3 

(5) 16 2 3 5 5 3 6 6 5 3 $\leftarrow$ (12) 4 5 3 5 9 3 5 7 7 

(15) 2 4 5 5 5 3 6 6 5 3 $\leftarrow$ (18) 4 5 5 5 3 6 6 5 3 

(19) 1 2 4 7 3 3 6 6 5 3 $\leftarrow$ (21) 2 4 7 3 3 6 6 5 3 

(25) ....3 3 6 6 5 3 $\leftarrow$ (26) 3 5 6 2 4 5 3 3 3 

(28) .....3 5 7 3 3 $\leftarrow$ (34) ....4 5 3 3 3 

(29) 4 ....4 5 3 3 3 $\leftarrow$ (30) 5 8 1 1 2 4 3 3 3 

(31) .....4 5 3 3 3 $\leftarrow$ (32) 4 ...4 5 3 3 3 

(35) 1 * 2 4 3 3 3 $\leftarrow$ (37) * 2 4 3 3 3 

(37) 3 4 4 1 1 * 1 $\leftarrow$ (38) 5 1 2 3 4 4 1 1 1 

\end{tcolorbox}
\begin{tcolorbox}[title={$(59,12)$}]
(2) 4 5 4 5 3 5 9 3 5 7 7 

(3) 4 4 2 3 5 3 5 9 7 7 7 $\leftarrow$ (4) 8 4 5 3 5 9 3 5 7 7 

(5) 13 ..4 7 3 3 6 6 5 3 $\leftarrow$ (6) 16 2 3 5 5 3 6 6 5 3 

(7) 4 3 5 5 6 5 2 3 5 7 7 $\leftarrow$ (10) 5 5 5 6 5 2 3 5 7 7 

(13) ..4 5 5 5 3 6 6 5 3 $\leftarrow$ (16) 2 4 5 5 5 3 6 6 5 3 

(18) 1 1 2 4 7 3 3 6 6 5 3 $\leftarrow$ (20) 1 2 4 7 3 3 6 6 5 3 

(25) 6 .....4 5 3 3 3 $\leftarrow$ (32) .....4 5 3 3 3 

(29) ......4 5 3 3 3 $\leftarrow$ (30) 4 ....4 5 3 3 3 

(34) 1 1 * 2 4 3 3 3 $\leftarrow$ (36) 1 * 2 4 3 3 3 

(35) 2 3 4 4 1 1 * 1 $\leftarrow$ (38) 3 4 4 1 1 * 1 

\end{tcolorbox}
\begin{tcolorbox}[title={$(59,13)$}]
(2) 4 3 5 5 5 6 5 2 3 5 7 7 

(3) ..4 2 3 5 3 5 9 7 7 7 $\leftarrow$ (4) 4 4 2 3 5 3 5 9 7 7 7 

(5) 2 4 3 5 5 6 5 2 3 5 7 7 $\leftarrow$ (8) 4 3 5 5 6 5 2 3 5 7 7 

(5) 11 6 ..4 3 3 3 6 6 5 3 $\leftarrow$ (6) 13 ..4 7 3 3 6 6 5 3 

(11) ...4 5 5 5 3 6 6 5 3 $\leftarrow$ (14) ..4 5 5 5 3 6 6 5 3 

(19) 3 6 .....3 5 7 3 3 $\leftarrow$ (20) 6 ....3 3 6 6 5 3 

(23) 6 ......4 5 3 3 3 $\leftarrow$ (30) ......4 5 3 3 3 

(25) 4 ......4 5 3 3 3 $\leftarrow$ (26) 6 .....4 5 3 3 3 

(34) 1 2 3 4 4 1 1 * 1 $\leftarrow$ (36) 2 3 4 4 1 1 * 1 

\end{tcolorbox}
\begin{tcolorbox}[title={$(59,14)$}]
(3) 1 1 2 4 2 3 5 3 5 9 7 7 7 $\leftarrow$ (4) ..4 2 3 5 3 5 9 7 7 7 

(3) ..4 3 5 5 6 5 2 3 5 7 7 $\leftarrow$ (6) 2 4 3 5 5 6 5 2 3 5 7 7 

(5) 9 4 1 1 2 4 7 3 3 6 6 5 3 $\leftarrow$ (6) 11 6 ..4 3 3 3 6 6 5 3 

(9) ....4 5 5 5 3 6 6 5 3 $\leftarrow$ (12) ...4 5 5 5 3 6 6 5 3 

(17) 3 5 6 .....4 5 3 3 3 $\leftarrow$ (18) 5 6 .....3 5 7 3 3 

(19) .......3 3 6 6 5 3 $\leftarrow$ (20) 3 6 .....3 5 7 3 3 

(22) ........3 5 7 3 3 $\leftarrow$ (27) 8 1 1 * 2 4 3 3 3 

(23) 4 .......4 5 3 3 3 $\leftarrow$ (24) 6 ......4 5 3 3 3 

(25) ........4 5 3 3 3 $\leftarrow$ (26) 4 ......4 5 3 3 3 

(29) 5 1 2 3 4 4 1 1 * 1 $\leftarrow$ (30) 6 2 3 4 4 1 1 * 1 

\end{tcolorbox}
\begin{tcolorbox}[title={$(59,15)$}]
(1) ...4 3 5 5 6 5 2 3 5 7 7 $\leftarrow$ (4) ..4 3 5 5 6 5 2 3 5 7 7 

(5) 8 ..1 ..4 7 3 3 6 6 5 3 $\leftarrow$ (6) 9 4 1 1 2 4 7 3 3 6 6 5 3 

(7) .....4 5 5 5 3 6 6 5 3 $\leftarrow$ (10) ....4 5 5 5 3 6 6 5 3 

(11) 1 * 2 4 7 3 3 6 6 5 3 $\leftarrow$ (13) * 2 4 7 3 3 6 6 5 3 

(17) ........3 3 6 6 5 3 $\leftarrow$ (18) 3 5 6 .....4 5 3 3 3 

(20) .........3 5 7 3 3 $\leftarrow$ (26) ........4 5 3 3 3 

(23) .........4 5 3 3 3 $\leftarrow$ (24) 4 .......4 5 3 3 3 

(27) 1 * * 2 4 3 3 3 $\leftarrow$ (29) * * 2 4 3 3 3 

(29) 3 4 4 1 1 * * 1 $\leftarrow$ (30) 5 1 2 3 4 4 1 1 * 1 

\end{tcolorbox}
\begin{tcolorbox}[title={$(60,2)$}]
(59) 1 $\leftarrow$ (61) 

\end{tcolorbox}
\begin{tcolorbox}[title={$(60,3)$}]
(29) 30 1 $\leftarrow$ (30) 31 

(45) 14 1 $\leftarrow$ (46) 15 

(53) 6 1 $\leftarrow$ (54) 7 

(57) 2 1 $\leftarrow$ (58) 3 

(58) 1 1 $\leftarrow$ (60) 1 

\end{tcolorbox}
\begin{tcolorbox}[title={$(60,4)$}]
(15) 15 15 15 $\leftarrow$ (23) 23 15 

(23) 23 7 7 $\leftarrow$ (27) 27 7 

(27) 27 3 3 $\leftarrow$ (29) 29 3 

(29) 29 1 1 $\leftarrow$ (30) 30 1 

(39) 7 7 7 $\leftarrow$ (43) 11 7 

(43) 11 3 3 $\leftarrow$ (45) 13 3 

(45) 13 1 1 $\leftarrow$ (46) 14 1 

(51) 3 3 3 $\leftarrow$ (53) 5 3 

(53) 5 1 1 $\leftarrow$ (54) 6 1 

(57) 1 1 1 $\leftarrow$ (58) 2 1 

\end{tcolorbox}
\begin{tcolorbox}[title={$(60,5)$}]
(15) 14 13 11 7 $\leftarrow$ (16) 15 15 15 

(19) 10 13 11 7 $\leftarrow$ (20) 11 15 15 

(20) 19 7 7 7 $\leftarrow$ (24) 23 7 7 

(26) 25 3 3 3 $\leftarrow$ (28) 27 3 3 

(29) 28 1 1 1 $\leftarrow$ (30) 29 1 1 

(38) 3 5 7 7 $\leftarrow$ (42) 5 7 7 

(42) 9 3 3 3 $\leftarrow$ (44) 11 3 3 

(45) 12 1 1 1 $\leftarrow$ (46) 13 1 1 

(51) 1 2 3 3 $\leftarrow$ (53) 2 3 3 

(53) 4 1 1 1 $\leftarrow$ (54) 5 1 1 

\end{tcolorbox}
\begin{tcolorbox}[title={$(60,6)$}]
(7) 5 7 11 15 15 $\leftarrow$ (11) 9 11 15 15 

(11) 9 11 15 7 7 $\leftarrow$ (23) 17 7 7 7 

(13) 7 11 15 7 7 $\leftarrow$ (21) 11 15 7 7 

(15) 9 15 7 7 7 $\leftarrow$ (17) 13 13 11 7 

(15) 13 13 5 7 7 $\leftarrow$ (22) 20 5 7 7 

(19) 9 11 7 7 7 $\leftarrow$ (20) 10 13 11 7 

(27) 11 3 5 7 7 $\leftarrow$ (28) 14 5 7 7 

(29) 24 4 1 1 1 $\leftarrow$ (30) 28 1 1 1 

(37) 3 6 6 5 3 $\leftarrow$ (43) 5 7 3 3 

(39) 3 5 7 3 3 $\leftarrow$ (41) 6 6 5 3 

(43) 3 6 2 3 3 $\leftarrow$ (44) 8 3 3 3 

(45) 8 4 1 1 1 $\leftarrow$ (46) 12 1 1 1 

(49) 4 4 1 1 1 $\leftarrow$ (52) 1 2 3 3 

(51) * 1 $\leftarrow$ (54) 4 1 1 1 

\end{tcolorbox}
\begin{tcolorbox}[title={$(60,7)$}]
(6) 2 4 7 11 15 15 

(8) 5 7 11 15 7 7 $\leftarrow$ (16) 9 15 7 7 7 

(9) 6 9 15 7 7 7 $\leftarrow$ (10) 7 13 13 11 7 

(11) 8 9 11 7 7 7 $\leftarrow$ (12) 9 11 15 7 7 

(13) 6 9 11 7 7 7 $\leftarrow$ (14) 7 11 15 7 7 

(14) 11 5 9 7 7 7 $\leftarrow$ (21) 18 3 5 7 7 

(15) 12 11 3 5 7 7 $\leftarrow$ (16) 13 13 5 7 7 

(20) 5 5 9 7 7 7 

(22) 3 5 9 7 7 7 $\leftarrow$ (26) 5 9 7 7 7 

(23) 13 2 3 5 7 7 $\leftarrow$ (24) 14 4 5 7 7 

(29) 22 * 1 $\leftarrow$ (30) 24 4 1 1 1 

(34) 3 3 6 6 5 3 $\leftarrow$ (40) 3 5 7 3 3 

(37) 2 3 5 7 3 3 $\leftarrow$ (38) 3 6 6 5 3 

(43) ..4 3 3 3 $\leftarrow$ (44) 3 6 2 3 3 

(45) 6 * 1 $\leftarrow$ (46) 8 4 1 1 1 

(49) 2 * 1 $\leftarrow$ (50) 4 4 1 1 1 

(50) 1 * 1 $\leftarrow$ (52) * 1 

\end{tcolorbox}
\begin{tcolorbox}[title={$(60,8)$}]
(3) 27 12 4 5 3 3 3 

(5) 1 2 4 7 11 15 15 

(7) 3 5 9 15 7 7 7 $\leftarrow$ (11) 5 9 15 7 7 7 

(9) 4 6 9 11 7 7 7 $\leftarrow$ (10) 6 9 15 7 7 7 

(13) 9 3 5 9 7 7 7 $\leftarrow$ (19) 12 12 9 3 3 3 

(15) 7 8 12 9 3 3 3 $\leftarrow$ (16) 12 11 3 5 7 7 

(19) 11 12 4 5 3 3 3 $\leftarrow$ (21) 17 3 6 6 5 3 

(21) 3 5 9 3 5 7 7 $\leftarrow$ (27) 5 7 3 5 7 7 

(23) 3 5 7 3 5 7 7 $\leftarrow$ (25) 5 9 3 5 7 7 

(23) 7 12 4 5 3 3 3 $\leftarrow$ (24) 13 2 3 5 7 7 

(27) 5 5 3 6 6 5 3 $\leftarrow$ (29) 9 3 6 6 5 3 

(29) 21 1 * 1 $\leftarrow$ (30) 22 * 1 

(31) 6 2 3 5 7 3 3 $\leftarrow$ (38) 2 3 5 7 3 3 

(43) 1 1 2 4 3 3 3 $\leftarrow$ (44) ..4 3 3 3 

(45) 5 1 * 1 $\leftarrow$ (46) 6 * 1 

(49) 1 1 * 1 $\leftarrow$ (50) 2 * 1 

\end{tcolorbox}
\begin{tcolorbox}[title={$(60,9)$}]
(2) 25 5 5 3 6 6 5 3 

(3) 6 4 6 9 11 7 7 7 $\leftarrow$ (16) 7 8 12 9 3 3 3 

(3) 18 3 5 9 3 5 7 7 $\leftarrow$ (4) 27 12 4 5 3 3 3 

(13) 4 5 3 5 9 7 7 7 

(14) 9 3 5 7 3 5 7 7 $\leftarrow$ (24) 3 5 7 3 5 7 7 

(18) 9 5 5 3 6 6 5 3 $\leftarrow$ (20) 11 12 4 5 3 3 3 

(23) 4 7 3 3 6 6 5 3 $\leftarrow$ (28) 5 5 3 6 6 5 3 

(23) 5 6 3 3 6 6 5 3 $\leftarrow$ (24) 7 12 4 5 3 3 3 

(25) 3 6 3 3 6 6 5 3 $\leftarrow$ (26) 6 5 2 3 5 7 7 

(29) 5 6 2 4 5 3 3 3 $\leftarrow$ (35) 6 2 4 5 3 3 3 

(29) 20 1 1 * 1 $\leftarrow$ (30) 21 1 * 1 

(31) 3 6 2 4 5 3 3 3 $\leftarrow$ (32) 6 2 3 5 7 3 3 

(37) 12 1 1 * 1 $\leftarrow$ (38) 13 1 * 1 

(43) 1 2 3 4 4 1 1 1 $\leftarrow$ (45) 2 3 4 4 1 1 1 

(45) 4 1 1 * 1 $\leftarrow$ (46) 5 1 * 1 

\end{tcolorbox}
\begin{tcolorbox}[title={$(60,10)$}]
(3) 4 2 4 6 9 11 7 7 7 $\leftarrow$ (4) 6 4 6 9 11 7 7 7 

(3) 11 9 3 5 7 3 5 7 7 $\leftarrow$ (4) 18 3 5 9 3 5 7 7 

(7) 2 3 5 10 11 3 5 7 7 $\leftarrow$ (10) 3 5 10 11 3 5 7 7 

(12) 2 3 5 3 5 9 7 7 7 

(15) 5 5 6 5 2 3 5 7 7 $\leftarrow$ (21) 5 6 5 2 3 5 7 7 

(22) 2 3 5 5 3 6 6 5 3 $\leftarrow$ (24) 4 7 3 3 6 6 5 3 

(23) 3 5 6 2 3 5 7 3 3 $\leftarrow$ (24) 5 6 3 3 6 6 5 3 

(25) 2 4 3 3 3 6 6 5 3 $\leftarrow$ (26) 3 6 3 3 6 6 5 3 

(28) ...3 3 6 6 5 3 $\leftarrow$ (33) 6 ..4 5 3 3 3 

(29) 3 6 ..4 5 3 3 3 $\leftarrow$ (30) 5 6 2 4 5 3 3 3 

(29) 16 4 1 1 * 1 $\leftarrow$ (30) 20 1 1 * 1 

(31) ....3 5 7 3 3 $\leftarrow$ (32) 3 6 2 4 5 3 3 3 

(35) 3 6 2 3 4 4 1 1 1 $\leftarrow$ (36) 8 1 1 2 4 3 3 3 

(37) 8 4 1 1 * 1 $\leftarrow$ (38) 12 1 1 * 1 

(41) 4 4 1 1 * 1 $\leftarrow$ (44) 1 2 3 4 4 1 1 1 

(43) * * 1 $\leftarrow$ (46) 4 1 1 * 1 

\end{tcolorbox}
\begin{tcolorbox}[title={$(60,11)$}]
(1) 6 2 3 5 10 11 3 5 7 7 

(3) ...4 6 9 11 7 7 7 $\leftarrow$ (4) 4 2 4 6 9 11 7 7 7 

(4) 5 5 9 3 5 7 3 5 7 7 

(6) 1 2 3 5 10 11 3 5 7 7 $\leftarrow$ (8) 2 3 5 10 11 3 5 7 7 

(7) 18 2 4 3 3 3 6 6 5 3 

(13) 4 3 5 6 5 2 3 5 7 7 $\leftarrow$ (16) 5 5 6 5 2 3 5 7 7 

(19) ..4 7 3 3 6 6 5 3 $\leftarrow$ (22) 2 4 7 3 3 6 6 5 3 

(23) ..4 3 3 3 6 6 5 3 $\leftarrow$ (24) 3 5 6 2 3 5 7 3 3 

(26) ....3 3 6 6 5 3 $\leftarrow$ (32) ....3 5 7 3 3 

(29) .....3 5 7 3 3 $\leftarrow$ (30) 3 6 ..4 5 3 3 3 

(29) 14 * * 1 $\leftarrow$ (30) 16 4 1 1 * 1 

(35) 2 * 2 4 3 3 3 $\leftarrow$ (36) 3 6 2 3 4 4 1 1 1 

(37) 6 * * 1 $\leftarrow$ (38) 8 4 1 1 * 1 

(41) 2 * * 1 $\leftarrow$ (42) 4 4 1 1 * 1 

(42) 1 * * 1 $\leftarrow$ (44) * * 1 

\end{tcolorbox}
\begin{tcolorbox}[title={$(60,12)$}]
(1) 4 7 4 5 3 5 9 3 5 7 7 $\leftarrow$ (2) 6 2 3 5 10 11 3 5 7 7 

(3) 4 5 4 5 3 5 9 3 5 7 7 $\leftarrow$ (5) 8 4 5 3 5 9 3 5 7 7 

(5) 5 5 5 5 6 5 2 3 5 7 7 

(6) 2 4 2 3 5 3 5 9 7 7 7 

(7) 16 ..4 3 3 3 6 6 5 3 $\leftarrow$ (8) 18 2 4 3 3 3 6 6 5 3 

(11) 2 4 3 5 6 5 2 3 5 7 7 $\leftarrow$ (14) 4 3 5 6 5 2 3 5 7 7 

(17) 6 ..4 3 3 3 6 6 5 3 $\leftarrow$ (21) 1 2 4 7 3 3 6 6 5 3 

(19) 1 1 2 4 7 3 3 6 6 5 3 $\leftarrow$ (20) ..4 7 3 3 6 6 5 3 

(23) 6 .....3 5 7 3 3 $\leftarrow$ (30) .....3 5 7 3 3 

(29) 13 1 * * 1 $\leftarrow$ (30) 14 * * 1 

(35) 1 1 * 2 4 3 3 3 $\leftarrow$ (36) 2 * 2 4 3 3 3 

(37) 5 1 * * 1 $\leftarrow$ (38) 6 * * 1 

(41) 1 1 * * 1 $\leftarrow$ (42) 2 * * 1 

\end{tcolorbox}
\begin{tcolorbox}[title={$(60,13)$}]
(1) 2 4 5 4 5 3 5 9 3 5 7 7 $\leftarrow$ (2) 4 7 4 5 3 5 9 3 5 7 7 

(3) 4 3 5 5 5 6 5 2 3 5 7 7 $\leftarrow$ (6) 5 5 5 5 6 5 2 3 5 7 7 

(5) 1 2 4 2 3 5 3 5 9 7 7 7 

(7) 13 6 ....3 3 6 6 5 3 $\leftarrow$ (8) 16 ..4 3 3 3 6 6 5 3 

(9) ..4 3 5 6 5 2 3 5 7 7 $\leftarrow$ (12) 2 4 3 5 6 5 2 3 5 7 7 

(15) 4 1 1 2 4 7 3 3 6 6 5 3 $\leftarrow$ (20) 1 1 2 4 7 3 3 6 6 5 3 

(17) 3 6 ....3 3 6 6 5 3 $\leftarrow$ (18) 6 ..4 3 3 3 6 6 5 3 

(21) 5 6 .....4 5 3 3 3 $\leftarrow$ (27) 6 .....4 5 3 3 3 

(23) 3 6 .....4 5 3 3 3 $\leftarrow$ (24) 6 .....3 5 7 3 3 

(29) 12 1 1 * * 1 $\leftarrow$ (30) 13 1 * * 1 

(35) 1 2 3 4 4 1 1 * 1 $\leftarrow$ (37) 2 3 4 4 1 1 * 1 

(37) 4 1 1 * * 1 $\leftarrow$ (38) 5 1 * * 1 

\end{tcolorbox}
\begin{tcolorbox}[title={$(60,14)$}]
(1) 2 4 3 5 5 5 6 5 2 3 5 7 7 $\leftarrow$ (4) 4 3 5 5 5 6 5 2 3 5 7 7 

(4) 1 1 2 4 2 3 5 3 5 9 7 7 7 

(7) ...4 3 5 6 5 2 3 5 7 7 $\leftarrow$ (10) ..4 3 5 6 5 2 3 5 7 7 

(7) 11 5 6 .....3 5 7 3 3 $\leftarrow$ (8) 13 6 ....3 3 6 6 5 3 

(13) 24 4 1 1 * * 1 $\leftarrow$ (16) 4 1 1 2 4 7 3 3 6 6 5 3 

(17) .....4 3 3 3 6 6 5 3 $\leftarrow$ (18) 3 6 ....3 3 6 6 5 3 

(20) .......3 3 6 6 5 3 $\leftarrow$ (25) 6 ......4 5 3 3 3 

(21) 3 6 ......4 5 3 3 3 $\leftarrow$ (22) 5 6 .....4 5 3 3 3 

(23) ........3 5 7 3 3 $\leftarrow$ (24) 3 6 .....4 5 3 3 3 

(27) 3 6 2 3 4 4 1 1 * 1 $\leftarrow$ (28) 8 1 1 * 2 4 3 3 3 

(29) 8 4 1 1 * * 1 $\leftarrow$ (30) 12 1 1 * * 1 

(33) 4 4 1 1 * * 1 $\leftarrow$ (36) 1 2 3 4 4 1 1 * 1 

(35) * * * 1 $\leftarrow$ (38) 4 1 1 * * 1 

\end{tcolorbox}
\begin{tcolorbox}[title={$(61,3)$}]
(31) 15 15 $\leftarrow$ (47) 15 

(47) 7 7 $\leftarrow$ (55) 7 

(55) 3 3 $\leftarrow$ (59) 3 

(59) 1 1 $\leftarrow$ (61) 1 

\end{tcolorbox}
\begin{tcolorbox}[title={$(61,4)$}]
(22) 21 11 7 

(23) 22 13 3 $\leftarrow$ (24) 23 15 

(27) 26 5 3 $\leftarrow$ (28) 27 7 

(30) 13 11 7 $\leftarrow$ (46) 13 3 

(31) 14 13 3 $\leftarrow$ (32) 15 15 

(40) 7 7 7 $\leftarrow$ (54) 5 3 

(43) 10 5 3 $\leftarrow$ (44) 11 7 

(47) 6 5 3 $\leftarrow$ (48) 7 7 

(52) 3 3 3 $\leftarrow$ (56) 3 3 

(58) 1 1 1 $\leftarrow$ (60) 1 1 

\end{tcolorbox}
\begin{tcolorbox}[title={$(61,5)$}]
(13) 7 11 15 15 $\leftarrow$ (21) 11 15 15 

(16) 14 13 11 7 $\leftarrow$ (45) 11 3 3 

(21) 19 7 7 7 $\leftarrow$ (25) 23 7 7 

(23) 21 11 3 3 $\leftarrow$ (24) 22 13 3 

(27) 25 3 3 3 $\leftarrow$ (28) 26 5 3 

(29) 13 5 7 7 

(31) 13 11 3 3 $\leftarrow$ (32) 14 13 3 

(38) 4 5 7 7 $\leftarrow$ (48) 6 5 3 

(39) 3 5 7 7 $\leftarrow$ (43) 5 7 7 

(43) 9 3 3 3 $\leftarrow$ (44) 10 5 3 

(47) 6 2 3 3 $\leftarrow$ (54) 2 3 3 

\end{tcolorbox}
\begin{tcolorbox}[title={$(61,6)$}]
(8) 5 7 11 15 15 

(9) 4 7 11 15 15 

(13) 10 17 7 7 7 $\leftarrow$ (44) 9 3 3 3 

(17) 11 14 5 7 7 $\leftarrow$ (18) 13 13 11 7 

(20) 9 11 7 7 7 

(21) 7 14 5 7 7 $\leftarrow$ (22) 11 15 7 7 

(22) 7 13 5 7 7 $\leftarrow$ (24) 17 7 7 7 

(23) 20 9 3 3 3 $\leftarrow$ (24) 21 11 3 3 

(28) 11 3 5 7 7 

(30) 9 3 5 7 7 $\leftarrow$ (40) 3 5 7 7 

(31) 12 9 3 3 3 $\leftarrow$ (32) 13 11 3 3 

(37) 2 3 5 7 7 $\leftarrow$ (45) 8 3 3 3 

(41) 4 7 3 3 3 $\leftarrow$ (42) 6 6 5 3 

(43) 4 5 3 3 3 $\leftarrow$ (44) 5 7 3 3 

(46) 2 4 3 3 3 $\leftarrow$ (53) 1 2 3 3 

(47) 5 1 2 3 3 $\leftarrow$ (48) 6 2 3 3 

\end{tcolorbox}
\begin{tcolorbox}[title={$(61,7)$}]
(3) 28 12 9 3 3 3 

(7) 2 4 7 11 15 15 $\leftarrow$ (10) 4 7 11 15 15 

(9) 5 7 11 15 7 7 $\leftarrow$ (17) 9 15 7 7 7 

(12) 8 9 11 7 7 7 $\leftarrow$ (32) 12 9 3 3 3 

(13) 9 7 13 5 7 7 $\leftarrow$ (14) 10 17 7 7 7 

(14) 6 9 11 7 7 7 

(15) 11 5 9 7 7 7 $\leftarrow$ (17) 13 13 5 7 7 

(17) 7 14 4 5 7 7 $\leftarrow$ (18) 11 14 5 7 7 

(21) 5 5 9 7 7 7 $\leftarrow$ (22) 7 14 5 7 7 

(23) 3 5 9 7 7 7 $\leftarrow$ (25) 14 4 5 7 7 

(23) 8 12 9 3 3 3 

(31) 12 4 5 3 3 3 $\leftarrow$ (44) 4 5 3 3 3 

(35) 3 3 6 6 5 3 $\leftarrow$ (41) 3 5 7 3 3 

(41) 2 4 5 3 3 3 $\leftarrow$ (42) 4 7 3 3 3 

(45) 1 2 4 3 3 3 $\leftarrow$ (51) 4 4 1 1 1 

(47) 3 4 4 1 1 1 $\leftarrow$ (48) 5 1 2 3 3 

(51) 1 * 1 $\leftarrow$ (53) * 1 

\end{tcolorbox}
\begin{tcolorbox}[title={$(61,8)$}]
(1) 6 2 4 7 11 15 15 

(3) 26 9 3 6 6 5 3 $\leftarrow$ (4) 28 12 9 3 3 3 

(6) 1 2 4 7 11 15 15 $\leftarrow$ (8) 2 4 7 11 15 15 

(8) 3 5 9 15 7 7 7 

(10) 4 6 9 11 7 7 7 $\leftarrow$ (30) 9 3 6 6 5 3 

(13) 5 10 11 3 5 7 7 $\leftarrow$ (24) 3 5 9 7 7 7 

(13) 8 5 5 9 7 7 7 $\leftarrow$ (14) 9 7 13 5 7 7 

(14) 9 3 5 9 7 7 7 $\leftarrow$ (16) 11 5 9 7 7 7 

(15) 8 3 5 9 7 7 7 $\leftarrow$ (18) 7 14 4 5 7 7 

(19) 10 9 3 6 6 5 3 $\leftarrow$ (20) 12 12 9 3 3 3 

(22) 3 5 9 3 5 7 7 $\leftarrow$ (26) 5 9 3 5 7 7 

(23) 6 9 3 6 6 5 3 $\leftarrow$ (24) 8 12 9 3 3 3 

(29) 6 3 3 6 6 5 3 $\leftarrow$ (42) 2 4 5 3 3 3 

(31) 10 2 4 5 3 3 3 $\leftarrow$ (32) 12 4 5 3 3 3 

(44) 1 1 2 4 3 3 3 $\leftarrow$ (48) 3 4 4 1 1 1 

(50) 1 1 * 1 $\leftarrow$ (52) 1 * 1 

\end{tcolorbox}
\begin{tcolorbox}[title={$(61,9)$}]
(1) 5 1 2 4 7 11 15 15 $\leftarrow$ (2) 6 2 4 7 11 15 15 

(3) 25 5 5 3 6 6 5 3 $\leftarrow$ (4) 26 9 3 6 6 5 3 

(7) 13 11 12 4 5 3 3 3 $\leftarrow$ (27) 6 5 2 3 5 7 7 

(9) 4 8 5 5 9 7 7 7 

(13) 4 6 3 5 9 7 7 7 $\leftarrow$ (14) 5 10 11 3 5 7 7 

(14) 4 5 3 5 9 7 7 7 $\leftarrow$ (16) 8 3 5 9 7 7 7 

(14) 8 3 5 9 3 5 7 7 

(15) 9 3 5 7 3 5 7 7 $\leftarrow$ (25) 3 5 7 3 5 7 7 

(19) 9 5 5 3 6 6 5 3 $\leftarrow$ (20) 10 9 3 6 6 5 3 

(23) 3 6 5 2 3 5 7 7 $\leftarrow$ (24) 6 9 3 6 6 5 3 

(27) 5 6 2 3 5 7 3 3 $\leftarrow$ (39) 13 1 * 1 

(29) 3 6 2 3 5 7 3 3 $\leftarrow$ (30) 6 3 3 6 6 5 3 

(31) 7 13 1 * 1 $\leftarrow$ (32) 10 2 4 5 3 3 3 

(35) 4 ..4 5 3 3 3 $\leftarrow$ (36) 6 2 4 5 3 3 3 

(39) 6 2 3 4 4 1 1 1 $\leftarrow$ (46) 2 3 4 4 1 1 1 

\end{tcolorbox}
\begin{tcolorbox}[title={$(61,10)$}]
(1) 3 6 4 6 9 11 7 7 7 $\leftarrow$ (2) 5 1 2 4 7 11 15 15 

(1) 13 4 5 3 5 9 7 7 7 

(4) 11 9 3 5 7 3 5 7 7 

(5) 4 4 8 5 5 9 7 7 7 $\leftarrow$ (25) 4 7 3 3 6 6 5 3 

(7) 12 9 5 5 3 6 6 5 3 $\leftarrow$ (8) 13 11 12 4 5 3 3 3 

(10) 5 9 3 5 7 3 5 7 7 $\leftarrow$ (16) 9 3 5 7 3 5 7 7 

(13) 2 3 5 3 5 9 7 7 7 $\leftarrow$ (14) 4 6 3 5 9 7 7 7 

(13) 4 5 3 5 9 3 5 7 7 

(19) 4 5 5 5 3 6 6 5 3 $\leftarrow$ (22) 5 6 5 2 3 5 7 7 

(23) 2 3 5 5 3 6 6 5 3 $\leftarrow$ (24) 3 6 5 2 3 5 7 7 

(26) 2 4 3 3 3 6 6 5 3 $\leftarrow$ (37) 8 1 1 2 4 3 3 3 

(27) 3 5 6 2 4 5 3 3 3 $\leftarrow$ (28) 5 6 2 3 5 7 3 3 

(29) ...3 3 6 6 5 3 $\leftarrow$ (30) 3 6 2 3 5 7 3 3 

(31) 5 8 1 1 2 4 3 3 3 $\leftarrow$ (32) 7 13 1 * 1 

(33) 4 ...4 5 3 3 3 $\leftarrow$ (34) 6 ..4 5 3 3 3 

(35) ....4 5 3 3 3 $\leftarrow$ (36) 4 ..4 5 3 3 3 

(38) * 2 4 3 3 3 $\leftarrow$ (45) 1 2 3 4 4 1 1 1 

(39) 5 1 2 3 4 4 1 1 1 $\leftarrow$ (40) 6 2 3 4 4 1 1 1 

\end{tcolorbox}
\begin{tcolorbox}[title={$(61,11)$}]
(1) 1..3 5 3 5 9 7 7 7 $\leftarrow$ (2) 13 4 5 3 5 9 7 7 7 

(4) ...4 6 9 11 7 7 7 $\leftarrow$ (9) 2 3 5 10 11 3 5 7 7 

(5) 5 5 9 3 5 7 3 5 7 7 

(7) 1 2 3 5 10 11 3 5 7 7 $\leftarrow$ (8) 12 9 5 5 3 6 6 5 3 

(7) 16 2 3 5 5 3 6 6 5 3 

(11) 5 5 5 6 5 2 3 5 7 7 $\leftarrow$ (17) 5 5 6 5 2 3 5 7 7 

(17) 2 4 5 5 5 3 6 6 5 3 $\leftarrow$ (20) 4 5 5 5 3 6 6 5 3 

(24) ..4 3 3 3 6 6 5 3 $\leftarrow$ (36) ....4 5 3 3 3 

(27) ....3 3 6 6 5 3 $\leftarrow$ (28) 3 5 6 2 4 5 3 3 3 

(31) 4 ....4 5 3 3 3 $\leftarrow$ (32) 5 8 1 1 2 4 3 3 3 

(33) .....4 5 3 3 3 $\leftarrow$ (34) 4 ...4 5 3 3 3 

(37) 1 * 2 4 3 3 3 $\leftarrow$ (43) 4 4 1 1 * 1 

(39) 3 4 4 1 1 * 1 $\leftarrow$ (40) 5 1 2 3 4 4 1 1 1 

(43) 1 * * 1 $\leftarrow$ (45) * * 1 

\end{tcolorbox}
\begin{tcolorbox}[title={$(61,12)$}]
(1) ..4 5 9 3 5 9 7 7 7 $\leftarrow$ (8) 1 2 3 5 10 11 3 5 7 7 

(1) 8 4 2 3 5 3 5 9 7 7 7 $\leftarrow$ (2) 1..3 5 3 5 9 7 7 7 

(4) 4 5 4 5 3 5 9 3 5 7 7 

(5) 4 4 2 3 5 3 5 9 7 7 7 $\leftarrow$ (6) 8 4 5 3 5 9 3 5 7 7 

(7) 2 4 2 3 5 3 5 9 7 7 7 $\leftarrow$ (9) 18 2 4 3 3 3 6 6 5 3 

(7) 13 ..4 7 3 3 6 6 5 3 $\leftarrow$ (8) 16 2 3 5 5 3 6 6 5 3 

(9) 4 3 5 5 6 5 2 3 5 7 7 $\leftarrow$ (12) 5 5 5 6 5 2 3 5 7 7 

(15) ..4 5 5 5 3 6 6 5 3 $\leftarrow$ (18) 2 4 5 5 5 3 6 6 5 3 

(21) 6 ....3 3 6 6 5 3 $\leftarrow$ (34) .....4 5 3 3 3 

(31) ......4 5 3 3 3 $\leftarrow$ (32) 4 ....4 5 3 3 3 

(36) 1 1 * 2 4 3 3 3 $\leftarrow$ (40) 3 4 4 1 1 * 1 

(42) 1 1 * * 1 $\leftarrow$ (44) 1 * * 1 

\end{tcolorbox}
\begin{tcolorbox}[title={$(61,13)$}]
(1) 5 5 5 5 5 6 5 2 3 5 7 7 

(1) 6 2 4 2 3 5 3 5 9 7 7 7 $\leftarrow$ (2) 8 4 2 3 5 3 5 9 7 7 7 

(2) 2 4 5 4 5 3 5 9 3 5 7 7 

(5) ..4 2 3 5 3 5 9 7 7 7 $\leftarrow$ (6) 4 4 2 3 5 3 5 9 7 7 7 

(6) 1 2 4 2 3 5 3 5 9 7 7 7 $\leftarrow$ (8) 2 4 2 3 5 3 5 9 7 7 7 

(7) 2 4 3 5 5 6 5 2 3 5 7 7 $\leftarrow$ (10) 4 3 5 5 6 5 2 3 5 7 7 

(7) 11 6 ..4 3 3 3 6 6 5 3 $\leftarrow$ (8) 13 ..4 7 3 3 6 6 5 3 

(13) ...4 5 5 5 3 6 6 5 3 $\leftarrow$ (16) ..4 5 5 5 3 6 6 5 3 

(19) 5 6 .....3 5 7 3 3 $\leftarrow$ (32) ......4 5 3 3 3 

(21) 3 6 .....3 5 7 3 3 $\leftarrow$ (22) 6 ....3 3 6 6 5 3 

(27) 4 ......4 5 3 3 3 $\leftarrow$ (28) 6 .....4 5 3 3 3 

(31) 6 2 3 4 4 1 1 * 1 $\leftarrow$ (38) 2 3 4 4 1 1 * 1 

\end{tcolorbox}
\begin{tcolorbox}[title={$(62,2)$}]
(31) 31 

\end{tcolorbox}
\begin{tcolorbox}[title={$(62,3)$}]
(30) 29 3 

(31) 30 1 $\leftarrow$ (32) 31 

(47) 14 1 $\leftarrow$ (48) 15 

(55) 6 1 $\leftarrow$ (56) 7 

(59) 2 1 $\leftarrow$ (60) 3 

\end{tcolorbox}
\begin{tcolorbox}[title={$(62,4)$}]
(17) 15 15 15 $\leftarrow$ (45) 11 7 

(23) 21 11 7 $\leftarrow$ (25) 23 15 

(29) 27 3 3 

(31) 13 11 7 $\leftarrow$ (33) 15 15 

(31) 29 1 1 $\leftarrow$ (32) 30 1 

(41) 7 7 7 $\leftarrow$ (49) 7 7 

(47) 13 1 1 $\leftarrow$ (48) 14 1 

(53) 3 3 3 $\leftarrow$ (57) 3 3 

(55) 5 1 1 $\leftarrow$ (56) 6 1 

(59) 1 1 1 $\leftarrow$ (60) 2 1 

\end{tcolorbox}
\begin{tcolorbox}[title={$(62,5)$}]
(1) 22 21 11 7 

(12) 9 11 15 15 

(14) 7 11 15 15 

(17) 14 13 11 7 $\leftarrow$ (18) 15 15 15 

(21) 10 13 11 7 $\leftarrow$ (22) 11 15 15 

(22) 19 7 7 7 $\leftarrow$ (24) 21 11 7 

(23) 20 5 7 7 $\leftarrow$ (26) 23 7 7 

(28) 25 3 3 3 

(29) 14 5 7 7 $\leftarrow$ (32) 13 11 7 

(30) 13 5 7 7 $\leftarrow$ (44) 5 7 7 

(31) 28 1 1 1 $\leftarrow$ (32) 29 1 1 

(39) 4 5 7 7 $\leftarrow$ (42) 7 7 7 

(47) 12 1 1 1 $\leftarrow$ (48) 13 1 1 

(55) 4 1 1 1 $\leftarrow$ (56) 5 1 1 

\end{tcolorbox}
\begin{tcolorbox}[title={$(62,6)$}]
(9) 5 7 11 15 15 

(11) 7 13 13 11 7 $\leftarrow$ (23) 11 15 7 7 

(13) 9 11 15 7 7 $\leftarrow$ (18) 14 13 11 7 

(15) 7 11 15 7 7 $\leftarrow$ (19) 13 13 11 7 

(21) 9 11 7 7 7 $\leftarrow$ (22) 10 13 11 7 

(22) 18 3 5 7 7 $\leftarrow$ (24) 20 5 7 7 

(23) 7 13 5 7 7 $\leftarrow$ (25) 17 7 7 7 

(24) 20 9 3 3 3 

(27) 5 9 7 7 7 

(29) 11 3 5 7 7 $\leftarrow$ (30) 14 5 7 7 

(31) 9 3 5 7 7 $\leftarrow$ (41) 3 5 7 7 

(31) 24 4 1 1 1 $\leftarrow$ (32) 28 1 1 1 

(38) 2 3 5 7 7 $\leftarrow$ (40) 4 5 7 7 

(39) 3 6 6 5 3 $\leftarrow$ (43) 6 6 5 3 

(45) 3 6 2 3 3 $\leftarrow$ (46) 8 3 3 3 

(47) 2 4 3 3 3 $\leftarrow$ (49) 6 2 3 3 

(47) 8 4 1 1 1 $\leftarrow$ (48) 12 1 1 1 

\end{tcolorbox}
\begin{tcolorbox}[title={$(62,7)$}]
(1) 20 9 11 7 7 7 

(1) 28 11 3 5 7 7 $\leftarrow$ (22) 9 11 7 7 7 

(10) 5 7 11 15 7 7 $\leftarrow$ (18) 9 15 7 7 7 

(11) 6 9 15 7 7 7 $\leftarrow$ (12) 7 13 13 11 7 

(12) 5 9 15 7 7 7 

(13) 8 9 11 7 7 7 $\leftarrow$ (14) 9 11 15 7 7 

(15) 6 9 11 7 7 7 $\leftarrow$ (16) 7 11 15 7 7 

(17) 12 11 3 5 7 7 $\leftarrow$ (18) 13 13 5 7 7 

(22) 5 5 9 7 7 7 $\leftarrow$ (24) 7 13 5 7 7 

(22) 17 3 6 6 5 3 

(25) 13 2 3 5 7 7 $\leftarrow$ (26) 14 4 5 7 7 

(28) 5 7 3 5 7 7 $\leftarrow$ (32) 9 3 5 7 7 

(31) 22 * 1 $\leftarrow$ (32) 24 4 1 1 1 

(36) 3 3 6 6 5 3 $\leftarrow$ (42) 3 5 7 3 3 

(39) 2 3 5 7 3 3 $\leftarrow$ (40) 3 6 6 5 3 

(45) ..4 3 3 3 $\leftarrow$ (46) 3 6 2 3 3 

(46) 1 2 4 3 3 3 $\leftarrow$ (48) 2 4 3 3 3 

(47) 6 * 1 $\leftarrow$ (48) 8 4 1 1 1 

(51) 2 * 1 $\leftarrow$ (52) 4 4 1 1 1 

\end{tcolorbox}
\begin{tcolorbox}[title={$(62,8)$}]
(1) 14 6 9 11 7 7 7 $\leftarrow$ (2) 20 9 11 7 7 7 

(1) 23 8 12 9 3 3 3 $\leftarrow$ (2) 28 11 3 5 7 7 

(5) 27 12 4 5 3 3 3 

(7) 1 2 4 7 11 15 15 $\leftarrow$ (9) 2 4 7 11 15 15 

(9) 3 5 9 15 7 7 7 

(11) 4 6 9 11 7 7 7 $\leftarrow$ (12) 6 9 15 7 7 7 

(14) 8 5 5 9 7 7 7 

(15) 9 3 5 9 7 7 7 $\leftarrow$ (17) 11 5 9 7 7 7 

(17) 7 8 12 9 3 3 3 $\leftarrow$ (18) 12 11 3 5 7 7 

(21) 11 12 4 5 3 3 3 

(23) 3 5 9 3 5 7 7 $\leftarrow$ (25) 8 12 9 3 3 3 

(25) 7 12 4 5 3 3 3 $\leftarrow$ (26) 13 2 3 5 7 7 

(29) 5 5 3 6 6 5 3 $\leftarrow$ (33) 12 4 5 3 3 3 

(31) 21 1 * 1 $\leftarrow$ (32) 22 * 1 

(33) 6 2 3 5 7 3 3 $\leftarrow$ (40) 2 3 5 7 3 3 

(45) 1 1 2 4 3 3 3 $\leftarrow$ (46) ..4 3 3 3 

(47) 5 1 * 1 $\leftarrow$ (48) 6 * 1 

(51) 1 1 * 1 $\leftarrow$ (52) 2 * 1 

\end{tcolorbox}
\begin{tcolorbox}[title={$(62,9)$}]
(1) 2 4 3 4 7 11 15 15 $\leftarrow$ (2) 23 8 12 9 3 3 3 

(4) 25 5 5 3 6 6 5 3 

(5) 6 4 6 9 11 7 7 7 $\leftarrow$ (8) 1 2 4 7 11 15 15 

(5) 18 3 5 9 3 5 7 7 $\leftarrow$ (6) 27 12 4 5 3 3 3 

(10) 4 8 5 5 9 7 7 7 

(11) 3 5 10 11 3 5 7 7 $\leftarrow$ (16) 9 3 5 9 7 7 7 

(15) 4 5 3 5 9 7 7 7 $\leftarrow$ (17) 8 3 5 9 7 7 7 

(15) 8 3 5 9 3 5 7 7 $\leftarrow$ (18) 7 8 12 9 3 3 3 

(20) 9 5 5 3 6 6 5 3 

(25) 5 6 3 3 6 6 5 3 $\leftarrow$ (26) 7 12 4 5 3 3 3 

(27) 3 6 3 3 6 6 5 3 $\leftarrow$ (28) 6 5 2 3 5 7 7 

(31) 5 6 2 4 5 3 3 3 $\leftarrow$ (37) 6 2 4 5 3 3 3 

(31) 20 1 1 * 1 $\leftarrow$ (32) 21 1 * 1 

(33) 3 6 2 4 5 3 3 3 $\leftarrow$ (34) 6 2 3 5 7 3 3 

(39) 12 1 1 * 1 $\leftarrow$ (40) 13 1 * 1 

(47) 4 1 1 * 1 $\leftarrow$ (48) 5 1 * 1 

\end{tcolorbox}
\begin{tcolorbox}[title={$(62,10)$}]
(1) 14 8 3 5 9 3 5 7 7 

(2) 3 6 4 6 9 11 7 7 7 

(5) 4 2 4 6 9 11 7 7 7 $\leftarrow$ (6) 6 4 6 9 11 7 7 7 

(5) 11 9 3 5 7 3 5 7 7 $\leftarrow$ (6) 18 3 5 9 3 5 7 7 

(6) 4 4 8 5 5 9 7 7 7 

(11) 5 9 3 5 7 3 5 7 7 $\leftarrow$ (17) 9 3 5 7 3 5 7 7 

(14) 2 3 5 3 5 9 7 7 7 $\leftarrow$ (16) 4 5 3 5 9 7 7 7 

(14) 4 5 3 5 9 3 5 7 7 $\leftarrow$ (16) 8 3 5 9 3 5 7 7 

(23) 2 4 7 3 3 6 6 5 3 $\leftarrow$ (26) 4 7 3 3 6 6 5 3 

(24) 2 3 5 5 3 6 6 5 3 

(25) 3 5 6 2 3 5 7 3 3 $\leftarrow$ (26) 5 6 3 3 6 6 5 3 

(27) 2 4 3 3 3 6 6 5 3 $\leftarrow$ (28) 3 6 3 3 6 6 5 3 

(30) ...3 3 6 6 5 3 $\leftarrow$ (35) 6 ..4 5 3 3 3 

(31) 3 6 ..4 5 3 3 3 $\leftarrow$ (32) 5 6 2 4 5 3 3 3 

(31) 16 4 1 1 * 1 $\leftarrow$ (32) 20 1 1 * 1 

(33) ....3 5 7 3 3 $\leftarrow$ (34) 3 6 2 4 5 3 3 3 

(37) 3 6 2 3 4 4 1 1 1 $\leftarrow$ (38) 8 1 1 2 4 3 3 3 

(39) * 2 4 3 3 3 $\leftarrow$ (41) 6 2 3 4 4 1 1 1 

(39) 8 4 1 1 * 1 $\leftarrow$ (40) 12 1 1 * 1 

\end{tcolorbox}
\begin{tcolorbox}[title={$(62,11)$}]
(1) 13 4 5 3 5 9 3 5 7 7 $\leftarrow$ (2) 14 8 3 5 9 3 5 7 7 

(3) 6 2 3 5 10 11 3 5 7 7 

(5) ...4 6 9 11 7 7 7 $\leftarrow$ (6) 4 2 4 6 9 11 7 7 7 

(6) 5 5 9 3 5 7 3 5 7 7 $\leftarrow$ (12) 5 9 3 5 7 3 5 7 7 

(15) 4 3 5 6 5 2 3 5 7 7 $\leftarrow$ (18) 5 5 6 5 2 3 5 7 7 

(21) ..4 7 3 3 6 6 5 3 

(22) 1 2 4 7 3 3 6 6 5 3 $\leftarrow$ (24) 2 4 7 3 3 6 6 5 3 

(25) ..4 3 3 3 6 6 5 3 $\leftarrow$ (26) 3 5 6 2 3 5 7 3 3 

(28) ....3 3 6 6 5 3 $\leftarrow$ (34) ....3 5 7 3 3 

(31) .....3 5 7 3 3 $\leftarrow$ (32) 3 6 ..4 5 3 3 3 

(31) 14 * * 1 $\leftarrow$ (32) 16 4 1 1 * 1 

(37) 2 * 2 4 3 3 3 $\leftarrow$ (38) 3 6 2 3 4 4 1 1 1 

(38) 1 * 2 4 3 3 3 $\leftarrow$ (40) * 2 4 3 3 3 

(39) 6 * * 1 $\leftarrow$ (40) 8 4 1 1 * 1 

(43) 2 * * 1 $\leftarrow$ (44) 4 4 1 1 * 1 

\end{tcolorbox}
\begin{tcolorbox}[title={$(62,12)$}]
(1) 5 5 5 9 3 5 7 3 5 7 7 

(1) 8 5 4 5 3 5 9 3 5 7 7 $\leftarrow$ (2) 13 4 5 3 5 9 3 5 7 7 

(2) ..4 5 9 3 5 9 7 7 7 

(3) 4 7 4 5 3 5 9 3 5 7 7 $\leftarrow$ (4) 6 2 3 5 10 11 3 5 7 7 

(5) 4 5 4 5 3 5 9 3 5 7 7 $\leftarrow$ (9) 16 2 3 5 5 3 6 6 5 3 

(7) 5 5 5 5 6 5 2 3 5 7 7 $\leftarrow$ (13) 5 5 5 6 5 2 3 5 7 7 

(9) 16 ..4 3 3 3 6 6 5 3 $\leftarrow$ (10) 18 2 4 3 3 3 6 6 5 3 

(13) 2 4 3 5 6 5 2 3 5 7 7 $\leftarrow$ (16) 4 3 5 6 5 2 3 5 7 7 

(19) 6 ..4 3 3 3 6 6 5 3 

(21) 1 1 2 4 7 3 3 6 6 5 3 $\leftarrow$ (22) ..4 7 3 3 6 6 5 3 

(25) 6 .....3 5 7 3 3 $\leftarrow$ (32) .....3 5 7 3 3 

(31) 13 1 * * 1 $\leftarrow$ (32) 14 * * 1 

(37) 1 1 * 2 4 3 3 3 $\leftarrow$ (38) 2 * 2 4 3 3 3 

(39) 5 1 * * 1 $\leftarrow$ (40) 6 * * 1 

(43) 1 1 * * 1 $\leftarrow$ (44) 2 * * 1 

\end{tcolorbox}
\begin{tcolorbox}[title={$(63,1)$}]
(63) 

\end{tcolorbox}
\begin{tcolorbox}[title={$(63,2)$}]
(62) 1 

\end{tcolorbox}
\begin{tcolorbox}[title={$(63,3)$}]
(1) 31 31 

(29) 27 7 

(31) 29 3 $\leftarrow$ (33) 31 

(47) 13 3 $\leftarrow$ (49) 15 

(55) 5 3 $\leftarrow$ (57) 7 

(61) 1 1 

\end{tcolorbox}
\begin{tcolorbox}[title={$(63,4)$}]
(1) 30 29 3 $\leftarrow$ (2) 31 31 

(25) 22 13 3 $\leftarrow$ (26) 23 15 

(29) 26 5 3 $\leftarrow$ (30) 27 7 

(30) 27 3 3 $\leftarrow$ (32) 29 3 

(33) 14 13 3 $\leftarrow$ (34) 15 15 

(45) 10 5 3 $\leftarrow$ (46) 11 7 

(46) 11 3 3 $\leftarrow$ (48) 13 3 

(49) 6 5 3 $\leftarrow$ (50) 7 7 

(54) 3 3 3 $\leftarrow$ (56) 5 3 

(55) 2 3 3 $\leftarrow$ (58) 3 3 

(60) 1 1 1 

\end{tcolorbox}
\begin{tcolorbox}[title={$(63,5)$}]
(1) 29 27 3 3 $\leftarrow$ (2) 30 29 3 

(2) 22 21 11 7 

(13) 9 11 15 15 $\leftarrow$ (33) 13 11 7 

(15) 7 11 15 15 $\leftarrow$ (23) 11 15 15 

(23) 19 7 7 7 $\leftarrow$ (25) 21 11 7 

(25) 21 11 3 3 $\leftarrow$ (26) 22 13 3 

(29) 25 3 3 3 $\leftarrow$ (30) 26 5 3 

(31) 13 5 7 7 $\leftarrow$ (45) 5 7 7 

(33) 13 11 3 3 $\leftarrow$ (34) 14 13 3 

(45) 5 7 3 3 $\leftarrow$ (50) 6 5 3 

(45) 9 3 3 3 $\leftarrow$ (46) 10 5 3 

(54) 1 2 3 3 $\leftarrow$ (56) 2 3 3 

(56) 4 1 1 1 

\end{tcolorbox}
\begin{tcolorbox}[title={$(63,6)$}]
(1) 12 9 11 15 15 $\leftarrow$ (16) 7 11 15 15 

(1) 14 7 11 15 15 

(1) 28 25 3 3 3 $\leftarrow$ (2) 29 27 3 3 

(10) 5 7 11 15 15 $\leftarrow$ (26) 17 7 7 7 

(11) 4 7 11 15 15 

(15) 10 17 7 7 7 $\leftarrow$ (24) 19 7 7 7 

(19) 11 14 5 7 7 $\leftarrow$ (20) 13 13 11 7 

(23) 7 14 5 7 7 $\leftarrow$ (24) 11 15 7 7 

(23) 18 3 5 7 7 $\leftarrow$ (25) 20 5 7 7 

(25) 20 9 3 3 3 $\leftarrow$ (26) 21 11 3 3 

(28) 5 9 7 7 7 $\leftarrow$ (42) 3 5 7 7 

(30) 11 3 5 7 7 $\leftarrow$ (32) 13 5 7 7 

(33) 12 9 3 3 3 $\leftarrow$ (34) 13 11 3 3 

(39) 2 3 5 7 7 $\leftarrow$ (41) 4 5 7 7 

(43) 4 7 3 3 3 $\leftarrow$ (44) 6 6 5 3 

(45) 4 5 3 3 3 $\leftarrow$ (46) 5 7 3 3 

(49) 5 1 2 3 3 $\leftarrow$ (50) 6 2 3 3 

(54) * 1 

\end{tcolorbox}
\begin{tcolorbox}[title={$(63,7)$}]
(1) 9 5 7 11 15 15 

(1) 24 20 9 3 3 3 $\leftarrow$ (2) 28 25 3 3 3 

(5) 28 12 9 3 3 3 

(11) 5 7 11 15 7 7 $\leftarrow$ (13) 7 13 13 11 7 

(13) 5 9 15 7 7 7 

(14) 8 9 11 7 7 7 $\leftarrow$ (19) 13 13 5 7 7 

(15) 9 7 13 5 7 7 $\leftarrow$ (16) 10 17 7 7 7 

(16) 6 9 11 7 7 7 

(19) 7 14 4 5 7 7 $\leftarrow$ (20) 11 14 5 7 7 

(21) 12 12 9 3 3 3 $\leftarrow$ (24) 18 3 5 7 7 

(23) 5 5 9 7 7 7 $\leftarrow$ (24) 7 14 5 7 7 

(23) 17 3 6 6 5 3 $\leftarrow$ (26) 20 9 3 3 3 

(25) 3 5 9 7 7 7 

(27) 5 9 3 5 7 7 $\leftarrow$ (41) 3 6 6 5 3 

(29) 5 7 3 5 7 7 $\leftarrow$ (33) 9 3 5 7 7 

(31) 9 3 6 6 5 3 $\leftarrow$ (34) 12 9 3 3 3 

(37) 3 3 6 6 5 3 $\leftarrow$ (40) 2 3 5 7 7 

(43) 2 4 5 3 3 3 $\leftarrow$ (44) 4 7 3 3 3 

(47) 1 2 4 3 3 3 $\leftarrow$ (49) 2 4 3 3 3 

(49) 3 4 4 1 1 1 $\leftarrow$ (50) 5 1 2 3 3 

(53) 1 * 1 

\end{tcolorbox}
\begin{tcolorbox}[title={$(63,8)$}]
(1) 22 17 3 6 6 5 3 $\leftarrow$ (2) 24 20 9 3 3 3 

(2) 14 6 9 11 7 7 7 

(3) 6 2 4 7 11 15 15 

(5) 26 9 3 6 6 5 3 $\leftarrow$ (6) 28 12 9 3 3 3 

(10) 3 5 9 15 7 7 7 

(12) 4 6 9 11 7 7 7 $\leftarrow$ (18) 11 5 9 7 7 7 

(15) 5 10 11 3 5 7 7 $\leftarrow$ (20) 7 14 4 5 7 7 

(15) 8 5 5 9 7 7 7 $\leftarrow$ (16) 9 7 13 5 7 7 

(21) 10 9 3 6 6 5 3 $\leftarrow$ (22) 12 12 9 3 3 3 

(22) 11 12 4 5 3 3 3 $\leftarrow$ (24) 17 3 6 6 5 3 

(24) 3 5 9 3 5 7 7 

(25) 6 9 3 6 6 5 3 $\leftarrow$ (26) 8 12 9 3 3 3 

(26) 3 5 7 3 5 7 7 $\leftarrow$ (28) 5 9 3 5 7 7 

(30) 5 5 3 6 6 5 3 $\leftarrow$ (32) 9 3 6 6 5 3 

(31) 6 3 3 6 6 5 3 $\leftarrow$ (38) 3 3 6 6 5 3 

(33) 10 2 4 5 3 3 3 $\leftarrow$ (34) 12 4 5 3 3 3 

(46) 1 1 2 4 3 3 3 $\leftarrow$ (48) 1 2 4 3 3 3 

(47) 2 3 4 4 1 1 1 $\leftarrow$ (50) 3 4 4 1 1 1 

(52) 1 1 * 1 

\end{tcolorbox}
\begin{tcolorbox}[title={$(63,9)$}]
(1) 9 3 5 9 15 7 7 7 

(1) 14 8 5 5 9 7 7 7 

(1) 21 11 12 4 5 3 3 3 $\leftarrow$ (2) 22 17 3 6 6 5 3 

(2) 2 4 3 4 7 11 15 15 

(3) 5 1 2 4 7 11 15 15 $\leftarrow$ (4) 6 2 4 7 11 15 15 

(5) 25 5 5 3 6 6 5 3 $\leftarrow$ (6) 26 9 3 6 6 5 3 

(9) 13 11 12 4 5 3 3 3 $\leftarrow$ (16) 8 5 5 9 7 7 7 

(11) 4 8 5 5 9 7 7 7 

(12) 3 5 10 11 3 5 7 7 $\leftarrow$ (18) 8 3 5 9 7 7 7 

(15) 4 6 3 5 9 7 7 7 $\leftarrow$ (16) 5 10 11 3 5 7 7 

(21) 9 5 5 3 6 6 5 3 $\leftarrow$ (22) 10 9 3 6 6 5 3 

(23) 5 6 5 2 3 5 7 7 $\leftarrow$ (29) 6 5 2 3 5 7 7 

(25) 3 6 5 2 3 5 7 7 $\leftarrow$ (26) 6 9 3 6 6 5 3 

(29) 5 6 2 3 5 7 3 3 $\leftarrow$ (35) 6 2 3 5 7 3 3 

(31) 3 6 2 3 5 7 3 3 $\leftarrow$ (32) 6 3 3 6 6 5 3 

(33) 7 13 1 * 1 $\leftarrow$ (34) 10 2 4 5 3 3 3 

(37) 4 ..4 5 3 3 3 $\leftarrow$ (38) 6 2 4 5 3 3 3 

(46) 1 2 3 4 4 1 1 1 $\leftarrow$ (48) 2 3 4 4 1 1 1 

(48) 4 1 1 * 1 

\end{tcolorbox}
\begin{tcolorbox}[title={$(63,10)$}]
(1) 20 9 5 5 3 6 6 5 3 $\leftarrow$ (2) 21 11 12 4 5 3 3 3 

(3) 3 6 4 6 9 11 7 7 7 $\leftarrow$ (4) 5 1 2 4 7 11 15 15 

(3) 13 4 5 3 5 9 7 7 7 

(6) 11 9 3 5 7 3 5 7 7 

(7) 4 4 8 5 5 9 7 7 7 $\leftarrow$ (12) 4 8 5 5 9 7 7 7 

(9) 12 9 5 5 3 6 6 5 3 $\leftarrow$ (10) 13 11 12 4 5 3 3 3 

(10) 2 3 5 10 11 3 5 7 7 $\leftarrow$ (17) 4 5 3 5 9 7 7 7 

(15) 2 3 5 3 5 9 7 7 7 $\leftarrow$ (16) 4 6 3 5 9 7 7 7 

(15) 4 5 3 5 9 3 5 7 7 $\leftarrow$ (17) 8 3 5 9 3 5 7 7 

(21) 4 5 5 5 3 6 6 5 3 $\leftarrow$ (24) 5 6 5 2 3 5 7 7 

(25) 2 3 5 5 3 6 6 5 3 $\leftarrow$ (26) 3 6 5 2 3 5 7 7 

(28) 2 4 3 3 3 6 6 5 3 $\leftarrow$ (33) 5 6 2 4 5 3 3 3 

(29) 3 5 6 2 4 5 3 3 3 $\leftarrow$ (30) 5 6 2 3 5 7 3 3 

(31) ...3 3 6 6 5 3 $\leftarrow$ (32) 3 6 2 3 5 7 3 3 

(33) 5 8 1 1 2 4 3 3 3 $\leftarrow$ (34) 7 13 1 * 1 

(35) 4 ...4 5 3 3 3 $\leftarrow$ (36) 6 ..4 5 3 3 3 

(37) ....4 5 3 3 3 $\leftarrow$ (38) 4 ..4 5 3 3 3 

(41) 5 1 2 3 4 4 1 1 1 $\leftarrow$ (42) 6 2 3 4 4 1 1 1 

(46) * * 1 

\end{tcolorbox}
\begin{tcolorbox}[title={$(63,11)$}]
(1) 2 3 6 4 6 9 11 7 7 7 

(1) 6 4 4 8 5 5 9 7 7 7 

(1) 24 2 3 5 5 3 6 6 5 3 

(3) 1..3 5 3 5 9 7 7 7 $\leftarrow$ (4) 13 4 5 3 5 9 7 7 7 

(6) ...4 6 9 11 7 7 7 $\leftarrow$ (8) 4 4 8 5 5 9 7 7 7 

(7) 5 5 9 3 5 7 3 5 7 7 $\leftarrow$ (13) 5 9 3 5 7 3 5 7 7 

(7) 8 4 5 3 5 9 3 5 7 7 $\leftarrow$ (16) 2 3 5 3 5 9 7 7 7 

(9) 1 2 3 5 10 11 3 5 7 7 $\leftarrow$ (10) 12 9 5 5 3 6 6 5 3 

(19) 2 4 5 5 5 3 6 6 5 3 $\leftarrow$ (22) 4 5 5 5 3 6 6 5 3 

(23) 1 2 4 7 3 3 6 6 5 3 $\leftarrow$ (25) 2 4 7 3 3 6 6 5 3 

(26) ..4 3 3 3 6 6 5 3 $\leftarrow$ (32) ...3 3 6 6 5 3 

(29) ....3 3 6 6 5 3 $\leftarrow$ (30) 3 5 6 2 4 5 3 3 3 

(33) 4 ....4 5 3 3 3 $\leftarrow$ (34) 5 8 1 1 2 4 3 3 3 

(35) .....4 5 3 3 3 $\leftarrow$ (36) 4 ...4 5 3 3 3 

(39) 1 * 2 4 3 3 3 $\leftarrow$ (41) * 2 4 3 3 3 

(41) 3 4 4 1 1 * 1 $\leftarrow$ (42) 5 1 2 3 4 4 1 1 1 

(45) 1 * * 1 

\end{tcolorbox}
\begin{tcolorbox}[title={$(64,2)$}]
(1) 63 

(61) 3 

(63) 1 $\leftarrow$ (65) 

\end{tcolorbox}
\begin{tcolorbox}[title={$(64,3)$}]
(1) 62 1 $\leftarrow$ (2) 63 

(33) 30 1 $\leftarrow$ (34) 31 

(49) 14 1 $\leftarrow$ (50) 15 

(57) 6 1 $\leftarrow$ (58) 7 

(61) 2 1 $\leftarrow$ (62) 3 

(62) 1 1 $\leftarrow$ (64) 1 

\end{tcolorbox}
\begin{tcolorbox}[title={$(64,4)$}]
(1) 29 27 7 

(1) 61 1 1 $\leftarrow$ (2) 62 1 

(19) 15 15 15 $\leftarrow$ (35) 15 15 

(27) 23 7 7 

(31) 27 3 3 $\leftarrow$ (33) 29 3 

(33) 29 1 1 $\leftarrow$ (34) 30 1 

(43) 7 7 7 $\leftarrow$ (51) 7 7 

(47) 11 3 3 $\leftarrow$ (49) 13 3 

(49) 13 1 1 $\leftarrow$ (50) 14 1 

(55) 3 3 3 $\leftarrow$ (57) 5 3 

(57) 5 1 1 $\leftarrow$ (58) 6 1 

(61) 1 1 1 $\leftarrow$ (62) 2 1 

\end{tcolorbox}
\begin{tcolorbox}[title={$(64,5)$}]
(1) 60 1 1 1 $\leftarrow$ (2) 61 1 1 

(3) 22 21 11 7 

(14) 9 11 15 15 $\leftarrow$ (34) 13 11 7 

(19) 14 13 11 7 $\leftarrow$ (20) 15 15 15 

(23) 10 13 11 7 $\leftarrow$ (24) 11 15 15 

(30) 25 3 3 3 $\leftarrow$ (32) 27 3 3 

(31) 14 5 7 7 $\leftarrow$ (46) 5 7 7 

(33) 28 1 1 1 $\leftarrow$ (34) 29 1 1 

(46) 9 3 3 3 $\leftarrow$ (48) 11 3 3 

(47) 8 3 3 3 $\leftarrow$ (56) 3 3 3 

(49) 12 1 1 1 $\leftarrow$ (50) 13 1 1 

(55) 1 2 3 3 $\leftarrow$ (57) 2 3 3 

(57) 4 1 1 1 $\leftarrow$ (58) 5 1 1 

\end{tcolorbox}
\begin{tcolorbox}[title={$(64,6)$}]
(1) 56 4 1 1 1 $\leftarrow$ (2) 60 1 1 1 

(2) 12 9 11 15 15 $\leftarrow$ (4) 22 21 11 7 

(2) 14 7 11 15 15 

(11) 5 7 11 15 15 $\leftarrow$ (27) 17 7 7 7 

(12) 4 7 11 15 15 

(15) 9 11 15 7 7 $\leftarrow$ (20) 14 13 11 7 

(17) 7 11 15 7 7 

(19) 9 15 7 7 7 $\leftarrow$ (21) 13 13 11 7 

(23) 9 11 7 7 7 $\leftarrow$ (24) 10 13 11 7 

(25) 7 13 5 7 7 

(27) 14 4 5 7 7 $\leftarrow$ (43) 3 5 7 7 

(29) 5 9 7 7 7 $\leftarrow$ (33) 13 5 7 7 

(31) 11 3 5 7 7 $\leftarrow$ (32) 14 5 7 7 

(33) 24 4 1 1 1 $\leftarrow$ (34) 28 1 1 1 

(43) 3 5 7 3 3 $\leftarrow$ (45) 6 6 5 3 

(46) 4 5 3 3 3 $\leftarrow$ (51) 6 2 3 3 

(47) 3 6 2 3 3 $\leftarrow$ (48) 8 3 3 3 

(49) 8 4 1 1 1 $\leftarrow$ (50) 12 1 1 1 

(53) 4 4 1 1 1 $\leftarrow$ (56) 1 2 3 3 

(55) * 1 $\leftarrow$ (58) 4 1 1 1 

\end{tcolorbox}
\begin{tcolorbox}[title={$(64,7)$}]
(1) 11 4 7 11 15 15 

(1) 54 * 1 $\leftarrow$ (2) 56 4 1 1 1 

(2) 9 5 7 11 15 15 

(3) 20 9 11 7 7 7 

(3) 28 11 3 5 7 7 

(10) 2 4 7 11 15 15 

(12) 5 7 11 15 7 7 $\leftarrow$ (17) 10 17 7 7 7 

(13) 6 9 15 7 7 7 $\leftarrow$ (14) 7 13 13 11 7 

(14) 5 9 15 7 7 7 $\leftarrow$ (20) 9 15 7 7 7 

(15) 8 9 11 7 7 7 $\leftarrow$ (16) 9 11 15 7 7 

(17) 6 9 11 7 7 7 $\leftarrow$ (18) 7 11 15 7 7 

(19) 12 11 3 5 7 7 $\leftarrow$ (20) 13 13 5 7 7 

(24) 5 5 9 7 7 7 

(26) 3 5 9 7 7 7 $\leftarrow$ (32) 11 3 5 7 7 

(27) 13 2 3 5 7 7 $\leftarrow$ (28) 14 4 5 7 7 

(30) 5 7 3 5 7 7 $\leftarrow$ (34) 9 3 5 7 7 

(33) 22 * 1 $\leftarrow$ (34) 24 4 1 1 1 

(41) 2 3 5 7 3 3 $\leftarrow$ (42) 3 6 6 5 3 

(44) 2 4 5 3 3 3 $\leftarrow$ (50) 2 4 3 3 3 

(47) ..4 3 3 3 $\leftarrow$ (48) 3 6 2 3 3 

(49) 6 * 1 $\leftarrow$ (50) 8 4 1 1 1 

(53) 2 * 1 $\leftarrow$ (54) 4 4 1 1 1 

(54) 1 * 1 $\leftarrow$ (56) * 1 

\end{tcolorbox}
\begin{tcolorbox}[title={$(64,8)$}]
(1) 13 5 9 15 7 7 7 

(1) 25 3 5 9 7 7 7 

(1) 53 1 * 1 $\leftarrow$ (2) 54 * 1 

(3) 14 6 9 11 7 7 7 $\leftarrow$ (4) 20 9 11 7 7 7 

(3) 23 8 12 9 3 3 3 $\leftarrow$ (4) 28 11 3 5 7 7 

(7) 27 12 4 5 3 3 3 

(9) 1 2 4 7 11 15 15 $\leftarrow$ (16) 8 9 11 7 7 7 

(11) 3 5 9 15 7 7 7 $\leftarrow$ (18) 6 9 11 7 7 7 

(13) 4 6 9 11 7 7 7 $\leftarrow$ (14) 6 9 15 7 7 7 

(17) 9 3 5 9 7 7 7 

(19) 7 8 12 9 3 3 3 $\leftarrow$ (20) 12 11 3 5 7 7 

(23) 11 12 4 5 3 3 3 $\leftarrow$ (25) 17 3 6 6 5 3 

(25) 3 5 9 3 5 7 7 $\leftarrow$ (35) 12 4 5 3 3 3 

(27) 3 5 7 3 5 7 7 $\leftarrow$ (29) 5 9 3 5 7 7 

(27) 7 12 4 5 3 3 3 $\leftarrow$ (28) 13 2 3 5 7 7 

(31) 5 5 3 6 6 5 3 $\leftarrow$ (33) 9 3 6 6 5 3 

(33) 21 1 * 1 $\leftarrow$ (34) 22 * 1 

(41) 13 1 * 1 $\leftarrow$ (49) 1 2 4 3 3 3 

(47) 1 1 2 4 3 3 3 $\leftarrow$ (48) ..4 3 3 3 

(49) 5 1 * 1 $\leftarrow$ (50) 6 * 1 

(53) 1 1 * 1 $\leftarrow$ (54) 2 * 1 

\end{tcolorbox}
\begin{tcolorbox}[title={$(64,9)$}]
(1) 3 6 2 4 7 11 15 15 $\leftarrow$ (2) 25 3 5 9 7 7 7 

(1) 52 1 1 * 1 $\leftarrow$ (2) 53 1 * 1 

(2) 9 3 5 9 15 7 7 7 

(2) 14 8 5 5 9 7 7 7 

(3) 2 4 3 4 7 11 15 15 $\leftarrow$ (4) 23 8 12 9 3 3 3 

(6) 25 5 5 3 6 6 5 3 

(7) 6 4 6 9 11 7 7 7 $\leftarrow$ (14) 4 6 9 11 7 7 7 

(7) 18 3 5 9 3 5 7 7 $\leftarrow$ (8) 27 12 4 5 3 3 3 

(13) 3 5 10 11 3 5 7 7 $\leftarrow$ (17) 5 10 11 3 5 7 7 

(18) 9 3 5 7 3 5 7 7 

(22) 9 5 5 3 6 6 5 3 $\leftarrow$ (24) 11 12 4 5 3 3 3 

(27) 4 7 3 3 6 6 5 3 $\leftarrow$ (32) 5 5 3 6 6 5 3 

(27) 5 6 3 3 6 6 5 3 $\leftarrow$ (28) 7 12 4 5 3 3 3 

(29) 3 6 3 3 6 6 5 3 $\leftarrow$ (30) 6 5 2 3 5 7 7 

(33) 20 1 1 * 1 $\leftarrow$ (34) 21 1 * 1 

(35) 3 6 2 4 5 3 3 3 $\leftarrow$ (36) 6 2 3 5 7 3 3 

(39) 8 1 1 2 4 3 3 3 $\leftarrow$ (48) 1 1 2 4 3 3 3 

(41) 12 1 1 * 1 $\leftarrow$ (42) 13 1 * 1 

(47) 1 2 3 4 4 1 1 1 $\leftarrow$ (49) 2 3 4 4 1 1 1 

(49) 4 1 1 * 1 $\leftarrow$ (50) 5 1 * 1 

\end{tcolorbox}
\begin{tcolorbox}[title={$(64,10)$}]
(1) ..4 3 4 7 11 15 15 $\leftarrow$ (2) 3 6 2 4 7 11 15 15 

(1) 11 4 8 5 5 9 7 7 7 

(1) 48 4 1 1 * 1 $\leftarrow$ (2) 52 1 1 * 1 

(2) 20 9 5 5 3 6 6 5 3 

(3) 14 8 3 5 9 3 5 7 7 

(4) 3 6 4 6 9 11 7 7 7 $\leftarrow$ (11) 13 11 12 4 5 3 3 3 

(7) 4 2 4 6 9 11 7 7 7 $\leftarrow$ (8) 6 4 6 9 11 7 7 7 

(7) 11 9 3 5 7 3 5 7 7 $\leftarrow$ (8) 18 3 5 9 3 5 7 7 

(11) 2 3 5 10 11 3 5 7 7 $\leftarrow$ (14) 3 5 10 11 3 5 7 7 

(16) 4 5 3 5 9 3 5 7 7 

(19) 5 5 6 5 2 3 5 7 7 $\leftarrow$ (25) 5 6 5 2 3 5 7 7 

(26) 2 3 5 5 3 6 6 5 3 $\leftarrow$ (28) 4 7 3 3 6 6 5 3 

(27) 3 5 6 2 3 5 7 3 3 $\leftarrow$ (28) 5 6 3 3 6 6 5 3 

(29) 2 4 3 3 3 6 6 5 3 $\leftarrow$ (30) 3 6 3 3 6 6 5 3 

(33) 3 6 ..4 5 3 3 3 $\leftarrow$ (34) 5 6 2 4 5 3 3 3 

(33) 16 4 1 1 * 1 $\leftarrow$ (34) 20 1 1 * 1 

(35) ....3 5 7 3 3 $\leftarrow$ (36) 3 6 2 4 5 3 3 3 

(38) ....4 5 3 3 3 $\leftarrow$ (43) 6 2 3 4 4 1 1 1 

(39) 3 6 2 3 4 4 1 1 1 $\leftarrow$ (40) 8 1 1 2 4 3 3 3 

(41) 8 4 1 1 * 1 $\leftarrow$ (42) 12 1 1 * 1 

(45) 4 4 1 1 * 1 $\leftarrow$ (48) 1 2 3 4 4 1 1 1 

(47) * * 1 $\leftarrow$ (50) 4 1 1 * 1 

\end{tcolorbox}
\begin{tcolorbox}[title={$(65,3)$}]
(1) 61 3 

(3) 31 31 

(27) 23 15 

(31) 27 7 $\leftarrow$ (35) 31 

(47) 11 7 $\leftarrow$ (51) 15 

(59) 3 3 

(63) 1 1 $\leftarrow$ (65) 1 

\end{tcolorbox}
\begin{tcolorbox}[title={$(65,4)$}]
(2) 29 27 7 

(3) 30 29 3 $\leftarrow$ (4) 31 31 

(26) 21 11 7 

(27) 22 13 3 $\leftarrow$ (28) 23 15 

(28) 23 7 7 $\leftarrow$ (34) 29 3 

(31) 26 5 3 $\leftarrow$ (32) 27 7 

(35) 14 13 3 $\leftarrow$ (36) 15 15 

(44) 7 7 7 $\leftarrow$ (50) 13 3 

(47) 10 5 3 $\leftarrow$ (48) 11 7 

(51) 6 5 3 $\leftarrow$ (52) 7 7 

(62) 1 1 1 $\leftarrow$ (64) 1 1 

\end{tcolorbox}
\begin{tcolorbox}[title={$(65,5)$}]
(1) 27 23 7 7 

(3) 29 27 3 3 $\leftarrow$ (4) 30 29 3 

(15) 9 11 15 15 $\leftarrow$ (21) 15 15 15 

(17) 7 11 15 15 

(25) 11 15 7 7 

(25) 19 7 7 7 

(26) 20 5 7 7 $\leftarrow$ (33) 27 3 3 

(27) 21 11 3 3 $\leftarrow$ (28) 22 13 3 

(31) 25 3 3 3 $\leftarrow$ (32) 26 5 3 

(35) 13 11 3 3 $\leftarrow$ (36) 14 13 3 

(42) 4 5 7 7 $\leftarrow$ (49) 11 3 3 

(47) 5 7 3 3 $\leftarrow$ (52) 6 5 3 

(47) 9 3 3 3 $\leftarrow$ (48) 10 5 3 

\end{tcolorbox}
\begin{tcolorbox}[title={$(65,6)$}]
(1) 10 9 15 15 15 

(3) 12 9 11 15 15 $\leftarrow$ (5) 22 21 11 7 

(3) 14 7 11 15 15 

(3) 28 25 3 3 3 $\leftarrow$ (4) 29 27 3 3 

(12) 5 7 11 15 15 $\leftarrow$ (16) 9 11 15 15 

(13) 4 7 11 15 15 $\leftarrow$ (18) 7 11 15 15 

(21) 11 14 5 7 7 $\leftarrow$ (22) 13 13 11 7 

(24) 9 11 7 7 7 

(25) 7 14 5 7 7 $\leftarrow$ (26) 11 15 7 7 

(25) 18 3 5 7 7 $\leftarrow$ (32) 25 3 3 3 

(26) 7 13 5 7 7 $\leftarrow$ (28) 17 7 7 7 

(27) 20 9 3 3 3 $\leftarrow$ (28) 21 11 3 3 

(30) 5 9 7 7 7 $\leftarrow$ (34) 13 5 7 7 

(35) 12 9 3 3 3 $\leftarrow$ (36) 13 11 3 3 

(41) 2 3 5 7 7 $\leftarrow$ (48) 9 3 3 3 

(44) 3 5 7 3 3 $\leftarrow$ (49) 8 3 3 3 

(45) 4 7 3 3 3 $\leftarrow$ (46) 6 6 5 3 

(47) 4 5 3 3 3 $\leftarrow$ (48) 5 7 3 3 

(51) 5 1 2 3 3 $\leftarrow$ (52) 6 2 3 3 

\end{tcolorbox}
\begin{tcolorbox}[title={$(65,7)$}]
(1) 25 7 13 5 7 7 $\leftarrow$ (4) 12 9 11 15 15 

(2) 11 4 7 11 15 15 $\leftarrow$ (4) 14 7 11 15 15 

(3) 9 5 7 11 15 15 

(3) 24 20 9 3 3 3 $\leftarrow$ (4) 28 25 3 3 3 

(7) 28 12 9 3 3 3 

(11) 2 4 7 11 15 15 $\leftarrow$ (14) 4 7 11 15 15 

(13) 5 7 11 15 7 7 $\leftarrow$ (17) 9 11 15 7 7 

(15) 5 9 15 7 7 7 $\leftarrow$ (21) 9 15 7 7 7 

(17) 9 7 13 5 7 7 $\leftarrow$ (18) 10 17 7 7 7 

(19) 11 5 9 7 7 7 $\leftarrow$ (21) 13 13 5 7 7 

(21) 7 14 4 5 7 7 $\leftarrow$ (22) 11 14 5 7 7 

(23) 12 12 9 3 3 3 $\leftarrow$ (28) 20 9 3 3 3 

(25) 5 5 9 7 7 7 $\leftarrow$ (26) 7 14 5 7 7 

(27) 3 5 9 7 7 7 $\leftarrow$ (29) 14 4 5 7 7 

(27) 8 12 9 3 3 3 $\leftarrow$ (36) 12 9 3 3 3 

(31) 5 7 3 5 7 7 $\leftarrow$ (35) 9 3 5 7 7 

(39) 3 3 6 6 5 3 $\leftarrow$ (43) 3 6 6 5 3 

(42) 2 3 5 7 3 3 $\leftarrow$ (48) 4 5 3 3 3 

(45) 2 4 5 3 3 3 $\leftarrow$ (46) 4 7 3 3 3 

(51) 3 4 4 1 1 1 $\leftarrow$ (52) 5 1 2 3 3 

(55) 1 * 1 $\leftarrow$ (57) * 1 

\end{tcolorbox}
\begin{tcolorbox}[title={$(65,8)$}]
(1) 10 2 4 7 11 15 15 

(1) 24 5 5 9 7 7 7 $\leftarrow$ (2) 25 7 13 5 7 7 

(2) 13 5 9 15 7 7 7 

(3) 22 17 3 6 6 5 3 $\leftarrow$ (4) 24 20 9 3 3 3 

(4) 14 6 9 11 7 7 7 

(5) 6 2 4 7 11 15 15 

(7) 26 9 3 6 6 5 3 $\leftarrow$ (8) 28 12 9 3 3 3 

(10) 1 2 4 7 11 15 15 $\leftarrow$ (12) 2 4 7 11 15 15 

(12) 3 5 9 15 7 7 7 $\leftarrow$ (16) 5 9 15 7 7 7 

(17) 8 5 5 9 7 7 7 $\leftarrow$ (18) 9 7 13 5 7 7 

(18) 9 3 5 9 7 7 7 $\leftarrow$ (20) 11 5 9 7 7 7 

(19) 8 3 5 9 7 7 7 $\leftarrow$ (22) 7 14 4 5 7 7 

(20) 7 8 12 9 3 3 3 $\leftarrow$ (26) 17 3 6 6 5 3 

(23) 10 9 3 6 6 5 3 $\leftarrow$ (24) 12 12 9 3 3 3 

(26) 3 5 9 3 5 7 7 $\leftarrow$ (34) 9 3 6 6 5 3 

(27) 6 9 3 6 6 5 3 $\leftarrow$ (28) 8 12 9 3 3 3 

(28) 3 5 7 3 5 7 7 $\leftarrow$ (32) 5 7 3 5 7 7 

(33) 6 3 3 6 6 5 3 $\leftarrow$ (40) 3 3 6 6 5 3 

(35) 10 2 4 5 3 3 3 $\leftarrow$ (36) 12 4 5 3 3 3 

(39) 6 2 4 5 3 3 3 $\leftarrow$ (46) 2 4 5 3 3 3 

(54) 1 1 * 1 $\leftarrow$ (56) 1 * 1 

\end{tcolorbox}
\begin{tcolorbox}[title={$(65,9)$}]
(1) 5 10 8 9 11 7 7 7 $\leftarrow$ (2) 10 2 4 7 11 15 15 

(1) 10 20 5 7 3 5 7 7 

(1) 17 9 3 5 9 7 7 7 $\leftarrow$ (2) 24 5 5 9 7 7 7 

(3) 9 3 5 9 15 7 7 7 

(3) 14 8 5 5 9 7 7 7 

(3) 21 11 12 4 5 3 3 3 $\leftarrow$ (4) 22 17 3 6 6 5 3 

(4) 2 4 3 4 7 11 15 15 $\leftarrow$ (9) 27 12 4 5 3 3 3 

(5) 5 1 2 4 7 11 15 15 $\leftarrow$ (6) 6 2 4 7 11 15 15 

(7) 25 5 5 3 6 6 5 3 $\leftarrow$ (8) 26 9 3 6 6 5 3 

(13) 4 8 5 5 9 7 7 7 $\leftarrow$ (18) 8 5 5 9 7 7 7 

(17) 4 6 3 5 9 7 7 7 $\leftarrow$ (18) 5 10 11 3 5 7 7 

(18) 4 5 3 5 9 7 7 7 $\leftarrow$ (20) 8 3 5 9 7 7 7 

(18) 8 3 5 9 3 5 7 7 $\leftarrow$ (25) 11 12 4 5 3 3 3 

(19) 9 3 5 7 3 5 7 7 $\leftarrow$ (33) 5 5 3 6 6 5 3 

(23) 9 5 5 3 6 6 5 3 $\leftarrow$ (24) 10 9 3 6 6 5 3 

(27) 3 6 5 2 3 5 7 7 $\leftarrow$ (28) 6 9 3 6 6 5 3 

(31) 5 6 2 3 5 7 3 3 $\leftarrow$ (37) 6 2 3 5 7 3 3 

(33) 3 6 2 3 5 7 3 3 $\leftarrow$ (34) 6 3 3 6 6 5 3 

(35) 7 13 1 * 1 $\leftarrow$ (36) 10 2 4 5 3 3 3 

(37) 6 ..4 5 3 3 3 $\leftarrow$ (43) 13 1 * 1 

(39) 4 ..4 5 3 3 3 $\leftarrow$ (40) 6 2 4 5 3 3 3 

\end{tcolorbox}
\begin{tcolorbox}[title={$(66,2)$}]
(3) 63 

(59) 7 

(63) 3 $\leftarrow$ (67) 

\end{tcolorbox}
\begin{tcolorbox}[title={$(66,3)$}]
(2) 61 3 

(3) 62 1 $\leftarrow$ (4) 63 

(35) 30 1 $\leftarrow$ (36) 31 

(51) 14 1 $\leftarrow$ (52) 15 

(58) 5 3 

(59) 6 1 $\leftarrow$ (60) 7 

(60) 3 3 $\leftarrow$ (66) 1 

(63) 2 1 $\leftarrow$ (64) 3 

\end{tcolorbox}
\begin{tcolorbox}[title={$(66,4)$}]
(1) 27 23 15 

(1) 59 3 3 

(3) 29 27 7 $\leftarrow$ (5) 31 31 

(3) 61 1 1 $\leftarrow$ (4) 62 1 

(25) 11 15 15 

(27) 21 11 7 $\leftarrow$ (29) 23 15 

(29) 23 7 7 $\leftarrow$ (33) 27 7 

(35) 13 11 7 $\leftarrow$ (37) 15 15 

(35) 29 1 1 $\leftarrow$ (36) 30 1 

(45) 7 7 7 $\leftarrow$ (49) 11 7 

(47) 5 7 7 $\leftarrow$ (53) 7 7 

(51) 13 1 1 $\leftarrow$ (52) 14 1 

(57) 3 3 3 

(58) 2 3 3 $\leftarrow$ (65) 1 1 

(59) 5 1 1 $\leftarrow$ (60) 6 1 

(63) 1 1 1 $\leftarrow$ (64) 2 1 

\end{tcolorbox}
\begin{tcolorbox}[title={$(66,5)$}]
(1) 26 21 11 7 $\leftarrow$ (2) 27 23 15 

(2) 27 23 7 7 $\leftarrow$ (4) 29 27 7 

(3) 60 1 1 1 $\leftarrow$ (4) 61 1 1 

(21) 14 13 11 7 $\leftarrow$ (22) 15 15 15 

(25) 10 13 11 7 $\leftarrow$ (26) 11 15 15 

(26) 19 7 7 7 $\leftarrow$ (28) 21 11 7 

(27) 20 5 7 7 $\leftarrow$ (30) 23 7 7 

(33) 14 5 7 7 $\leftarrow$ (36) 13 11 7 

(35) 28 1 1 1 $\leftarrow$ (36) 29 1 1 

(43) 4 5 7 7 $\leftarrow$ (46) 7 7 7 

(44) 3 5 7 7 $\leftarrow$ (48) 5 7 7 

(51) 12 1 1 1 $\leftarrow$ (52) 13 1 1 

(57) 1 2 3 3 $\leftarrow$ (64) 1 1 1 

(59) 4 1 1 1 $\leftarrow$ (60) 5 1 1 

\end{tcolorbox}
\begin{tcolorbox}[title={$(66,6)$}]
(1) 25 19 7 7 7 $\leftarrow$ (2) 26 21 11 7 

(2) 10 9 15 15 15 

(3) 56 4 1 1 1 $\leftarrow$ (4) 60 1 1 1 

(13) 5 7 11 15 15 $\leftarrow$ (17) 9 11 15 15 

(15) 7 13 13 11 7 $\leftarrow$ (22) 14 13 11 7 

(19) 7 11 15 7 7 

(25) 9 11 7 7 7 $\leftarrow$ (26) 10 13 11 7 

(26) 18 3 5 7 7 $\leftarrow$ (28) 20 5 7 7 

(27) 7 13 5 7 7 $\leftarrow$ (29) 17 7 7 7 

(31) 5 9 7 7 7 $\leftarrow$ (35) 13 5 7 7 

(33) 11 3 5 7 7 $\leftarrow$ (34) 14 5 7 7 

(35) 24 4 1 1 1 $\leftarrow$ (36) 28 1 1 1 

(42) 2 3 5 7 7 $\leftarrow$ (44) 4 5 7 7 

(45) 3 5 7 3 3 $\leftarrow$ (49) 5 7 3 3 

(49) 3 6 2 3 3 $\leftarrow$ (50) 8 3 3 3 

(51) 2 4 3 3 3 $\leftarrow$ (53) 6 2 3 3 

(51) 8 4 1 1 1 $\leftarrow$ (52) 12 1 1 1 

(55) 4 4 1 1 1 $\leftarrow$ (60) 4 1 1 1 

\end{tcolorbox}
\begin{tcolorbox}[title={$(66,7)$}]
(1) 6 11 7 11 15 15 

(3) 11 4 7 11 15 15 $\leftarrow$ (5) 14 7 11 15 15 

(3) 54 * 1 $\leftarrow$ (4) 56 4 1 1 1 

(4) 9 5 7 11 15 15 

(5) 20 9 11 7 7 7 

(5) 28 11 3 5 7 7 

(14) 5 7 11 15 7 7 $\leftarrow$ (19) 10 17 7 7 7 

(15) 6 9 15 7 7 7 $\leftarrow$ (16) 7 13 13 11 7 

(17) 8 9 11 7 7 7 $\leftarrow$ (18) 9 11 15 7 7 

(19) 6 9 11 7 7 7 $\leftarrow$ (20) 7 11 15 7 7 

(21) 12 11 3 5 7 7 $\leftarrow$ (22) 13 13 5 7 7 

(26) 5 5 9 7 7 7 $\leftarrow$ (28) 7 13 5 7 7 

(28) 3 5 9 7 7 7 $\leftarrow$ (32) 5 9 7 7 7 

(29) 13 2 3 5 7 7 $\leftarrow$ (30) 14 4 5 7 7 

(30) 5 9 3 5 7 7 $\leftarrow$ (36) 9 3 5 7 7 

(35) 22 * 1 $\leftarrow$ (36) 24 4 1 1 1 

(43) 2 3 5 7 3 3 $\leftarrow$ (44) 3 6 6 5 3 

(49) ..4 3 3 3 $\leftarrow$ (50) 3 6 2 3 3 

(50) 1 2 4 3 3 3 $\leftarrow$ (52) 2 4 3 3 3 

(51) 6 * 1 $\leftarrow$ (52) 8 4 1 1 1 

(52) 3 4 4 1 1 1 $\leftarrow$ (58) * 1 

(55) 2 * 1 $\leftarrow$ (56) 4 4 1 1 1 

\end{tcolorbox}
\begin{tcolorbox}[title={$(66,8)$}]
(1) 5 7 5 7 11 15 15 

(1) 12 8 9 15 7 7 7 $\leftarrow$ (4) 11 4 7 11 15 15 

(3) 13 5 9 15 7 7 7 

(3) 25 3 5 9 7 7 7 $\leftarrow$ (18) 8 9 11 7 7 7 

(3) 53 1 * 1 $\leftarrow$ (4) 54 * 1 

(5) 14 6 9 11 7 7 7 $\leftarrow$ (6) 20 9 11 7 7 7 

(5) 23 8 12 9 3 3 3 $\leftarrow$ (6) 28 11 3 5 7 7 

(11) 1 2 4 7 11 15 15 $\leftarrow$ (13) 2 4 7 11 15 15 

(13) 3 5 9 15 7 7 7 $\leftarrow$ (17) 5 9 15 7 7 7 

(15) 4 6 9 11 7 7 7 $\leftarrow$ (16) 6 9 15 7 7 7 

(19) 9 3 5 9 7 7 7 $\leftarrow$ (21) 11 5 9 7 7 7 

(21) 7 8 12 9 3 3 3 $\leftarrow$ (22) 12 11 3 5 7 7 

(27) 3 5 9 3 5 7 7 $\leftarrow$ (29) 8 12 9 3 3 3 

(29) 3 5 7 3 5 7 7 $\leftarrow$ (33) 5 7 3 5 7 7 

(29) 7 12 4 5 3 3 3 $\leftarrow$ (30) 13 2 3 5 7 7 

(31) 6 5 2 3 5 7 7 $\leftarrow$ (37) 12 4 5 3 3 3 

(35) 21 1 * 1 $\leftarrow$ (36) 22 * 1 

(49) 1 1 2 4 3 3 3 $\leftarrow$ (50) ..4 3 3 3 

(50) 2 3 4 4 1 1 1 $\leftarrow$ (57) 1 * 1 

(51) 5 1 * 1 $\leftarrow$ (52) 6 * 1 

(55) 1 1 * 1 $\leftarrow$ (56) 2 * 1 

\end{tcolorbox}
\begin{tcolorbox}[title={$(67,3)$}]
(1) 59 7 

(3) 61 3 $\leftarrow$ (5) 63 

(35) 29 3 $\leftarrow$ (37) 31 

(51) 13 3 $\leftarrow$ (53) 15 

(59) 5 3 $\leftarrow$ (61) 7 

(61) 3 3 $\leftarrow$ (65) 3 

\end{tcolorbox}
\begin{tcolorbox}[title={$(67,4)$}]
(1) 58 5 3 $\leftarrow$ (2) 59 7 

(2) 59 3 3 $\leftarrow$ (4) 61 3 

(5) 30 29 3 $\leftarrow$ (6) 31 31 

(29) 22 13 3 $\leftarrow$ (30) 23 15 

(33) 26 5 3 $\leftarrow$ (34) 27 7 

(34) 27 3 3 $\leftarrow$ (36) 29 3 

(37) 14 13 3 $\leftarrow$ (38) 15 15 

(49) 10 5 3 $\leftarrow$ (50) 11 7 

(50) 11 3 3 $\leftarrow$ (52) 13 3 

(53) 6 5 3 $\leftarrow$ (54) 7 7 

(58) 3 3 3 $\leftarrow$ (60) 5 3 

(59) 2 3 3 $\leftarrow$ (62) 3 3 

\end{tcolorbox}
\begin{tcolorbox}[title={$(67,5)$}]
(1) 57 3 3 3 $\leftarrow$ (2) 58 5 3 

(3) 27 23 7 7 $\leftarrow$ (5) 29 27 7 

(5) 29 27 3 3 $\leftarrow$ (6) 30 29 3 

(6) 22 21 11 7 

(19) 7 11 15 15 

(23) 13 13 11 7 

(27) 11 15 7 7 $\leftarrow$ (37) 13 11 7 

(27) 19 7 7 7 $\leftarrow$ (29) 21 11 7 

(29) 21 11 3 3 $\leftarrow$ (30) 22 13 3 

(33) 25 3 3 3 $\leftarrow$ (34) 26 5 3 

(37) 13 11 3 3 $\leftarrow$ (38) 14 13 3 

(45) 3 5 7 7 $\leftarrow$ (49) 5 7 7 

(47) 6 6 5 3 $\leftarrow$ (54) 6 5 3 

(49) 9 3 3 3 $\leftarrow$ (50) 10 5 3 

(58) 1 2 3 3 $\leftarrow$ (60) 2 3 3 

\end{tcolorbox}
\begin{tcolorbox}[title={$(67,6)$}]
(2) 25 19 7 7 7 $\leftarrow$ (4) 27 23 7 7 

(3) 10 9 15 15 15 

(5) 12 9 11 15 15 

(5) 28 25 3 3 3 $\leftarrow$ (6) 29 27 3 3 

(14) 5 7 11 15 15 $\leftarrow$ (18) 9 11 15 15 

(15) 4 7 11 15 15 $\leftarrow$ (20) 7 11 15 15 

(22) 9 15 7 7 7 

(23) 11 14 5 7 7 $\leftarrow$ (24) 13 13 11 7 

(26) 9 11 7 7 7 $\leftarrow$ (30) 17 7 7 7 

(27) 7 14 5 7 7 $\leftarrow$ (28) 11 15 7 7 

(27) 18 3 5 7 7 $\leftarrow$ (29) 20 5 7 7 

(29) 20 9 3 3 3 $\leftarrow$ (30) 21 11 3 3 

(34) 11 3 5 7 7 $\leftarrow$ (36) 13 5 7 7 

(37) 12 9 3 3 3 $\leftarrow$ (38) 13 11 3 3 

(43) 2 3 5 7 7 $\leftarrow$ (45) 4 5 7 7 

(46) 3 5 7 3 3 $\leftarrow$ (51) 8 3 3 3 

(47) 4 7 3 3 3 $\leftarrow$ (48) 6 6 5 3 

(49) 4 5 3 3 3 $\leftarrow$ (50) 5 7 3 3 

(53) 5 1 2 3 3 $\leftarrow$ (54) 6 2 3 3 

\end{tcolorbox}
\begin{tcolorbox}[title={$(67,7)$}]
(1) 19 7 11 15 7 7 

(2) 6 11 7 11 15 15 

(3) 25 7 13 5 7 7 

(5) 9 5 7 11 15 15 $\leftarrow$ (19) 9 11 15 7 7 

(5) 24 20 9 3 3 3 $\leftarrow$ (6) 28 25 3 3 3 

(9) 28 12 9 3 3 3 $\leftarrow$ (16) 4 7 11 15 15 

(15) 5 7 11 15 7 7 $\leftarrow$ (17) 7 13 13 11 7 

(19) 9 7 13 5 7 7 $\leftarrow$ (20) 10 17 7 7 7 

(20) 6 9 11 7 7 7 

(23) 7 14 4 5 7 7 $\leftarrow$ (24) 11 14 5 7 7 

(25) 12 12 9 3 3 3 $\leftarrow$ (28) 18 3 5 7 7 

(27) 5 5 9 7 7 7 $\leftarrow$ (28) 7 14 5 7 7 

(27) 17 3 6 6 5 3 $\leftarrow$ (30) 20 9 3 3 3 

(29) 3 5 9 7 7 7 $\leftarrow$ (33) 5 9 7 7 7 

(31) 5 9 3 5 7 7 $\leftarrow$ (37) 9 3 5 7 7 

(35) 9 3 6 6 5 3 $\leftarrow$ (38) 12 9 3 3 3 

(41) 3 3 6 6 5 3 $\leftarrow$ (44) 2 3 5 7 7 

(44) 2 3 5 7 3 3 $\leftarrow$ (50) 4 5 3 3 3 

(47) 2 4 5 3 3 3 $\leftarrow$ (48) 4 7 3 3 3 

(51) 1 2 4 3 3 3 $\leftarrow$ (53) 2 4 3 3 3 

(53) 3 4 4 1 1 1 $\leftarrow$ (54) 5 1 2 3 3 

\end{tcolorbox}
\begin{tcolorbox}[title={$(68,2)$}]
(67) 1 $\leftarrow$ (69) 

\end{tcolorbox}
\begin{tcolorbox}[title={$(68,3)$}]
(5) 62 1 $\leftarrow$ (6) 63 

(37) 30 1 $\leftarrow$ (38) 31 

(53) 14 1 $\leftarrow$ (54) 15 

(61) 6 1 $\leftarrow$ (62) 7 

(65) 2 1 $\leftarrow$ (66) 3 

(66) 1 1 $\leftarrow$ (68) 1 

\end{tcolorbox}
\begin{tcolorbox}[title={$(68,4)$}]
(3) 27 23 15 

(3) 59 3 3 $\leftarrow$ (5) 61 3 

(5) 61 1 1 $\leftarrow$ (6) 62 1 

(23) 15 15 15 $\leftarrow$ (39) 15 15 

(27) 11 15 15 

(31) 23 7 7 $\leftarrow$ (35) 27 7 

(35) 27 3 3 $\leftarrow$ (37) 29 3 

(37) 29 1 1 $\leftarrow$ (38) 30 1 

(47) 7 7 7 $\leftarrow$ (51) 11 7 

(51) 11 3 3 $\leftarrow$ (53) 13 3 

(53) 13 1 1 $\leftarrow$ (54) 14 1 

(59) 3 3 3 $\leftarrow$ (61) 5 3 

(61) 5 1 1 $\leftarrow$ (62) 6 1 

(65) 1 1 1 $\leftarrow$ (66) 2 1 

\end{tcolorbox}
\begin{tcolorbox}[title={$(68,5)$}]
(2) 57 3 3 3 $\leftarrow$ (4) 59 3 3 

(3) 26 21 11 7 $\leftarrow$ (4) 27 23 15 

(5) 60 1 1 1 $\leftarrow$ (6) 61 1 1 

(7) 22 21 11 7 

(23) 14 13 11 7 $\leftarrow$ (24) 15 15 15 

(27) 10 13 11 7 $\leftarrow$ (28) 11 15 15 

(28) 19 7 7 7 $\leftarrow$ (32) 23 7 7 

(34) 25 3 3 3 $\leftarrow$ (36) 27 3 3 

(35) 14 5 7 7 $\leftarrow$ (48) 7 7 7 

(37) 28 1 1 1 $\leftarrow$ (38) 29 1 1 

(46) 3 5 7 7 $\leftarrow$ (50) 5 7 7 

(50) 9 3 3 3 $\leftarrow$ (52) 11 3 3 

(53) 12 1 1 1 $\leftarrow$ (54) 13 1 1 

(59) 1 2 3 3 $\leftarrow$ (61) 2 3 3 

(61) 4 1 1 1 $\leftarrow$ (62) 5 1 1 

\end{tcolorbox}
\begin{tcolorbox}[title={$(68,6)$}]
(1) 19 7 11 15 15 

(1) 23 13 13 11 7 

(3) 25 19 7 7 7 $\leftarrow$ (4) 26 21 11 7 

(4) 10 9 15 15 15 

(5) 56 4 1 1 1 $\leftarrow$ (6) 60 1 1 1 

(6) 12 9 11 15 15 $\leftarrow$ (8) 22 21 11 7 

(6) 14 7 11 15 15 

(15) 5 7 11 15 15 $\leftarrow$ (19) 9 11 15 15 

(21) 7 11 15 7 7 

(23) 9 15 7 7 7 $\leftarrow$ (25) 13 13 11 7 

(23) 13 13 5 7 7 $\leftarrow$ (30) 20 5 7 7 

(27) 9 11 7 7 7 $\leftarrow$ (28) 10 13 11 7 

(29) 7 13 5 7 7 $\leftarrow$ (37) 13 5 7 7 

(31) 14 4 5 7 7 $\leftarrow$ (46) 4 5 7 7 

(35) 11 3 5 7 7 $\leftarrow$ (36) 14 5 7 7 

(37) 24 4 1 1 1 $\leftarrow$ (38) 28 1 1 1 

(45) 3 6 6 5 3 $\leftarrow$ (51) 5 7 3 3 

(47) 3 5 7 3 3 $\leftarrow$ (49) 6 6 5 3 

(51) 3 6 2 3 3 $\leftarrow$ (52) 8 3 3 3 

(53) 8 4 1 1 1 $\leftarrow$ (54) 12 1 1 1 

(57) 4 4 1 1 1 $\leftarrow$ (60) 1 2 3 3 

(59) * 1 $\leftarrow$ (62) 4 1 1 1 

\end{tcolorbox}
\begin{tcolorbox}[title={$(69,3)$}]
(3) 59 7 

(7) 31 31 

(31) 23 15 $\leftarrow$ (39) 31 

(55) 7 7 

(63) 3 3 $\leftarrow$ (67) 3 

(67) 1 1 $\leftarrow$ (69) 1 

\end{tcolorbox}
\begin{tcolorbox}[title={$(69,4)$}]
(3) 58 5 3 $\leftarrow$ (4) 59 7 

(6) 29 27 7 

(7) 30 29 3 $\leftarrow$ (8) 31 31 

(30) 21 11 7 $\leftarrow$ (38) 29 3 

(31) 22 13 3 $\leftarrow$ (32) 23 15 

(35) 26 5 3 $\leftarrow$ (36) 27 7 

(38) 13 11 7 

(39) 14 13 3 $\leftarrow$ (40) 15 15 

(51) 10 5 3 $\leftarrow$ (52) 11 7 

(55) 6 5 3 $\leftarrow$ (56) 7 7 

(60) 3 3 3 $\leftarrow$ (64) 3 3 

(66) 1 1 1 $\leftarrow$ (68) 1 1 

\end{tcolorbox}
\begin{tcolorbox}[title={$(69,5)$}]
(1) 27 11 15 15 

(3) 57 3 3 3 $\leftarrow$ (4) 58 5 3 

(5) 27 23 7 7 

(7) 29 27 3 3 $\leftarrow$ (8) 30 29 3 

(21) 7 11 15 15 

(24) 14 13 11 7 $\leftarrow$ (37) 27 3 3 

(29) 11 15 7 7 

(29) 19 7 7 7 $\leftarrow$ (33) 23 7 7 

(31) 17 7 7 7 $\leftarrow$ (49) 7 7 7 

(31) 21 11 3 3 $\leftarrow$ (32) 22 13 3 

(35) 25 3 3 3 $\leftarrow$ (36) 26 5 3 

(39) 13 11 3 3 $\leftarrow$ (40) 14 13 3 

(47) 3 5 7 7 $\leftarrow$ (51) 5 7 7 

(51) 9 3 3 3 $\leftarrow$ (52) 10 5 3 

(55) 6 2 3 3 $\leftarrow$ (62) 2 3 3 

\end{tcolorbox}
\begin{tcolorbox}[title={$(70,2)$}]
(7) 63 

(55) 15 

(63) 7 $\leftarrow$ (71) 

\end{tcolorbox}
\begin{tcolorbox}[title={$(70,3)$}]
(6) 61 3 

(7) 62 1 $\leftarrow$ (8) 63 

(39) 30 1 $\leftarrow$ (40) 31 

(54) 13 3 

(55) 14 1 $\leftarrow$ (56) 15 

(62) 5 3 $\leftarrow$ (70) 1 

(63) 6 1 $\leftarrow$ (64) 7 

(67) 2 1 $\leftarrow$ (68) 3 

\end{tcolorbox}
\begin{tcolorbox}[title={$(70,4)$}]
(1) 55 7 7 

(5) 27 23 15 

(5) 59 3 3 

(7) 29 27 7 $\leftarrow$ (9) 31 31 

(7) 61 1 1 $\leftarrow$ (8) 62 1 

(25) 15 15 15 $\leftarrow$ (37) 27 7 

(29) 11 15 15 

(31) 21 11 7 $\leftarrow$ (33) 23 15 

(39) 13 11 7 $\leftarrow$ (41) 15 15 

(39) 29 1 1 $\leftarrow$ (40) 30 1 

(53) 11 3 3 

(55) 13 1 1 $\leftarrow$ (56) 14 1 

(56) 6 5 3 $\leftarrow$ (69) 1 1 

(61) 3 3 3 $\leftarrow$ (65) 3 3 

(63) 5 1 1 $\leftarrow$ (64) 6 1 

(67) 1 1 1 $\leftarrow$ (68) 2 1 

\end{tcolorbox}

.
\end{appendices}

\end{document}